\title{Associativity and Integrability}
\author[Rui L Fernandes]{Rui Loja Fernandes}
\email{ruiloja@illinois.edu}
\author{Daan Michiels}
\email{michiel2@illinois.edu}
\address{Department of Mathematics, University of Illinois at Urbana-Champaign, 1409 W. Green Street, Urbana, IL 61801, USA}
\thanks{This work was partially supported by NSF grants DMS 13-08472, DMS 14-05671, DMS-1710884 and FCT/Portugal.}
\date{\today}
\DeclareFixedFont{\ttb}{T1}{txtt}{bx}{n}{9} % for bold
\DeclareFixedFont{\ttm}{T1}{txtt}{m}{n}{9}  % for normal
\definecolor{lightgrey}{rgb}{0.7,0.7,0.7}
\definecolor{deepblue}{rgb}{0,0,0.5}
\definecolor{deepred}{rgb}{0.6,0,0}
\definecolor{deepgreen}{rgb}{0,0.5,0}
\newcommand\pythonstyle{\lstset{
language=Python,
basicstyle=\ttm,
numbers=left,
stepnumber=1,
numberstyle=\itshape\color{lightgrey},
commentstyle=\itshape\color{lightgrey},
otherkeywords={as, self, yield},
keywordstyle=\ttb\color{deepblue},
emph={__name__},
emphstyle=\ttb\color{deepred},
stringstyle=\color{deepgreen},
frame=tb,
showstringspaces=false
}}
\newcommand{\Z}{\ensuremath{\mathbb{Z}}}
\newcommand{\realization}[1]{{\lvert {#1}\rvert}}
\newcommand{\N}{\ensuremath{\mathcal{N}}}
\newcommand{\R}{\ensuremath{\mathbb{R}}}
\newcommand{\Q}{\ensuremath{\mathbb{Q}}}
\renewcommand{\S}{\ensuremath{\mathbb{S}}}
\newcommand{\F}{\mathcal{F}}
\newcommand{\eps}{\varepsilon}
\newcommand{\G}{\mathcal{G}}
\renewcommand{\NG}{\mathbf{G}}
\newcommand{\g}{\mathfrak{g}}
\newcommand{\HH}{\mathcal{H}}
\newcommand{\V}{\mathcal{V}}
\newcommand{\U}{\mathcal{U}}
\newcommand{\W}{\mathcal{W}}
\newcommand{\X}{\ensuremath{\mathfrak{X}}}
\newcommand{\AsCo}{\mathcal{AC}}
\newcommand{\Mon}{\tilde{\mathcal{N}}}
\newcommand{\Orbit}{\mathcal{O}}
\newcommand{\into}{\hookrightarrow}
\newcommand{\timesst}{\tensor[_s]{\times}{_t}}
\newcommand{\timesss}{\tensor[_s]{\times}{_s}}
\newcommand{\tto}{\rightrightarrows}
\newcommand{\wmc}{\omega_{\textrm{MC}}}
\DeclareMathOperator{\id}{Id}
\DeclareMathOperator{\Assoc}{Assoc}
\DeclareMathOperator{\im}{Im}
\DeclareMathOperator{\Ker}{Ker}
\renewcommand{\d}{\mathrm d}               % differential of functions, forms, etc.
\DeclareMathOperator{\pr}{pr}      % projection
\newcommand{\al}{\alpha}
\newtheorem{theorem}{Theorem}[section]
\newtheorem{lemma}[theorem]{Lemma}
\newtheorem{corollary}[theorem]{Corollary}
\newtheorem{proposition}[theorem]{Proposition}
\newtheorem{example}[theorem]{Example}
\theoremstyle{definition}
\newtheorem{definition}[theorem]{Definition}
\newtheorem{remark}[theorem]{Remark}
\begin{document}

\begin{abstract}
We provide a complete solution to the problem of extending a local Lie groupoid to a global Lie
groupoid. First, we show that the classical Mal'cev's theorem, which
characterizes local Lie groups that can be extended to global Lie groups, also
holds in the groupoid setting. Next, we describe a construction that can be
used to obtain any local Lie groupoid with integrable algebroid. Last, our main result
establishes a precise relationship between the integrability of a Lie algebroid
and the failure in associativity of a local integration. We give a simplicial
interpretation of this result showing that the monodromy groups of a Lie
algebroid manifest themselves combinatorially in a local integration, as a lack
of associativity.
\end{abstract}

\maketitle

\setcounter{tocdepth}{1}
\tableofcontents

\section{Introduction}
%TODO: extend this section

One of the main differences between the Lie theory of groupoids and the
ordinary Lie theory of groups is the failure of Lie's Third Theorem: not every
Lie algebroid integrates to a Lie groupoid. A fundamental fact discovered in
\cite{integ-of-lie-article} is that the lack of integrability of a Lie
algebroid can be measured by the so-called monodromy groups. On the other hand,
it is well known that every Lie algebroid integrates to a \emph{local} Lie
groupoid. In this paper we show that the failure of integrability of a Lie
algebroid can also be measured by the failure of associativity of any of its
local integrations. 

In connection with his proof of Lie's Third Theorem \cite{Cartan} for ordinary
Lie theory, E.~Cartan
already considered the question whether a local Lie group is contained in a
global Lie group, i.e.\ whether a local Lie group is ``globalizable'' (sometimes
called ``enlargeable'').
Mal'cev \cite{malcev} was the first one to define a notion of a local group
and he showed that the lack of associativity is the only
obstruction to embedding a local group in a global one. More precisely, the
associativity axiom for a local group requires that for every triple of
elements one has:
\[ g(hk)=(gh)k, \]
provided both sides are defined. While for a global group this implies that all
higher associativities hold, this is not true for a local Lie group. So, for
example, there exist local Lie groups in which one can find 4 elements such
that:
\[ (gh)(kl)\not = g((hk)l), \]
so that 4-associativity does not hold. An obvious necessary condition for a
local group to be globalizable is that $n$-associativity holds for all $n\ge 3$,
in which case we say that the local Lie group is \emph{globally associative}.
Mal'cev's theorem states that this condition is also sufficient. 

Mal'cev's notion of a local group was used by Smith \cite{Smith}, who applied a
simplicial complex approach to establish another criterion for globalizability.
Both the Mal'cev criterion and the Smith criterion were used in the 1960's to
understand the failure of Lie's Third Theorem for Banach Lie algebras. For an
account of these works, and the fact that the two criteria are in fact the
same, we refer to \cite{EstLee}. More recently, these kind of problems have
resurfaced in connection with Goldbring's solution (\cite{Goldbring}) to the
local version of Hilbert's fifth problem (see, e.g, the recent book by Tao
\cite{Tao}).

All these notions for local groups extend to local \emph{groupoids} so,
perhaps, it is not too surprising that a version of Mal'cev's result holds for
local groupoids. We will be interested mainly in the case of local \emph{Lie}
groupoids, and we will establish that:

\begin{theorem}[Mal'cev's theorem for local Lie groupoids]
    A sufficiently connected local Lie groupoid is globalizable if and only if it
    is globally associative.
\end{theorem}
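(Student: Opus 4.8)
The plan is to prove the two implications separately, with essentially all the substance lying in the converse. The direction globalizability $\Rightarrow$ global associativity is immediate: if the local Lie groupoid $G$ sits as an open neighborhood of the units inside a global Lie groupoid $\NG$ whose multiplication restricts to that of $G$, then every product of elements of $G$ that happens to be defined in $G$ coincides with the corresponding product in $\NG$. Since multiplication in $\NG$ is honestly associative, all the higher associativities that make sense in $G$ hold automatically, so $G$ is globally associative.

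For the converse I would build the globalization explicitly as an enveloping groupoid. Let $P$ denote the set of admissible strings $(g_1,\dots,g_n)$ of arrows of $G$, i.e.\ chains with $s(g_i)=t(g_{i+1})$, and impose the equivalence relation $\sim$ generated by the elementary moves that replace an adjacent pair $(g_i,g_{i+1})$ lying in the domain of the local multiplication by its product $g_i g_{i+1}$ (and conversely, by expansion). Set $\NG := P/\!\sim$. Source and target descend from $s(g_n)$ and $t(g_1)$; the product is concatenation of strings, defined exactly when the target/source data match; units are the classes of the unit arrows $1_x$; and the inverse reverses a string and inverts each entry. Concatenation is associative already before passing to the quotient, and every structure map is manifestly compatible with the elementary moves, so $\NG$ is an (algebraic) groupoid over the object manifold of $G$, generated by the image of $G$.

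The crux is to show that the canonical map $\iota\colon G \to \NG$, $g \mapsto [(g)]$, is injective, and this is precisely where global associativity enters. Two one-element strings $(g)$ and $(h)$ represent the same class if and only if they are joined by a finite sequence of elementary moves passing through longer strings; such a sequence encodes two ways of bracketing a common iterated product, and global $n$-associativity (for all $n\ge 3$) is exactly the assertion that any two such bracketings, agreeing as formal products, must agree in $G$. Carrying out this reduction carefully---showing that the higher associativity axioms suffice to collapse every zig-zag of expansions and contractions connecting two single elements---is the main obstacle and constitutes the genuine content of Mal'cev's theorem. Once injectivity is secured, $\iota$ identifies $G$ with an open generating set, and the remaining unit and inverse axioms on $\NG$ are routine.

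Finally I would upgrade $\NG$ to a Lie groupoid by transporting the smooth structure of $G$. Using left translations $L_{[w]}$ by classes of strings, one covers $\NG$ by translates of the chart $\iota(G)$ and declares these to be smooth charts; their transition maps are built from the local multiplication of $G$ and are therefore smooth, so $\NG$ acquires a smooth structure for which $s$, $t$, and multiplication are smooth. It is here that the hypothesis that $G$ is \emph{sufficiently connected} does its work, guaranteeing that these translates are mutually compatible and that $\NG$ is a well-behaved manifold rather than a pathological quotient. The resulting $\NG$ is then a global Lie groupoid containing $G$ as the given local Lie groupoid, which completes the proof.
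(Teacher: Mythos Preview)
Your approach is the same as the paper's: construct the associative completion $\AsCo(G)$ (your $\NG$), show that the canonical map is injective precisely when $G$ is globally associative, and then equip $\AsCo(G)$ with a smooth structure. However, several steps that you treat as routine are where the actual work lies, and as written they contain gaps.

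First, your construction of inverses in $\NG$ (``reverse the string and invert each entry'') presupposes that every arrow of $G$ is invertible. In the paper's setup this is \emph{not} assumed: inversion is only defined on a neighborhood $\V$ of the units. The paper handles this by proving that a strongly connected local Lie groupoid is \emph{inversional} (every element is a product of invertible ones), which is what really makes $\AsCo(G)$ a groupoid. This is one concrete place where the connectedness hypothesis is doing work, not just ensuring charts are compatible.

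Second, and more seriously, your injectivity argument is circular as stated. You assert that a zig-zag of contractions and expansions linking $(g)$ to $(h)$ can be collapsed using global $n$-associativity. But global associativity only says that if a single word admits two complete evaluations, they agree; it says nothing directly about a chain that passes through intermediate words via both expansions and contractions. The paper's key technical step (Proposition~\ref{prop:equivalence-relation}) is to prove, using bi-regularity and ``products connected to the axes,'' that $w_1\sim w_2$ implies the existence of a common expansion $w_3$ with $w_1\le w_3$ and $w_2\le w_3$. Only then does global associativity finish the job. You have identified this as ``the main obstacle'' but not indicated how to overcome it; the connectedness hypotheses are essential here, not just in the smooth structure.

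Finally, your smoothness sketch via left translates is morally right but omits the substance. The paper builds charts from local bisections near each letter of a representing word, and checks three distinct types of transition maps (change of bisections, change of index, change of representative) by explicit computations that reduce to the local multiplication. It also needs to first \emph{restrict} $G$ to obtain products connected to the axes, take $\AsCo$ of the restriction, and then use Lemma~\ref{lem:stronglyconnecteddeterminedbyrestriction} together with strong connectedness to recover a globalization of the original $G$. None of this is visible in your outline.
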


The connectedness needed in the statement of this theorem as well as in the results that follow will be explained in the main body of the paper.
\medskip

Simple examples of non-globalizable local Lie groups exist, and Olver in
\cite{olver} gives a general method of constructing local Lie groups that are
not contained in a global Lie group, leading to a classification of local Lie
groups. The main idea is to start with a (global)
Lie group and consider an open neighborhood $U$ of the unit. Obviously, $U$
is a globalizable local Lie group. However, if $U$ is chosen not
to be 1-connected, a covering $\widetilde{U}$ inherits a local Lie group
structure from $U$ which, in general, is non-globalizable. Olver's theorem
essentially states that every local Lie group is covered by a local Lie group
which covers a globalizable local Lie group.

The theory of covers for local Lie groupoids is more subtle than for local Lie
groups. Still, we will show that when the underlying Lie algebroid is
\emph{integrable}, Olver's theorem also extends to local Lie groupoids with the
appropriate assumptions:

\begin{theorem}[Classification of local Lie groupoids]
    Suppose $G$ is a source-connected local Lie groupoid with
    integrable Lie algebroid $A$.
    Let $\tilde{G}$ be the source-simply connected cover of $G$ and let
    $\G(A)$ be the source-simply connected integration of $A$.
    Then we have a diagram:
    \begin{center}
     \begin{tikzpicture}[baseline=(current  bounding  box.center)]
        \matrix(m)[matrix of math nodes, row sep=1.0em, column sep=2.4em]{
            & \tilde{G} & \\
            G & & U\subseteq \G(A) \\
        };
        \path[->] (m-1-2) edge node[auto,swap]{$p_1$} (m-2-1.north east);
        \path[->] (m-1-2) edge node[auto]{$p_2$} (m-2-3.north west);
    \end{tikzpicture}
    \end{center}
    where $p_1$ is the covering map and $p_2$ is a generalized covering of local
    Lie groupoids.
\end{theorem}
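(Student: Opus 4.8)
The plan is to extract both maps from the source-simply connected cover $\tilde{G}$, whose existence for a source-connected local Lie groupoid is supplied by the theory of covers established earlier. By construction $\tilde{G}$ comes equipped with the covering map $p_1 : \tilde{G} \to G$, so the substance of the argument lies in building $p_2$ and pinning down the neighborhood $U \subseteq \G(A)$.

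Since passing to a cover does not alter the Lie algebroid, $\tilde{G}$, $G$, and $\G(A)$ all integrate the same $A$, and I would construct $p_2$ through the Lie functor relating $\tilde{G}$ to the global source-simply connected integration $\G(A)$. Concretely, I would represent points of $\tilde{G}$ by source-homotopy classes of source-paths (admissible paths in the source fibers) and differentiate each such path to an $A$-path; because $\G(A)$ is realized as the space of $A$-homotopy classes of $A$-paths, this assignment descends to a map $p_2 : \tilde{G} \to \G(A)$, and integrability of $A$ is precisely what guarantees that the target is a genuine Lie groupoid. As the source-paths representing elements of $\tilde{G}$ are confined to the local groupoid, the resulting $A$-paths are ``short,'' so $p_2$ takes values in a neighborhood $U$ of the identity section.

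It then remains to establish the structural properties. That $p_2$ is a morphism of local Lie groupoids — compatible with source, target, and the partial multiplication — follows from the functoriality of the integration of $A$-paths, while smoothness follows from the smooth dependence on initial conditions in the correspondence between source-paths and $A$-paths. Since $\tilde{G}$ and $U$ are both source-simply connected local integrations of the same algebroid, $p_2$ is a local diffeomorphism and restricts to a covering on each source fiber; the reason it is only a \emph{generalized} covering is that $U$, being merely an open neighborhood in $\G(A)$, need not be source-simply connected as a local groupoid, so the fibers of $p_2$ are governed by the fundamental-group data of the source fibers of $U$ rather than being trivial.

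The step I expect to be the main obstacle is showing that $p_2$ is well-defined and a generalized covering, which reduces to matching the two equivalence relations: two source-paths are source-homotopic in $G$ exactly when their associated $A$-paths are $A$-homotopic. Proving this correspondence in the local setting, while simultaneously checking compatibility with the local multiplications, is where integrability and the local (rather than global) structure interact most delicately, and it is the crux on which the commutativity of the whole diagram rests.
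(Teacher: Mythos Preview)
Your description of $p_2$ is exactly the map the paper builds: send $g\in\tilde{G}$ to the $A$-homotopy class of the $A$-path obtained by differentiating any source-path from $s(g)$ to $g$. So at the level of \emph{what} the map is, you and the paper agree.

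Where the paper differs is in \emph{how} it verifies the properties you list as the ``main obstacle.'' You propose to show well-definedness and the local-diffeomorphism/generalized-covering property by matching source-homotopy with $A$-homotopy and invoking smooth dependence on initial data. The paper instead uses Cartan's method of the graph: on $\tilde{G}\tensor[_t]{\times}{_t}\G(A)$ it defines the distribution $\F_{(g_1,g_2)}=\{(\xi\cdot g_1,\xi\cdot g_2):\xi\in A_{t(g_1)}\}$, checks integrability (both factors have algebroid $A$), and shows that the leaf $N_x$ through $(x,x)$ projects diffeomorphically to the simply-connected source fiber $s^{-1}(x)\subset\tilde{G}$ via a covering argument exploiting right-invariance in the $\G(A)$-direction. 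Then $p_2$ is $\pi_2$ composed with the inverse of this projection, and smoothness comes for free as holonomy extension of $x\mapsto(x,x)$.

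Both routes are standard for integrating algebroid morphisms from a source-simply-connected domain (cf.\ Moerdijk--Mr\v{c}un), and yours is not wrong; the graph method simply packages well-definedness, smoothness, and the local-diffeomorphism property in one stroke, avoiding the need to directly compare the two homotopy relations. One small slip in your outline: you first assert $p_2$ is a local diffeomorphism because $\tilde{G}$ and $U$ are both source-simply-connected, then immediately (and correctly) note $U$ need not be. The actual reason $p_2$ is \'etale on source fibers is that it intertwines the Maurer--Cartan forms, hence is an isomorphism on $T^s$; the graph argument makes this transparent.
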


Our version of Mal'cev's theorem shows that global associativity must fail for
a local Lie groupoid whose Lie algebroid is non-integrable. This is a
completely new aspect of the theory of groupoids which has no
counterpart for groups. One of the main goals of this paper is to establish a precise
connection between the lack of associativity and the lack of integrability.

Given a local groupoid $G$ over $M$ we build a new groupoid $\AsCo(G)$ over
$M$, called the {\em associative completion} of $G$ as follows. One introduces
the set of \emph{well-formed} words on $G$:
\[ W(G) :=\bigsqcup_{n\geq 1} \underbrace{G\timesst G\timesst \cdots \timesst G}_{\text{$n$ times}}, \]
namely the words
$(w_1,\dots,w_n)$ in $G$ formed by arrows whose source and target match:
$s(w_i) = t(w_{i+1})$ for all $i\in\{1,\dots,n-1\}$.
Given a well-formed word
$w=(w_1,\dots,w_k,w_{k+1},\dots,w_n)$ such that  $w_k$ and $w_{k+1}$ can be
composed, we have a new well-formed word $w' = (w_1,\dots,w_kw_{k+1},\dots,w_n)$.
We say that $w'$ is is obtained from $w$ by
\emph{contraction} or that $w$ is obtained from $w'$ by \emph{expansion}.
Contractions and expansions generate an equivalence relation $\sim$ on $W(G)$,
and one defines the associative completion of $G$ to be the space of
equivalence classes:
\[ \AsCo(G) := W(G)/{\sim}.\]
Notice that there is an obvious map $G\to \AsCo(G)$. 

Under a very mild assumption on $G$, \emph{concatenation} of well-formed words
gives $\AsCo(G)$ a groupoid structure over $M$. Moreover, if $G$ is a local
topological groupoid and we equip $\AsCo(G)$ with the quotient topology inherited
from $W(G)$, we have:

\begin{proposition}
    If $G$ is a local topological groupoid, then $\AsCo(G)$ is a
    topological groupoid and $G\to \AsCo(G)$ is a morphism of local topological
    groupoids. Given any morphism $F:G\to \HH$, where $\HH$ is a topological
    groupoid, there exists a unique morphism of topological groupoids
    $\tilde{F}:\AsCo(G)\to\HH$ such that the following diagram commutes:
   \[ \xymatrix{ G\ar[r]^F\ar[d] & \HH \\
                       \AsCo(G)\ar@{-->}[ur]_{\tilde{F}}}\]
\end{proposition}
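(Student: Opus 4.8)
The plan is to topologize $W(G)$ as the disjoint union of the fiber products $G^{(n)} := \underbrace{G\timesst\cdots\timesst G}_{n}$ (each carrying the subspace topology from $G^n$), to give $\AsCo(G)=W(G)/{\sim}$ the quotient topology, and to write $q\colon W(G)\to\AsCo(G)$ for the quotient map and $j\colon G=G^{(1)}\hookrightarrow W(G)\xrightarrow{q}\AsCo(G)$ for the canonical map. Since the groupoid operations are already in hand (the ``very mild assumption''), proving that $\AsCo(G)$ is a topological groupoid amounts to continuity of its structure maps. The source and target are induced by the continuous maps $(w_1,\dots,w_n)\mapsto s(w_n)$ and $(w_1,\dots,w_n)\mapsto t(w_1)$ on $W(G)$, which are constant on $\sim$-classes, hence continuous by the universal property of the quotient topology. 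The unit $x\mapsto[1_x]$ is $q$ precomposed with the continuous unit section of $G$, and the inversion $[w_1,\dots,w_n]\mapsto[w_n^{-1},\dots,w_1^{-1}]$ descends from the continuous reversal map on $W(G)$, which respects $\sim$ since $(w_kw_{k+1})^{-1}=w_{k+1}^{-1}w_k^{-1}$ turns a contraction into a contraction of the reversed word; both are therefore continuous. Finally $j$ is continuous as a restriction of $q$, preserves source and target, and satisfies $j(g)j(h)=[g,h]=[gh]=j(gh)$ whenever $gh$ is defined (a contraction), so it is a morphism of local topological groupoids.

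The one genuinely topological point is continuity of multiplication $m\colon\AsCo(G)\timesst\AsCo(G)\to\AsCo(G)$. Concatenation gives a continuous map $\bar c\colon W(G)\timesst W(G)\to W(G)$ with $m\circ(q\timesst q)=q\circ\bar c$, so $m$ is continuous as soon as $q\timesst q$ is a quotient map. I will obtain this from the claim that $q$ is an \emph{open} map: if $q$ is open then so is $q\times q$, hence it is a quotient map, and restricting it to the saturated subspace $W(G)\timesst W(G)=(q\times q)^{-1}(\AsCo(G)\timesst\AsCo(G))$ yields an open, hence quotient, map $q\timesst q$. Thus everything reduces to showing that $q$ is open, i.e.\ that the $\sim$-saturation of every open set is open.

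To prove $q$ open, I would build the saturation from open sets one elementary move at a time. For each position $k$, let $D_k\subseteq W(G)$ be the (open) set of words for which $w_kw_{k+1}$ is defined, and let $\gamma_k\colon D_k\to W(G)$ be the continuous contraction map. For open $U\subseteq W(G)$, the set of words obtained from $U$ by a single expansion is $\bigcup_k\gamma_k^{-1}(U)$, which is open; the set obtained by a single contraction is $\bigcup_k\gamma_k(U\cap D_k)$. Writing $S(U)$ for the union of $U$ with these two sets, one has $q^{-1}(q(U))=\bigcup_{n\ge 0}S^n(U)$, which is open provided each $S(U)$ is open. \textbf{This is where the main obstacle lies}: showing that the contraction maps $\gamma_k$ are open. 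Since $\gamma_k$ is, up to the identity on the remaining factors, the multiplication map of $G$ in the $k$-th slot, openness of $\gamma_k$ follows from openness of $G\timesst G\to G$, which is part of the local topological groupoid structure — in the local Lie case it is a submersion, and in general it follows from continuity of local division together with openness of $s$ and $t$. This is the only step using more than formal properties of the quotient.

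For the universal property, given a morphism $F\colon G\to\HH$ into a topological groupoid I would define $\tilde F([w_1,\dots,w_n]):=F(w_1)F(w_2)\cdots F(w_n)$, the product taken in $\HH$. This product exists because $F$ preserves source and target, so consecutive factors are composable; it is independent of the representative because $F$ is a morphism, so a contraction replaces $F(w_k)F(w_{k+1})$ by $F(w_kw_{k+1})$ without changing the product. Hence $\tilde F$ is well defined, and it is continuous because its lift $(w_1,\dots,w_n)\mapsto F(w_1)\cdots F(w_n)$ is continuous on each $G^{(n)}$ (a composition of $F$ with iterated multiplication in $\HH$) and $q$ is a quotient map. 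A direct check shows $\tilde F$ is a morphism of topological groupoids with $\tilde F\circ j=F$. Uniqueness is immediate: any such $\tilde F$ must send $[w_1,\dots,w_n]=[w_1]\cdots[w_n]=j(w_1)\cdots j(w_n)$ to $F(w_1)\cdots F(w_n)$, which pins it down on all of $\AsCo(G)$.
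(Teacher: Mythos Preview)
Your proposal is correct and follows essentially the same route as the paper: both arguments hinge on showing that the quotient map $q\colon W(G)\to\AsCo(G)$ is open (because multiplication in $G$ is an open map), then deduce continuity of the structure maps on $\AsCo(G)$, and construct $\tilde F$ via $(w_1,\dots,w_n)\mapsto F(w_1)\cdots F(w_n)$. In fact your write-up is more explicit than the paper's (which is rather terse) about why openness of $q$ yields the quotient property for $q\timesst q$ and about building the $\sim$-saturation one elementary move at a time.

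One point needs tightening. Your inversion formula $[w_1,\dots,w_n]\mapsto[w_n^{-1},\dots,w_1^{-1}]$ presumes each $w_i$ lies in the domain $\V$ of the inversion map, but in this paper $\V\subsetneq G$ in general. The ``very mild assumption'' you invoke is made precise in the paper as $G$ being \emph{inversional}: every element is a well-defined product of invertible elements. Under this hypothesis every $\sim$-class has a representative with all letters in $\V$, and your reversal-and-invert map is defined on the open set of such words; since $q$ is open and this open set surjects onto $\AsCo(G)$, continuity of inversion follows. Without the inversional hypothesis $\AsCo(G)$ is only a category, so you should name the assumption and use it explicitly rather than relying on a formula that is undefined on general words.
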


If $F:G\to H$ is a morphism of local topological groupoids, there is an obvious
map  $\AsCo(F):\AsCo(G)\to\AsCo(H)$, which is a morphism of topological
groupoids, and makes the following diagram commutative:
\[
\xymatrix{
G\ar[d]\ar[r]^F & H\ar[d] \\
\AsCo(G)\ar[r]_{\AsCo(F)} & \AsCo(H)
}
\]
Hence, $\AsCo$ is a functor from the category of local topological groupoids to the category of topological groupoids. 

In general, if $G$ is a local Lie groupoid, $\AsCo(G)$ is not a Lie groupoid.
We call an element in the isotropy 
\[ g\in G_x=s^{-1}(x)\cap t^{-1}(x) \] 
an \emph{associator} at $x$ if there is a well-formed word $(w_1,\dots,w_n)$ which admits two sequences of contractions: one ending at $g$ and the other one ending at the unit $1_x$. The set of all associators, denoted $\Assoc(G)$, is contained in the kernel of the map $G\to \AsCo(G)$. Under mild assumptions on $G$, we show that it coincides with this kernel, and moreover that it controls the smoothness of $\AsCo(G)$:

\begin{theorem}
    If $G$ is a enough connected local Lie groupoid, then $\AsCo(G)$ is
    smooth if and only if $\Assoc(G)$ is uniformly discrete in $G$. In this case, $G \to \AsCo(G)$ is a local diffeomorphism.
\end{theorem}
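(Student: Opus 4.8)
The plan is to study the canonical map $\pi\colon G\to\AsCo(G)$, $g\mapsto[g]$, through its fibers. Since $\pi$ is a morphism of local groupoids covering $\id_M$, whenever $\pi(g)=\pi(h)$ the arrows $g,h$ must share source and target; and when the product $g^{-1}h$ is defined in $G$ one has $\pi(g)=\pi(h)$ if and only if $g^{-1}h\in\ker\pi=\Assoc(G)$, the last identification being the stated one, valid under the connectivity hypothesis. Thus the fibers of $\pi$ are exactly the orbits of the partially defined right action of $\Assoc(G)$ on $G$ (an associator $a\in G_x$ acting on $s^{-1}(x)$ by $g\mapsto ga$), and the theorem reduces to showing that this orbit space is a manifold, with $\pi$ étale, precisely when the orbits do not accumulate near the identities. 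I take uniform discreteness to mean that there is an open neighborhood $V$ of $1_M$ in $G$ with $V\cap\Assoc(G)=1_M$.

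For the implication ``uniformly discrete $\Rightarrow$ smooth'' I would first produce an open $U\supseteq 1_M$ on which $\pi$ is injective: if $g,h\in U$ satisfy $\pi(g)=\pi(h)$ then $g^{-1}h\in\Assoc(G)$, and since $(g,h)\mapsto g^{-1}h$ is continuous and takes units along the diagonal, shrinking $U$ forces $g^{-1}h\in V$, whence $g^{-1}h=1$ and $g=h$. Using that the quotient $W(G)\to\AsCo(G)$ is open (the saturation of an open set is open), $\pi$ is an open map, so $\pi|_U$ is a homeomorphism onto an open neighborhood of the units, through which I transport the manifold structure of $U\subseteq G$. I then spread this chart along the source fibers by the right translations $R_{[a]}$ of the topological groupoid $\AsCo(G)$, which are fibrewise homeomorphisms sending $[1_{t(a)}]$ to $[a]$; together with the transverse directions along $M$ these cover $\AsCo(G)$ by charts. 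On overlaps the transition maps are read off, via $\pi$, from right translations in $G$, and these are smooth because $G$ is a Lie groupoid. Coherence of the atlas and smoothness of the structure maps (source a submersion, multiplication, inversion, unit) follow by pushing the corresponding operations of $G$ through $\pi$, which is a morphism; this simultaneously exhibits $\AsCo(G)$ as a Lie groupoid and $\pi$ as a local diffeomorphism.

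For the converse I argue by contraposition. If $\Assoc(G)$ is not uniformly discrete, then nontrivial associators accumulate at some identity $1_{x_0}$; as each associator is collapsed by $\pi$ to a unit, $\pi$ fails to be locally injective on every neighborhood of $1_{x_0}$. On the other hand, if $\AsCo(G)$ were smooth then $\pi$ would be a local diffeomorphism near the unit section: its Lie algebroid agrees with $A$, which is a germ invariant of the multiplication near $1_M$, and $\pi$ realizes the identity on $A$, so a local-Lie-theory argument shows $\pi$ is étale along $1_M$. This contradicts the accumulation of associators, so smoothness forces uniform discreteness, and the local-diffeomorphism statement is the one already obtained above.

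I expect the main obstacle to be the construction of the smooth structure (the implication ``uniformly discrete $\Rightarrow$ smooth''), and within it two uniformity issues: the openness of $\pi$, equivalently that saturations of open sets in $W(G)$ are open, which is what turns $\pi|_U$ into genuine charts; and the \emph{uniform}, rather than merely fibrewise, choice of the neighborhoods $U$ and $V$, which is exactly where the hypothesis that $G$ be sufficiently connected enters and where the gap between discreteness and uniform discreteness of $\Assoc(G)$ is decisive. A secondary delicate point is the automatic smoothness of $\pi$ used in the converse; once these are settled, verifying the transition maps and the groupoid operations is routine, since everything is transported from the Lie groupoid $G$ through the local diffeomorphism $\pi$.
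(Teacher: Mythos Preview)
Your overall plan is on the right track, and your injectivity argument for $\pi|_U$ near the units is correct. But the chart construction has a genuine gap. Right translations $R_{[a]}$ in $\AsCo(G)$ act only on the source fiber $s^{-1}(t(a))$, so translating the identity chart by $R_{[a]}$ yields a chart only of dimension $\dim G-\dim M$, not a full chart. Your phrase ``together with the transverse directions along $M$'' is where the work hides: to vary $[a]$ transversally in a controlled way you need, for a representing word $(x_1,\dots,x_n)$, a choice of \emph{local bisections} $N_i\subset G$ through each $x_i$. This is exactly what the paper does: the chart around $[x_1,\dots,x_n]$ is
\[
y\;\longmapsto\;[y_1,\dots,y_{k-1},y,y_{k+1},\dots,y_n],
\]
where $y$ ranges over a neighborhood of $x_k$ and the remaining $y_i\in N_i$ are determined from $y$ by matching sources and targets along the bisections. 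Your claim that ``transitions are read off from right translations in $G$'' is not accurate in general: the paper must check three distinct types of transitions (different choices of bisections, different choice of free index $k$, different representative word), and in each case the transition is an expression of the form $y\mapsto \eps_{k-1}\,y\,\eps_{k+1}$ with $\eps_i$ small elements of $G$ depending smoothly on $y$. None of this is a single right translation.

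For the converse, your Lie-algebroid route is more delicate than necessary and has a circularity risk: to know that $\pi$ induces the identity on algebroids you essentially need $\pi$ to be \'etale near $M$, which is what you are trying to prove. The paper's argument is a direct dimension count: by definition, ``$\AsCo(G)$ smooth'' means $W(G)\to\AsCo(G)$ restricts to a submersion on each component, in particular $\pi:G\to\AsCo(G)$ is a submersion; since the fibers of $\pi$ are countable this forces $\dim\AsCo(G)=\dim G$, hence $\pi$ is a local diffeomorphism, and $\Assoc(G)=\pi^{-1}(M)$ is then an embedded submanifold of $G$, hence uniformly discrete.
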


Notice the obvious similarities between the functor $\AsCo(-)$ and the
integration functor $\G(-)$, which associates to a Lie algebroid the space of
$A$-paths modulo $A$-homotopies (see
\cite{integ-of-lie-article,lectures-integrability-lie}):
\begin{itemize}
\item If $A$ is a Lie algebroid, then $\G(A)$ is a topological groupoid. It is
    a Lie groupoid iff the monodromy $\N(A)$ is uniformly discrete in $A$;
\item If $G$ is a local Lie groupoid, then $\AsCo(G)$  is a topological
    groupoid. It is a Lie groupoid iff the associators $\Assoc(G)$ are
    uniformly discrete in $G$.
\end{itemize}
Moreover, if $\AsCo(G)$ is smooth then the Lie algebroid of $G$ is integrable
since $G$ and $\AsCo(G)$ have the same Lie algebroid. One of our main results
establishes a precise relationship between the monodromy groups and the
associators as follows:

\begin{theorem}
    Let $G$ be a shrunk local Lie groupoid $G$ with Lie algebroid $A\to M$. For each $x\in M$, there is an 
    embedding of local groups $G_x\hookrightarrow \G(\g_x)$ such that:
    \[ \Assoc_x(G)=G_x\cap \N_x(A). \]
\end{theorem}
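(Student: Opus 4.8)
The plan is to obtain the embedding from the Lie theory of the isotropy \emph{local group}, to reinterpret $\N_x(A)$ as the kernel of a comparison homomorphism into $\G(A)$, and then to match the contraction/expansion moves defining $\Assoc_x(G)$ with $A$-homotopies. First I would produce the embedding. Restricting the partial multiplication of $G$ to $G_x=s^{-1}(x)\cap t^{-1}(x)$ turns $G_x$ into a local Lie group whose Lie algebra is the isotropy algebra $\g_x=\Ker(\rho)_x$. By the Lie theory of local groups there is a local morphism $\phi_x\colon G_x\to\G(\g_x)$ integrating $\id_{\g_x}$, and it is a local diffeomorphism at the unit; since $G$ is shrunk, $G_x$ is small enough that $\phi_x$ is defined on all of $G_x$ and injective, hence an embedding of local groups onto a neighbourhood of the identity of $\G(\g_x)$.

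The bridge between the combinatorics of $G$ and the integration is the following Key Lemma: assigning to a well-formed word $w=(w_1,\dots,w_n)$ the concatenation $a_w$ of the $A$-paths of its letters, the class $[a_w]\in\G(A)$ is invariant under contraction and expansion. This is the infinitesimal form of the fact that composition in $G$ corresponds, up to $A$-homotopy, to concatenation of $A$-paths. In particular $w\mapsto[a_w]$ descends to a morphism $\Phi\colon\AsCo(G)\to\G(A)$, and on isotropy it gives $\Phi_x\colon\AsCo(G)_x\to\G(A)_x$.

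Next I would set up the comparison on the isotropy. The inclusion $\g_x\into A_x$ induces a homomorphism $\iota_\ast\colon\G(\g_x)\to\G(A)_x$ into the isotropy group of $\G(A)$ at $x$ (whose Lie algebra is again $\g_x$), and by the results of \cite{integ-of-lie-article} the monodromy group is exactly $\N_x(A)=\Ker(\iota_\ast)$, a subgroup of the centre of $\G(\g_x)$. Writing $\psi\colon G_x\to\G(A)_x$ for the map $g\mapsto[a_g]$, uniqueness of local morphisms integrating $\id_{\g_x}$ (and shrunkenness) gives the factorization $\psi=\iota_\ast\circ\phi_x=\Phi_x\circ(G_x\to\AsCo(G)_x)$. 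Hence for $g\in G_x$ one has $\phi_x(g)\in\N_x(A)$ if and only if $\psi(g)=1$, so that $G_x\cap\N_x(A)=\Ker\psi$, and the theorem reduces to the identity $\Assoc_x(G)=\Ker\psi$. Since $\Assoc_x(G)$ is the kernel of $G_x\to\AsCo(G)_x$ and $\psi$ factors through this map, the inclusion $\Assoc_x(G)\subseteq\Ker\psi$ is immediate; concretely, if a word contracts both to $g$ and to $1_x$ then the Key Lemma yields $\psi(g)=[a_g]=[a_{1_x}]=1$.

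The reverse inclusion $\Ker\psi\subseteq\Assoc_x(G)$ is where the real work lies, and I expect it to be the main obstacle. Here one begins with an $A$-homotopy contracting $a_g$ to the trivial $A$-path and must manufacture from it a single well-formed word admitting two contraction sequences, one terminating at $g$ and the other at $1_x$ --- equivalently, one must show that $\Ker\Phi_x$ is trivial. The idea is to discretize the homotopy over a sufficiently fine grid on the homotopy square: on each small cell the multiplication of the shrunk groupoid $G$ is available and behaves associatively on that scale, so the elementary move across a cell is realized by an expansion followed by a contraction. Traversing the disk bounded by the base loop of $a_g$ in the two possible ways --- around its boundary versus collapsing it through the interior --- then produces the two required contraction sequences from a common word. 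The delicate points are keeping every composition within the domain of the shrunk $G$ so that all intermediate words stay well-formed and all moves are legal, and verifying that the two combinatorial endpoints are precisely $g$ and $1_x$; this is exactly where shrunkenness, rather than the mere local Lie groupoid structure, is needed.
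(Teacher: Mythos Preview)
Your outline is correct and matches the paper's strategy: the embedding $G_x\hookrightarrow\G(\g_x)$, the factorization $G_x\to\G(\g_x)\to\G(A)_x$ identifying $\N_x(A)$ as the kernel of the second arrow, and the ``Key Lemma'' (that contraction/expansion preserves the $A$-homotopy class of the concatenated $A$-path) are exactly what the paper sets up --- and the shrunk hypothesis is precisely what makes that lemma true. The easy inclusion $\Assoc_x(G)\subseteq G_x\cap\N_x(A)$ follows as you say.

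For the hard inclusion the paper's execution differs from your sketch in a way worth noting. Rather than discretizing an arbitrary $A$-homotopy, the paper starts from a class $[\alpha]\in\pi_2(M,x)$ with $\partial([\alpha])\in G_x$, triangulates the representing disk $\alpha:\Delta\to M$ finely, and uses a splitting $\sigma:TM\to A$ together with $\exp_\nabla$ to assign a groupoid element $g_e\in G$ to each short edge. The combinatorial heart is then the construction of a single edge-sequence that is \emph{simultaneously} (a) obtainable from the boundary word $(x,\dots,x)$ by inserting cancelling blocks, and (b) a concatenation of \emph{laces} --- short conjugated triangular boundaries, each yielding an element of $G_x$ whose product in $\G(\g_x)$ computes $\partial([\alpha])$. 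Condition (a) gives the contraction to $1_x$, condition (b) gives the contraction to $\partial([\alpha])$, and a final lemma (that in the $s$-simply connected group $\G(\g_x)$ any defined product in the open subset $G_x$ is already realized by a $W(G_x)$-equivalence) closes the argument. Your phrase ``traversing the disk in the two possible ways'' captures the intuition, but producing one sequence satisfying both combinatorial constraints at once is the technical crux your sketch leaves implicit; working with the base sphere and a splitting, rather than the $A$-homotopy itself, is what makes this tractable.
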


The assumption in this theorem that $G$ is shrunk is of a topological nature.
Any local Lie groupoid has a neighborhood of the units which is shrunk. Such
kind of assumption on $G$ is required in order to be able to establish a
relationship between the monodromy groups and the associators, since we will
see that even local Lie groups may have non-discrete associators.

The associative completion $\AsCo(G)$ has a simplicial interpretation: a local Lie groupoid $G$ has a nerve which is a simplical set $\mathbf{G}=\{G^{(m)}\}$ and $\AsCo(G)\simeq \Pi_1(\mathbf{G})$, the path groupoid of $\mathbf{G}$. The theory is more complicated than for (global) groupoids, since now the nerve fails to be a Kan complex. Still, just like the monodromy groups $\N_x(A)$, that can be defined as the image of a monodromy homomorphism $\partial:\pi_2(M,x) \to\G(\g_x)$, which appears as the connecting homomorphism of a long exact sequence (see \cite{integ-of-lie-article}):
\[ \xymatrix{\cdots \ar[r] & \pi_2(M,x) \ar[r]^\partial & \G(\g_x)\ar[r] & \G_x(A) \ar[r] & \pi_1(M,x)\ar[r] & \cdots} \]
we will see that there is a simplicial version of the monodromy map involving
the simplicial homotopy groups:
\[ \xymatrix{\cdots \ar[r] & \pi_2(\mathbf{U},x) \ar[r]^{\partial_s}& \AsCo(G_x) \ar[r] & \AsCo_x(G) \ar[r] & \pi_1(\mathbf{U},x)\ar[r] & \cdots} \]
Here, $U$ is the local groupoid over $M$ obtained as the image of the
source/target map $(t,s):G\to M\times M$, and $\mathbf{U}$ denotes its nerve.
We will see that then it follows that for a shrunk local Lie groupoid there is
an isomorphism $\AsCo(G)\simeq \G(A)$ (as topological groupoids).
\medskip

This paper is organized as follows. In Section~\ref{sec:preliminaries} we discuss some basic properties of local Lie groupoids, including 
our definition, various connectedness assumptions and constructions. 
In Section~\ref{ss:example-non-globalizable} we give a few examples illustrating the failure of  associativity. In Section~\ref{sec:malcev} we show that Mal'cev's theorem holds for local Lie groupoids.
In Section~\ref{sec:classification} we extend a construction of local Lie groups
due to Olver \cite{olver}
to the case of local Lie groupoids. This leads to the classification of
local Lie groupoids with integrable algebroid.
In Section~\ref{sec:integrability}, we deduce our results concerning the relationship between associativity
of a local Lie groupoid and the integrability of its Lie algebroid, via
associators and monodromy groups. Finally, Section~\ref{s:simplicial:monodromy}
discusses the simplicial approach to $\AsCo(G)$ and the simplicial version of
the monodromy map.  The appendix contains Python code for a video that
illustrates a construction related to a triangulation used in the proof of the main
theorem of Section 6.
\medskip

{\bf Acknowledgements.} We would like to thank Alejandro Cabrera, Marius Crainic, Matias del Hoyo, Gustavo Granja, Peter Olver and Ioan Marcut for 
comments and remarks at various stages of this project. In particular, the last section of this paper originated from discussions with Matias del Hoyo.

\medskip

%TODO: move this elsewhere
{\bf Notations and Conventions.}
All manifolds and maps under consideration will be smooth.
Throughout the text, local groupoids will be denoted by Latin letters (e.g.\ $G$),
and global groupoids will be denoted by calligraphic versions (e.g.\ $\G$).
If $A$ is a Lie algebroid, then $\G(A)$ will denote the associated source-simply connected
groupoid (it is global, so it has a calligraphic symbol.)
Arrows of a groupoid compose from right to left, so that
$gh$ can only be defined if $s(g) = t(h)$.

\section{Local Lie groupoids}
\label{sec:preliminaries}

In this section, we discuss some basic concepts and properties of local Lie groupoids that will be relevant for this paper.

\subsection{Definition of a local Lie groupoid}

There are several different definitions of local Lie groupoids. We adopt the following ``weak'' version as definition (there are even weaker versions, see e.g. \cite{CMS}). Later we will consider stronger versions.

\begin{definition}
    A \emph{local Lie groupoid} $G$ over a manifold $M$ is a manifold $G$,
    together with maps:
    \begin{itemize}
        \item $s, t : G \to M$ submersions (the \emph{source} and \emph{target} maps);
        \item $u : M \to G$ a smooth map (the \emph{unit} map);
        \item $m : \U \to G$ a submersion (the \emph{multiplication}), where $\U\subset  G\timesst G$ is an open neighborhood of:
            \[ (G\timesst M) \cup (M\timesst G)
                = \bigcup_{g\in G} \{ (g,u(s(g))), (u(t(g)), g) \} ;\]
        \item $i : \V \to \V$ a smooth map (the \emph{inversion}), where $\V \subset  G$ is an open neighborhood of $u(M)$
            such that $\V\timesst \V \subset \U$;
    \end{itemize}
    such that the following axioms hold:
    \begin{itemize}
        \item $s(m(g,h)) = s(h)$ and $t(m(g,h)) = t(g)$ for all $(g,h) \in \U$;
        \item $m(m(g,h),k) = m(g,m(h,k))$ in a subset $\W\subset G\timesst G\timesst G$ whose interior contains
        \[ (M\timesst M\timesst G) \cup (M\timesst G\timesst M) \cup (G\timesst M\timesst M);\]
        \item $m(g,u(s(g))) = m(u(t(g)),g) = g$ for all $g\in G$;
        \item $s(i(g)) = t(g)$ and $t(i(g)) = s(g)$ for all $g\in\V$;
        \item $m(i(g),g) = u(s(g))$ and $m(g,i(g)) = u(t(g))$ for all $g\in \V$.
    \end{itemize}\label{def:llg}
\end{definition}

\begin{remark} A few remarks concerning this definition are in order:
\begin{enumerate}
\item By a \emph{local Lie group} we will always mean a local Lie groupoid $G\tto M$ with objects the singleton: $M=\{*\}$.
\item It follows from this definition that $u : M \to G$ is an embedding and we
    will consider $M$ as a submanifold of $G$ using this embedding.  In
    particular, if $x\in M$, we will write the unit at $x$ as just $x$, leaving
    out the $u$.
\item It also follows from the definition that $i:\V\to \V$ is an involutive
    diffeomorphism. An element of $\V$ will be called \emph{invertible}. Note
    that we do not require all elements in $G$ to be invertible, unlike some of
    the definitions in the literature, and this is relevant for us (see Remarks
    \ref{rmk:MC:inverses}, \ref{rmk:Malcev:inverses} and
    \ref{rmk:Covers:inverses}).
\item For the multiplication we usually write $gh$ instead of $m(g,h)$. We
    require it to be defined near $(G\timesst M) \cup (M\timesst G)$, not just
    near $M\timesst M$. In other words, $t(g)g$ and $g s(g)$ are defined (and
    equal $g$) for all $g\in G$.
\item For a Lie groupoid the fact that multiplication is a submersion follows
    from the other axioms, but this is not the case for a local Lie groupoid. 
\item Just as in the case of Lie groupoids, we allow the manifold $G$ to be
    non-Hausdorff. However, all other manifolds, including $M$, the source and
    target fibers, are assumed to be Hausdorff. 
\item Occasionally, we will deal with \emph{local topological groupoids}: the
    definition is analogous but one works in the topological category instead.
    Similarly, one can define \emph{local groupoids} completely free from
    topology (where the sets $\U$, $\V$ and $\W$, define the ``local'' nature of $G$).
\end{enumerate}
\end{remark}

There are two different ways of obtaining a smaller local Lie groupoid $G'$
from a given local Lie groupoid $G$. Both of them are relevant for us:
\begin{itemize}
    \item We say that $G'$ is obtained by \emph{restricting} $G$, if both local
        groupoids have the same manifolds of arrows and objects, the same
        source and target maps, and the multiplication and inversion in $G'$
        are obtained by restricting the ones of $G$ to smaller domains;
    \item We say that $G'$ is obtained by \emph{shrinking} $G$ if $G'$ is an
        open neighborhood of $M$ in $G$, the source and target maps are the
        restrictions of $s$ and $t$ to $G'$, multiplication is the restriction
        of $m$ to $\U \cap (G' \timesst G')\cap m^{-1}(G')$, and
        inversion is the restriction of $i$ to $\V'=(\V \cap G') \cap i(\V \cap
        G')$.
\end{itemize}

\begin{example}[Restriction of a Lie groupoid]
Let $\G\tto M$ be a Lie groupoid. Any open $\U\subset \G^{(2)}$ containing
$(\G\timesst M) \cup (M\timesst \G)$ determines a restriction $G$ of $\G$. On
the other hand, any open neighborhood $G'\subset \G$ of the unit section
$M$ determines a local Lie groupoid $G'$ shrinking $\G$. 
\end{example}

\begin{example}[One-point compactification]
The one-point compactification
$\R_\infty=\R\cup\{\infty\}$, becomes a local Lie group if one extends addition by setting:
\[ g+\infty=\infty=\infty+ g,\quad \forall g\in \R. \]
Although we could define $\infty+\infty=\infty$, multiplication would not
be smooth at $(\infty,\infty)$. So the domain of multiplication is
$\U=\R_\infty\times \R_\infty\setminus\{(\infty,\infty)\}$ and the domain of inversion is
$\V=\R$. 
\end{example}

\begin{example}[$A$-path local integration]
\label{canonical-integ}
   Let $A$ be a Lie algebroid and let $P(A)$ denote the space of $A$-paths. A choice of $A$-connection $\nabla$ on $A$  
   determines an exponential map $\exp_\nabla:A\to P(A)$ (see, e.g., \cite{lectures-integrability-lie}). If $\sim$ denotes 
   the equivalence relation on $P(A)$ determined by $A$-homotopy, it is shown in \cite{integ-of-lie-article} that, 
   for a sufficiently small neighborhood $V\subset A$ of the zero section, the quotient 
   \[ G:=\exp_\nabla(V)/\sim \]
   is a local Lie groupoid with (partial) multiplication defined by concatenation of $A$-paths. We will call $G$ 
   an \emph{$A$-path local integration} of $A$. % (be aware that it depends on the choice of $\nabla$ and $V$).
   There are other methods to associate a local groupoid to a Lie algebroid (see, e.g., \cite{CMS}).
\end{example}

Morphisms between local Lie groupoids are defined in a more or less obvious way.

\begin{definition}
    Suppose that $G_1$ and $G_2$ are local Lie groupoids over $M_1$ and $M_2$,
    respectively.
    A \emph{morphism of local Lie groupoids} is a pair $(F,f)$, where
    $F : G_1\to G_2$ and $f:M_1\to M_2$ are smooth maps
    such that
    \begin{itemize}
        \item $F \circ u_1 = u_2 \circ f$,
        \item $f \circ s_1 = s_2 \circ F$ and $f\circ t_1 = t_2\circ F$,
        \item $(F\times F)(\U_1) \subset \U_2$ and $F\circ m_1= m_2\circ(F\times F)$
            on $\U_1$,
        \item $F(\V_1) \subset \V_2$ and $F \circ i_1 = i_2\circ F$ on $\V_1$.
    \end{itemize}
\end{definition}

Note that the map $f$ is really just the restriction of $F$ to the unit manifold,
so $F$ completely determines the morphism, and it is therefore natural to think of the
morphism as just $F$.
If $G'$ is obtained from $G$ by either restricting or shrinking, the inclusion $G' \to G$
is a morphism of local Lie groupoids.

\subsection{Associativity}
\label{ss:associativity}

One can strengthen the associativity axiom in the definition of a local Lie
groupoid in various ways. The following definition is taken from \cite{olver},
with group replaced by groupoid:

\begin{definition}
    A local Lie groupoid is called \emph{associative to order $n$} (or
    \emph{$n$-associative}) if for every $3 \leq m\leq n$ and every ordered
    $m$-tuple of groupoid elements 
    \[ (x_1,\dots,x_m)\in G \timesst G \timesst \cdots \timesst G,\] 
    all corresponding defined $m$-fold products are equal (in other words, if
    an $m$-fold product can be evaluated by putting in brackets in distinct
    ways, all result in the same answer).
\end{definition}

For example, a local groupoid $G$ is 3-associative if 
\[ (gh)k=g(hk) \quad\text{whenever $(gh)k$ and $g(hk)$ are both defined}. \]
This is stronger than the associativity axiom of a local groupoid, where this
only needs to hold in some neighborhood of 
\[ (M\timesst M\timesst G) \cup (M\timesst G\timesst M) \cup (G\timesst M\timesst M).\]

It is a (perhaps surprising) fact that for every $n\geq 3$ there are
$n$-associative local Lie groupoids that are not $(n+1)$-associative. This is
already true in the case of local Lie groups (\cite[section 3]{olver}) and we
will discuss more examples later.

\begin{definition}
    A local Lie groupoid is called \emph{globally associative} if it is
    associative to every order $n\geq 3$.
\end{definition}

Clearly, the local Lie groupoids obtained by restricting or shrinking a Lie
groupoid are always globally associative. On the other hand, the $A$-path local
integrations of a Lie algebroid $A$ may fail to be even 3-associative. However:

\begin{proposition}
Let $G$ be a local Lie groupoid. For each $n\ge 3$ there is a restriction of $G$
which is $n$-associative.
\end{proposition}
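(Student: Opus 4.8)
The plan is to prove the statement by induction on $n$, showing at each stage that one can shrink $G$ to a restriction that is $n$-associative. The key intuition is that near the unit section $M$, associativity holds automatically (this is built into the axioms of a local Lie groupoid, where the associativity axiom holds on a set $\W$ whose interior contains the ``degenerate'' triples). The only way $n$-associativity can fail is if a multiplication is defined but the bracketing forces one through regions far from the units. By restricting the domain of multiplication to a small enough neighborhood, one should be able to force every $m$-fold product (for $3 \le m \le n$) either to be undefined or to lie in a region where associativity is guaranteed.

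More concretely, first I would fix $n$ and consider the iterated multiplication maps. For each way of bracketing an $m$-fold product with $3 \le m \le n$, there is an open set in $G \timesst \cdots \timesst G$ (the $m$-fold fiber product) on which that bracketing is defined, together with a continuous map to $G$ computing the result. Along the degenerate locus containing
\[ (M\timesst \cdots \timesst M \timesst G) \cup \cdots \cup (G\timesst M\timesst \cdots \timesst M), \]
all these bracketing maps agree, by repeated application of the local associativity axiom. Since there are only finitely many bracketings for each $m$ and finitely many $m$ with $3 \le m \le n$, I have finitely many continuous maps that all coincide on this degenerate locus.

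The heart of the argument is then a neighborhood-shrinking step: by continuity, for each pair of distinct bracketings there is an open neighborhood of the degenerate locus on which, whenever both bracketings are defined, they give the same answer. Taking a finite intersection over all pairs of bracketings and all $m \le n$ yields an open set $\Omega$ in the $n$-fold fiber product on which all defined products of length up to $n$ agree. The remaining technical point is to convert this condition ``on products'' into an honest restriction $G'$ of $G$: I would choose a smaller neighborhood $\U' \subset \U$ of $(G\timesst M)\cup(M\timesst G)$ for the multiplication so that every admissible $m$-fold iterated product, computed by composing pairs, stays inside the good region $\Omega$. This is where one must be careful, since restricting the binary multiplication controls products indirectly; the main obstacle will be ensuring that a single choice of $\U'$ simultaneously tames all bracketings of all lengths $m \le n$ without the shrinking at one length destroying definedness needed at another. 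I expect this to be handled by shrinking iteratively, from $m=n$ downward, so that the domain of $\U'$ is chosen small enough that any product that can be re-bracketed lands in $\Omega$, using compactness of the simplex of bracketings and the submersion property of $m$ to keep the relevant sets open.

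Finally, I would verify that the resulting $G'$ is genuinely a local Lie groupoid (the restricted multiplication is still a submersion defined near the units, inverses still work near $M$), and that by construction it is $n$-associative. Since the whole construction is a restriction in the sense defined in the excerpt, $G'$ has the same arrows, objects, source and target as $G$, so nothing is lost except domain of definition, completing the proof.
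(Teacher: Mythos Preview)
Your overall strategy matches the paper's: find an open region $\Omega$ containing the degenerate tuples on which all bracketings agree, then restrict $\U$ so that every defined iterated product lands in $\Omega$. You correctly identify that the second step---converting $\Omega$ into an honest restriction $\U'$ of the binary multiplication---is the crux, but the mechanism you propose does not close it. Finiteness of the set of bracketings and the submersion property of $m$ only ensure that $\Omega$ is open and that the finitely many product maps are smooth; they do not by themselves produce a $\U'$ with the \emph{implicit} property that every chain of multiplications staying in $\U'$ forces the tuple into $\Omega$. Nothing in your outline rules out a sequence of bad triples accumulating on the degenerate locus, so that every candidate $\U'$ still meets the bad set.

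The paper supplies precisely this missing ingredient via a compactness argument you do not mention. For $G$ compact (treating $n=3$; higher $n$ is declared similar) one argues by contradiction: if no restriction works, choose a nested sequence $\U\supset\U_1\supset\U_2\supset\cdots$ shrinking to $(M\timesst G)\cup(G\timesst M)$ and pick $(g_n,h_n,k_n)\in(\U_n\times_G\U_n)\setminus\W$; a convergent subsequence has limit on the degenerate locus, hence in $\mathrm{int}(\W)$, a contradiction. For non-compact $G$ one exhausts by an increasing sequence of precompact open subgroupoids, applies the compact case to each, and takes the union of the resulting multiplication domains. Your plan becomes a proof once this compactness/exhaustion step replaces the appeal to ``compactness of the simplex of bracketings.''
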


\begin{proof}
We will show that any local Lie groupoid $G$ admits a restriction which is 3-associative. The case $n>3$ is
similar. 

Let $\W \subset G\timesst G\timesst G$ be the region where 3-associativity holds. First we note that we can assume that the domain of multiplication
$\U\subset G\timesst G$ is such that:
\[ \{(g,s(g),h) \mid (g,h)\in \U\}\subset \textrm{int}(\W).\]
%where $\W \subset G\timesst G\timesst G$ is the domain of associativity.
This follows because the  set:
\[ C=\{(g,h)\in G\timesst G \mid (g,s(g),h)\in \partial(\W)\}, \]
is closed and does not intersect $(M\timesst G)\cup (G\timesst M)$, so we can replace $\U$ by the open set $\U-C$. Now we split the proof into two steps:

\emph{Step 1: $G$ compact.} Assume that there is no restriction $\U'\subset \U$
where 3-associativity holds, i.e., for every open set $\U'$
\[ (M\timesst G)\cup (G\timesst M)\subset \U'\subset \U\quad \Longrightarrow\quad (\U'\times_G \U')\setminus \W\ne\emptyset. \] 
Choose a sequence of open sets
\[ \U\supset \U_1\supset\cdots \supset \U_n\supset \dots\supset (M\timesst G)\cup (G\timesst M) \]
such that
\begin{enumerate}
\item[(a)] $\U_{n-1}\supset \overline{\U}_n$;
\item[(b)] $\bigcap_{n=1}^\infty \U_n=(M\timesst G)\cup (G\timesst M)$.
\end{enumerate}
If we pick elements
\[ (g_n,h_n,k_n)\in  (\U_n \times_G \U_n)\setminus \W\]
we obtain a sequence which can be assumed to converge to an element:
\[ (g_n,h_n,k_n)\to (g,h,k)\in (\Delta\times_G \Delta)\cap(\U\times_G \U), \]
where $\Delta=(M\timesst G)\cup (G\timesst M)$. This means that the limit is either of the form:
\[ (g,h,k)=(t(h),h,s(h))\in \text{int}(\W), \]
(by the definition of a local topological groupoid) or of the form:
\[ (g,h,k)=(g,s(g),k)\in \text{int}(\W),\]
(by the remark at the beginning of the proof).
This contradicts the fact that $(g_n,h_n,k_n)\not\in \W$ for all $n$.
Hence, there must exist a restriction $\U'\subset \U$ for which
3-associativity holds.

\vskip 10 pt

\emph{Step 2: $G$ not compact.} We can find a sequence 
\[ G_1\subset G_2 \subset \cdots \subset G_n\subset \cdots G \]
where each $G_i$ is an open, precompact subgroupoid of $G$
(the inclusion is a local groupoid morphism), and such that:
\[ \bigcup_{n=1}^\infty G_n =G. \]
The proof of step 1 shows that for each $G_n$ there is a restriction $\U_n\subset G_n\timesst G_n$ where 3-associativity holds. But then:
\[ \U=\bigcup_{n=1}^\infty \U_n, \]
gives a restriction of $G$ where 3-associativity holds.
\end{proof}

As we shall see later, a local Lie groupoid may fail to have a restriction
which is globally associative or even a shrinking which is globally associative. On
the other hand, by Lie's Third Theorem, a local Lie group can always be shrunk
to a globally associative one.

From now on, to simplify the discussion, we will assume that local groupoids
are always 3-associative, unless otherwise indicated. The previous result guarantees that this can always
be achieved by restricting.

\subsection{The nerve of a local Lie groupoid}
\label{ss:nerve}

Given a local Lie groupoid, we introduce a simplicial set 
\[ \xymatrix@R=8pt{G^{(0)} & \ar@<-.4ex>[l] \ar@<.4ex>[l] G^{(1)}  & \ar@<-.8ex>[l] \ar[l] \ar@<.8ex>[l] G^{(2)} \cdots} \]
as follows. We let $G^{(0)}=M$, $G^{(1)}=G$ and for $m>1$ we denote by:
\[ G^{(m)}\subset G \timesst G \timesst \cdots \timesst G, \]
the set formed by $m$-tuples $(g_1,\dots,g_m)$ for which \emph{all} $m$-fold
products are defined and are equal. If $m=1$, we have $d_0(g)=s(g)$ and
$d_1(g)=t(g)$, while for $m>1$ we have the usual formulas for the face maps:
\begin{align*}
&d_0(g_1,\dots,g_m)=(g_2,\dots,g_m),\\
&d_i(g_1,\dots,g_m)=(g_1,\dots,g_ig_{i+1},\dots, g_m),\quad (i=1,\dots,m-1)\\
&d_m(g_1,\dots,g_m)=(g_1,\dots,g_{m-1}).
\end{align*}
On the other hand, the degeneracy maps are given by:
\[ s_i(g_1,\dots,g_m)=(g_1,\dots,g_i,s(g_i),g_{i+1},\dots,g_m),\quad (i=0,\dots,m). \]

Notice that $G^{(m)}$ is, in general, strictly smaller than the subset of
$m$-tuples in $G \timesst G \timesst \cdots \timesst G$ for which
$m$-associativity holds: an $m$-tuple $(g_1,\dots,g_m)$ can satisfy
$m$-associativity but some $m$-fold products may fail to be defined. 

\begin{proposition}
Given a local Lie groupoid $G$, its nerve $G^\bullet=\{G^{(m)}\}$ is a
simplicial manifold. It is a Kan complex if and only if $G$ is a Lie groupoid.
\end{proposition}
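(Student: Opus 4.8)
The plan is to treat the two assertions in turn. For the simplicial manifold statement the key point is that, under the standing $3$-associativity assumption, $G^{(m)}$ is actually \emph{open} in the fiber product $G\timesst\cdots\timesst G$. Writing $P^{(m)}$ for the (open, since $\U$ is open) set of composable $m$-tuples all of whose complete bracketings are defined, I would show $P^{(m)}=G^{(m)}$, i.e.\ that on $P^{(m)}$ all bracketings automatically agree. Indeed, if every complete bracketing of $(g_1,\dots,g_m)$ is defined then so is every bracketing of every consecutive sub-block (such a sub-bracketing extends to a complete one), and since any two complete bracketings are joined by elementary rewrites $(ab)c\leftrightarrow a(bc)$ applied to sub-blocks (the rotation graph of binary trees is connected), each rewrite has both sides defined on $P^{(m)}$ and hence is an equality by $3$-associativity, which propagates through the surrounding product. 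Thus $G^{(m)}=P^{(m)}$ is an open submanifold of the fiber product (a manifold, since $s,t$ are submersions); the face and degeneracy maps are assembled from $m,s,t$ and the unit, hence smooth, and the simplicial identities are formal, exactly as in the global case.

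For the equivalence, $(\Leftarrow)$ is immediate: if $G$ is a global Lie groupoid then multiplication is everywhere defined and associative and all arrows are invertible, so $G^{(m)}$ is the full space of composable $m$-tuples, i.e.\ the ordinary nerve, which is Kan, inner horns being filled by composition and outer horns using inverses. For $(\Rightarrow)$ I would extract the groupoid axioms from horn-fillers dimension by dimension. Filling $\Lambda^2_1$ over an arbitrary composable pair forces its product to be defined, so $\U$ is the whole fiber product and multiplication is global.

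The decisive structural remark is that, for $G^\bullet$, every inner horn $\Lambda^n_k$ ($0<k<n$) contains both $d_0$ and $d_n$, and $d_n=(g_1,\dots,g_{n-1})$ together with $d_0=(g_2,\dots,g_n)$ already determine the whole tuple $(g_1,\dots,g_n)$; hence inner-horn fillers, when they exist, are automatically \emph{unique}. By the classical characterization of nerves, a simplicial set with unique inner-horn fillers is the nerve of a category, so $G^\bullet\cong N(\mathcal C)$ with object set $M$ and arrows $G$; concretely, fillability of the inner horns is exactly the statement that every composable $n$-tuple lies in $G^{(n)}$, i.e.\ global associativity. Filling the outer horns $\Lambda^2_0$ and $\Lambda^2_2$ then yields right and left inverses, so $\mathcal C$ is a groupoid, and as the structure maps are smooth (with $d_1\colon G^{(2)}\to G$ the now globally defined multiplication) it is a Lie groupoid.

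The main obstacle I anticipate is the forward implication: one must verify that mere existence of fillers upgrades the \emph{local}, a priori non-associative structure to a genuine global groupoid. The two facts that carry this — that an inner horn is pinned down by its $d_0$ and $d_n$ faces (forcing uniqueness, hence associativity) and that outer horns encode precisely invertibility — are where care is needed; the refinement to Kan-ness in the sense of surjective submersions is then routine. A secondary subtlety, used already in the first step, is the openness of $G^{(m)}$: this is exactly where $3$-associativity is indispensable, since without it the locus where all products agree would only be closed in the open set $P^{(m)}$ and $G^{(m)}$ could fail to be a manifold.
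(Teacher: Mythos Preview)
Your argument is correct and runs parallel to the paper's, with two presentational differences worth noting.

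For openness of $G^{(m)}$, the paper proceeds by induction via the identity $G^{(n+1)}=\bigcap_i d_i^{-1}(G^{(n)})$, whereas you argue directly that on the manifestly open set $P^{(m)}$ of tuples with all bracketings defined, connectivity of the rotation graph together with $3$-associativity forces all bracketings to agree, so $G^{(m)}=P^{(m)}$. Both arguments rest on the same mechanism and both genuinely require the standing $3$-associativity hypothesis; your closing remark that without it $G^{(m)}$ would only be the closed locus of agreement inside the open $P^{(m)}$ is exactly the point.

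For the Kan equivalence, the paper is terse: it observes that the degree-$2$ Kan conditions already force $\U=G\timesst G$ and $\V=G$, and then defers the rest to the literature on differential Kan complexes. Your treatment is more self-contained: you extract global multiplication from $\Lambda^2_1$, observe that inner-horn fillers are pinned down by their $d_0$ and $d_n$ faces and hence unique, invoke the nerve characterization of categories, and read off inverses from the outer $2$-horns. This buys you an explicit proof at the cost of some length; the paper trades explicitness for brevity by citing \cite{Henriques,Zhu2009}.
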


\begin{proof}
We first prove that $G^{(m)}$ is a manifold, by showing that it is open
in $G\timesst ...\timesst G$. We do this by induction: clearly $G^{(0)} = M$
is open in $M$, $G^{(1)} = G$ is open in $G$, and $G^{(2)} = \U$ is open in
$G\timesst G$. Now if $G^{(n)}$ is open in $G\timesst ...\timesst G$ ($n$ copies),
then we see that
\[ G^{(n+1)} = \bigcap_i d_i^{-1}(G^{(n)}) \]
is a finite intersection of open subsets of $\U\times_G ...\times_G \U$ ($n-1$ copies),
and therefore an open subset of $G\timesst ...\timesst G$.
Moreover, because multiplication is assumed to be a submersion,
all the face maps are submersions.
The nerve is therefore a simplicial manifold.

The (differential versions \cite{Henriques} of the) Kan conditions hold for $G^{(2)}$ if and
only if $G^{(2)}=G\timesst G$ and inversion has domain $G$.
%It holds for $G^{(3)}$ if and only if $G^{(3)}=G\timesst G\timesst G$.
This already shows that $\{G^{(m)}\}$ is Kan if and only if $G$ is a Lie
groupoid (for more details see \cite{Henriques,Zhu2009}).
\end{proof}

\subsection{Connectedness and bi-regularity for local Lie groupoids}
Most of the material in this section is an adaptation of material in \cite{olver} to the groupoid case. We write $T^sG$ for the kernel of the map $\d s : TG \to TM$, and $T^tG$ for the kernel of $\d t : TG\to TM$.

%We start with the remark that the point at infinity in the 1-point compactification is special. We must exclude this kind of special points in our study of local Lie groupoids and the following definition points to one way of doing this:

\begin{definition}
    Let $G$ be a local Lie groupoid and fix $g\in G$.
    We say that $G$ is \emph{right-regular at $g$} if right multiplication by
    $g$ induces an isomorphism $\d R_g:T^s_{h} G\to T^s_{hg}{G}$, for every $h$
    such that $(h,g)\in \U$. We say that $G$ is \emph{left-regular at $g$} if
    left multiplication by $g$ induces an isomorphism
    $\d L_g:T^t_{h} G\to T^t_{gh} G$, for every $h$ such that $(g,h)\in \U$.
    We say that $G$ is \emph{bi-regular at $g$} if it is both left-regular and
    right-regular at $g$.  The local groupoid $G$ is called \emph{bi-regular}
    if it is bi-regular at all $g\in G$.
\end{definition}

For example, the groupoid $\R_\infty$ is bi-regular at all points except
$\infty$. Another peculiarity of the element $\infty$ is that it is not
invertible. Note, however, that an element may fail to be invertible, while still
being expressible as a product of invertible elements. For that reason we
introduce:

\begin{definition}
    An element $g$ of a local Lie groupoid is called \emph{inversional}
    if it can be written as a well-defined product of invertible elements.
    A local Lie groupoid is called \emph{inversional} if all of its elements are inversional.
\end{definition}

Every Lie groupoid is obviously inversional and every local groupoid has a shrinking which is inversional. The example of $\R_\infty$ shows that a local groupoid may not have a restriction which is inversional. Also, restricting a local groupoid %(by restricting multiplication and inversion)
can change it from inversional to not inversional.

\begin{example}
   Consider the Lie group $\R \times \Z$ as a local Lie groupoid (with
   the usual addition). If we restrict its inversion map to a domain $\V'$,
   the resulting local Lie groupoid is inversional precisely if
   $\{ n\in \Z \mid \R\times\{n\} \cap \V' \neq \emptyset \} \subset \Z$
   generates $\Z$.
\end{example}

%It should be clear that a local Lie groupoid is bi-regular at all its inversional
%elements.
%TODO: No, it isn't. Is this even true?

Let us turn now to connectedness of local Lie groupoids. There are several
notions that will play a role in the sequel. First, we have the following
obvious concepts that already play an important role for Lie groupoids:

\begin{definition}
    We say that a local Lie groupoid is \emph{source-connected} (or $s$-connected) if
    all of its source fibers are connected.
    We say that a local Lie groupoid is \emph{target-connected} (or $t$-connected) if
    all of its target fibers are connected.
\end{definition}

%To define connectedness of a local Lie groupoid, one could just ask that $G$ be connected
%as a manifold. However, this is not the correct notion for our purposes. 

In \cite{olver}, the notion of connectedness of a local Lie group requires that every neighborhood of the unit generates
the local group. We have an analogous notion for groupoids. First:

\begin{definition}
    Let $U$ be a neighborhood of $M$ in $G$.
    We say that \emph{$U$ generates $G$} if every element of $G$ can be written
    as a well-defined product of elements in $U$.
\end{definition}

The following stronger version of connectedness will play a crucial role:

\begin{definition}
    A local Lie groupoid $G$ over $M$ is \emph{strongly connected} if
    \begin{enumerate}[(a)]
        \item \label{item:Mconn} $M$ is connected;
        \item  \label{item:stconn0} the domains $\U$ and $\V$ of the
            multiplication and inversion maps are connected;
        \item \label{item:stconn1} the set
            $\{ (g,h) \in \U \mid s(g) = t(h) = x \}$
            is connected for all $x \in M$;
        \item \label{item:stconn2} $G$ is $s$-connected and $t$-connected, and
        \item $G$ is bi-regular.
    \end{enumerate}
\end{definition}
%TODO: is this a minimal set of conditions?
%      (It isn't, since connectedness of M follows from that of U.)

In the case of a local Lie group, condition (\ref{item:Mconn}) is automatic, (\ref{item:stconn1}) follows from (\ref{item:stconn0}), and (\ref{item:stconn2}) amounts to connectedness of the manifold $G$.

We have the following result which implies, in particular, that a strongly connected
local Lie groupoid is inversional.

\begin{proposition}
    If $G$ is right-regular and $s$-connected,
    then any neighborhood of $M$ generates $G$.
    If $G$ is left-regular and $t$-connected, the same conclusion holds.
\end{proposition}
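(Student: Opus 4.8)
The plan is to prove that any neighborhood $U$ of $M$ generates $G$ under the hypothesis that $G$ is right-regular and $s$-connected (the left-regular/$t$-connected case being entirely symmetric, obtained by applying the inversion or by reflecting the roles of $s$ and $t$). The key observation is that right-regularity controls how right-multiplication moves points \emph{along} source fibers. Given $g\in G$, I want to write $g$ as a well-defined product $u_1 u_2 \cdots u_k$ with all $u_i\in U$. The natural strategy is to fix the target-fiber or rather to work within a single source fiber: define $H_x$ to be the subset of the source fiber $s^{-1}(x)$ consisting of those elements expressible as a well-defined product of elements of $U$ (where the final product still lands in $s^{-1}(x)$), and then show $H_x$ is both open and closed in $s^{-1}(x)$, hence all of it by $s$-connectedness.

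\medskip

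First I would set up the "reachable set'' properly. For a point $x\in M$, let $R_x\subseteq s^{-1}(x)$ denote the set of elements $g$ with $s(g)=x$ that admit a decomposition as a well-defined product of elements of $U$. Note $R_x$ is nonempty since $u(x)\in U\cap s^{-1}(x)$. \textbf{Openness:} suppose $g\in R_x$, say $g = u_1\cdots u_k$. I want a whole neighborhood of $g$ within $s^{-1}(x)$ to lie in $R_x$. Here is where right-regularity enters: consider the map from a neighborhood $W$ of the unit $u(t(g))$ in $s^{-1}(t(g))$ given by $w\mapsto w\,g$ (right multiplication by $g$, defined for $w$ close enough to the unit since $\U$ is open around the diagonal). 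Right-regularity at $g$ says $\d R_g: T^s_{u(t(g))}G \to T^s_g G$ is an isomorphism, so by the inverse function theorem this map is a local diffeomorphism onto a neighborhood of $g$ in $s^{-1}(x)$. Shrinking $W$ so that $W\subseteq U$ (a neighborhood of the unit), every point near $g$ in the fiber is of the form $w\,g = w\,u_1\cdots u_k$ with $w\in U$, hence is itself a well-defined product of elements of $U$, so lies in $R_x$. Thus $R_x$ is open in $s^{-1}(x)$.

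\medskip

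\textbf{Closedness} is the step I expect to be the main obstacle, because the same argument must produce a decomposition of a limit point $g$ from decompositions of nearby points, but the lengths of those decompositions are uncontrolled. The clean fix is again right-regularity run in the other direction: if $g\in s^{-1}(x)$ is in the closure of $R_x$, pick any single element $v\in U$ that is invertible with $t(v)=t(g)$ and such that $v^{-1}g$ (equivalently, the unique $h$ near $g$ with $vh=g$) lies in $U$ — this is possible precisely because right-multiplication by a small invertible $v\in U$ is, by right-regularity and the inverse function theorem, a diffeomorphism of a neighborhood of $g$ in its source fiber, so it carries the open set $R_x$ (which accumulates at $g$) onto an open set accumulating at the point $v^{-1}g$ that lies in $U\cap R_x$. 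More concretely: choose a small invertible $v\in U\cap t^{-1}(t(g))$ whose left-translate $L_v$ is defined near $g$; since $R_x$ is open and $g\in\overline{R_x}$, there is $g'\in R_x$ close to $g$ with $v^{-1}g'\in U$, and then $g = v\,(v^{-1}g)$ where $v^{-1}g = \lim (v^{-1}g')$; choosing $v$ so that the neighborhood is small enough forces $v^{-1}g\in U$ directly, giving $g = v\cdot(v^{-1}g)$ a product of two elements of $U$, hence $g\in R_x$. The care needed is to verify all the intermediate products are \emph{well-defined} (the partial multiplication is honestly applicable, not just formally written), which is guaranteed by keeping everything inside the open neighborhoods where $\U$ and local inverses are defined, and by our standing 3-associativity assumption so that the bracketing of $u_1\cdots u_k\cdot w$ or $v\cdot(v^{-1}g)$ is unambiguous.

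\medskip

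Finally, since $R_x$ is nonempty, open, and closed in the connected fiber $s^{-1}(x)$, we conclude $R_x = s^{-1}(x)$ for every $x\in M$, i.e.\ every element of $G$ is a well-defined product of elements of $U$, which is exactly the statement that $U$ generates $G$. The left-regular and $t$-connected case follows by the mirror-image argument using left multiplication $\d L_g$ on target fibers $T^tG$, or alternatively by applying the right-regular result to the opposite local groupoid obtained via inversion on the invertible part. I would remark that the only genuinely groupoid-theoretic subtlety, compared to Olver's group argument, is that all the neighborhoods and translations must respect the source/target fibration, which is why the connectedness hypothesis is placed on the individual fibers rather than on $G$ as a whole.
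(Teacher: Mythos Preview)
Your overall strategy matches the paper's: define the reachable set $R_x$ in each source fiber and show it is open and closed. The openness argument is correct and essentially identical to the paper's.

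The closedness argument, however, has a genuine gap. You claim that for a suitably small invertible $v$ one has $v^{-1}g\in U$, and hence $g=v\cdot(v^{-1}g)$ is a product of \emph{two} elements of $U$. But $v^{-1}g$ is close to $g$ (since $v$ is close to a unit), and $g$ may be arbitrarily far from $M$; there is no reason $v^{-1}g$ should land in the given neighborhood $U$. The sentence ``choosing $v$ so that the neighborhood is small enough forces $v^{-1}g\in U$ directly'' is simply false. You are also conflating $U$ and $R_x$ in the surrounding discussion.

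The correct fix (and this is exactly what the paper does) is not to fix a single $v$ but to vary it: take a neighborhood $V\subset U$ of $t(g)$ in $s^{-1}(t(g))$ consisting of invertible elements with $(h,g)\in\U$ and $(h^{-1},hg)\in\U$ for all $h\in V$. Right-regularity makes $\{hg:h\in V\}$ an open neighborhood of $g$ in $s^{-1}(x)$, so it contains some $k\in R_x$. Then $g=h^{-1}k$ for some $h\in V$, with $h^{-1}\in U$ and $k$ already a well-defined product of elements of $U$; prepending $h^{-1}$ exhibits $g$ as such a product. The point is that you get one extra factor from $U$ and the rest from the inductive hypothesis encoded in $R_x$, not that $g$ becomes a product of only two elements of $U$.
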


\begin{proof}
    We will discuss the case where $G$ is right-regular and $s$-connected. The
    other case is analogous.
    Let $U$ be a neighborhood of $M$ in $G$.
    Pick an element $x\in M$. We will show that the set
    \[ S = \{ g\in s^{-1}(x) \mid \text{$g$ can be written as a product of elements in $U$} \} \]
    is both open and closed in $s^{-1}(x)$ (and therefore, by $s$-connectedness, it is the
    entire source fiber).
    This suffices to prove the result.

    By right-regularity of $G$, it is clear that $S$ is open.
    Pick $g\in \partial S$. We will show that $g\in S$.
    There is a neighborhood $V$ of $t(g)$ in $s^{-1}(t(g))$, consisting
    of invertible elements, contained in $U$, such that
    for all $h \in V$ we have
    \[ (h,g) \in \U \quad\text{and}\quad (h^{-1},hg) \in \U .\]
    By right-regularity of $G$, the set $\{ hg \mid h \in V \}$ is a neighborhood
    of $g$ in $s^{-1}(x)$, and so contains an element of $S$.
    Pick such an element $k\in S$.
    Then $k = hg$ for some $h\in V$, and by our choice of $V$, the product $g = h^{-1}k$ is defined.
    But since $k$ can be written as a product of elements of $U$, this last equation
    shows that $g$ also can.
\end{proof}

% \subsection{Restrictions of strongly connected local Lie groupoids}

The following lemma shows that, in some sense,
little information is lost by restricting a strongly connected local Lie groupoid.

\begin{lemma}
    Suppose that we have two strongly connected 3-associative local Lie groupoids
    structures on $G$ over $M$, with the same source, target and unit maps,
    and the same domains for multiplication and inversion $(s,t,u, \U, \V,m_j : \U \to G,i_j : \V\to G)$
    for $j\in\{1,2\}$.
    If the two structures have a common restriction, i.e., there are neighborhoods
    $\U' \subset \U\subset M\timesst G \cup G\timesst M$ and $\V'\subset \V\subset M$ such that
    $m_1{\restriction_{\U'}} = m_2{\restriction_{\U'}}$ and
    $i_1{\restriction_{\V'}} = i_2{\restriction_{\V'}}$,
    then $m_1 = m_2$ and $i_1 = i_2$.
    \label{lem:stronglyconnecteddeterminedbyrestriction}
\end{lemma}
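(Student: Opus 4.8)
The plan is to show that the set where the two multiplications (and inversions) agree is both open and closed in an appropriate connected space, so that agreement on the common restriction $\U'$ propagates to all of $\U$ (and $\V$ to $\V'$). The key is to exploit strong connectedness together with $3$-associativity to turn local agreement into a rigidity statement, much in the spirit of the preceding proposition on generation by neighborhoods of $M$.

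\medskip

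\textbf{Step 1: Inversion from multiplication.} First I would observe that, once we know $m_1 = m_2$, the inversion maps are forced to agree as well. Indeed, since both structures are strongly connected (hence inversional), each $g \in \V$ has a well-defined inverse characterized by $m_j(i_j(g), g) = u(s(g))$ and $m_j(g, i_j(g)) = u(t(g))$. In a local groupoid the inverse satisfying these identities is unique whenever it exists (by the usual cancellation argument using $3$-associativity), so $m_1 = m_2$ immediately yields $i_1 = i_2$ on $\V$. This reduces the problem to proving $m_1 = m_2$.

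\medskip

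\textbf{Step 2: Propagating agreement of multiplication.} To prove $m_1 = m_2$, fix a base object $x \in M$ and consider the composability set
\[ P_x = \{ (g,h) \in \U \mid s(g) = t(h) = x \}, \]
which is connected by condition (\ref{item:stconn1}) of strong connectedness. I would show that the set $E_x = \{ (g,h) \in P_x \mid m_1(g,h) = m_2(g,h) \}$ is both open and closed in $P_x$. Closedness is clear since $m_1, m_2$ are continuous and $G$ (along the relevant fibers) is Hausdorff. For openness, the idea is to use right-regularity and the common restriction $\U'$: given $(g,h) \in E_x$, I would write $h$, after multiplying on the left by a small invertible element $a \in \V'$ near the unit (so that $(a,h)$ and nearby pairs land in $\U'$), and use $3$-associativity in each structure to relate $m_j(g, m_j(a^{-1}, ah))$ to $m_j(g,h)$. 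Since $m_1$ and $m_2$ agree on $\U'$, the two structures agree on the products involving $a^{-1}$ and $a$, and bi-regularity guarantees that sliding $(g,h)$ to nearby pairs stays inside $\U$; this exhibits a whole neighborhood of $(g,h)$ inside $E_x$. Because $P_x$ is connected and $E_x$ is nonempty (it contains all pairs close to $M$, where $\U'$ applies), we conclude $E_x = P_x$ for every $x$, hence $m_1 = m_2$ on all of $\U$.

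\medskip

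\textbf{The main obstacle} will be Step~2, specifically the openness argument: making precise how $3$-associativity together with bi-regularity lets one ``factor'' an arbitrary composable pair through the common restriction $\U'$ and transport agreement to neighboring pairs. The subtlety is that the intermediate products one introduces (such as $m_j(a^{-1}, ah)$) must genuinely be defined in \emph{both} structures and the associativity identities must hold on the relevant domain $\W$; one has to choose the auxiliary invertible elements small enough that all the bracketed expressions land in the region where $3$-associativity is guaranteed, while still covering a full neighborhood of $(g,h)$. Handling this carefully, and checking that the connectedness hypothesis (\ref{item:stconn1}) is exactly what is needed to run the open-closed dichotomy fiberwise, is where the real work lies.
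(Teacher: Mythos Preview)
Your overall architecture is exactly that of the paper: reduce to the multiplication, fix $x\in M$, work on the connected set $P_x=\{(g,h)\in\U\mid s(g)=t(h)=x\}$, and run an open--closed argument on $E_x=\{(g,h)\in P_x\mid m_1(g,h)=m_2(g,h)\}$. Step~1 and the closedness half of Step~2 are fine and match the paper verbatim.

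The problem is your proposed mechanism for \emph{openness}. You suggest multiplying $h$ on the \emph{left} by a small invertible $a$ and looking at $m_j(g,m_j(a^{-1},ah))$. But this expression just reproduces $m_j(g,h)$; it does not move you to a neighboring pair in $P_x$. Worse, left-multiplying $h$ by a generic small $a$ changes $t(h)$, so $(g,ah)$ is no longer in $P_x$ unless $a$ is in the isotropy $G_x$, and isotropy translations do not sweep out a full neighborhood of $h$ in $t^{-1}(x)$. So as written the argument does not produce an open set around $(g,h)$.

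The fix, and what the paper does, is to translate on the \emph{outer} sides: for $(g,h)\in E_x$ set $gh:=m_1(g,h)=m_2(g,h)$, pick $\bar h$ close to $s(h)$ in $t^{-1}(s(h))$ so that $(h,\bar h)\in\U'$ and $(gh,\bar h)\in\U'$, and use 3-associativity in each structure to get
\[
m_1(g,h\bar h)=m_1(gh,\bar h)=m_2(gh,\bar h)=m_2(g,h\bar h),
\]
so $(g,h\bar h)\in E_x$. Then pick $\bar g$ close to $t(g)$ in $s^{-1}(t(g))$ with $(\bar g,g)\in\U'$ and $(\bar g,gh\bar h)\in\U'$, and the same trick gives $(\bar g g,h\bar h)\in E_x$. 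By bi-regularity, pairs of the form $(\bar g g,\,h\bar h)$ fill out a neighborhood of $(g,h)$ in $P_x$, which is the openness you need. The crucial point is that the two ``small'' products $(h,\bar h)$, $(\bar g,g)$ and the two ``anchor'' products $(gh,\bar h)$, $(\bar g,gh\bar h)$ all lie in $\U'$, where $m_1$ and $m_2$ already agree; your left-multiplication scheme never lands you in $\U'$ in a useful way.
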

\begin{proof}
    Let us first show that the multiplication maps agree.
    Pick a point $x\in M$.
    Let $F = \{ (g,h) \in \U \mid s(g) = t(h) = x \}$.
    We will show that $m_1{\restriction_F} = m_2{\restriction_F}$.
    Since $x$ was arbitrary, this will prove that $m_1 = m_2$. By strong connectedness of $G$, the set $F$ is connected, and 
    we argue that the set
    \[ F' = \{ (g,h) \in F \mid m_1(g,h) = m_2(g,h) \} \]
    is open and closed (so that it is all of $F$).
    Closedness of $F'$ is obvious, so we just have to show that $F'$ is open.

    Pick $(g,h) \in F'$.
    Let us write $gh$ for $m_1(g,h) = m_2(g,h)$.
    Let $N_h$ be a compact neighborhood of $s(h)$ in $t^{-1}(s(h))$, small enough
    such that for all $\overline{h} \in N_h$ we have
    \begin{itemize}
        \item $(h,\overline{h}) \in \U'$ (so that we can write $h\overline{h}$ without ambiguity),
        \item $(g,h\overline{h}) \in \U$,
        \item $(gh, \overline{h}) \in \U'$.
    \end{itemize}
    Then
    \[ m_1(g,h\overline{h})= m_1(gh,\overline{h}) = m_2(gh,\overline{h}) = m_2(g,h\overline{h}), \]
    showing that $(g,h\overline{h}) \in F'$
    (so that we can write $gh\overline{h}$ without ambiguity).
    Now let $N_g$ be a neighborhood of $t(g)$ in $s^{-1}(t(g))$ small enough
    such that for all $\overline{h} \in N_h$ and $\overline{g} \in N_g$ we have
    \begin{itemize}
        \item $(\overline{g},g) \in \U'$ (so that we can write $\overline{g}g = m_1(\overline{g},g) = m_2(\overline{g},g)$),
        \item $(\overline{g},gh\overline{h}) \in \U'$,
        \item $(\overline{g}g,h\overline{h}) \in \U$.
    \end{itemize}
    Then
    \[m_1(\overline{g}g,h\overline{h}) = m_1(\overline{g},gh\overline{h}) = m_2(\overline{g},gh\overline{h}) = m_2(\overline{g}g,h\overline{h}), \]
    showing that $(\overline{g}g,h\overline{h}) \in F'$.
    But by bi-regularity of $m_1$ and $m_2$, elements of the form $(\overline{g}g,h\overline{h})$ form
    a neighborhood of $(g,h)$ in $F$.
    This shows that $F'$ is open, and therefore that the multiplications agree.
    By uniqueness of inverses, the inversion maps also agree.
\end{proof}

There is one more type of connectedness that we will need:

\begin{definition}
    We say that a local Lie groupoid $G$ \emph{has products connected to the axes}
    %TODO: come up with a nicer word
    if for every $(g,h) \in \U$, there is either a path $\gamma$ from $t(h)$ to $g$ in $G$
    such that $(\gamma(\tau),h) \in \U$ for all $\tau$,
    or there is a path $\gamma$ from $s(g)$ to $h$ in $G$ such that
    $(g,\gamma(\tau)) \in \U$ for all $\tau$.
\end{definition}

Note that every local Lie groupoid has a restriction with products connected to
the axes. The most important consequence of this assumption is stated in the
following proposition, which will be needed later. In particular, it shows that
this property together with bi-regularity implies that the groupoid is
inversional.

\begin{proposition}
	\label{prop:inverses:decomp}
    Let $G$ be a bi-regular local Lie groupoid with products connected to axes.
    For any $(g,h)\in\U$ there are either invertible elements $a_1,\dots,a_l\in G$
    such that
	\begin{equation}
	\label{eq:inverses:decomp:1} 
	g=a_l(\cdots(a_2 a_1))\quad\text{ and }\quad h=a_1^{-1}(\cdots(a_l^{-1}(gh)))),
	\end{equation}
	or invertible elements $b_1,\dots,b_m\in G$ such that
	\begin{equation}
	\label{eq:inverses:decomp:2}
	h=(((b_1b_2)\cdots)b_m\quad\text{ and }\quad g=((((gh)b_m^{-1})\cdots)b_1^{-1}.
	\end{equation}
\end{proposition}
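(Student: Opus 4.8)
The plan is to exploit the ``products connected to the axes'' hypothesis to reduce the claim to a local statement, and then turn each step of the connecting path into one invertible factor via bi-regularity. By definition of having products connected to the axes, for the given pair $(g,h)\in\U$ we are in one of two symmetric cases: either there is a path $\gamma$ from $t(h)$ to $g$ with $(\gamma(\tau),h)\in\U$ for all $\tau$, or there is a path from $s(g)$ to $h$ with $(g,\gamma(\tau))\in\U$. I would treat the first case and obtain \eqref{eq:inverses:decomp:1}; the second is entirely analogous and yields \eqref{eq:inverses:decomp:2}. So assume $\gamma$ runs from $t(h)$ to $g$ inside the ``left slice'' $\{k \mid (k,h)\in\U\}$.

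First I would use compactness of $\gamma([0,1])$ to chop the path into finitely many short pieces. The idea is to pick a partition $0=\tau_0<\tau_1<\dots<\tau_l=1$ fine enough that each consecutive pair $\gamma(\tau_{j-1}),\gamma(\tau_j)$ is close enough that $\gamma(\tau_j)$ can be written as $a_j\cdot\gamma(\tau_{j-1})$ for a unique \emph{invertible} element $a_j$ near a unit. Concretely, since $\gamma(\tau_{j-1})$ is invertible-composable on the left near a unit, the map $a\mapsto m(a,\gamma(\tau_{j-1}))$ is a local diffeomorphism onto a neighborhood of $\gamma(\tau_{j-1})$ (this is exactly left bi-regularity together with the fact that near units multiplication is smoothly invertible), so for $\tau_j$ close to $\tau_{j-1}$ there is a unique small, hence invertible, $a_j$ with $\gamma(\tau_j)=a_j\gamma(\tau_{j-1})$. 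Starting from $\gamma(\tau_0)=t(h)$ (a unit) and iterating, I would get $g=\gamma(\tau_l)=a_l(\cdots(a_2 a_1))$, which is the first half of \eqref{eq:inverses:decomp:1}; here the nesting to the right records that at each step we multiply the previous partial product on the left.

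The second half of \eqref{eq:inverses:decomp:1} I would obtain by ``peeling off'' the factors $a_j$ one at a time from $gh$, applying the corresponding inverse $a_j^{-1}$ on the left. The key point is that along the path the products $(\gamma(\tau),h)$ are \emph{all} defined by hypothesis, so each intermediate product $\gamma(\tau_j)h$ makes sense, and bi-regularity guarantees that left multiplication by the invertible $a_j^{-1}$ inverts the operation $k\mapsto a_j k$ on the relevant source/target fibers. Thus $a_j^{-1}(\gamma(\tau_j)h)=\gamma(\tau_{j-1})h$, and iterating from $\gamma(\tau_l)h=gh$ down to $\gamma(\tau_0)h=t(h)h=h$ gives $h=a_1^{-1}(\cdots(a_l^{-1}(gh)))$, completing \eqref{eq:inverses:decomp:1}.

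I expect the main obstacle to be \emph{bookkeeping of domains}: one must guarantee that every product written down is actually defined, i.e.\ that each $a_j$ lies in the inversion domain $\V$, that each pairing $(a_j,\text{partial product})$ and $(a_j^{-1},\text{partial product}\cdot h)$ lands in $\U$, and that the unique small $a_j$ produced by the local-diffeomorphism argument really is invertible. This is handled by choosing the partition fine enough relative to a compactness/tube argument around the compact arc $\gamma([0,1])$, so that all the intermediate points stay inside fixed neighborhoods where $\U$, $\V$ and the relevant inverse maps are controlled; the genuinely geometric input is just bi-regularity, while everything else is a finiteness argument from compactness of $\gamma$.
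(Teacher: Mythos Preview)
Your proposal is correct and follows essentially the same approach as the paper's proof. Both arguments run along the path $\gamma$ provided by the ``products connected to the axes'' hypothesis and use bi-regularity to produce small invertible factors; the only cosmetic difference is that the paper packages the argument as an open--closed connectedness argument on $[0,1]$ (defining $T\subset[0,1]$ as the set of parameters where the decomposition works and showing $T$ is open and closed), whereas you phrase it as a compactness/partition argument. These are interchangeable, and your identification of the domain bookkeeping (membership in $\U$ and $\V$ at each step) as the only real care point matches the paper exactly.
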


\begin{proof}
    Because $G$ has products connected to the axes,
    there is either a curve in $G$ from $t(h)$ to $g$ in $s^{-1}(t(h))$
    that can be right-translated by $h$, or a curve from $s(g)$ to $h$ in
    $t^{-1}(s(g))$ that can be left-translated by $g$.
    We will assume the former, and show that there are invertible elements $a_1,\dots,a_l\in G$  
    such that \eqref{eq:inverses:decomp:1} holds. If one assumes the latter, then an entirely similar
    argument will show that \eqref{eq:inverses:decomp:2} holds. 
       
    Let $\gamma$ be such a curve in $s^{-1}(t(h))$ from $t(h)$ to $g$
    and let $\gamma'$ be the curve obtained from $\gamma$ by right-translating by $h$. 
    Let $T \subset [0,1]$ be the set of all $\tau\in[0,1]$ such that
    there is a sequence $a_1, \dots, a_l \in G$ such that
    \[ \gamma(\tau) = a_l(\cdots(a_1(s(g)))) \]
    and
    \[ h = a_1^{-1}(\cdots(a_l^{-1}(\gamma'(\tau)))) \]
    are defined and true. We claim that $T = [0,1]$.
    We will prove this by showing that $T\subset[0,1]$ is both open and closed (clearly it is
    nonempty, because $0\in T$).

    We first prove that $T$ is open. Pick $\tau_0\in T$.
    Let $a_1,\dots,a_l$ be a sequence such that $\gamma(\tau_0) = a_1(\cdots(a_l(s(g))))$
    and
    $h = a_1^{-1}(\cdots(a_l^{-1}(\gamma'(\tau_0))))$.
    Now $t(\gamma(\tau_0))$ has a neighborhood in $s^{-1}(t(\gamma(\tau_0)))$
    of invertible elements
    such that
    for every $g$ in this neighborhood we have
    \[ (g,\gamma(\tau_0)) \in \U \text{ and } (g^{-1}, g\gamma'(\tau_0)) \in \U .\]
    Then by bi-regularity of $G$, for all $\tau$ close enough to $\tau_0$ we can write
    \[ \gamma(\tau) = g\gamma(\tau_0) = g(a_l(\cdots(a_1(s(g))))) \]
    and
    \[ h = a_1^{-1}(\cdots(a_l^{-1}(g^{-1}(\gamma'(\tau))))) .\]
    This shows that $T$ is open.
    
    We now show that $T$ is closed.
    Pick $\tau_0 \in \partial T$.
    Now $t(\gamma(\tau_0))$ has a neighborhood in $t^{-1}(t(\gamma(\tau_0)))$
    of invertible elements
    such that
    for every $g$ in this neighborhood we have
    \[ (g^{-1}, \gamma'(\tau_0)) \in \U \text{ and } (g^{-1}, \gamma(\tau_0)) \in \U \text{ and } (g,g^{-1}\gamma(\tau_0)) \in \U .\]
    Using bi-regularity of $G$ and picking $\tau \in T$ sufficiently close to $\tau_0$ we can find a $g$
    in this neighborhood
    such that $\gamma(\tau) = g^{-1}\gamma(\tau_0)$. If $a_1,\dots,a_l$ are such that
    \[ \gamma(\tau) = a_l(\cdots(a_1(s(g)))) \]
    and
    \[ h = a_1^{-1}(\cdots(a_l^{-1}(\gamma'(\tau)))) ,\]
    then
    \[ \gamma(\tau_0) = g(a_l(\cdots(a_1(s(g))))) \]
    and
    \[ h = a_1^{-1}(\cdots(a_l^{-1}(g^{-1}(\gamma'(\tau_0))))) .\]
    This shows that $\tau_0 \in T$, proving that $T$ is closed.

    We have proved that $1\in T$, so we can find a sequence $a_1, \dots, a_l$ such that
    \[ g = a_l(\cdots(a_1(s(g)))) = a_l(\cdots(a_2 a_1)) \]
    and
    \[ h = a_1^{-1}(\cdots(a_l^{-1}(\gamma'(1)))) = a_1^{-1}(\cdots(a_l^{-1}(gh))) .\]
 \end{proof}

\subsection{The Lie algebroid of a local Lie groupoid}
\label{sec:algebroid}

The construction of the Lie algebroid of a local Lie groupoid follows the same
pattern as for a Lie groupoid, but there are a few subtleties related to
left-invariant and right-invariant vector fields that are relevant for us.

Given a local Lie groupoid $G$ over $M$, just like for a Lie groupoid, we
consider the vector bundle
\[ A=T^s_M G \]
over $M$ and define the \emph{anchor} $\rho:A\to TM$ to be $\rho:=\d t|_A$.
Given a section $\al\in\Gamma(A)$, we associate to it the vector field
$\widetilde{\al}$ on $G$ given by:
\[ \widetilde{\al}_g=\d_{t(g)} R_g (\al_{t(g)}), \]
where $R_g$ denotes right translation by $g$. This is well-defined since $R_g$
is a smooth map from a neighborhood of $t(g)$ in the source fiber
$s^{-1}(t(g))$ to a neighborhood of $g$ in the source fiber $s^{-1}(s(g))$. 

One may wonder if the vector field $\widetilde{\al}$ is right-invariant. We
will say that a vector field $X\in\X(G)$ is \emph{right-invariant} if it is
tangent to the source fibers and:
\[ \d R_h(X_g)=X_{gh}, \quad \forall (g,h)\in \U. \]
Clearly, a right-invariant vector field $X$ is of the form $\widetilde{\al}$,
where $\alpha=X|_M\in\Gamma(A)$. The converse holds for a right-regular local
Lie groupoid $G$, and we have:

\begin{lemma}
  Let $G$ be a right-regular local Lie groupoid.  There is a 1:1 correspondence between right-invariant vector fields 
  in $G$ and sections of $A$ given by:
  \[ \X_{\textrm{R-inv}}(G)\to \Gamma(A),\quad X\mapsto X|_M. \]
  The inverse associates to $\al\in\Gamma(A)$ the vector field $\widetilde{\al}\in\X_{\textrm{R-inv}}(G)$.
\end{lemma}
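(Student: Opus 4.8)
The plan is to exhibit $X\mapsto X|_M$ and $\al\mapsto\widetilde{\al}$ as mutually inverse bijections. The map $X\mapsto X|_M$ is immediately well-defined: a right-invariant $X$ is tangent to the source fibers, so $X_x\in T^s_xG=A_x$ for $x\in M$, and smoothness of $X$ forces $X|_M\in\Gamma(A)$. The real work is to check that $\al\mapsto\widetilde{\al}$ lands in $\X_{\textrm{R-inv}}(G)$ and that the two composites are the identities. I would split this into three steps: (i) smoothness and source-tangency of $\widetilde{\al}$; (ii) right-invariance of $\widetilde{\al}$; and (iii) the identities $\widetilde{\al}|_M=\al$ and $\widetilde{X|_M}=X$.

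For step (i) the idea is to present $\widetilde{\al}$ as a composition of smooth maps. Since $(u(t(g)),g)\in M\timesst G\subset\U$, the pair $(\al_{t(g)},0_g)$ lies in $T_{(u(t(g)),g)}\U$ (both $\d s(\al_{t(g)})$ and $\d t(0_g)$ vanish), and a short computation gives $\d m(\al_{t(g)},0_g)=\d R_g(\al_{t(g)})=\widetilde{\al}_g$. Because $g\mapsto(\al_{t(g)},0_g)$ is a smooth section along $g\mapsto(u(t(g)),g)$ (its components are $\al\circ t$ and the zero section) and $m$ is smooth, $\widetilde{\al}$ is a smooth vector field. Source-tangency is then automatic, since $R_g$ carries a neighborhood of $t(g)$ in $s^{-1}(t(g))$ into $s^{-1}(s(g))$, so $\widetilde{\al}_g\in T^s_gG$.

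Step (ii) is the heart of the matter and where I expect the only real difficulty. Fix $(g,h)\in\U$; since $t(gh)=t(g)$, proving $\d R_h(\widetilde{\al}_g)=\widetilde{\al}_{gh}$ amounts to showing $\d R_h\circ\d R_g=\d R_{gh}$ on $A_{t(g)}$. I would obtain this by checking that the two maps $R_h\circ R_g\colon x\mapsto(xg)h$ and $R_{gh}\colon x\mapsto x(gh)$ coincide on a neighborhood of $t(g)$ in $s^{-1}(t(g))$ and then differentiating at $t(g)$. The care needed is entirely in the domains: at $x=t(g)$ one has $t(g)\cdot g=g$, so $(t(g)\cdot g,h)=(g,h)\in\U$ and $(t(g),gh)\in M\timesst G\subset\U$, and openness of $\U$ propagates both conditions to a neighborhood of $t(g)$; on that neighborhood $3$-associativity (our standing assumption) yields $(xg)h=x(gh)$, so the two maps agree and hence so do their differentials. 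This is exactly where associativity on a full neighborhood of the fiber point is indispensable—away from the unit axes the two bracketings need not coincide—while right-regularity enters by guaranteeing that each $\d R_g$ is an isomorphism, so that the resulting correspondence matches the one familiar from genuine Lie groupoids.

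Finally, for step (iii): evaluating at $x\in M$, the unit axiom $m(y,u(x))=y$ shows that $R_x$ restricts to the identity on $s^{-1}(x)$, whence $\d R_x=\id$ and $\widetilde{\al}_x=\al_x$, giving $\widetilde{\al}|_M=\al$. In the other direction, for right-invariant $X$ one computes $\widetilde{X|_M}_g=\d R_g(X_{t(g)})=X_{t(g)\cdot g}=X_g$, using right-invariance of $X$ together with $t(g)\cdot g=g$; this is the ``clearly'' statement preceding the lemma. Combining the two identities shows that $X\mapsto X|_M$ and $\al\mapsto\widetilde{\al}$ are mutually inverse, which establishes the claimed $1:1$ correspondence.
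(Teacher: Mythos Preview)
Your proof is correct and follows precisely the approach sketched in the paper: the paper's proof consists of the single sentence ``for a right-regular local Lie groupoid this follows from associativity in the usual way,'' and your step (ii) supplies exactly those details, reducing $\d R_h\circ\d R_g=\d R_{gh}$ on $A_{t(g)}$ to the identity $(xg)h=x(gh)$ on a neighborhood of $t(g)$ via the standing $3$-associativity assumption. The surrounding steps (i) and (iii) simply make explicit what the paper leaves as preamble to the lemma.
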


\begin{proof}
  All we have to show is that for any section $\al\in\Gamma(A)$ the vector field $\widetilde{\al}$ is right-invariant. 
  For a right-regular local Lie groupoid this follows from associativity in the usual way.
\end{proof}

The Lie bracket of right-invariant vector fields is clearly a right-invariant vector field. Hence, if $G$ is a right-regular local Lie groupoid we have an induced Lie bracket $[~,~]$ on the space of sections $\Gamma(A)$ and the triple $(A,\rho,[~,~])$ is easily seen to be a Lie algebroid. Observing that any local Lie groupoid $G$ has a shrinking to a right-regular local Lie groupoid $G'$, we define:

\begin{definition}
  The \emph{Lie algebroid} of a local Lie groupoid $G$ is the Lie algebroid $(A,\rho,[~,~])$ of any right-regular shrinking of $G$.
\end{definition}

Obviously, all this discussion is valid by replacing ``right'' by ``left''
everywhere. For bi-regular local Lie groupoids there is 1:1 correspondence
between left-invariant and right-invariant vector fields. Moreover, every local
Lie groupoid admits a bi-regular shrinking.

\subsection{The Maurer-Cartan form of a local Lie groupoid}

For a bi-regular local Lie groupoid, one can define the Maurer-Cartan form much
the same way as one does in the case of a Lie groupoid (for the latter see,
e.g., \cite{coframes}). 

First of all, since we have chosen to define the Lie algebroid of a local Lie
groupoid using $s$-fibers and right-invariant vector fields, a
\emph{right-invariant} $k$-form on local Lie groupoid $G$ is an $s$-foliated
$k$-form satisfying
\[ (R_g)^*\omega=\omega, \quad \forall g\in G, \]
whenever this makes sense. More generally, given a vector bundle $E\to M$, a
right-invariant $k$-form on $G$ with values in $E$ is given by a bundle map
\[
\xymatrix{
\wedge^k T^s G \ar[r]^{\omega} \ar[d] & E\ar[d]\\
G\ar[r]_{t} & M}
\]
which, for any pair $(g,h)\in\U$, satisfies the invariance condition:
\[ \omega(\d R_g\cdot v_1,\dots, \d R_g\cdot v_k)=\omega(v_1,\dots,v_k), \quad v_1,\dots,v_k\in T^s_h G \]
(note that both sides are elements of $E_{t(g)}$).  The usual right-invariant
forms correspond to the case where $E=M\times\R\to M$ is the trivial line
bundle.

A right-invariant $k$-form $\omega$ on $G$ with values in a vector bundle $E$ determines an $A$-differential form with values in $E$:
\[ \omega|_M\in\Omega^k(A;E). \]
For a right-regular local Lie groupoid the assignment $\omega\mapsto \omega|_M$ establishes a bijection between right-invariant forms on $G$ with values in $E$ and $A$-differential forms with values in $E$.  

For a bi-regular local Lie groupoid we define its (right) \emph{Maurer-Cartan form} $\wmc$ to be the unique right-invariant 1-form on $G$ with values in the Lie algebroid $A$, whose values along the unit section is the identity map $A\to A$. Explicitly, the  Maurer-Cartan form is given by:
\[ \wmc : T^sG\to A, \quad v \mapsto (d_{t(g)}R_{g})^{-1}(v). \]
The left Maurer-Cartan form is defined analogously. If $G$ is not bi-regular, we can consider the restriction of $G$ to a bi-regular local Lie groupoid and define the Maurer-Cartan form of $G$ to be the one of the restriction.

The Maurer-Cartan form satisfies a version of the Maurer-Cartan equation for Lie groupoids/algebroids which reads:
\[ \d_\nabla \wmc+\frac{1}{2}[\wmc,\wmc]=0. \]
We will not get into details here. We refer the reader to \cite{coframes}, where it is proved that the Maurer-Cartan equation is equivalent to the statement that the bundle map:
\[
\xymatrix{
T^s G \ar[r]^{\wmc} \ar[d] & A\ar[d]\\
G\ar[r]_{t} & M}
\]
is a  morphism of Lie algebroids. We will use the fact that for a local Lie groupoid this statement still holds (just mimic the proof in \cite{coframes}).

Our next result is well-known in the case of Lie algebras and (local) Lie groups (see, e.g., Theorem 8.7 and Remark 8.8 in \cite{sharpe}). 
However, in the case of Lie algebroids and groupoids we do not know of any reference in the literature (but see the discussion in \cite{coframes} concerning the universal property of the Maurer-Cartan form of a Lie groupoid). 

\begin{theorem}
\label{thm:universal:covering}
  Let $G$ and $M$ be manifolds, let $s,t:G\to M$ be submersions with 1-connected fibers, and $u:M\to G$ a section of both $s$ and $t$. 
  Assume also that $A:=T^s_MG$ is an integrable Lie algebroid with anchor $\rho:=\d t|_A$ and one is given a Lie algebroid morphism:
  \[ \xymatrix{
     T^s G \ar[r]^{\omega} \ar[d] & A\ar[d]\\
     G\ar[r]_{t} & M}
  \]
  which is a fiberwise isomorphism with values along $M$ the identity $A\to A$. 
  Then there is a bi-regular local Lie groupoid structure on $G$ with structure
  maps $s,t,u$ for which the Maurer-Cartan form is $\omega$, unique up to
  restriction.
\end{theorem}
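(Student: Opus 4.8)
The plan is to reconstruct the missing structure maps by integrating $\omega$ and transporting the (global) groupoid structure of $\G(A)$ back to $G$ through an \'etale map. The first step is to view $T^sG$ as the Lie algebroid of the regular foliation $\F$ of $G$ by source-fibers, with anchor the inclusion $T^sG\hookrightarrow TG$ and bracket the commutator of $s$-foliated vector fields. Since the $s$-fibers (the leaves of $\F$) are $1$-connected, the monodromy groupoid of $\F$ is the $s$-pair groupoid $G\timesss G\tto G$, and this is exactly the source-simply-connected integration of $T^sG$ (its source-fibers are the leaves, hence $1$-connected). As $A$ is integrable with source-simply-connected integration $\G(A)$, the integrability of algebroid morphisms (Lie's second theorem for Lie algebroids) applies to $\omega:T^sG\to A$, viewed as an algebroid morphism covering $t$, and produces a unique morphism of Lie groupoids
\[ \Omega : G\timesss G \longrightarrow \G(A) \]
covering $t$ and integrating $\omega$.

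Next I would define $\Phi:G\to\G(A)$ by $\Phi(g):=\Omega(g,u(s(g)))$. Because $\Omega$ covers $t$ and $u$ is a section of both $s$ and $t$, the element $\Phi(g)$ is an arrow from $s(g)$ to $t(g)$; thus $\Phi$ covers both $s$ and $t$, sends $u(x)$ to the unit $1_x$, and satisfies $\Phi\circ u=$ the unit section. The key point is that, since $\Omega$ integrates $\omega$ and the canonical Maurer--Cartan form $\wmc$ of $\G(A)$ is right-invariant with $\wmc|_M=\id$, one checks that $\Phi$ pulls back $\wmc$ to $\omega$, i.e.\ $\Phi^*\wmc=\omega$ on $T^sG$. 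As $\omega$ and $\wmc$ are fiberwise isomorphisms, $\d\Phi$ restricts to an isomorphism $T^s_gG\to T^s_{\Phi(g)}\G(A)$ for every $g$; together with the fact that $\Phi$ covers the submersion $s$, a dimension count shows $\Phi$ is a local diffeomorphism. Since $\Phi$ also covers $t$, it carries the $s$- and $t$-fibers of $G$ locally diffeomorphically onto those of $\G(A)$.

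I would then transport the structure. Set $\psi:G\timesst G\to\G(A)$, $\psi(g,h):=\Phi(g)\Phi(h)$ (defined because $s_{\G(A)}(\Phi(g))=s(g)=t(h)=t_{\G(A)}(\Phi(h))$). On the axis $G\timesst M$ one has $\psi(g,u(s(g)))=\Phi(g)$, which lifts through $\Phi$ to $g$; symmetrically on $M\timesst G$ it lifts to $h$. Because $\Phi$ is \'etale, these lifts extend uniquely to a neighborhood $\U$ of the axes: for $(g,h)$ near the axis, put $m(g,h):=\Phi^{-1}(\psi(g,h))$, using the branch of the local inverse determined by continuity from the axis value. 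The two extensions agree on the overlap near $M\timesst M$ (both send $(u(x),u(x))\mapsto u(x)$), so $m$ is well-defined and smooth on $\U$, and it is a submersion since $\Phi$ is a local diffeomorphism and multiplication in $\G(A)$ is a submersion. Define $i(g):=\Phi^{-1}(\Phi(g)^{-1})$ near $u(M)$. All the local groupoid axioms then follow by transporting the corresponding identities in $\G(A)$ through $\Phi$: the source/target relations because $\Phi$ covers $(t,s)$; the unit laws from $\psi(g,u(s(g)))=\Phi(g)$; local associativity from that of $\G(A)$ (both bracketings of a triple map under $\Phi$ to $\Phi(g_1)\Phi(g_2)\Phi(g_3)$ and agree through the local inverse on a region $\W$ containing the required axes); and the inverse identities near units. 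Bi-regularity holds because right and left translations in $G$ are $\Phi$-conjugate to those in the bi-regular groupoid $\G(A)$, and $\Phi$ carries $s$- and $t$-fibers isomorphically. Finally, since $\Phi$ is by construction a morphism of local groupoids and a local diffeomorphism, the right Maurer--Cartan form of this structure is $\Phi^*\wmc=\omega$.

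For uniqueness up to restriction, any bi-regular local structure with Maurer--Cartan form $\omega$ has right translations $R_g$ that preserve $\omega$ and fix $u(t(g))$; by the uniqueness of $\omega$-preserving maps on the $1$-connected source-fibers (the rigidity part of the coframe/Maurer--Cartan correspondence), each $R_g$ is determined by $\omega$ alone, so two such structures agree on a common restriction. I expect the main obstacle to be the construction of $m$ in the third step: gluing the local inverses of the non-injective \'etale map $\Phi$ into a single smooth multiplication on a full neighborhood of the axes, and verifying that associativity survives on the prescribed region $\W$. By comparison, the integration step via Lie's second theorem for the foliation algebroid and the pullback identity $\Phi^*\wmc=\omega$ are routine.
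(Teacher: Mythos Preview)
Your proposal is correct and follows essentially the same route as the paper: integrate $\omega$ via Lie II to a groupoid morphism $\Omega:G\timesss G\to\G(A)$, compose with $g\mapsto(g,u(s(g)))$ to obtain an \'etale map $\Phi:G\to\G(A)$ covering $(t,s)$, and then transport the groupoid operations of $\G(A)$ back through $\Phi$.

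The one place where the paper is more explicit is precisely the step you flag as the main obstacle: gluing the local inverses of $\Phi$ into a single multiplication on a neighborhood of the axes. Rather than invoking ``the branch determined by continuity from the axis value'' abstractly, the paper fixes a Riemannian metric, chooses $\V\subset G$ so that $\V\cap s^{-1}(x)$ and $\V\cap t^{-1}(x)$ are geodesically convex, and for $(g,h)\in G\timesst\V$ defines $gh$ by lifting the \emph{curve} $\tau\mapsto\Phi(g)\Phi(\gamma_h(\tau))$ through $\Phi$ starting at $g$, where $\gamma_h$ is the geodesic from $t(h)$ to $h$. This path-lifting device makes the choice of branch canonical and the consistency on overlaps (including the overlap of the two axes on $\V\timesst\V$) automatic, since lifts of paths through an \'etale map are unique once the initial point is fixed. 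Your approach works too, but would need a line or two to argue that the locally defined inverses patch coherently; the paper's geodesic trick sidesteps that bookkeeping.
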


\begin{remark}
\label{rmk:MC:inverses}
The proof below shows that the local groupoid whose existence is stated in the theorem, in general, will have domain of inversion $\V$ a strict subset of $G$. This is one of the reasons that in the definition of a local Lie groupoid we do not require the domain of inversion to be all of $G$. Also, the groupoid constructed in the proof is not, in general, 3-associative (this will be clear in Example~\ref{example:3-associative}). However, as we observed before, it has a restriction which is 3-associative.
\end{remark}

\begin{proof}
  The Lie algebroid $T^s G$ is integrable and has source 1-connected integration $\Pi_1(s)$, the monodromy groupoid of 
  the $s$-foliation of $G$. Since we assume the $s$-fibers to be 1-connected, this groupoid coincides 
  with the submersion groupoid of $s:G\to M$:
  \[ \Pi_1(s)= G\timesss G\tto G. \]
  We denote by $\bar{s}=\pr_2$ and $\bar{t}=\pr_1$ the source and target maps of $G\timesss G$. 
  
  The Lie algebroid morphism $\omega:T^sG\to A$ integrates to a Lie groupoid morphism 
  \[
  \xymatrix@R=20pt{
  G\timesss G\ar[r]^{\Phi} \ar@<-.6ex>[d] \ar@<.6ex>[d] & \G(A) \ar@<-.6ex>[d] \ar@<.6ex>[d] \\
  G\ar[r]_{t} & M}
  \] 
  where $\G(A)$ is the source 1-connected integration of $A$. Explicitly, the map $\Phi$ takes a 
  pair $(g_1,g_2)$ to the A-homotopy class $[a]\in \G(A)$, with $a$ the $A$-path $a(t)=\omega(\gamma'(t))$, for any choice of
  path $\gamma:I\to G$ in the $s$-fiber connecting $g_1$ to $g_2$.
  
  We claim that $\Phi$ is a submersion. Indeed, its restriction to the units is $t$, 
  which is a submersion onto the unit section of $\G(A)$, and its restriction to the tangent to the source fibers along $G$ 
  is $\omega$, which is an isomorphism onto $A$, the tangent to the source fibers of $\G(A)$. This shows that $\Phi$ is a submersion in 
  a neighborhood of the unit section, but since it is a groupoid morphism it is a submersion everywhere.
  
  Consider now the embedding:
  \[ G\hookrightarrow G\timesss G, \quad g\mapsto (g,s(g)). \]
  Composing this embedding with $\Phi$, we obtain an \'etale map denoted by the same letter, $\Phi:G\to \G(A)$, which 
  commutes with the unit maps $M\to G$ and $M\to \G(A)$, and makes the following diagram commute:
  	\[ \xymatrix@R=20pt{
  	G\ar[r]^{\Phi} \ar@<-.6ex>[d]_{s} \ar@<.6ex>[d]^{t} & \G(A) \ar@<-.6ex>[d]_s \ar@<.6ex>[d]^t \\
  	M\ar[r]_{\textrm{id}} & M}
  	\] 
	
  Next we define a (local) multiplication on $G$ which makes $\Phi$ a morphism of local Lie groupoids. Let 
  $M\subset \V\subset \V'\subset G$ be open sets such that the restriction $\Phi|_{\V'}:\V'\to G$ is an embedding, $\Phi(\V)$ is invariant under inversion,
  and the product of elements in $\Phi(\V)$ belongs to $\Phi(\V')$. Also we choose a Riemannian metric on $G$ and we can shrink 
  $\V$ further so that for each $x\in M$ we have:
  \begin{enumerate}
  \item[(a)] $\V\cap s^{-1}(x)$ is a convex neighborhood of $x$ for the induced metric on $s^{-1}(x)$;
  \item[(b)] $\V\cap t^{-1}(x)$ is a convex neighborhood of $x$ for the induced metric on $t^{-1}(x)$.
  \end{enumerate}
  If $g\in \V$ we define its inverse to be $g^{-1}=\Phi^{-1}(\Phi(g)^{-1})$. We
  define multiplication in an open subset of $(G\timesst \V) \cup(\V\timesst G)$
  containing $(G\timesst M) \cup(M\timesst G)$ as follows:
  if $(g,h)\in G\timesst U$ let $\gamma_h(t)$ be the geodesic in the $t$-fiber connecting
  $t(h)$ to $h$. The product $gh$ is defined provided the curve
  $\Phi(g)\Phi(\gamma_h(t))$ belongs to the image of $\Phi$ and its $\Phi$-lift
  exists, in which case $gh$ is the endpoint of this lift.
  One similarly defines the product on some open subset of $\V\timesst G$.
  If $(g,h)\in \V\timesst \V$ the two definitions are compatible, since then
  $\Phi(g)\Phi(h)\in\Phi(\V')$ and $\Phi|_{\V'}$ is an embedding.
  
  This gives a structure of local Lie groupoid to $G$ making $\Phi:G\to \G(A)$ a
  local Lie groupoid morphism. Since $\Phi$ is \'etale, this implies that $G$
  is bi-regular with Lie algebroid $A$ and its Maurer-Cartan form is the given
  1-form.
%Let us call a vector field $X$ on $G$ right-invariant if it is tangent to $T^sG$ and $\omega(X)$ is constant along the $t$-fibers. Since $\omega$ is a fiberwise isomorphism it follows that each section $\al\in\Gamma(A)$ determines an unique right-invariant vector field $X$ such that $\omega(X)=X\circ t$.
%
%Next, we call a vector field $Y$ on $G$ left-invariant if it is tangent to $T^tG$ and 
\end{proof}

\section{Non-globalizable local Lie groupoids}
\label{ss:example-non-globalizable}

Before we discuss Mal'cev's theorem, we look at some examples of local Lie groupoids where associativity fails to some degree. Such groupoids are not globalizable in the following sense  (the term ``enlargeable'' is also used in the literature):

\begin{definition}
    A local Lie groupoid is called \emph{globalizable} if it is a restriction
    of an open neighborhood of the unit section in a Lie groupoid.
\end{definition}
 
Even if we consider only local Lie \emph{groups}, there are examples where
3-associativity fails, and which therefore are not globalizable.
%The following simple example is a slightly modified version of one given in \cite{olver}.

\begin{example}[Failure of 3-associativity \cite{olver}]
\label{example:3-associative}
Let $X$ be the plane $\R^2$, with a small ball centered at $(1,0)$ taken out.
Let $G$ be the universal cover of $X$.
Let $\tilde{0}\in G$ (the unit in $G$) be a preimage of $0\in X$ under the projection.
Let $B \subset X$ be the open ball of radius $\frac34$ centered at $0$,
and let $\tilde{B}$ be the ball above $B$ containing $\tilde{0}$.

The multiplication will be defined on a subset of
$G\times\tilde{B} \cup \tilde{B}\times G$.
Let $(\tilde{g},\tilde{h}) \in G\times\tilde{B}$,
and write $g$ and $h$ for the projections to $X$. The product
$\tilde{g}\tilde{h}$ will be defined if the line segment from $g$ to
$g+h$ lies entirely in $X$, in which case the product $\tilde{g}\tilde{h}$ is
the endpoint of the lift of this line segment, starting at
$\tilde{g}$. Similarly, if $(\tilde{g},\tilde{h}) \in \tilde{B}\times\G$, the
product is the endpoint of the lift of the line segment from $h$ to
$g+h$, starting at $\tilde{h}$ (if that segment lies entirely in $X$).
Inverses are defined in the obvious way.

This local Lie group is locally associative but not not 3-associative.
By picking $a,b,c \in \tilde{B}$ appropriately, one can have $(ab)c\neq a(bc)$
as illustrated in Figure~\ref{fig:lallg}.

\begin{figure}[h]
    \centering
    \begin{overpic}[width=0.6\linewidth]{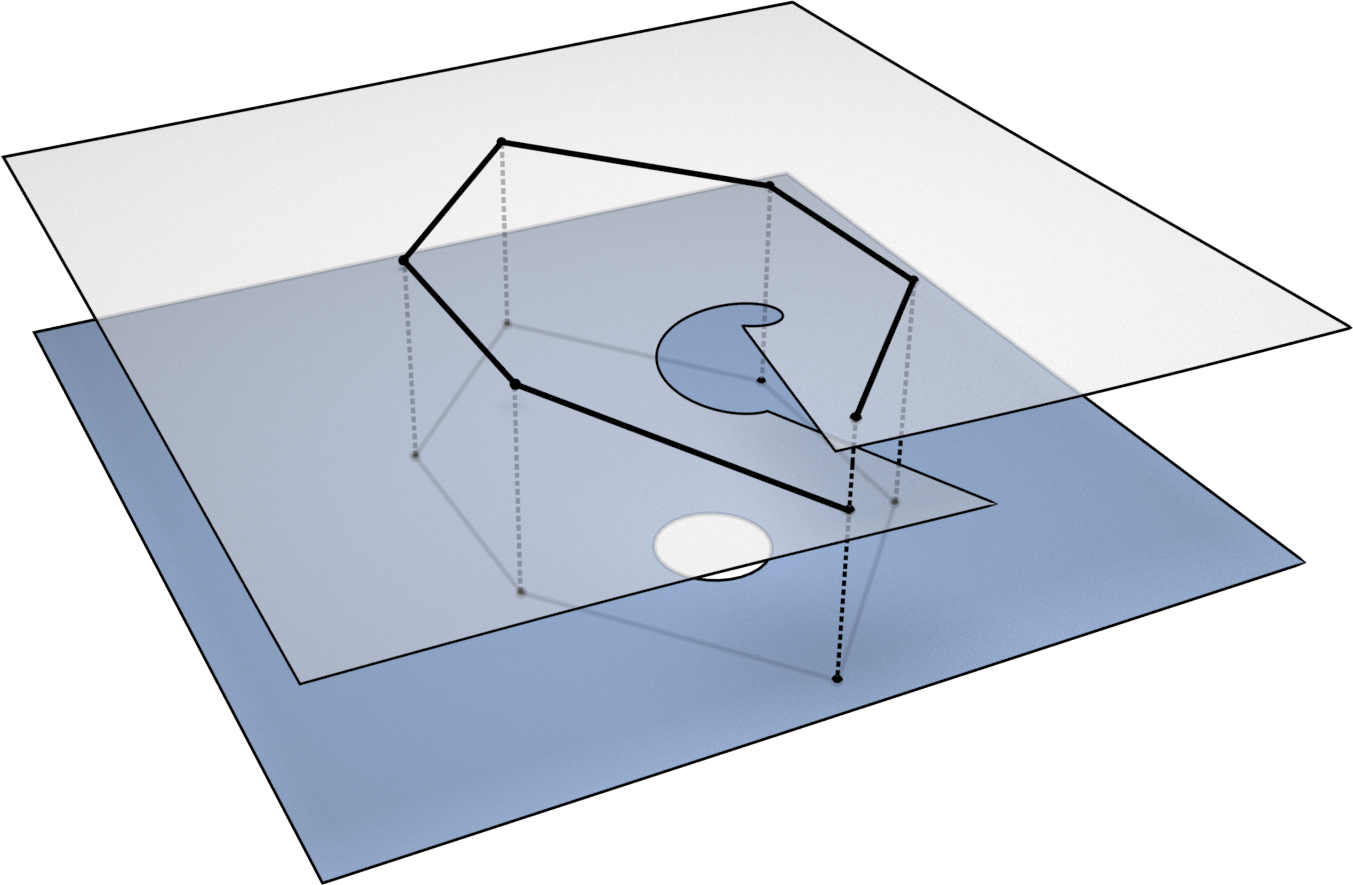}
    %\begin{overpic}[grid,width=0.6\linewidth]{blender/locally-associative-trimmed.png}
        \put (37,56.3) {$\tilde{0}$}
    \end{overpic}
    \caption{A local Lie group $G$ that is not 3-associative.
    $G$ is the universal cover (shown on top) of the plane with a disk removed (shown at the bottom).
    Starting at the unit $\tilde{0}$, we can evaluate a triple
    product in two different ways, ending up over the same point of the plane,
    but on different sheets.}
    \label{fig:lallg}
\end{figure}
\end{example}

Notice that we can view this example as a special case of the construction
furnished in the proof of Theorem~\ref{thm:universal:covering}, where the Lie
algebroid $A$ is just the abelian 2-dimensional Lie algebra. Hence, the local
groupoid obtained there is, in general, not 3-associative.
The resulting group(oid)s always have a restriction that is 3-associative,
however.

We now turn to examples of local groupoids which are not globalizable.
Besides the failure of associativity, there is another obvious obstruction
to globalizability of a local Lie groupoid $G$:
the underlying Lie algebroid must be integrable.
One of our main aims will be to relate these two obstructions.
Still, the examples in this section have underlying Lie algebroids which are
integrable. In fact, they will all be local Lie groupoids integrating the
so-called \emph{pre-quantum Lie algebroid} $A_\omega$ of a closed 2-form
$\omega\in\Omega^2(M)$ (see \cite[Example 2.26]{lectures-integrability-lie}).

As a bundle, $A_\omega=TM \oplus \R$. The anchor is the projection to the first summand and the Lie bracket is given by
\[ [(X,f),(Y,g)] = ([X,Y],\mathcal{L}_X(g) - \mathcal{L}_Y(f) + \omega(X,Y)) .\]
The integrability of $A_\omega$ is controlled by the group of spherical periods of $\omega$:
\[ \N=\left\{\int_\eta \omega\mid[\eta]\in\pi_2(M)\right\}\subset \R. \]
It follows from the general integrability criteria that this algebroid is integrable if and only if $\N\subset\R$ is a discrete subgroup (see \cite[example 3.1]{lectures-integrability-lie}). In the examples below we consider the case where $M = \S^2$, equipped with its usual area form $\omega$ of total area $4\pi$, so $A_\omega$ is integrable.

\begin{example}[The source 1-connected integration]
Let us describe the global source 1-connected Lie groupoid integrating $A_\omega$. We will write $A$ for the map that calculates the area enclosed by a loop:
\[ A : \Omega(\S^2) \to \R/4\pi\Z,\quad \gamma \mapsto \int_{\Gamma} \omega ,\]
where $\gamma : [0,1] \to \S^2$ is a loop and $\Gamma : [0,1]\times[0,1] \to \S^2$
is any homotopy contracting $\gamma$ to the trivial loop
at $\gamma(0) = \gamma(1)$.
Note that this area is well-defined up to $4\pi$, because any two such homotopies will
have areas differing by an element $\int_{\eta} \omega \in 4\pi\Z$ where $[\eta]\in\pi_2(\S^2)$. 

Now, we set
\[ P = \{ \text{piecewise smooth paths in $\S^2$} \} \times \R/4\pi\Z ,\]
and we define an equivalence relation $\sim$ in $P$ by letting 
\[ (\gamma_1,a_1)\sim (\gamma_2,a_2)\quad\text{if}\quad 
\left\{
\begin{array}{l}
\gamma_1(0) = \gamma_2(0),\\ 
\gamma_1(1) = \gamma_2(1),\text{ and }\\
a_2 = a_1 + A(\gamma_2^{-1}\cdot\gamma_1)
\end{array}
\right.
\]
%
%
%\gamma_1(0) = \gamma_2(0),\ \gamma_1(1) = \gamma_2(1),\text{ and }
%a_2 = a_1 + A(\gamma_2^{-1}\cdot\gamma_1).\]
Here $\cdot$ represents concatenation of paths and $\gamma_2^{-1}$ denotes the reverse of $\gamma_2$. Then $\G = P/{\sim}$ is a smooth manifold of dimension 5.

We think of an equivalence class $[(\gamma, a)]$ as an arrow from $\gamma(0)$ to $\gamma(1)$, so we obtain a groupoid
\[ \G \tto \S^2 \]
with multiplication
\[ [\gamma_1,a_1] [\gamma_2,a_2] = [\gamma_1\cdot\gamma_2, a_1+a_2] .\]
One checks that the $s$-fibers of $\G$ are diffeomorphic to $\S^3$ so that $\G\simeq\G(A_\omega)$, the source 1-connected Lie groupoid with Lie algebroid $A_\omega$.
\end{example}

Our next example is just an open subset of the previous Lie groupoid. But it admits an alternative, more geometric, description.

\begin{example}[A globalizable local Lie groupoid] Let us denote by $G'\subset \G$ the open subset
consisting of arrows $[(\gamma,a)]$ whose source and target \emph{are not} antipodal: $\gamma(0)\ne -\gamma(1)$.
This is, of course, a (globalizable) local Lie groupoid.

Each element of $G'$ has a unique representative whose path is a geodesic for the usual round metric.
Hence, if we represent each $G'$ by its geodesic representative, we can write
down this local Lie groupoid explicitly as
\[ G' =\left\{ (y,x,a) \in \S^2\times \S^2\times \R/4\pi\Z \mid  x + y\neq 0 \right\}  .\]
The multiplication of $(z,y,a), (y,x,a') \in G'$ is then defined whenever $x+z\neq 0$,
and is given by
\[ (z,y,a) \cdot (y,x,a') = (z,x,a+a'+A(\Delta xyz)) ,\]
where $A(\Delta xyz) \in \R/4\pi$ is the signed area of the spherical triangle $\Delta xyz$.

In this notation the inclusion of $G'$ in $\G$, establishing its globalizability, becomes:
\[ G'\hookrightarrow \G:\quad  (y,x,a) \mapsto [\textrm{geodesic from $x$ to $y$}, a] .\]
\end{example}

By introducing a slight variation, we can make the previous local Lie groupoid non-globalizable.

\begin{example}[A non-globalizable local Lie groupoid]
\label{sss:non-glob}
We now define a local Lie groupoid $G'$ over $\S^2$, similar to $G'$ in the previous example, but
we will no longer quotient out the areas by $4\pi$:
\[ G'' = \left\{(x,y,a)\in \S^2\times \S^2\times \R \mid x+y\neq 0 \right\}.\]
We will define the multiplication of $(z,y,a)$ and $(y,x,a')$ only if $x+z\neq 0$
and $-\pi < A(\Delta xyz) < \pi$.
In that case, it is defined as above, by the formula
\[ (z,y,a) \cdot (y,x,a') = (z,x,a+a'+A(\Delta xyz)) .\]
This local Lie groupoid is still 3-associative: if $(z,y,a_1), (y,x,a_2), (x,w,a_3) \in G''$,
and we assume the products are defined, then
\begin{align*}
    ((z,y,a_1)\cdot(y,x,a_2))\cdot(x,w,a_3) &= (z,w,a_1+a_2+a_3+A(\Delta xyz)+A(\Delta wxz)) \\
    (z,y,a_1)\cdot((y,x,a_2)\cdot(x,w,a_3)) &= (z,w,a_1+a_2+a_3+A(\Delta wxy)+A(\Delta wyz))
\end{align*}
where
\[ A(\Delta xyz)+A(\Delta wxz)=A(\Delta wxy)+A(\Delta wyz) \pmod{4\pi}, \] 
because both sides equal the area of the quadrangle $wxyz$ (which is defined up to $4\pi$). Since we have
restricted the areas to be in $(-\pi,\pi)$, this implies 
\[ A(\Delta xyz)+A(\Delta wxz)=A(\Delta wxy)+A(\Delta wyz), \]
proving 3-associativity.

\begin{figure}
    \centering
    %\begin{overpic}[width=0.4\linewidth]{llg-bis-cropped.png}
    \begin{overpic}[width=0.4\linewidth]{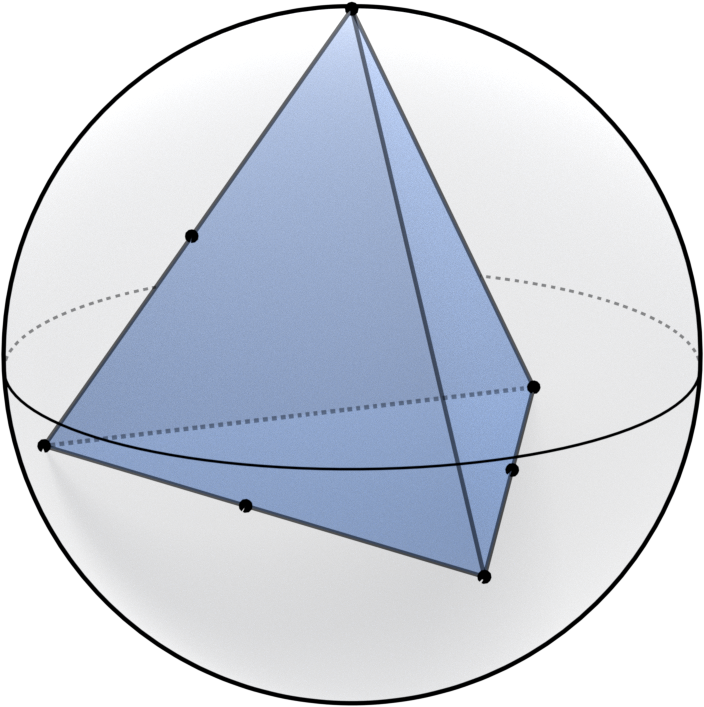}
        \put (50,100.6) {1}
        \put (21,64.5) {2}
        \put (5.4,29.8) {3}
        \put (31.9,21.5) {4}
        \put (70.5,13) {5}
        \put (74.4,30) {6}
        \put (77,45) {7}
    \end{overpic}
    \caption{Demonstrating the lack of 6-associativity. The tetrahedron is inscribed in a sphere, and the even-numbered points are midpoints of the edges they lie on.}
    \label{fig:tetrahedron}
\end{figure}

This local Lie groupoid is not globally associative.
Consider Figure~\ref{fig:tetrahedron}, which shows a regular tetrahedron inscribed in $\S^2$.
Let $x_j \in \S^2$ be the normalization of the $j$'th marked point in the picture (i.e.\ the radial projection of the point to the circumscribed sphere).
Let
\begin{align*}
    A &= (x_2,x_1,0) & D &= (x_5,x_4,0) \\
    B &= (x_3,x_2,0) & E &= (x_6,x_5,0) \\
    C &= (x_4,x_3,0) & F &= (x_7,x_6,0).
\end{align*}
Then
\[ (x_7,x_1,2\pi) = F(E((D(CB))A)) \neq ((F((ED)C))B)A = (x_7,x_1,-2\pi) .\]

Of course, global associativity is necessary for globalizability.
Our groupoid $G''$ is therefore not globalizable.
Note, however, that there is a neighborhood of the unit section in $G''$
that \emph{is} globalizable:
if $G''' = \{ (y,x,a) \in G'' \mid -2\pi < a < 2\pi \}$,
then $G'''$ is globalizable and in fact isomorphic to an open part of $G'$ and
hence of $\G$. It also follows that the Lie algebroid of $G''$ is still
$A_\omega$.
\end{example}

\section{Mal'cev's theorem for local Lie groupoids}
\label{sec:malcev}

In this section, we prove Mal'cev's theorem for local Lie groupoids. It states
that global associativity is the only obstruction for globalizability under the
appropriate connectedness assumptions:

\begin{theorem}[Mal'cev's Theorem for local Lie groupoids]
    A strongly connected local Lie groupoid is globalizable if and only if it is globally associative.
\end{theorem}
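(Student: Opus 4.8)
The forward implication requires no connectedness and follows from the remarks already made. If $G$ is globalizable it is a restriction of an open neighborhood of the unit section in a global Lie groupoid $\G$; restrictions and shrinkings of a Lie groupoid are globally associative, and a further restriction of a globally associative local groupoid is again globally associative, so $G$ is globally associative. For the converse the plan is to build the enveloping Lie groupoid as the associative completion $\AsCo(G)$ and to realize $G$ as a restriction of a neighborhood of its units.

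The first step is to observe that global associativity is precisely the statement that the associators are trivial. If $g\in\Assoc_x(G)$, then some well-formed word $(w_1,\dots,w_n)$ admits two full sequences of contractions, one ending at $g$ and one at $1_x$; but a full contraction sequence is nothing other than a bracketing of $w_1\cdots w_n$ in which every intermediate product is defined, so global $n$-associativity forces the two resulting $n$-fold products to agree, i.e.\ $g=1_x$. Hence $\Assoc(G)=u(M)$, which is trivially uniformly discrete in $G$. Strong connectedness supplies the connectedness demanded by the smoothness criterion for $\AsCo(G)$ stated in the introduction, which therefore guarantees that $\AsCo(G)$ is a Lie groupoid and that the canonical map $\Phi\colon G\to\AsCo(G)$, $g\mapsto[g]$, is a local diffeomorphism.

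It remains to upgrade $\Phi$ to an embedding: being a local diffeomorphism it is open, so $\Phi(G)$ is an open neighborhood of the units, and being a morphism it carries the partial multiplication of $G$ to the restriction of that of $\AsCo(G)$; thus, once $\Phi$ is known to be injective, $G$ is exhibited as a restriction of the Lie groupoid $\AsCo(G)$ and hence is globalizable. Injectivity is the genuinely local and most delicate point, since for a local groupoid the triviality of the kernel (which under our hypotheses coincides with $\Assoc(G)=u(M)$) does not by itself yield injectivity, as it would for a global groupoid where one could simply form $gh^{-1}$. Here strong connectedness is used in full: because $G$ is right-regular and $s$-connected, every source fiber is generated by any neighborhood of $u(M)$, so choosing a neighborhood $W$ of $u(M)$ on which $\Phi$ restricts to a diffeomorphism onto its image, one writes any two elements sharing an image as products of factors from $W$ and propagates the injectivity along source fibers by bi-regular translation, the equivalence relation defining $\AsCo(G)$ being controlled at each stage by global associativity. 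I expect this propagation of injectivity from a neighborhood of the units to the whole groupoid to be the main obstacle.
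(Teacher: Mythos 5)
You get the forward implication and the implication ``globally associative $\Rightarrow \Assoc(G)=u(M)$'' right, and your overall plan --- complete to $\AsCo(G)$, invoke the smoothness criterion, then upgrade the completion map to an open embedding --- is the paper's strategy in outline. The first genuine gap is a hypothesis mismatch: the smoothness criterion you invoke, Theorem~\ref{thm:smoothness-of-asco}, requires $G$ to be bi-regular \emph{and} to have products connected to the axes. Strong connectedness gives bi-regularity but not the second condition, and nothing in your argument supplies it. The paper's proof therefore does not work with $G$ directly: it first restricts $G$ to a local groupoid $G'$ having products connected to the axes (every local Lie groupoid admits such a restriction, and $G'$ inherits global associativity because its defined products are among those of $G$), runs the completion argument on $G'$ to get an open embedding $G'\hookrightarrow\AsCo(G')$, and then --- this is a step absent from your proposal --- transfers the conclusion back to $G$ via Lemma~\ref{lem:stronglyconnecteddeterminedbyrestriction}: $G$ and the image $U$ of $G'$ in $\AsCo(G')$ are two strongly connected structures with common restriction $G'$, hence their multiplications coincide, so $G$ itself is a restriction of a shrinking of the Lie groupoid $\AsCo(G')$.

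The second gap is the one you yourself flag as ``the main obstacle'': injectivity. Your propagation-along-source-fibers sketch does not address the real difficulty, which is that $[g]=[h]$ in the completion means the one-letter words $(g)$ and $(h)$ are joined by an \emph{arbitrary zigzag} of expansions and contractions, while global associativity is only a statement comparing the full contractions of a \emph{single} word; a priori it says nothing about zigzags. The paper's tool is Proposition~\ref{prop:equivalence-relation} (a confluence property): for a bi-regular local groupoid with products connected to the axes, $w_1\sim w_2$ if and only if they admit a common expansion $w_3$. Granting this, injectivity is immediate and needs no propagation: $w_3$ contracts fully both to $g$ and to $h$, so $n$-associativity forces $g=h$. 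But the proof of confluence rests on Proposition~\ref{prop:inverses:decomp} (the factors of a defined product can be written as products of invertibles), which is exactly where ``products connected to the axes'' enters. So the hypothesis you dropped in the first gap is precisely what is needed to close the second; as written, your argument is missing its central step.
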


Our proof will be analogous to the proof for local groups (cf.~\cite{olver,EstLee}).  A little bit of extra effort is required in proving smoothness of the constructed groupoid. The main technical ingredient in the proof is the \emph{associative completion} $\AsCo(G)$ of a local Lie groupoid $G$, which we will now study in detail.

\subsection{Associative completions}
\label{ss:asco}

In this section, we review a construction that associates to a local groupoid a global groupoid. In the case of local groups this construction is well-known (see, e.g., \cite{EstLee}). For groupoids, it appears in \cite[appendix A]{bailey-gualtieri}, where it is called the \emph{formal completion} of the local groupoid (and it is used only for
globally associative structures).

We will call this construction the \emph{associative completion} of the local groupoid, to highlight that it has the effect of
\begin{itemize}
    \item making the structure globally associative,
    \item globalizing the structure to a (global) groupoid, so that the multiplication is complete.
\end{itemize}
Further justification is provided by the universal property satisfied by the associative completion (see Proposition~\ref{prop:AsCo}).

Suppose that $G$ is a local groupoid. A word $(w_1,\dots,w_n)\in G^n$ is called \emph{well-formed} if $s(w_i) = t(w_{i+1})$ for all $i\in\{1,\dots,n-1\}$. Let us denote the set of all well-formed words on $G$ by:
\[ W(G) = \bigsqcup_{n\ge 1} \underbrace{G \timesst G \timesst \cdots \timesst G}_{\text{$n$ copies}}.\]
Note that if $G$ happens to be an open part of a global Lie groupoid, then a groupoid globalizing it may be obtained as a quotient of $W(G)$ by an appropriate equivalence relation, as follows.

\begin{definition}
If $w = (w_1,\dots,w_k,w_{k+1},\dots,w_n)\in W(G)$ is a well-formed word and $(w_k,w_{k+1}) \in \mathcal{U}$,
we will say that the word
\[ w' = (w_1,\dots,w_kw_{k+1},\dots,w_n) \]
is obtained from $w$ by \emph{contraction} and that $w$ is obtained from $w'$ by \emph{expansion}.
If two words are related by an expansion/contraction, we will say that they are \emph{elementarily equivalent}. We denote by $\sim$ the equivalence relation on $W(G)$ generated by elementary equivalences. We will also write $w'\le w$ if 
there is a sequence of expansions starting at $w'$ and ending at $w$ ($\le$ is a partial order relation on $W(G)$).
\end{definition}

\begin{definition}
If $G$ is a local groupoid, then
\[ \AsCo(G) = W(G)/{\sim} \]
is called the \emph{associative completion} of $G$. The \emph{completion map} $G\to\AsCo(G)$ associates to an element $g\in G$ the equivalence class of the word $w=(g)$.
\end{definition}

When $G$ is a local topological groupoid, $W(G)$ is a disjoint union of topological spaces, so it is a topological space. We consider on $\AsCo(G)$ the quotient topology.

\begin{proposition}
\label{prop:AsCo}
    If $G$ is an inversional local topological groupoid, then $\AsCo(G)$ is a topological groupoid and $G\to \AsCo(G)$ is a 
    morphism of local topological groupoids. Given any morphism $F:G\to \HH$, where $\HH$ is a topological groupoid, there 
    exists a unique morphism of topological groupoids $\tilde{F}:\AsCo(G)\to\HH$ such that the following diagram commutes:
   \[ \xymatrix{ G\ar[r]^F\ar[d] & \HH \\
                       \AsCo(G)\ar@{-->}[ur]_{\tilde{F}}}\]
\end{proposition}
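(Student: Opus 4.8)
The plan is to verify that $\AsCo(G)$ carries a groupoid structure, endow it with a topology making all structure maps continuous, and then establish the universal property. For the groupoid structure, I would define multiplication on $\AsCo(G)$ by concatenation: given two classes $[w]$ and $[v]$ with $w=(w_1,\dots,w_n)$ and $v=(v_1,\dots,v_m)$ satisfying $s(w_n)=t(v_1)$, set $[w][v]=[(w_1,\dots,w_n,v_1,\dots,v_m)]$. First I would check this is well-defined on equivalence classes, which is immediate since a contraction/expansion performed inside $w$ (or inside $v$) is still a contraction/expansion of the concatenated word. The source and target maps are $s([w])=s(w_n)$ and $t([w])=t(w_1)$, which are visibly constant on $\sim$-classes since contraction and expansion preserve the first target and last source. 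Associativity of this concatenation product is automatic at the level of words (concatenation of lists is strictly associative), so it descends. The unit at $x\in M$ is the class $[(1_x)]$; the identity axioms follow because $(w_1,\dots,w_n,1_x)\sim(w_1,\dots,w_n)$ via a single contraction (using that $(w_n,1_{s(w_n)})\in\U$, one of the axes where multiplication is defined).

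The crucial point where the \emph{inversional} hypothesis enters is the existence of inverses. For a one-letter word $[(g)]$ with $g$ invertible, the inverse is $[(i(g))]=[(g^{-1})]$, since $(g,g^{-1})\sim(1_{t(g)})$ and $(g^{-1},g)\sim(1_{s(g)})$ by the inversion axioms. For a general class $[(w_1,\dots,w_n)]$, I would first use that $G$ is inversional to rewrite each letter $w_i$ as a well-defined product of invertible elements; expanding each $w_i$ into its invertible factors gives an elementarily equivalent word all of whose letters are invertible. Then I would define the candidate inverse to be the reversed word of letterwise inverses, $[(w_n^{-1},\dots,w_1^{-1})]$ (after the expansion), and verify that the concatenation with the original word contracts down to a unit via repeated cancellation of adjacent $a\,a^{-1}$ pairs. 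This gives $\AsCo(G)$ the structure of a (global, strictly associative) groupoid.

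For the topology, I would give $W(G)=\bigsqcup_n G^{\timesst n}$ the disjoint-union topology and $\AsCo(G)$ the quotient topology, as indicated. I would then argue that source, target, multiplication, unit, and inversion are continuous. Continuity of $s,t$ and of the unit map is straightforward since they are built from the continuous structure maps of $G$ composed with inclusions of factors. For multiplication one uses that concatenation $G^{\timesst n}\times G^{\timesst m}\to G^{\timesst(n+m)}$ is continuous and compatible with the quotients; the main subtlety is that the quotient of a product need not be the product of quotients, so I would verify continuity by checking it against the quotient maps directly, or restrict attention to the universal property (below), which sidesteps the need to manipulate the quotient topology explicitly. The completion map $G\to\AsCo(G)$, $g\mapsto[(g)]$, is continuous (it is the composite of the inclusion $G=G^{\timesst 1}\hookrightarrow W(G)$ with the quotient map) and is a morphism of local topological groupoids, since on the overlap where products are defined in $G$ a single contraction identifies $[(gh)]$ with $[(g)][(h)]=[(g,h)]$.

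Finally, for the universal property, given a continuous morphism $F\colon G\to\HH$ into a topological groupoid, I would define $\tilde F\colon\AsCo(G)\to\HH$ on words by $\tilde F([(w_1,\dots,w_n)])=F(w_1)\cdots F(w_n)$, the product taken in the (globally associative) groupoid $\HH$. Well-definedness on $\sim$-classes is exactly the statement that $F$ intertwines contraction with multiplication in $\HH$: if $(w_k,w_{k+1})\in\U$ then $F(w_kw_{k+1})=F(w_k)F(w_{k+1})$, so contracting a word does not change the image product, and this is where both the morphism property of $F$ and the global associativity of $\HH$ are used. That $\tilde F$ is a groupoid morphism is immediate from the definition of the product as concatenation, and $\tilde F\circ(\text{completion})=F$ holds on one-letter words by construction. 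Uniqueness follows because the completion map's image (one-letter words) generates $\AsCo(G)$ under the product, so any morphism extending $F$ is forced to be $\tilde F$. Continuity of $\tilde F$ follows from continuity of the word-level map $W(G)\to\HH$ together with the universal property of the quotient topology. I expect the main obstacle to be the careful handling of inverses in the general (multi-letter, possibly non-invertible-letter) case, where the inversional hypothesis must be deployed to expand into invertible factors before reversing.
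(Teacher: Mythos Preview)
Your overall strategy matches the paper's almost exactly: concatenation for the product, the inversional hypothesis for inverses, quotient topology on $\AsCo(G)$, and the word-level map $\bar F(w_1,\dots,w_n)=F(w_1)\cdots F(w_n)$ for the universal property. The algebraic parts and the universal property are fine.

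There is, however, a genuine gap in your treatment of the topology. You correctly flag that the quotient of a product need not be the product of the quotients, but neither of your suggested workarounds actually closes the gap. ``Checking against the quotient maps directly'' amounts to showing that the composite
\[
W(G)\tensor[_{\bar s}]{\times}{_{\bar t}} W(G)\;\longrightarrow\; \AsCo(G)\timesst\AsCo(G)\;\xrightarrow{\ m\ }\;\AsCo(G)
\]
is continuous (which it is, since it factors through $\bar m$); but that only yields continuity of $m$ if the left-hand map is itself a quotient map, which is precisely the point in question. Your other suggestion, to invoke the universal property, is circular: the universal property as stated presupposes that $\AsCo(G)$ is already a topological groupoid, so it cannot be used to establish continuity of its own multiplication.

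The paper resolves this by proving that the projection $W(G)\to\AsCo(G)$ is an \emph{open} map: the saturation of an open set is open because each elementary expansion or contraction is implemented by the multiplication $m:\U\to G$, which is assumed to be both continuous and open (a submersion in the Lie case). Openness of the quotient map is stable under (fibered) products, so $W(G)\tensor[_{\bar s}]{\times}{_{\bar t}} W(G)\to \AsCo(G)\timesst\AsCo(G)$ is a quotient map, and continuity of $m$ then follows from continuity of $\bar m$. You should insert this openness argument; once it is in place, continuity of inversion can be handled similarly.
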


\begin{proof}
    The source and target maps of $\AsCo(G)$ are given by
    \[ s([w_1,\dots,w_n]) = s(w_n) \quad\text{and}\quad t([w_1,\dots,w_n]) = t(w_1) .\]
    The multiplication map is juxtaposition.
    The unit at $x\in M$ is given by $[x]$.
    It remains to show that $[w_1,\dots,w_n]$ has an inverse.
    Because $G$ is inversional, every $w_i$ can be written as a product of invertible elements
    $w_i^1,\dots,w_i^{k_i}$.
    Then we have
    \[ [w_1,\dots,w_n]^{-1} = [(w_n^{k_n})^{-1},\dots,(w_n^1)^{-1},\dots,(w_1^{k_1})^{-1},\dots,(w_1^1)^{-1}] .\]
    These operations turn $\AsCo(G)$ into a groupoid. 
        
    Next observe that the projection $W(G)\to \AsCo(G)$ is an open map. This means that if $O\subset W(G)$ is an open set, then its saturation:
    \[ \tilde{O}:=\{w\in W(G): \exists\, w'\in O \text{ such that }w\sim w'\} \]
    is also open and this follows because multiplication is both continuous and an open map. 
    Now we have commutative diagrams:
    \[ 
    \xymatrix{
    W(G)\ar[d]\ar[r]^{\bar{s}} & M\\
    \AsCo(G)\ar[ru]_s}\quad
    \xymatrix{
    W(G)\ar[d]\ar[r]^{\bar{t}} & M\\
    \AsCo(G)\ar[ru]_t}\quad 
    \xymatrix{
    W(G) \tensor[_{\bar{s}}]{\times}{_{\bar{t}}}W(G) \ar[d]\ar[r]^----{\bar{m}} & W(G)\ar[d]\\
    \AsCo(G) \tensor[_s]{\times}{_t} \AsCo(G) \ar[r]_----m& \AsCo(G)}
    \]
    and since $\bar{s}$, $\bar{t}$ and $\bar{m}$ are both open and continuous, it follows that $s$, $t$ and $m$ are also open and continuous. 
    One shows similarly that the unit map and inversion map are continuous, so that $\AsCo(G)$ is a topological groupoid. 
    
    Finally, given a morphism of local topological groupoids $F:G\to \HH$, where $\HH$ is a groupoid, we define $\bar{F}:W(G)\to \HH$ by:
    \[ \bar{F}(w_1,\dots,w_n)=F(w_1)\cdots F(w_n)\]
    This is clearly continuous, so induces a continuous map $\tilde{F}:\AsCo(G)\to \HH$ such that:
       \[ \xymatrix{ W(G)\ar[r]^{\bar{F}}\ar[d] & \HH \\
                       \AsCo(G)\ar[ur]_{\tilde{F}}}\]
    It should be clear that $\tilde{F}:\AsCo(G)\to \HH$ is the unique map making the diagram in the statement commute. 
\end{proof}

If $F:G\to H$ is a morphism of local topological groupoids, there is an obvious map  $\AsCo(F):\AsCo(G)\to\AsCo(H)$, which is a morphism of topological groupoids, and makes the following diagram commutative:
\[
\xymatrix{
G\ar[d]\ar[r]^F & H\ar[d] \\
\AsCo(G)\ar[r]_{\AsCo(F)} & \AsCo(H)
}
\]
Hence, $\AsCo$ is a functor from the category of inversional local topological groupoids to the category of topological groupoids. By the proposition above, it is left-adjoint to the forgetful functor from the category of topological groupoids to the category of inversional local topological groupoids.

%\begin{center}
%\begin{tikzpicture}
%    \matrix(m)[matrix of math nodes, row sep=2.2em, column sep=3.6em]{
%        G & H \\
%        \AsCo(G) & \AsCo(H). \\
%    };
%    \path[->] (m-1-1) edge node[auto]{$f$} (m-1-2);
%    \path[->] (m-1-1) edge (m-2-1);
%    \path[->] (m-2-1) edge node[auto]{$\AsCo(f)$} (m-2-2);
%    \path[->] (m-1-2) edge (m-2-2);
%\end{tikzpicture}
%\end{center}

\subsection{Associators}
\label{ss:associators}

If $G$ is a local Lie groupoid, then $W(G)$ is a disjoint union of manifolds
(of different dimensions) and it is natural to wonder if $\AsCo(G)$ inherits a
quotient differential structure so that it becomes a Lie groupoid. The answer
to this question is intimately related to the properties of the kernel of the
map $G \to \AsCo(G)$ (i.e.\ those elements that are mapped to a unit), which we
now study.

The following elements are clearly in this kernel.

\begin{definition}
    Suppose that $G$ is a local Lie groupoid over $M$ and that $x\in M$.
    An element $g\in G_x = s^{-1}(x)\cap t^{-1}(x) \subset G$ is
    called an \emph{associator} at $x$ if there is a word
    \[w= (w_1,\dots,w_k) \in W(G) \]
    that can be evaluated to both $x$ and $g$, i.e., such that $(x)\le w$ and $(g)\le w$. We write
    $\Assoc_x(G)$
    for the set of all associators at $x$.
    We write $\Assoc(G)$ for the set of all associators in $G$.
\end{definition}

%TODO: check this again

In general the kernel of $G \to \AsCo(G)$ will contain other elements. However, under mild connectedness assumptions, the kernel is made only of associators.

\begin{proposition}
\label{prop:equivalence-relation}
    Suppose $G$ is a bi-regular local Lie groupoid that has products connected to the axes.
    For any two words $w_1,w_2\in\W(G)$ one has $w_1 \sim w_2$ if and only if there is $w_3\in W(G)$ such that $w_1\le w_3$ and $w_2\le w_3$. 
\end{proposition}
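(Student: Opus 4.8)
The plan is to prove the nontrivial direction (the ``only if''), since the reverse implication is immediate: if there is a common expansion $w_3$ with $w_1 \le w_3$ and $w_2 \le w_3$, then $w_1 \sim w_3 \sim w_2$ by definition of the equivalence relation. So I would focus on showing that whenever $w_1 \sim w_2$, the two words admit a common upper bound in the partial order $\le$. Recall that $\sim$ is generated by elementary equivalences (single contractions and expansions), so $w_1 \sim w_2$ means there is a finite zig-zag chain
\[ w_1 = v_0 \leftrightarrow v_1 \leftrightarrow \cdots \leftrightarrow v_N = w_2, \]
where each $\leftrightarrow$ is a single expansion or contraction. The natural strategy is a \emph{confluence} (diamond-lemma) argument: I would show that the relation $\le$ is directed in the appropriate sense, so that any such zig-zag can be ``pushed up'' to a common expansion.

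The key technical step is a local confluence lemma: given a word $w$ together with two distinct contractions producing words $w'$ and $w''$, I want to produce a single word $\hat{w}$ with $w' \le \hat w$ and $w'' \le \hat w$. Equivalently, phrased in terms of expansions, if $w'$ and $w''$ are both contractions of a common $w$, they share a further common expansion. When the two contracted pairs occur at disjoint positions in the word, the two contractions commute and I can simply perform both, recovering $w$ itself as the common upper bound. The delicate case is when the contracted pairs \emph{overlap}, i.e. when $w$ contains three consecutive letters $(\dots, a, b, c, \dots)$ and one contraction forms $ab$ while the other forms $bc$. Here I must produce a common expansion of $(\dots, ab, c, \dots)$ and $(\dots, a, bc, \dots)$, and this is exactly where bi-regularity and products connected to the axes enter, since a priori neither $(ab)c$ nor $a(bc)$ need be defined, let alone equal. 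This overlapping case is the main obstacle.

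To handle the overlap I would invoke Proposition~\ref{prop:inverses:decomp}. Applied to the pair $(a,b) \in \U$, it furnishes invertible elements decomposing $a$ and $b$ so that one may reassociate the product through a chain of defined contractions/expansions; the analogous statement for $(b,c)$ lets me do the same on the other side. Concretely, I expect to insert invertible factors (and their inverses) to build a longer word lying above both $(\dots,ab,c,\dots)$ and $(\dots,a,bc,\dots)$: the invertible elements $a_i$ from \eqref{eq:inverses:decomp:1} or $b_j$ from \eqref{eq:inverses:decomp:2} allow every intermediate product to be well-defined, so that all the required contractions make sense. The point is that bi-regularity guarantees that once a product is defined it behaves like an honest groupoid multiplication locally, and ``products connected to the axes'' guarantees that the needed decomposition into invertibles exists in the first place. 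This gives local confluence.

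Finally I would assemble the global statement from local confluence by a standard Newman-type induction on the length $N$ of the zig-zag chain connecting $w_1$ and $w_2$. The idea is to process the chain so as to move all expansions before all contractions: whenever the chain contains a ``peak'' $v_{i-1} \ge v_i \le v_{i+1}$ (a contraction immediately followed by an expansion, read left to right, i.e. $v_i$ is contracted from both sides), local confluence replaces it by a ``valley'' $v_{i-1} \le \hat v \ge v_{i+1}$, strictly reducing a suitable complexity measure. Iterating, every zig-zag can be rewritten into the shape $w_1 \le w_3 \ge w_2$ for a single $w_3$, which is precisely the claimed common upper bound. I would remark that the termination of this rewriting, together with the fact that $\le$ has no infinite descending behavior issues here (contraction strictly decreases word length), makes the induction go through cleanly, and the hypotheses of bi-regularity and products connected to the axes are used only through the confluence lemma.
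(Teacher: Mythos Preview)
Your overall strategy—eliminate each ``peak'' $v_{i-1}\ge v_i\le v_{i+1}$ by producing a common expansion $\hat v$—is the right one, and it is essentially what the paper does. But your local confluence lemma is stated backwards. You write ``given $w$ with two contractions $w'$, $w''$, find $\hat w$ with $w'\le\hat w$ and $w''\le\hat w$''; since $w'\le w$ and $w''\le w$ already, $\hat w=w$ works with no argument at all. In particular, your ``delicate overlap case'' of $(\dots,ab,c,\dots)$ versus $(\dots,a,bc,\dots)$ is resolved trivially by their common parent $(\dots,a,b,c,\dots)$. What peak-elimination actually requires is the opposite statement: given $v_i$ with two single-step \emph{expansions} $v_{i-1}$ and $v_{i+1}$, find a common expansion of those. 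The genuinely hard instance is when both expansions split the \emph{same} letter $u$ in two different ways, $u=gh=g'h'$; you must then find a common upper bound of $(\dots,g,h,\dots)$ and $(\dots,g',h',\dots)$, and neither is $\le$ the other in any obvious way. Your sketch never confronts this case, and the use you describe of Proposition~\ref{prop:inverses:decomp} is aimed at the trivial direction.

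The paper avoids formulating a local diamond lemma altogether. It inducts directly on the length of the equivalence chain: assuming a common upper bound $w_2'$ for $w_1$ and the penultimate word $w_1'$ exists, it treats the final elementary equivalence $w_1'\leftrightarrow w_2$. If $w_2\le w_1'$ then $w_2\le w_1'\le w_2'$ and $w_3=w_2'$ works. If $w_1'\le w_2$, so $w_2$ splits one letter $u_i=gh$ of $w_1'=(u_1,\dots,u_m)$, then—writing $w_2'=v_1\cdots v_m$ with $(u_j)\le v_j$—one applies Proposition~\ref{prop:inverses:decomp} \emph{once} to the pair $(g,h)$ to obtain invertibles $a_1,\dots,a_l$ (or $b_1,\dots,b_m$), and inserts the block $(a_l,\dots,a_1,a_1^{-1},\dots,a_l^{-1})$ into $w_2'$ just before $v_i$ to produce $w_3$. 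One checks $w_2'\le w_3$ (the block contracts to a unit) and $w_2\le w_3$ (expand $g$ and $h$ via the $a_i$'s, then each $u_j$ to $v_j$). Because the induction always lifts a \emph{single} expansion along an already-built chain of expansions, it never has to reconcile two different splittings of the same letter, which is why the argument is cleaner than a pure Newman-type rewriting.
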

    
\begin{proof}
    One implication is obvious. To prove the other implication, we show that if $w_1,w_2\in\W(G)$ and $w_1 \sim w_2$ then 
    there is $w_3\in W(G)$ using induction on the number of elementary equivalences that make up the equivalence $w_1 \sim w_2$.
    
    If $w_1 \sim w_2$ are elementary equivalent, then it is clear that either $w_1\le w_2$, so we take $w_3=w_2$, or $w_2\le w_1$, 
    so we take $w_3=w_1$, and we are done with the first step of the induction.
    
    Assume that we proved the result if two words are connected by $N$ elementary equivalences. Let $w_1\sim w_2$ through $N+1$ 
    elementary equivalences. Then we have that $w_1\sim w'_1$ through $N$ elementary equivalences, and 
    $w'_1\sim w_2$ is an elementary equivalence. By the induction hypothesis, there is a word $w'_2$ such that $w_1\le w'_2$ and $w_1'\le w'_2$. 
    Now there are two cases:
    \vskip 5 pt
    
    {\em Case 1: $w_2\le w'_1$.} We can take $w_3=w'_2$, so we have $w_1\le w_3$ and $w_2\le w'_1\le w_3$.
    \vskip 5 pt
    
    {\em Case 2: $w'_1\le w_2$.} In this case we can write: 
    \begin{align*}
    w'_1&=(u_1,\dots,u_m), \\
    w_2&=(u_1,\dots,u_{i-1},g,h,u_{i+1},\dots,u_m),\quad \text{with }u_i=gh,\\
    w'_2&=v_1\cdot \ldots \cdot v_m,\quad \text{with }(u_j)\le v_j\text{ for }j=1,\dots,m, 
    \end{align*}
    where $u_j\in G$, $v_j\in W(G)$ and the $\cdot$ means concatenation of words. Now, applying Proposition~\ref{prop:inverses:decomp}, 
    the pair $(g,h)$ either satisfies \eqref{eq:inverses:decomp:1} in that proposition, in which case we set:
    \[ w_3= v_1\cdot\ldots\cdot v_{i-1} \cdot (a_l,\dots,a_1,a_1^{-1},\dots,a_l^{-1})\cdot v_i\cdot\ldots\cdot v_m, \]
    or the pair $(g,h)$ satisfies \eqref{eq:inverses:decomp:2} in that proposition, in which case we set:
    \[ w_3= v_1\cdot\ldots\cdot v_i \cdot (b_1^{-1},\dots,b_l^{-1},b_l,\dots,b_1)\cdot v_{i+1}\cdot\ldots\cdot v_m. \]
    For example, in the first case, we check that:
    \begin{align*}
    w_1&\le w_2'=v_1\cdot \ldots \cdot v_m\\
    		     &\le v_1\cdot\ldots\cdot v_{i-1} \cdot (a_l,\dots,a_1,a_1^{-1},\dots,a_l^{-1})\cdot v_i\cdot\ldots\cdot v_m=w_3,\\
    w_2&=(u_1,\dots,u_{i-1},g,h,u_{i+1},\dots,u_m)\\
     	  &=(u_1,\dots,u_{i-1},a_l(\cdots(a_2 a_1)),a_1^{-1}(\cdots(a_l (gh))),u_{i+1},\dots,u_m)\\
    	  &\le v_1\cdot\ldots\cdot v_{i-1} \cdot (a_l,\cdots,a_1,a_1^{-1},\dots,a_l, u_i)\cdot v_{i+1}\cdot\ldots\cdot v_m\\
	  &\le v_1\cdot\ldots\cdot v_{i-1} \cdot (a_l,\dots,a_1,a_1^{-1},\dots,a_l^{-1})\cdot v_i\cdot\ldots\cdot v_m=w_3.
    \end{align*}
    A similar argument for the second case, shows that $w_3$ satisfies $w_1\le w_3$ and $w_2\le w_3$, so the proposition holds.
\end{proof}

\begin{corollary}
\label{cor:kernel:completion}
    For a bi-regular local Lie groupoid $G$ with products connected to the axes,
    the kernel of $G\to\AsCo(G)$ is precisely $\Assoc(G)$.
\end{corollary}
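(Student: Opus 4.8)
The plan is to deduce the corollary directly from Proposition~\ref{prop:equivalence-relation}, since the hypotheses on $G$ (bi-regular, products connected to the axes) are exactly those under which that proposition applies. By definition, the kernel of the completion map $G \to \AsCo(G)$ consists of those $g \in G$ whose word $(g)$ is equivalent to a unit word $(x)$ for some $x \in M$; necessarily $x = s(g) = t(g)$, so $g \in G_x$ is an isotropy element. The goal is therefore to show that $(g) \sim (x)$ holds if and only if $g$ is an associator at $x$, i.e.\ that there exists $w \in W(G)$ with $(x) \le w$ and $(g) \le w$.

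First I would observe that the direction ``associator $\Rightarrow$ kernel'' is immediate and was already noted before the definition: if some $w$ satisfies $(x) \le w$ and $(g) \le w$, then both $(x)$ and $(g)$ are obtained from $w$ by sequences of contractions, so $(g) \sim w \sim (x)$, whence $g$ lies in the kernel. Thus the only content is the reverse inclusion: every kernel element is an associator. For this I would apply Proposition~\ref{prop:equivalence-relation} to the two words $w_1 = (g)$ and $w_2 = (x)$. Since $g$ is in the kernel we have $(g) \sim (x)$, and the proposition then furnishes a word $w_3 \in W(G)$ with $(g) \le w_3$ and $(x) \le w_3$. But this is precisely the defining condition for $g$ to be an associator at $x$, so $g \in \Assoc_x(G)$.

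Taking the union over all $x \in M$ then identifies the full kernel with $\Assoc(G)$, completing the proof. The key point to verify carefully is that a kernel element is automatically an isotropy element, so that the notion of ``associator at $x$'' is the right target: this follows because the source and target maps descend to $\AsCo(G)$ (as established in Proposition~\ref{prop:AsCo}), forcing $s(g) = t(g) = x$ for any $g$ mapping to a unit $[x]$. Beyond this bookkeeping, there is essentially no obstacle: the real work has already been done in Proposition~\ref{prop:equivalence-relation}, whose proof reduces an arbitrary chain of elementary equivalences to a single common upper bound in the partial order $\le$, and the corollary is just the specialization of that statement to the words $(g)$ and $(x)$. The only thing one must be slightly attentive to is that $\Assoc(G) \subset G$ consists genuinely of elements of $G$ rather than words, matching it up with the kernel as a subset of $G$ under the completion map.
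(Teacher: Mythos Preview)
Your argument is correct and is exactly the deduction the paper intends: the corollary is stated immediately after Proposition~\ref{prop:equivalence-relation} with no separate proof, and you have spelled out precisely the intended application of that proposition to the words $(g)$ and $(x)$. The only step you add beyond what is implicit in the paper is the observation that a kernel element is automatically an isotropy element, which is indeed the necessary bookkeeping.
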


The product of two associators (if defined) is again an associator. In fact, $\Assoc(G)$ has the structure of a local groupoid 
(free from topology) where source and target coincide, so it maybe thought of as a bundle of local groups.

Restricting a local Lie groupoid does not change its associators. More precisely, we have:

\begin{lemma}
\label{lem:assoc:restriction}
    Let $G$ be a bi-regular local Lie groupoid over $M$, with multiplication $m : \U \to G$
    and inversion map $i : \V \to G$. Let $G'$ be a restriction of $G$ with 
    multiplication map $m' : \U' \to G$ and inversion map $i' : \V' \to G$.
    If both $G$ and $G'$ have products connected to the axes, 
    then for all $x\in M$ we have
    \[ \Assoc_x(G) = \Assoc_{x}(G') .\]
\end{lemma}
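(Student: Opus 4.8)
The plan is to show the two inclusions $\Assoc_x(G')\subseteq\Assoc_x(G)$ and $\Assoc_x(G)\subseteq\Assoc_x(G')$ separately. The first inclusion is immediate: since $G'$ is a restriction of $G$, every well-formed word $w$ on $G'$ is also a well-formed word on $G$, and every contraction valid in $G'$ (using $m'=m|_{\U'}$) is a contraction valid in $G$ (using $m$). Hence if $g\in\Assoc_x(G')$ is witnessed by a word $w$ with $(x)\le w$ and $(g)\le w$ over $G'$, the same word witnesses $g\in\Assoc_x(G)$. So the content of the lemma is entirely in the reverse inclusion.

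For $\Assoc_x(G)\subseteq\Assoc_x(G')$, the strategy is to take a word $w$ over $G$ witnessing an associator $g$ (so $(x)\le w$ and $(g)\le w$) and to \emph{replace} it by a word $w'$ over $G'$ witnessing the same $g$. The key tool is Proposition~\ref{prop:inverses:decomp}, which applies because both $G$ and $G'$ are bi-regular with products connected to the axes. The idea is that any single multiplication $ab$ performed in $G$ but not available in $G'$ (because $(a,b)\notin\U'$) can be rewritten, using that proposition, as a product of invertible elements together with the already-known product $ab$, in a way that only uses products lying in the restricted domain $\U'$. Concretely, for the pair $(a,b)$ one obtains invertible $a_1,\dots,a_l$ with $a=a_l(\cdots(a_2a_1))$ and $b=a_1^{-1}(\cdots(a_l^{-1}(ab)))$ (or the mirror decomposition), and one can arrange, by taking the curve $\gamma$ in Proposition~\ref{prop:inverses:decomp} to be short enough, that all the elementary products appearing in these expressions lie in $\U'$. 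Thus every letter of $w$ that is itself a nontrivial product can be re-expanded into a $G'$-legal word, and the witnessing relations $(x)\le w$, $(g)\le w$ get transported to $G'$.

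More carefully, I would argue by induction on the length of the chain of contractions realizing $(x)\le w$ and $(g)\le w$, or equivalently exploit Proposition~\ref{prop:equivalence-relation} and Corollary~\ref{cor:kernel:completion}, which already identify $\Assoc(G)$ with the kernel of $G\to\AsCo(G)$. In fact the cleanest route is to show directly that the equivalence relations on $W(G')$ and $W(G)$ induce the \emph{same} kernel over each $G_x$: an element $g\in G_x$ is an associator for $G$ iff $g\sim_G x$, and the rewriting above shows $g\sim_G x$ implies $g\sim_{G'} x$, since each $G$-contraction has been replaced by a sequence of $G'$-contractions and $G'$-expansions. The reverse implication $g\sim_{G'}x\Rightarrow g\sim_G x$ is again trivial from $\U'\subset\U$.

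The main obstacle I expect is \emph{control of domains}: I must guarantee that every intermediate product produced by the decomposition of Proposition~\ref{prop:inverses:decomp} genuinely lands in the smaller domain $\U'$, not merely in $\U$. This is where the two hypotheses ``products connected to the axes'' (for both $G$ and $G'$) do the real work, since they let me choose the connecting curve $\gamma$ and hence the invertible factors $a_i$ to be localized near the axis, where multiplications are defined in any restriction containing a neighborhood of the unit section. I would therefore phrase the rewriting so that each new factor $a_i$ is invertible and each new product is taken between an element and a nearby element along $\gamma$, ensuring membership in $\U'$; the bi-regularity is what makes these translated neighborhoods open, so finitely many steps suffice to traverse $\gamma$.
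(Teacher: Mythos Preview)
Your proposal is correct and follows essentially the same route as the paper: show that the two equivalence relations $\sim$ and $\sim'$ on $W(G)=W(G')$ coincide by replacing each $G$-elementary contraction $(w_k,w_{k+1})\mapsto(w_kw_{k+1})$ with a chain of $G'$-expansions and $G'$-contractions via the decomposition of Proposition~\ref{prop:inverses:decomp}, and then invoke Proposition~\ref{prop:equivalence-relation} to translate this into the statement about associators. One small clarification: it is not the curve $\gamma$ that is taken ``short enough'' (it must run all the way from the unit to $w_k$), but rather the individual increments $a_i$ along it that are chosen small; also note that the curve itself is supplied by the hypothesis on $G$ (since $(w_k,w_{k+1})\in\U$, not $\U'$), while the hypothesis on $G'$ is what lets you apply Proposition~\ref{prop:equivalence-relation} to $G'$---your final paragraph gets the mechanism right even if the earlier phrasing is slightly off.
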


\begin{proof}
    We consider two equivalence relations on $W(G)$:
    we will write ${\sim}$ for the equivalence relation on $W(G)$ generated by contractions
    and expansions for the multiplication $m$ on $G$,
    and ${\sim'}$ for the equivalence relation on $W(G)$ generated by contractions and
    expansions for the multiplication $m'$ on $G'$.
    These are precisely the equivalence relations that were considered in the construction
    of the associative completions of $G$ and $G'$.
    Proposition~\ref{prop:equivalence-relation} tells us that
    \[ g\in \Assoc_{x}(G) \iff (g)\sim (x) \quad\text{and}\quad
    g\in \Assoc_{x}(G') \iff (g)\sim' (x) .\]
    It therefore suffices to show that ${\sim}$ and ${\sim'}$ are the same equivalence relation.

    Clearly, $w\sim' v \Rightarrow w\sim v$.
    We claim that the converse implication also holds, so that the two equivalence relations are equal.
    It suffices to show that two words that are \emph{elementarily} equivalent for ${\sim}$
    are also equivalent for ${\sim'}$.
    Let $(w_1,\dots,w_k,w_{k+1},\dots,w_n) \in W(G)$ such that $(w_k,w_{k+1}) \in \U$ and 
    write $w_kw_{k+1}$ for $m(w_k,w_{k+1})$.
    Following along with the proof of Proposition~\ref{prop:inverses:decomp} \emph{for the local groupoid $G'$},
    we find that we can expand, for the multiplication $m'$, the subword
    $(w_k,w_{k+1})$ into
    \[ (a_l,\dots,a_1,a_1^{-1},\dots,a_l^{-1},w_kw_{k+1})\quad\text{ or }\quad (w_kw_{k+1},b_1^{-1},\dots,b_l^{-1},b_l,\dots,b_1),   \]
    where the inverses are inverses for $G'$ (and thus also for $G$, but that
    is not important).  We can then contract this word for the multiplication
    $m'$ into $(w_kw_{k+1})$, so that 
    \[ (w_1,\dots,w_k,w_{k+1},\dots,w_n) \sim' (w_1,\dots,w_kw_{k+1},\dots,w_n). \]
    This shows that ${\sim}$ and ${\sim'}$ coincide, proving the result.
\end{proof}

We note, however, that shrinking a local Lie groupoid (i.e.\ replacing it with
a neighborhood of $M$ in $G$) \emph{can} change the associators drastically
(see Section~\ref{ss:associators:examples}).

\subsection{Smoothness of $\AsCo(G)$}
\label{ss:smothness of AC}

For a bi-regular local Lie groupoid $G$ with products connected to the axes, we have the following relations:
\begin{align*}
    G \to &\,\AsCo(G)\text{ is injective} \\
    \iff & \text{$G$ is globally associative} \\
    \Longrightarrow\; & \text{$\Assoc(G)$ is trivial} %(i.e.\ $\Assoc(G) = M \subset G$)}.
    %\iff is the same as \;\Longleftrightarrow\;
\end{align*}

Note that triviality of $\Assoc(G)$ does not imply that $G$ is globally associative:
a morphism of local Lie groupoids with trivial kernel is not necessarily injective.
For example, the restriction of the morphism of Lie groups $\R \to \R/\Z$
to the interval $(-0.6,0.6)$ has trivial kernel, but it is not injective.

\begin{definition}
    We say that $\Assoc(G) \subset G$ is \emph{uniformly discrete} if
    there is an open neighborhood $U$ of $M$ in $G$ such that
    $U\cap\Assoc(G) = M$.
\end{definition}

We can now describe a necessary and sufficient condition for $\AsCo(G)$ to be a Lie groupoid. The proof will be given in the 
next section.

\begin{theorem}[Smoothness of $\AsCo(G)$]
    If $G$ is a bi-regular local Lie groupoid with products connected to the axes,
    then $\AsCo(G)$ is
    smooth if and only if $\Assoc(G)$ is uniformly discrete in $G$.
    Moreover, in that case, $G \to \AsCo(G)$ is a local diffeomorphism so $G$ and $\AsCo(G)$ 
    have isomorphic Lie algebroids.
    \label{thm:smoothness-of-asco}
\end{theorem}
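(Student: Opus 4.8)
The plan is to analyze the quotient map $q : W(G) \to \AsCo(G)$ and show that smoothness of the quotient is equivalent to the fibers of $q$ being ``nice'' near the units, which is exactly the uniform discreteness condition. First I would set up the local picture: fix a word $w = (w_1,\dots,w_n) \in W(G)$ and observe that by repeated use of bi-regularity, the concatenation/contraction maps are submersions, so near any point of $W(G)$ the fiber of $q$ through $w$ is a submanifold and the quotient is, locally, the space of leaves of a foliation. The key structural input is Proposition~\ref{prop:equivalence-relation} together with Corollary~\ref{cor:kernel:completion}: two words are identified in $\AsCo(G)$ precisely when they have a common expansion, and the kernel of $G \to \AsCo(G)$ is exactly $\Assoc(G)$. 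This lets me reduce the question of local structure of $\AsCo(G)$ around an arbitrary class to the question of local structure around a unit, by right-translating by an invertible representative (using that $G$ is inversional, which follows from bi-regularity and products connected to the axes via Proposition~\ref{prop:inverses:decomp}).

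For the forward direction, I would assume $\AsCo(G)$ is smooth and show $\Assoc(G)$ is uniformly discrete. If it were not, there would be a sequence of associators $g_j \to x \in M$ with $g_j \neq x$. Each $g_j$ is identified with the unit $x$ in $\AsCo(G)$, so all the $g_j$ and $x$ map to the same point of $\AsCo(G)$. If $G \to \AsCo(G)$ were a local diffeomorphism (or even just a local homeomorphism onto its image) near $x$, this would force $g_j = x$ for large $j$, a contradiction. So the heart of this direction is to argue that smoothness of $\AsCo(G)$, combined with the fact that the completion map restricted to $G = W^{(1)}(G)$ is a map of the correct dimension (since $G$ and $\AsCo(G)$ share the Lie algebroid $A$, hence the same dimension), forces the completion map to be étale near $M$; then distinct points of an $s$-fiber accumulating at $x$ cannot all collapse to $[x]$.

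For the converse, assume $\Assoc(G)$ is uniformly discrete, with witnessing neighborhood $U$ of $M$ such that $U \cap \Assoc(G) = M$. I would construct charts on $\AsCo(G)$ by showing that the completion map $G \to \AsCo(G)$ restricts to an injective local diffeomorphism on a suitable shrinking $U' \subseteq U$: injectivity near $M$ is exactly the statement that the only elements collapsing to a unit are units, which is the uniform discreteness hypothesis, upgraded from the single fiber over $x$ to a neighborhood using continuity and the local product structure. Having an étale section of $G$ into $\AsCo(G)$ near the units, I then translate these charts around by the (now well-defined, associative, globally composable) multiplication in $\AsCo(G)$ to cover all of $\AsCo(G)$, and check the transition maps are smooth because they are built from multiplication and inversion in $G$, which are smooth. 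This simultaneously gives that $G \to \AsCo(G)$ is a local diffeomorphism, and since a local diffeomorphism of local Lie groupoids induces an isomorphism of Lie algebroids (both are computed from $T^s_M$ and the induced bracket), the final clause about isomorphic Lie algebroids follows.

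The main obstacle I anticipate is the bookkeeping needed to pass from the single-fiber statement ``the only associators at $x$ near $x$ are $x$ itself'' to a genuine \emph{neighborhood} in $G$ on which the completion map is injective, uniformly in the base point $x \in M$; this is where the uniform (as opposed to pointwise) discreteness is essential and where the topological hypotheses (bi-regularity giving the submersion/foliation picture, products connected to the axes giving the inversional decompositions of Proposition~\ref{prop:inverses:decomp}) must be combined carefully to control the equivalence relation $\sim$ on words of arbitrary length.
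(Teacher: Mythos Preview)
Your overall strategy is right, but there is a genuine gap in the forward direction and a significant divergence from the paper in the converse.

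\textbf{Forward direction: the dimension argument is circular.} You write that ``$G$ and $\AsCo(G)$ share the Lie algebroid $A$, hence the same dimension,'' and then deduce that $G\to\AsCo(G)$ is \'etale. But at this stage you only know $\AsCo(G)$ is smooth as a \emph{quotient} of $W(G)$; you do not yet know it is a Lie groupoid of the correct dimension, let alone that it has Lie algebroid $A$. The paper argues the dimension directly: the fibers of $G\to\AsCo(G)$ are countable (each equivalence class in $W(G)$ meets $G^{(1)}$ in countably many points), forcing $\dim\AsCo(G)\ge\dim G$; and $G$ itself is a component of $W(G)$ surjecting onto $\AsCo(G)$, so $\dim\AsCo(G)\le\dim G$. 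Only after this does one conclude that $G\to\AsCo(G)$ is a local diffeomorphism, and hence that the preimage of $M$ --- namely $\Assoc(G)$ --- is an embedded submanifold, so uniformly discrete.

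\textbf{Converse direction: the paper's chart construction is different and more explicit.} You propose to build a chart near $M$ and then ``translate by multiplication in $\AsCo(G)$.'' This is delicate because you are translating by homeomorphisms of a space not yet known to be smooth, and you still have to produce smooth transition maps. The paper instead constructs a chart directly at every $[x_1,\dots,x_n]\in\AsCo(G)$: choose local bisections $N_i$ through each $x_i$, fix an index $k$, and send $y$ near $x_k$ to $[y_1,\dots,y_{k-1},y,y_{k+1},\dots,y_n]$, where the $y_i\in N_i$ are determined by matching sources and targets along the bisections. Injectivity of this chart is where uniform discreteness enters: if two inputs $y,z$ give the same class, one first argues (by stripping off the bisection letters on both sides) that $[y]=[z]$, then writes $z=y\delta$ with $\delta$ small, and finally invokes that small associators are trivial. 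Your sentence ``injectivity near $M$ is exactly the statement that the only elements collapsing to a unit are units'' elides this reduction step. The paper then checks three kinds of transitions --- change of bisections, change of index $k$, and change of representative word --- each by an explicit computation with small invertible ``error'' elements $\varepsilon_i$; this replaces your appeal to translation and is where the actual work lies.
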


By ``$\AsCo(G)$ is smooth'', we mean, of course, that $\AsCo(G)$ has a smooth
structure as a quotient of $W(G)$, where $W(G)$ is considered as a manifold with
components of various dimensions (so the restriction of the projection to each component of $W(G)$ is a submersion).

The following corollary characterizes globalizable local Lie groupoids.
The result is entirely analogous to the situation for local Lie groups.

\begin{corollary}[Mal'cev's theorem for local Lie groupoids]
    A strongly connected local Lie groupoid is globalizable if and only if it is globally associative.
    \label{cor:malcev-for-groupoids}
\end{corollary}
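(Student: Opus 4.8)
The plan is to derive Mal'cev's theorem (Corollary~\ref{cor:malcev-for-groupoids}) as a fairly direct consequence of the Smoothness Theorem~\ref{thm:smoothness-of-asco} together with the universal property of $\AsCo(G)$ in Proposition~\ref{prop:AsCo}. The key observation is that a strongly connected local Lie groupoid is bi-regular and (after passing to a restriction, which by Lemma~\ref{lem:assoc:restriction} does not change the associators) has products connected to the axes, so all the hypotheses of Theorem~\ref{thm:smoothness-of-asco} are available, as is the fact (noted after the strong-connectedness definition) that strong connectedness forces $G$ to be inversional.

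\textbf{Forward direction (globalizable $\Rightarrow$ globally associative).} First I would argue the easy implication. If $G$ is globalizable, then by definition it is a restriction of an open neighborhood of the unit section in a Lie groupoid $\G$. The completion map $G\to\AsCo(G)$ factors through the inclusion $G\into\G$ via the universal property of Proposition~\ref{prop:AsCo}: the morphism $G\to\G$ induces $\widetilde{F}:\AsCo(G)\to\G$ with $\widetilde{F}$ composed with completion equal to the inclusion. Since the inclusion $G\into\G$ is injective, the completion map $G\to\AsCo(G)$ must be injective as well. By the equivalences recorded at the start of Section~\ref{ss:smothness of AC}, injectivity of $G\to\AsCo(G)$ is equivalent to $G$ being globally associative, which gives this direction.

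\textbf{Converse direction (globally associative $\Rightarrow$ globalizable).} This is the substantive half. If $G$ is globally associative, then $G\to\AsCo(G)$ is injective, so in particular $\Assoc(G)$ is trivial, hence certainly uniformly discrete in $G$ (one may take $U=G$). By Theorem~\ref{thm:smoothness-of-asco}, $\AsCo(G)$ is then a smooth \emph{Lie} groupoid and $G\to\AsCo(G)$ is a local diffeomorphism. Because it is both injective and a local diffeomorphism, $G\to\AsCo(G)$ is an open embedding of $G$ onto an open neighborhood of the unit section in the global Lie groupoid $\AsCo(G)$, and it intertwines the (local) structure maps by Proposition~\ref{prop:AsCo}. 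This exhibits $G$ as a restriction of an open neighborhood of the units in a global Lie groupoid, which is exactly the definition of globalizable.

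\textbf{Main obstacle.} The technical point I expect to need most care is ensuring that the hypotheses ``products connected to the axes'' and ``bi-regularity'' are genuinely in force: strong connectedness supplies bi-regularity and $s$-/$t$-connectedness directly, but products connected to the axes is only guaranteed after passing to a restriction. I would resolve this by replacing $G$ with such a restriction $G'$, invoking Lemma~\ref{lem:assoc:restriction} to conclude $\Assoc(G')=\Assoc(G)$ so that triviality (and hence uniform discreteness) is preserved, and then using Lemma~\ref{lem:stronglyconnecteddeterminedbyrestriction} to transfer globalizability of the restriction back to $G$ itself, since a strongly connected structure is determined by any of its restrictions. The only other subtlety is checking that the open embedding $G\into\AsCo(G)$ really lands in (a restriction of) a neighborhood of the units as required by the definition of globalizable, which follows from the fact that the completion map sends $M$ to the unit section and is an open map.
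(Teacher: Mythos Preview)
Your proposal is correct and follows essentially the same route as the paper: pass to a restriction $G'$ with products connected to the axes, use global associativity to see that $G'\hookrightarrow\AsCo(G')$ is an injective local diffeomorphism (hence an open embedding) via Theorem~\ref{thm:smoothness-of-asco}, and then invoke Lemma~\ref{lem:stronglyconnecteddeterminedbyrestriction} to promote globalizability of $G'$ back to $G$. Your forward direction via the universal property is more elaborate than needed (the paper simply observes that a restriction of a Lie groupoid is clearly globally associative), but it is valid.

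One small point: your appeal to Lemma~\ref{lem:assoc:restriction} to transport triviality of associators from $G$ to $G'$ is unnecessary and its hypotheses may not be met, since that lemma requires \emph{both} $G$ and $G'$ to have products connected to the axes, whereas strong connectedness of $G$ does not guarantee this. The simpler (and sufficient) observation is that a restriction of a globally associative local groupoid is again globally associative, so $\Assoc(G')$ is trivial directly. With that replacement your argument matches the paper's proof.
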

\begin{proof}
    Call the local Lie groupoid $G$.
    If $G$ globalizable, it is clearly globally associative.
    If $G$ is globally associative, then restrict it to get a local Lie groupoid
    $G'$ with products connected to the axes.
    Then $G' \hookrightarrow \AsCo(G')$ is the inclusion
    of $G'$ onto an open set $U$ of a global Lie groupoid
    (injectivity follows from global associativity).
    This means that $G$ and $U$ have a common restriction $G'$, so that
    by strong connectedness and Lemma~\ref{lem:stronglyconnecteddeterminedbyrestriction}
    the inclusion $G\hookrightarrow\AsCo(G')$ is also the inclusion of a
    restriction of $U$.
    The Lie groupoid $\AsCo(G')$ therefore globalizes $G$.
\end{proof}

\begin{remark}
\label{rmk:Malcev:inverses} 
In \cite{EstLee}, a version of Mal'cev's theorem is proved for local groups (free
from topology). The authors observe that their proof works for groupoids.
Our version differs from the result in this paper in two significant ways: (i)
our notion of local groupoid does not assume that every element is invertible
(unlike \cite{EstLee}) and (ii) our result concerns smooth local Lie groupoids.
For these reasons, our result also requires different assumptions and is
similar in spirit to the version proved in \cite{olver}.
\end{remark}

Every Lie algebroid can be integrated to a local Lie groupoid \cite[Corollary 5.1]{integ-of-lie-article}.
Because the globalizability of a local Lie groupoid is related to its higher associativity,
this points to a link between associativity and integrability. The following corollary
is a first indication.

%TODO: note that there is a sharper statement: algebroid integrable iff every s-simply connected
%      local groupoid integrating it has uniformly discrete associators
%But we can't note that until we have classified local groupoids
\begin{corollary}
    A Lie algebroid is integrable if and only if
    there is a local Lie groupoid integrating it that has uniformly discrete associators.
\end{corollary}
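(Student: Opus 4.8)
The plan is to prove both implications of this biconditional using the machinery developed above, principally the Smoothness Theorem (Theorem~\ref{thm:smoothness-of-asco}) together with the fact, stated after Example~\ref{canonical-integ} and cited from \cite{integ-of-lie-article}, that every Lie algebroid admits an $A$-path local integration. The forward direction is the substantive one; the reverse direction should follow almost immediately from earlier observations.

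**For the reverse implication**, suppose $G$ is a local Lie groupoid integrating $A$ with uniformly discrete associators. First I would replace $G$ by a restriction (which does not change the associators, by Lemma~\ref{lem:assoc:restriction}, nor the Lie algebroid) so that $G$ is bi-regular with products connected to the axes; such a restriction exists by the remarks preceding Proposition~\ref{prop:inverses:decomp}. Then Theorem~\ref{thm:smoothness-of-asco} applies: uniform discreteness of $\Assoc(G)$ forces $\AsCo(G)$ to be a genuine Lie groupoid, and $G\to\AsCo(G)$ is a local diffeomorphism. Since a local diffeomorphism onto an open part of a Lie groupoid induces an isomorphism of Lie algebroids, the Lie algebroid of $\AsCo(G)$ is $A$, so $A$ is integrable.

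**For the forward implication**, suppose $A$ is integrable, so that $\G(A)$ is a genuine Lie groupoid. The idea is to manufacture the required local integration by \emph{shrinking} $\G(A)$: take $G$ to be a small enough open neighborhood of the unit section in $\G(A)$, viewed as a local Lie groupoid. Because $G$ sits inside a global (hence honestly associative) groupoid, it is globally associative, and therefore its completion map $G\to\AsCo(G)$ is injective. By Corollary~\ref{cor:kernel:completion} this means $\Assoc(G)$ is trivial — in particular uniformly discrete, witnessed by $U=G$ itself. One must only check that $G$ genuinely integrates $A$, which holds because shrinking does not change the Lie algebroid (Section~\ref{sec:algebroid}), and that $G$ can be chosen bi-regular with products connected to the axes so that the corollary applies; both are arranged by choosing the neighborhood small enough.

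**The main obstacle** I anticipate is a bookkeeping issue rather than a deep one: matching up the technical hypotheses (bi-regularity, products connected to the axes) that the cited results require, since these are not automatic for an arbitrary local integration but can always be secured by passing to a suitable restriction or shrinking. One should be careful that the forward direction does not secretly assume the conclusion — here the point is that integrability of $A$ is \emph{given}, and we exhibit a concrete local integration (a shrinking of $\G(A)$) whose associators are trivial, rather than claiming every local integration has this property. Indeed, Example~\ref{sss:non-glob} shows that not every local integration of an integrable algebroid has discrete associators, so the existential quantifier in the statement is essential and the proof must respect it.
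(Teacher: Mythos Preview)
Your forward implication is correct and matches the paper's (the paper is slightly more direct: a global Lie groupoid integrating $A$ already \emph{is} a local Lie groupoid with trivial associators, so no shrinking is needed).

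Your reverse implication has a genuine gap. You write that you will ``replace $G$ by a restriction \dots\ so that $G$ is bi-regular,'' citing the remark before Proposition~\ref{prop:inverses:decomp}. But that remark only says every local Lie groupoid has a restriction with products connected to the axes; it says nothing about bi-regularity. In fact restriction \emph{cannot} achieve bi-regularity in general: for the local Lie group $\R_\infty$, the map $dR_\infty:T_0\R_\infty\to T_\infty\R_\infty$ is zero, and since $(0,\infty)\in M\timesst G$ must lie in any restricted $\U'$, no restriction makes $\R_\infty$ bi-regular. The paper obtains bi-regularity by \emph{shrinking} (Section~\ref{sec:algebroid}), not restricting. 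Moreover, your appeal to Lemma~\ref{lem:assoc:restriction} is circular: that lemma already assumes $G$ is bi-regular.

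The paper's fix is to shrink rather than restrict. Uniform discreteness gives an open $U\supset M$ with $U\cap\Assoc(G)=M$; choose a bi-regular shrinking $G'\subset U$ and then restrict for products connected to the axes. Any associator of $G'$ is witnessed by expansions and contractions valid in $G$, so $\Assoc(G')\subset\Assoc(G)\cap G'\subset\Assoc(G)\cap U=M$, i.e.\ the associators become \emph{trivial}. Now Theorem~\ref{thm:smoothness-of-asco} applies to $G'$ and yields a Lie groupoid $\AsCo(G')$ with Lie algebroid $A$.
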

\begin{proof}
    Suppose a Lie algebroid is integrable. Then by definition,
    there is a Lie groupoid integrating it, and this is a local Lie groupoid with uniformly discrete
    (indeed, trivial) associators.

    Suppose that a Lie algebroid has a local Lie groupoid integrating it, with uniformly discrete associators.
    Then by taking a small enough open neighborhood of the units in this local Lie groupoid, we can get
    a bi-regular local Lie groupoid, with products connected to the axes and trivial associators, which integrates the Lie algebroid.
    The associative completion of this local Lie groupoid is a
    Lie groupoid integrating our Lie algebroid.
\end{proof}

\subsection{Proof of smoothness of $\AsCo(G)$}
\label{ss:proof-malcev}

    We now turn to the proof of Theorem~\ref{thm:smoothness-of-asco}. 
    Suppose first that $\AsCo(G)$ has a smooth structure for which $W(G) \to \AsCo(G)$ is a 
    submersion.
    Note that the fibers of $G \to \AsCo(G)$ are countable,
    %countably many combinatorial ways to create an equivalence combinatorially,
    %and by bi-regularity for any given way there are only countably many choice of letters
    %that will work
    so that $\dim(\AsCo(G)) \geq \dim(G)$. Because $W(G)$ has a component
    of dimension $\dim(G)$ and $\AsCo(G)$ is a quotient of $W(G)$,
    we must have $\dim(\AsCo(G)) \leq \dim(G)$, and therefore
    $\dim(\AsCo(G)) = \dim(G)$ (this is of course the only reasonable dimension
    to expect).
    The map $G \to \AsCo(G)$ is therefore a local diffeomorphism.
    The inverse image of $M \subset \AsCo(G)$ under $G \to \AsCo(G)$,
    which is precisely $\Assoc(G)$, is therefore
    an embedded submanifold of $G$. This shows that $\Assoc(G)$ is uniformly discrete
    in $G$.

    Suppose now that $\Assoc(G)$ is uniformly discrete.
    We will prove that $\AsCo(G)$ is smooth.
    Given an element $h\in\AsCo(G)$, represented by a word $(x_1,\dots,x_n)$, we will construct
    a chart near $h$ modeled on an open part of $G$.
    For each $x_i$, take a small submanifold $N_i \subset G$ of the same dimension as $M$
    through $x_i$ that is
    transverse to both source and target fibers (i.e.\ a local bisection of the local Lie groupoid near $x_i$).
    Then $s$ and $t$ define diffeomorphisms between the $N_i$ and open subsets of $M$.
    Write $s_i = s{\restriction_{N_i}}$ and $t_i = t{\restriction_{N_i}}$ for these diffeomorphisms.

    Let $k\in\{1,\dots,n\}$.
    Let $U$ be a small neighborhood of $x_k$.
    Then the chart we use near $h$ is defined as
    \[ \varphi : U \to \AsCo(G) : y \mapsto [y_1,\dots,y_{k-1},y,y_{k+1},\dots,y_n] \]
    where
    \[
        y_{k+1} = t_{k+1}^{-1}(s(y)), \ 
        y_{k+2} = t_{k+2}^{-1}(s(y_{k+1})),
        \dots, \ 
        y_n = t_n^{-1}(s(y_{n-1})),
    \]
    and
    \[
        y_{k-1} = s_{k-1}^{-1}(t(y)), \ 
        y_{k-2} = s_{k-2}^{-1}(t(y_{k-1})),
        \dots, \
        y_1 = s_1^{-1}(t(y_2)).
    \]
    If we pick $U$ sufficiently small, this is well-defined.
    Note that it maps $x_k$ to $h$.
    We claim that $\varphi$ is injective, if we pick $U$ small enough
    (this is where we need the uniform discreteness).
    Indeed, suppose that $\varphi(y) = \varphi(z)$.
    Write the representatives of $\varphi(y)$ and $\varphi(z)$ as given above as
    \[ (y_1,\dots,y_{k-1},y,y_{k+1},\dots,y_n) \quad\text{and}\quad (z_1,\dots,z_{k-1},z,z_{k+1},\dots,z_n) .\]
    Because $\varphi(y) = \varphi(z)$, we must have $s(\varphi(y)) = s(\varphi(z))$ and
    therefore $s(y_n) = s(z_n)$.
    But then $y_n$ and $z_n$ are both on $N_n$, and have the same source. Therefore, $y_n = z_n$.
    This implies that $s(y_{n-1}) = s(z_{n-1})$.
    But then $y_{n-1}$ and $z_{n-1}$ are both on $N_{n-1}$, and have the same source.
    Therefore, $y_{n-1} = z_{n-1}$.
    Continuing this way, we find that $y_i = z_i$ for $i > k$.
    The same argument applied starting from the left and using the target map shows that
    $y_i = z_i$ for $i<k$.
    We conclude that
    \[ [y_1,\dots,y_{k-1},y,y_{k+1},\dots,y_n] = \varphi(y) = \varphi(z) = [y_1,\dots,y_{k-1},z,y_{k+1},\dots,y_n] .\]
    Multiplying both sides by $[y_1,\dots,y_{k-1}]^{-1}$ (from the left) and by $[y_{k+1},\dots,y_n]^{-1}$ (from the
    right) shows that $[y] = [z]$.
    If $B$ is a neighborhood of $M$ in $G$ such that $B\cap \Assoc(G) = M$, then for $U$ small enough,
    we must have $z=y\delta$ for $\delta$ in $B$.
    But then $[\delta]$ must be trivial in $\AsCo(G)$ so that, by Corollary~\ref{cor:kernel:completion}, it is an associator.
    Therefore $\delta \in M$ and we conclude that $y=z$.

    Let us check smoothness of the transition maps.
    Given a point in $\AsCo(G)$, a chart is determined by
    \begin{itemize}
        \item a choice of representative,
        \item a choice of $k$,
        \item a choice of local bisections.
    \end{itemize}
    We will show smoothness for each of the following types of transition maps:
    \begin{enumerate}
        \item[(T1)] for a fixed representative and index $k$, the transition between charts coming
            from different choices of local bisections,
        \item[(T2)] for a fixed representative, the transition between charts for two different
            choices of $k$,
        \item[(T3)] the transition between charts constructed using different representatives of
            the same element of $\AsCo(G)$.
    \end{enumerate}
    Together, these show that all the transitions are smooth.

    Let us start by checking the first type of transition map (T1).
    Suppose $(x_1,\dots,x_n)$ represents $h\in\AsCo(G)$. Pick $k\in\{1,\dots,n\}$.
    Making one choice of local bisections $N_i$ leads to a chart
    \[ \varphi : y \mapsto [y_1,\dots,y_{k-1},y,y_{k+1},\dots,y_n] .\]
    Note that the $y_i$ depend smoothly on $y$.
    Now make a different choice of local bisections $N_i'$, leading to
    a second chart.
    We will calculate the transition map between the two charts.
    Suppose $y$ is near $x_k$ and let $z_n$ be the point on $N_n'$ with source $s(y_n) = s(h)$.
    Then
    \[ z_n = \eps_n y_n \]
    for some invertible $\eps_n$ near $t(y_n)$ (on the condition that
    $y$ is sufficiently close to $x_k$).
    Note that $\eps_n=\eps_n(y)$ depends smoothly on $y$ and $\eps_n(x_k) = t(y_n)$.
    The product $y_{n-1}\eps_n^{-1}$ is defined for $y$ close enough to $x_k$.
    Let $z_{n-1}$ be the point on $N_{n-1}'$ with source $t(z_n)$.
    Then
    \[ z_{n-1} = \eps_{n-1} (y_{n-1} \eps_n^{-1}) \]
    for some invertible $\eps_{n-1}$ near $t(y_{n-1})$ (on the condition that
    $y$ is sufficiently close to $x_k$).
    Note that $\eps_{n-1}$ depends smoothly on $y$, and $\eps_{n-1} = t(y_{n-1})$
    if $y = x_k$.
    %TODO: note that we are using regularity of left- and right-multiplication here
    %to ensure the existence of the epsilons
    Continuing this way, we construct a sequence $z_n, \dots, z_{k+1}$ on the $N_i'$ and
    $\eps_n, \dots, \eps_{k+1}$
    such that
    \[ z_i = \eps_i(y_i\eps_{i+1}^{-1}) ,\]
    where we will set $\eps_{n+1} = s(y_n)$ for ease of notation.
    All the $\eps_i$ depend smoothly on $y$ (we may have to restrict $y$ to be in a
    smaller and smaller neighborhood of $x_k$).
    Similarly, we start from the left and construct a sequence $z_1, \dots, z_{k-1}$
    on the $N_i'$ and $\eps_1, \dots, \eps_{k-1}$ such that
    \[ z_i = (\eps_{i-1}^{-1}y_i)\eps_i \]
    with all the $\eps_i$ depending smoothly on $y$.
    Note now that
    \begin{align*}
        \varphi(y) &= [y_1,\dots,y_{k-1},y,y_{k+1},\dots,y_n] \\
                   &= [y_1,\eps_1,\dots,\eps_{k-2}^{-1},y_{k-1},\eps_{k-1},y,\eps_{k+1}, y_{k+1},\eps_{k+2}^{-1},\dots,\eps_n,y_n] \\
                   &= [z_1,\dots,z_{k-1},(\eps_{k-1}y\eps_{k+1}),z_{k+1},\dots,z_n] .
    \end{align*}
    where the product in the middle will be defined if $y$ is sufficiently close to $x_k$.
    This shows that the transition map between our two charts is given by
    \[ y \mapsto \eps_{k-1}y\eps_{k+1} ,\]
    which is smooth in $y$ (because the $\eps_i$ are).
    This proves smoothness for type (T1).
    
    Let us now check smoothness for type (T2). We consider transition between charts for the same
    representative, but different choices of $k$. Clearly, it is enough to consider the case where 
    the two choices for $k$ differ by 1, so we look at two charts associated to the same representative
    $(x_1,\dots,x_n)$ of $h$ for index $k$ and index $k+1$.
    Again, write
    \[ y \mapsto [y_1,\dots,y,y_{k+1},\dots,y_n] \]
    for the chart associated to index $k$.
    Suppose that $y$ is near $x_k$.
    Let $y_k$ be the point on $N_k$ with target $t(y)$.
    This point depends smoothly on $y$.
    For $y$ sufficiently close to $x_k$, we may write
    $y_k = y\eps$
    where $\eps$ is near $s(x_k)$ and depends smoothly on $y$.
    Then
    \begin{align*}
        [y_1,\dots,y,y_{k+1},\dots,y_n] &= [y_1,\dots,y,\eps,\eps^{-1},y_{k+1},\dots,y_n] \\
                                   &= [y_1,\dots,y_k,(\eps^{-1}y_{k+1}),\dots,y_n]
    \end{align*}
    for $y$ sufficiently close to $x_k$.
    This shows that the transition map is $y\mapsto \eps^{-1}y_{k+1}$, which is smooth.

    We now check the smoothness of the third type of transition (T3). So we now have
    different representatives of the same point $h$.
    This type is the easiest to check.
    Indeed, we only need to check smoothness for the transition for a single contraction
    or expansion, but that is clear from the smoothness of multiplication in $G$.

    Note, finally, that $\AsCo(G)$ is second-countable because $W(G)$ is. It may fail to be Hausdorff (as does $G$), but its source
    fibers and target fibers, as well as unit manifold, are Hausdorff.  
    Hence, we have proved that $\AsCo(G)$ is smooth and this completes the proof of Theorem~\ref{thm:smoothness-of-asco}. \qed
    %TODO: say something about Hausdorffness

\section{Classification of local Lie groupoids}
\label{sec:classification}

In this section, we classify bi-regular local Lie groupoids with integrable algebroids.
By ``classifying'' we mean that we give a construction that, starting from a
global Lie groupoid, can be used to obtain all bi-regular local Lie groupoids
with the same Lie algebroid.  This section is a generalization to groupoids of
results by Olver \cite[Theorems 19 and 21]{olver}.
%TODO: check that the assumptions (bi-regularity, integrable algebroid) are written everywhere

\subsection{Coverings of local Lie groupoids}

Just like in the theory of Lie groups and Lie groupoids, coverings play an
important role for local Lie groupoids. 

For groupoids, the notion of cover does not refer to the space of arrows, but
rather to its source/target fibers. In this section we will assume that source
and target maps have connected fibers, as do their covers in the sense of the
following definition:

\begin{definition}
    Suppose $G$ is a local Lie groupoid over $M$, with source and target maps
    $s$ and $t$, and unit section $u$.
    A \emph{covering} (resp.\ \emph{generalized covering}) of $G$ is a tuple
    $(G',s',t',u')$ and a map $\varphi:G'\to G$, where:
    \begin{itemize}
        \item $u':M \into G'$ is an embedding,
        \item $s, t : G' \to M$ are surjective submersions such that $s'\circ u' =t'\circ u'= \id$,
        \item $\varphi : G' \to G$ is a smooth map such that $s' = s\circ \varphi$, $t' = t\circ \varphi$, $u=\varphi\circ u'$,
        \item for each $m \in M$, the map $\varphi{\restriction_{(s')^{-1}(m)}} : (s')^{-1}(m) \to s^{-1}(\varphi(m))$ is a covering map (resp.\ local diffeomorphism).
    \end{itemize}
\end{definition}

An important fact is the following:

\begin{theorem}
\label{thm:covers}
    Any generalized covering $\varphi : G' \to G$ of a bi-regular local Lie groupoid $G$ has a structure of a local Lie groupoid for which $\varphi$ is 
    an \'etale morphism of local Lie groupoids. Moreover, $G'$ is also bi-regular.
\end{theorem}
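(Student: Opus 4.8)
The plan is to first observe that $\varphi$ is \'etale, and then to transport the local groupoid structure from $G$ to $G'$ by lifting it through $\varphi$ along the source fibers. To see that $\varphi$ is a local diffeomorphism, note that by hypothesis $\d\varphi$ restricts to an isomorphism $T^{s'}_{g'}G'\to T^s_{\varphi(g')}G$ on the tangents to the source fibers; combined with $s'=s\circ\varphi$ (so $\d s\circ\d\varphi=\d s'$) this forces $\d\varphi$ to be injective, because a vector in its kernel is automatically tangent to the source fiber and hence zero. Since the fibrewise covering property makes $\dim G'=\dim G$, the map $\d\varphi$ is an isomorphism at every point, so $\varphi$ is \'etale.

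Next I would define the multiplication by lifting. Given $(g',h')\in G'\timesst G'$ with $s'(g')=t'(h')=y$, I choose a path $\gamma'$ in the (connected) source fiber $(s')^{-1}(y)$ from the unit $u'(y)$ to $g'$ and consider the $G$-path $\tau\mapsto \varphi(\gamma'(\tau))\,\varphi(h')$, which lies in $s^{-1}(s'(h'))$ and starts at $u(y)\varphi(h')=\varphi(h')$. I then define $g'h'$ to be the endpoint of the lift of this path through $\varphi$, taken inside the fiber $(s')^{-1}(s'(h'))$ and starting at $h'$. Because $G$ is bi-regular, the right translations $R_{\varphi(h')}$ are local diffeomorphisms, so these $G$-paths stay inside $\U$ once $(g',h')$ is close enough to the axes $(G'\timesst M)\cup(M\timesst G')$; this is what guarantees that the construction is defined on an open neighborhood of the axes. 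The inversion $i'$ is defined analogously, by lifting $\varphi(g')^{-1}=i(\varphi(g'))$ through $\varphi$ starting from the unit, and the domain $\V'$ is shrunk so that these lifts exist.

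I expect the main obstacle to be well-definedness: the lift must exist, and its endpoint must be independent of the chosen path $\gamma'$. This is exactly where the covering hypothesis enters. Two paths from $u'(y)$ to $g'$ differ by a loop in $(s')^{-1}(y)$ whose projection lies in the covering subgroup $\im\bigl(\pi_1((s')^{-1}(y))\to\pi_1(s^{-1}(y))\bigr)$; right-translating by $\varphi(h')$ sends it to a loop in $s^{-1}(s'(h'))$, and since these covering subgroups vary compatibly across fibers (a consequence of $\varphi$ being a single, globally defined \'etale map), the translated loop again lifts to a loop, so the endpoint is unambiguous. In the generalized covering case the fibre maps are only local diffeomorphisms, so instead I would shrink the domain $\U'$ until the required lifts exist and are unique, which suffices for a \emph{local} groupoid. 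Once multiplication and inversion are in place, the local Lie groupoid axioms for $G'$ follow from those of $G$ together with the facts that $\varphi$ is \'etale and, by construction, a morphism: each identity, such as $s'(g'h')=s'(h')$, the unit and associativity axioms, and $m(i'(g'),g')=u'(s'(g'))$, is checked by applying $\varphi$ and using that $\varphi$ is injective on small neighborhoods. Finally, bi-regularity of $G'$ is immediate, since $\varphi$ intertwines right and left translations, so $\d R'_{h'}$ and $\d L'_{g'}$ are conjugate through the isomorphism $\d\varphi$ to the corresponding isomorphisms of the bi-regular groupoid $G$, and are therefore themselves isomorphisms.
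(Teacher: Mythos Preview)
Your approach---lifting multiplication through $\varphi$ by right-translating paths in source fibers---is different from the paper's and is workable in principle, but the well-definedness argument you flag as ``the main obstacle'' is not adequately resolved. The claim that ``covering subgroups vary compatibly across fibers'' under right translation presupposes that $R_{\varphi(h')}$ is a diffeomorphism between entire source fibers, so that it induces an isomorphism on $\pi_1$. In a \emph{local} Lie groupoid this fails: $R_{\varphi(h')}$ is only defined on $\{g : (g,\varphi(h'))\in\U\}$, which need not be all of $s^{-1}(t(\varphi(h')))$, so a loop $\varphi(\gamma'_1)\cdot(\varphi(\gamma'_2))^{-1}$ may not even have a right-translate, let alone one whose homotopy class lands in the correct covering subgroup. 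Your fallback (``shrink $\U'$ until lifts exist and are unique'') is the right instinct, but you then need a separate construction near the other axis $M\timesst G'$ (via left-translation of paths to $h'$) and a proof that the two agree on the overlap---none of which is automatic.

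The paper avoids this entirely by working infinitesimally. It first proves a characterization (Proposition~\ref{prop:characterization}): a bi-regular local Lie groupoid structure on $(G',M,s',t',u')$ with prescribed Maurer-Cartan forms exists (uniquely up to restriction) if and only if the associated left- and right-invariant vector fields have left-invariant, right-invariant, and vanishing mixed brackets. The proof of the theorem is then two lines: pull back the left and right Maurer-Cartan forms of $G$ along $\varphi$ to get framings on $G'$; the resulting invariant vector fields on $G'$ are $\varphi$-related to those on $G$, so the bracket conditions are inherited, and the proposition yields the structure. Well-definedness of multiplication is absorbed into the proof of the characterization, where it is handled by integrating the Lie algebroid action rather than by covering-space arguments. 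Your direct lifting and the paper's flow-along-invariant-vector-fields are ultimately the same operation, but the paper's packaging via framings cleanly separates the general existence question from the specific covering situation and sidesteps the $\pi_1$-compatibility issue altogether.
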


\begin{corollary}
\label{cor:universal:cover}
    Every bi-regular local Lie groupoid $G$ has a source-simply connected cover $\varphi:\widetilde{G}\to G$ which is unique up to restriction.
\end{corollary}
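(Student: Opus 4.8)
The plan is to realize $\widetilde{G}$ as the fiberwise universal cover of $G$ and then invoke Theorem~\ref{thm:covers} to transport the local groupoid structure. Concretely, I would let $\widetilde{G}$ be the set of homotopy classes (rel endpoints, and within a single source fiber) of paths $\gamma$ that start at a unit $1_x$ and stay in $s^{-1}(x)$; evaluation at the endpoint defines the projection $\varphi:\widetilde{G}\to G$, $[\gamma]\mapsto\gamma(1)$, and I set $\tilde s = s\circ\varphi$, $\tilde t=t\circ\varphi$, with unit $\tilde u(x)$ the class of the constant path at $1_x$. By construction the fiber $\tilde s^{-1}(x)$ is exactly the universal cover of the connected (Hausdorff) manifold $s^{-1}(x)$ based at $1_x$, so $\varphi$ restricts to the universal covering map on each source fiber. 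The smooth structure on $\widetilde{G}$ is the standard fiberwise universal-cover structure for the submersion $s$, built (as in the construction of $\G(A)$) from a leafwise development of $s$-paths, and for it $\varphi$ is a local diffeomorphism.

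With this in hand, $(\widetilde{G},\tilde s,\tilde t,\tilde u)$ together with $\varphi$ is a covering of $G$ in the sense of the definition: $\tilde s,\tilde t$ are surjective submersions since $s,t$ are and $\varphi$ is \'etale, they satisfy $\tilde s\circ\tilde u=\tilde t\circ\tilde u=\id$, and $\varphi$ is a covering map on each source fiber. Since $G$ is bi-regular, Theorem~\ref{thm:covers} then endows $\widetilde{G}$ with a bi-regular local Lie groupoid structure for which $\varphi$ is an \'etale morphism of local Lie groupoids. That $\widetilde{G}$ is source-simply connected is immediate, since its source fibers are the simply connected universal covers of the source fibers of $G$.

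For uniqueness, suppose $\psi:G'\to G$ is a second source-simply connected cover. Fiberwise, both $\tilde s^{-1}(x)$ and $(s')^{-1}(x)$ are universal covers of $s^{-1}(x)$, so the universal property of the universal cover (fixing the basepoints $\tilde u(x)$ and $u'(x)$) yields a unique fiber-preserving diffeomorphism $\Psi:\widetilde{G}\to G'$ with $\psi\circ\Psi=\varphi$ and $\Psi\circ\tilde u=u'$. It then remains to check that $\Psi$ intertwines the local groupoid structures after passing to a common restriction. By Theorem~\ref{thm:covers} the multiplications on $\widetilde{G}$ and $G'$ are the unique \'etale lifts of the multiplication of $G$ through $\varphi$ and $\psi$ respectively, so the transported multiplication $\Psi^{-1}\circ m_{G'}\circ(\Psi\times\Psi)$ and the given $\tilde m$ are two \'etale lifts of $m_G$ through $\varphi$ that agree near the units; uniqueness of such lifts (alternatively, the determination-by-restriction result, Lemma~\ref{lem:stronglyconnecteddeterminedbyrestriction}) forces them to coincide on a common restriction. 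Hence $\widetilde{G}$ is unique up to restriction.

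The main obstacle I anticipate is not the fiberwise construction itself---universal covers of the connected source fibers are routine---but rather the bookkeeping in the uniqueness step: the two local groupoid structures have \emph{a priori} different multiplication domains, so one cannot expect an isomorphism on the nose, and the phrase ``up to restriction'' must be justified by carefully matching the \'etale lifts near the units before concluding. A secondary point to handle with care is confirming that the fiberwise universal covers assemble into a genuine smooth manifold $\widetilde{G}$ with $\varphi$ \'etale, which is exactly where the submersion hypothesis on $s$ and the development argument are used.
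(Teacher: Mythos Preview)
Your approach is correct and essentially matches the paper's own proof: both construct $\widetilde{G}$ as the fiberwise universal cover of the source fibers (the paper packages this as $p^{-1}(M)$ inside the monodromy groupoid $\Pi_1(\F(s))$ of the source foliation, with $\varphi([\gamma])=\gamma(1)$) and then invoke Theorem~\ref{thm:covers} to transport the local groupoid structure. The paper is terser on uniqueness, simply asserting it follows, whereas you spell out the \'etale-lift comparison more carefully.
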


\begin{proof}[Proof of the Corollary]
    The construction of the source simply connected cover of a local groupoid is entirely similar to the case of Lie groupoids (see, e.g., \cite{lectures-integrability-lie}).
    Let $\Pi_1(\F(s))$ be the fundamental groupoid of the source-foliation of $G$. It is a smooth Lie groupoid over $G$ with source map
    \[ p : \Pi_1(\F(s)) \to G,\quad [\gamma]\mapsto \gamma(0). \] 
    We set $\widetilde{G}:=p^{-1}(M)$ and define the maps:
    \begin{align*} 
	s':\widetilde{G}\to M,&\quad [\gamma]\mapsto s(\gamma(0)),\qquad  t':\widetilde{G}\to M,\quad [\gamma]\mapsto t(\gamma(1)),\\
	&u':M\to\widetilde{G},\quad x\mapsto [x]. 
    \end{align*}
    Then $\varphi:\widetilde{G}\to G$, $[\gamma]\to \gamma(1)$, is a cover of $G$ with 1-connected source fibers, so the result follows from the theorem.
\end{proof}

\begin{remark}
    One can give a proof of the corollary independent of the theorem, by following the same method as in the proof of 
    Theorem~\ref{thm:universal:covering}, with $G$ replacing $\G(A)$. In the sequel, we will only make use of the corollary. 
    However, since the theorem is interesting on its own, and the proof in \cite{olver} of the corresponding statement for local Lie groups  
    contains a gap (see Remark~\ref{rmk:olver:gap} below), we will give a complete proof of the theorem.
\end{remark}

The remainder of this section is concerned with the proof of Theorem
\ref{thm:covers}. Suppose $G$ and $M$ are manifolds, and we have maps
\begin{itemize}
    \item $u : M\to G$, an embedding,
    \item $s : G \to M$, a surjective submersion such that $s\circ u = \id$,
    \item $t : G \to M$, a surjective submersion such that $t\circ u = \id$.
\end{itemize}
We will show, roughly speaking, that a bi-regular local Lie groupoid structure
on $G$ (with the given $s$, $t$, $u$) is equivalent to specifying left- and right-invariant Maurer-Cartan forms. From this the theorem follows immediately.

\begin{definition}
    An \emph{$s$-framing} for the tuple $(G,M,s,t,u)$ is an isomorphism of vector bundles $\omega_s : T^sG \to t^*(T^s_MG)$
    over the unit section such that $\omega_s{\restriction_M} = \id$.
    A \emph{$t$-framing} is an isomorphism of vector bundles $\omega_t : T^tG \to s^*(T^t_MG)$
    over the unit section such that $\omega_t{\restriction_M} = \id$.
\end{definition}

If $G$ is a bi-regular local Lie groupoid, the left-invariant (respectively,
right-invariant) Maurer Cartan form is an $s$-framing (respectively,
$t$-framing). For a bi-regular groupoid we will always use these framings.

Once framings are specified one can talk about invariant vector fields.

\begin{definition}
    Suppose we are given an $s$-framing $\omega_s$ and a $t$-framing $\omega_t$ for the tuple $(G,M,s,t,u)$. Then:
    \begin{enumerate}[(i)]
    \item A vector field $X$ tangent to the $s$-fibers is called \emph{right-invariant} if its image under the $s$-framing is
    of the form $t^*\sigma$ for some $\sigma \in \Gamma(T^s_MG)$.
    \item A vector field $X$ tangent to the $t$-fibers is called \emph{left-invariant} if its image under the $t$-framing is
    of the form $s^*\sigma$ for some $\sigma \in \Gamma(T^t_MG)$.
   \end{enumerate}
\end{definition}

Notice that in the case of a bi-regular local Lie groupoid this definition of right- and left-invariant vector field coincides with the one given in Section~\ref{sec:algebroid}. In particular, for a local Lie groupoid we always have that:
    \begin{itemize}
        \item the Lie bracket of right-invariant vector fields is right-invariant,
        \item the Lie bracket of left-invariant vector fields is left-invariant,
        \item the Lie bracket of a right-invariant and a left-invariant vector field is zero.
    \end{itemize}

The following is the characterization of local Lie groupoids we are aiming for. It is a direct generalization of \cite[Theorem 18]{olver}.

\begin{proposition}
\label{prop:characterization}
    Let $\omega_s$ and $\omega_t$ be $s$- and $t$-framings for the tuple $(G,M,s,t,u)$. There is a bi-regular local Lie groupoid structure on 
    $(G,M,s,t,u)$ whose Maurer-Cartan forms coincide with these framings if and only if:
    \begin{enumerate}[(i)]
        \item the Lie bracket of left-invariant vector fields is left-invariant,
        \item the Lie bracket of right-invariant vector fields is right-invariant,
        \item the Lie bracket of a left- and a right-invariant vector field is zero.
    \end{enumerate}
    If we demand that the local Lie groupoid be strongly connected, the structure is unique up to restriction.
\end{proposition}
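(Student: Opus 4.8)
The plan is to prove the two implications separately; the forward (existence) direction is the substantial one and is modeled on Olver's Theorem~18 in \cite{olver}, but with both framings used to bypass any integrability hypothesis (contrast Theorem~\ref{thm:universal:covering}, which needed $A$ integrable and $1$-connected $s$-fibers because it reconstructed the multiplication through the map $G\to\G(A)$). For \emph{necessity}, suppose $(G,M,s,t,u)$ already carries a bi-regular local Lie groupoid structure whose Maurer--Cartan forms are $\omega_s$ and $\omega_t$. Under the identification of $\omega_s$ with the right Maurer--Cartan form and $\omega_t$ with the left one, conditions (i)--(iii) are literally the three bracket properties of left- and right-invariant vector fields recorded immediately before the statement, so this direction is immediate.

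For \emph{sufficiency} I would reconstruct the multiplication from the framings using the principle that, in any groupoid, right multiplication $R_h$ preserves the right-invariant framing, left multiplication $L_g$ preserves the left-invariant framing, and $L_gR_h=R_hL_g$. Concretely, to define $gh$ when $h$ is near the unit $u(s(g))$, I would join $u(s(g))$ to $h$ by the geodesic $c$ in the target fiber $t^{-1}(s(g))$ for an auxiliary metric (as in Theorem~\ref{thm:universal:covering}), left-translate $c$ by $g$ --- that is, solve the ODE for the path $\tilde c$ with $\tilde c(0)=g$ whose velocity is the left-invariant field determined by $\omega_t(\dot{\tilde c}(\tau))=\omega_t(\dot c(\tau))$ --- and set $gh:=\tilde c(1)$; symmetrically, for $g$ near $u(t(h))$ I would right-translate a path from $u(t(h))$ to $g$ in the source fiber by $h$ using $\omega_s$. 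These two partial definitions must be shown to agree on their overlap and to patch to a smooth $m$ defined on an open neighborhood of $(G\timesst M)\cup(M\timesst G)$.

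The cleanest way to organize all of this, and the step I expect to be the main obstacle, is an involutivity/Frobenius argument. On the fibered product
\[ Z=\{(g,h,k)\in G\times G\times G : s(g)=t(h),\ t(k)=t(g),\ s(k)=s(h)\} \]
one builds a distribution $D$ of rank $\dim(G\timesst G)$ spanned by the right-translation directions (governed by $\omega_s$), the left-translation directions (governed by $\omega_t$), and the remaining diagonal/unit directions; a dimension count gives rank $2\dim G-\dim M$. The content of hypotheses (i)--(iii) is precisely that $D$ is involutive: (ii) keeps brackets of right-translation directions inside $D$, (i) does the same on the left, and (iii) makes the mixed brackets vanish. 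The leaf of $D$ through the units is then the graph of $m$, its smooth dependence on initial data yields smoothness of $m$ (and that it is a submersion), and bi-regularity is automatic because the framings are fiberwise isomorphisms, so all translations are local diffeomorphisms. The remaining axioms are read off the construction: the $s,t$-relations and the unit law from the way $c$ and $\tilde c$ are built, local associativity from the commuting of the left and right flows encoded in (i)--(iii), and inverses near $u(M)$ from invertibility of the translations; that the Maurer--Cartan forms of the resulting structure are $\omega_s$ and $\omega_t$ holds by design.

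Finally, for \emph{uniqueness up to restriction} under strong connectedness, two such structures share the same framings, hence the same infinitesimal left and right translations, hence identical multiplication on some neighborhood of $u(M)$ --- a common restriction. Passing to a common $3$-associative restriction, Lemma~\ref{lem:stronglyconnecteddeterminedbyrestriction} upgrades this local agreement to equality of the multiplications (and then, by uniqueness of inverses, of the inversions) on the whole common domain, which is exactly uniqueness up to restriction.
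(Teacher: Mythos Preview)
Your proposal is sound and takes a genuinely different route from the paper for the sufficiency direction; necessity and uniqueness (via Lemma~\ref{lem:stronglyconnecteddeterminedbyrestriction}) are handled the same way in both.

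For well-definedness of the product, the paper does \emph{not} run a Frobenius argument. It writes $g\cdot h=\phi^1_{\omega_t^{-1}(s^*\sigma)}(g)$ where $\sigma$ is any local section with $h=\phi^1_{\omega_t^{-1}(s^*\sigma)}(u(s(g)))$, and shows independence from $\sigma$ by invoking integration of a Lie algebroid action: condition~(ii) makes $T^s_MG$ into a Lie algebroid acting on $G\xrightarrow{t}M$ via the right-invariant fields; integrating this action to a local groupoid action and identifying a neighborhood of $M$ in the acting groupoid with one in $G$ exhibits the formula as a well-defined action product. Agreement of the two partial definitions on the overlap then comes from~(iii), as in your sketch. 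Your Frobenius route is more self-contained and avoids appealing to integration of algebroid actions; conversely, the paper's route avoids the foliation bookkeeping.

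One correction to your setup: on $Z$ the span of the right-translation pairs $(\tilde\alpha_g,0,\tilde\alpha_k)$ and the left-translation pairs $(0,\tilde\beta_h,\tilde\beta_k)$ has rank $2(\dim G-\dim M)$, not $2\dim G-\dim M$, and there is no canonical way to specify the extra ``diagonal/unit directions'' away from the unit section using only the framings. The fix is not to enlarge the distribution but to use it as is: $D=D_1+D_2$ is already involutive by (i)--(iii), and the leaf through $(g,u(s(g)),g)$ contains all nearby points $(g,h,k)$ with $k$ the desired product (flow by $D_2$); the graph of $m$ near $G\timesst M$ is the union of these leaves over $g\in G$, smooth by smooth dependence on initial data, and the symmetric construction through $(u(t(h)),h,h)$ covers $M\timesst G$. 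With that adjustment your argument goes through.
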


\begin{remark}
\label{rmk:olver:gap}
Theorem 18 in \cite{olver} claims that giving a local Lie group structure in $G$ is equivalent to
prescribing right-invariant vector fields,
without specifying the left-invariant vector fields.
Explicit examples show, however, that this is not sufficient, and the left-invariant
vector fields need to be specified as well. More precisely, if only right-invariant
vector fields are specified, it may happen that no compatible left-invariant
vector fields exist. This is related to the fact that the Maurer-Cartan form on a local Lie group(oid) is not only invariant but satisfies an extra condition: the Maurer-Cartan equation. 

The mistake in the proof in \cite{olver} is that the
multiplication does not get defined near $G\times\{e\}$.
For an explicit example, one can look at $G = \R_{>0} \times \S^1$, with coordinates $(x,\theta)$. The vector fields $X = x\frac{\partial}{\partial x}$ and $Y = x\frac{\partial}{\partial \theta}$ satisfy $[X,Y] = Y$.
If we try to prescribe $X$ and $Y$ as right-invariant vector fields on $G$,
there is no way to find corresponding left-invariant vector fields to make $G$ into a
local Lie group: if $Z$ is a left-invariant vector field that equals $\frac{\partial}{\partial x}$ at $(1,1)$, we must have $[Y,Z]=0$, but the flow of $Y$
maps the point $(1,1)$ to itself after time $2\pi$,
mapping $\frac{\partial}{\partial x}$ to $\frac{\partial}{\partial x} + 2\pi\frac{\partial}{\partial \theta}$.
This shows that $Z$ cannot be invariant under the flow of $Y$.
\end{remark}

\begin{proof}[Proof of Proposition~\ref{prop:characterization}]
    The ``only if'' part of the proof is clear.
    For the proof of the ``if'' part, suppose that we have framings that satisfy the above conditions.
    We will prescribe the multiplication and inversion maps.
    Let us start by defining the multiplication on a neighborhood of
    $G \timesst M$.
    Pick $g\in G$ and write $x = s(g)$. If $h\in t^{-1}(x)$ is sufficiently close
    to $x$,
    then
    \[ h = \phi^1_{\omega_t^{-1}(s^*\sigma)}(x) \]
    for some local section $\sigma$ of $T^t_MG$ near $x$
    (here $\phi^1$ denotes the time-1 flow of a vector field).
    Note that $\omega_t^{-1}(s^*\sigma)$ is left-invariant.
    We set
    \begin{equation} 
    \label{eq:product}
    g\cdot h = \phi^1_{\omega_t^{-1}(s^*\sigma)}(g)
    \end{equation}
    if this flow exists.
    This product $g\cdot h$ is well-defined for $h$ small,
    in the sense that it does not depend on the choice of $\sigma$, as we now explain.
    Because the bracket of right-invariant vector fields is right-invariant,
    we can define a Lie algebroid $A\to M$ from these right-invariant vector fields.
    This algebroid $A$ acts on $G \xrightarrow{t} M$ via the right-invariant vector fields.
    We can integrate this Lie algebroid action to an action of a local Lie groupoid $H$
    integrating $A$.
    Using this action $H \curvearrowright G$, we can identify a neighborhood of $M$
    in $H$ with a neighborhood of $M$ in $G$, by $h \mapsto h \cdot s(h)$.
    Under this identification, the multiplication \eqref{eq:product} corresponds to
    the one in $H$. This shows that $g\cdot h$ is indeed well-defined using \eqref{eq:product}.
    We have defined the multiplication near $G \timesst M$.
    Similarly, one defines $g\cdot h = \phi^1_{\omega_s^{-1}(t^*\tau)}(h)$
    if $g = \phi^1_{\omega_s^{-1}(t^*\tau)}(x)$
    is small, and $\tau$ is a local section of $T^s_MG$.

    One thing remains to check to ensure we have a well-defined multiplication:
    if $g = \phi^1_{\omega_s^{-1}(t^*\tau)}(x)$ and $h = \phi^1_{\omega_t^{-1}(s^*\sigma)}(x)$ for local
    section $\tau$ and $\sigma$ as above, we need to ensure that the two definitions
    of $g\cdot h$ agree.
    The first definition defines $g\cdot h$ as
    \[ \phi^1_{\omega_t^{-1}(s^*\sigma)}(g) = \phi^1_{\omega_t^{-1}(s^*\sigma)}(\phi^1_{\omega_s^{-1}(t^*\tau)}(x)) \]
    and the second as
    \[ \phi^1_{\omega_s^{-1}(t^*\tau)}(h) = \phi^1_{\omega_s^{-1}(t^*\tau)}(\phi^1_{\omega_t^{-1}(s^*\sigma)}(x)) .\]
    Because left- and right-invariant vector fields commute, these definitions coincide
    for $g, h$ sufficiently close to $x$.
    Therefore, by restricting if necessary, we have
    defined a multiplication map on $G \timesst G$ near
    $G \timesst M \cup M \timesst G$.

    %TODO: write this out, if only in comment
    We leave it to the reader to check that this multiplication is locally associative
    (possibly after restricting
    further),
    and to define an inversion map near $M$.
    %TODO: be more precise here
    The uniqueness up to restriction follows immediately from
    Lemma~\ref{lem:stronglyconnecteddeterminedbyrestriction}.
\end{proof}

We can now give a very short proof of the existence of a groupoid structure on covers.

\begin{proof}[Proof of Theorem~\ref{thm:covers}]
   Let $G$ be a bi-regular local Lie groupoid over $M$ and let $\varphi:G'\to G$ be a generalized cover. If 
   $\wmc^L$ and $\wmc^R$ denote the left- and right-invariant Maurer-Cartan forms on $G$, then we obtain 
   $s$- and $t$-framings on $G'$ by setting:
   \[ \omega_s:=\varphi^*\wmc^L \quad \text{ and }\quad \omega_t:=\varphi^*\wmc^R. \]
   The left- and right-invariant vector fields on $G'$ defined by these framings are $\varphi$-related to the left- and 
   right-invariant vector fields on $G$. It follows that they satisfy conditions (i)-(iii) of Proposition~\ref{prop:characterization},
   and we conclude that $G'$ has the required bi-regular local groupoid structure for which $\varphi:G'\to G$ is an 
   \'etale morphism of local Lie groupoids. 
\end{proof}

\begin{remark}
\label{rmk:Covers:inverses}
The local groupoid structure constructed in Proposition~\ref{prop:characterization} has an inverse map with domain, in general, strictly smaller than $G$.
Therefore, even if every element in the local Lie groupoid $G$ has an inverse, elements in a cover $G'$ of $G$ may fail to have an inverse. This is yet another reason we have to allow for the domain of the inverse map of a local Lie groupoid $G$ to be strictly smaller than $G$.
\end{remark}

\subsection{Classification result}

In \cite{olver}, Olver shows roughly speaking that every local Lie group
is covered by a cover of a globalizable local Lie group.
We prove the analogous result for groupoids, and strengthen it slightly.

%TODO: bi-regularity? It's assumed in Olver.
%Yes, we need bi-regularity for the foliation to be transverse to the projections. !!!
\begin{theorem}
    Suppose $G\tto M$ is a bi-regular $s$-connected local Lie groupoid with product connected to axes and
    integrable Lie algebroid $A$.
    Let $\tilde{G}$ be its source-simply connected cover,
    and let $\G(A)$ be the source-simply connected integration of $A$.
    Then there is a commutative diagram:
%    \[
%    \xymatrix{
%     & \tilde{G}\ar[dl]_{p_1} \ar[dr]^{p_2} \\
%     G\ar[dr] &  & *[r]{\,U\subseteq \G(A)}\ar[dl] \\
%     & \AsCo(G)
%    }
%    \]
    \begin{center}
    \begin{tikzpicture}
        \matrix(m)[matrix of math nodes, row sep=2.2em, column sep=2.6em]{
            & \tilde{G} & \\
            G & & U\subseteq \G(A) \\
            & \AsCo(G) & \\
        };
        \path[->] (m-1-2) edge node[auto,swap]{$p_1$} (m-2-1.north east);
        \path[->] (m-1-2) edge node[auto]{$p_2$} (m-2-3.north west);
        \path[->] (m-2-1.south east) edge (m-3-2);
        \path[->] (m-2-3.south west) edge (m-3-2);
    \end{tikzpicture}
    \end{center}
    Here, $p_1$ is the covering map and $G \to \AsCo(G)$ is the completion map. 
    The map $p_2$ is a generalized covering of local Lie groupoids,
    which sends a point $g\in\tilde{G}$ to the class of the $A$-path associated
    to a $\tilde{G}$-path from $s(g)$ to $g$.
    \label{thm:classification-groupoids}
\end{theorem}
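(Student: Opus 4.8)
The plan is to construct the two legs $p_1$ and $p_2$ separately and then identify the resulting globalizations along the bottom of the diagram. For the left leg, apply Corollary~\ref{cor:universal:cover} to the bi-regular $s$-connected local Lie groupoid $G$ to obtain the source-simply connected cover $p_1:\tilde{G}\to G$; by Theorem~\ref{thm:covers}, $\tilde{G}$ is itself a bi-regular local Lie groupoid with the same Lie algebroid $A$, and $p_1$ is an \'etale morphism. After restricting if necessary, $\tilde{G}$ has $1$-connected source fibers and products connected to the axes.

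For the right leg, I would rerun the construction from the proof of Theorem~\ref{thm:universal:covering}, now applied to $\tilde{G}$ in place of the abstract tuple: since $\tilde{G}$ has $1$-connected $s$-fibers and $A$ is integrable, its (right) Maurer--Cartan form $\wmc:T^s\tilde{G}\to A$ is a Lie algebroid morphism that integrates to a groupoid morphism $\Phi:\tilde{G}\timesss\tilde{G}\to\G(A)$. Precomposing with the embedding $g\mapsto(g,s(g))$ yields an \'etale morphism $p_2:\tilde{G}\to\G(A)$ sending $g$ to the $A$-homotopy class of the $A$-path $\wmc(\gamma'(\cdot))$ of a source-fiber path $\gamma$ from $s(g)$ to $g$, which is exactly the description in the statement. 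Setting $U:=\Im(p_2)$, which is open because $p_2$ is \'etale, the map $p_2:\tilde{G}\to U$ is a morphism of local Lie groupoids that restricts to a local diffeomorphism on each source fiber, hence a generalized covering. This produces the top triangle together with the inclusion $U\subseteq\G(A)$.

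For the bottom of the diagram I would argue functorially. By the universal property of the associative completion (Proposition~\ref{prop:AsCo}), the morphism $p_2:\tilde{G}\to\G(A)$ into the topological groupoid $\G(A)$ extends uniquely to $\widetilde{p_2}:\AsCo(\tilde{G})\to\G(A)$, and the morphism $c_G\circ p_1:\tilde{G}\to\AsCo(G)$ (where $c_G$ and $c_{\tilde{G}}$ denote the completion maps) extends uniquely to $\Psi:\AsCo(\tilde{G})\to\AsCo(G)$. Granting that $\widetilde{p_2}$ is an isomorphism of topological groupoids, I define the remaining edge by $\phi:=\Psi\circ\widetilde{p_2}^{-1}{\restriction_U}:U\to\AsCo(G)$. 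Commutativity of the lower square is then formal: $\phi\circ p_2=\Psi\circ\widetilde{p_2}^{-1}\circ\widetilde{p_2}\circ c_{\tilde{G}}=\Psi\circ c_{\tilde{G}}=c_G\circ p_1$, using the factorizations $p_2=\widetilde{p_2}\circ c_{\tilde{G}}$ and $\Psi\circ c_{\tilde{G}}=c_G\circ p_1$ coming from the universal property.

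The whole argument thus reduces to showing that $\widetilde{p_2}$ is an isomorphism, and I expect \textbf{this to be the main obstacle}. Surjectivity is the easy half: $U$ is a neighborhood of the units in the source-connected $\G(A)$, so it generates $\G(A)$, and the image of $\widetilde{p_2}$ is the subgroupoid generated by $U$. Injectivity is the crux and is where integrability of $A$ and source-simple-connectedness of $\tilde{G}$ must be used. One first checks that $\Assoc(\tilde{G})$ is trivial: an associator of $\tilde{G}$ maps under the globally associative $p_2$ to an element of $\G(A)$ that is simultaneously a unit and $p_2(g)$, forcing $p_2(g)=1$, so $\Assoc(\tilde{G})\subseteq\Ker(p_2)$, which meets the isotropy only in units by a homotopy-lifting argument in the simply connected source fibers; by Corollary~\ref{cor:kernel:completion} this makes $\tilde{G}\to\AsCo(\tilde{G})$ injective. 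One must then upgrade this to injectivity of $\widetilde{p_2}$ on all of $\AsCo(\tilde{G})$, i.e.\ show that a word in $\tilde{G}$ whose product lies over a unit of $\G(A)$ is $\sim$-equivalent to a unit. The technical heart is converting the null-$A$-homotopy witnessing triviality in $\G(A)$ into an explicit sequence of expansions and contractions in $\tilde{G}$; equivalently, $\widetilde{p_2}$ is a fiberwise covering by \'etaleness and path lifting, and source-simple-connectedness of $\G(A)$ forces it to be a fiberwise diffeomorphism, hence an isomorphism. This lifting step is precisely the concrete manifestation of the associativity--integrability link, and is where the real work lies.
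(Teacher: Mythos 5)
Your construction of the two legs and the formal derivation of the bottom square are sound, and your reduction of the whole theorem to the claim that $\widetilde{p_2}:\AsCo(\tilde{G})\to\G(A)$ is an isomorphism is exactly the paper's strategy (the paper builds $p_2$ by Cartan's method of the graph, which amounts to the same integration of the Maurer--Cartan form that you perform, and then identifies $\AsCo(\tilde{G})\cong\G(A)$, routing through $\AsCo(U)$). But your attack on that key claim contains both an outright error and the acknowledged gap. The error: $\Assoc(\tilde{G})$ is \emph{not} trivial in general, and $\tilde{G}\to\AsCo(\tilde{G})$ is \emph{not} injective. Example~\ref{example:3-associative} (Olver's universal cover of the plane minus a disk) is a counterexample satisfying all the hypotheses here: it is a simply connected, bi-regular local Lie group with integrable (abelian) Lie algebroid, yet it has non-trivial associators --- that is precisely what the failure of $3$-associativity there produces, and it is why Corollary~\ref{cor:1-connected:integrable} asserts only discreteness. (In that example $p_2$ is the covering projection onto the punctured plane, whose fiber over the unit is an infinite discrete set; your ``homotopy-lifting argument in the simply connected source fibers'' fails because the \emph{target} of $p_2{\restriction_{s^{-1}(x)}}$ is not simply connected, so the covering property gives no injectivity.) What is true, and all that is needed, is that $\Assoc(\tilde{G})$ is \emph{uniformly discrete}: your correct observation that $\Assoc(\tilde{G})$ lies in the fiber of the \'etale map $p_2$ over the unit section gives exactly this.

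The gap: you leave the isomorphism property of $\widetilde{p_2}$ as ``where the real work lies,'' expecting to have to convert a null-$A$-homotopy into an explicit sequence of expansions and contractions. That hard combinatorial step belongs to Theorem~\ref{thm:assoc-mono}, where no integrability is assumed; here the paper closes the argument cheaply with tools already proved. Namely: uniform discreteness of $\Assoc(\tilde{G})$ feeds into Theorem~\ref{thm:smoothness-of-asco}, so $\AsCo(\tilde{G})$ is a \emph{Lie} groupoid, the completion map $\tilde{G}\to\AsCo(\tilde{G})$ is a local diffeomorphism (so $\AsCo(\tilde{G})$ is an $s$-connected integration of $A$), and $\widetilde{p_2}$ is then a morphism of Lie groupoids inducing an isomorphism on Lie algebroids with target the source-simply connected integration $\G(A)$. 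Standard Lie theory now forces $\widetilde{p_2}$ to be an isomorphism: the canonical morphism $\kappa:\G(A)\to\AsCo(\tilde{G})$ is a covering on source fibers, hence surjective, and $\widetilde{p_2}\circ\kappa$ induces the identity on $A$, hence equals $\id$ by uniqueness of morphisms out of the source-simply connected integration; so $\kappa$ is bijective and $\widetilde{p_2}=\kappa^{-1}$. Your alternative one-line suggestion (``fiberwise covering by \'etaleness and path lifting'') gestures at this, but it cannot even be formulated before $\AsCo(\tilde{G})$ is known to be smooth, and path lifting is not a consequence of \'etaleness alone --- it is supplied by the right-invariant vector field (integration theory) argument above. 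So the missing idea is: discreteness (not triviality) of the associators, then smoothness of $\AsCo(\tilde{G})$ via Theorem~\ref{thm:smoothness-of-asco}, then the standard uniqueness argument; with that, the theorem needs no new combinatorics at all.
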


\begin{proof}
    Our first step is to show the existence of the upper half of the diagram.
    This is the statement that is proved in \cite[Theorem 21]{olver}
    for the case of local groups, and the proof is analogous.
    We use Cartan's method of the graph.
    The argument is similar to the usual argument for integrability of Lie algebroid
    morphisms to Lie groupoid morphisms \cite[Proposition 6.8]{moerdijk-mrcun}.

    On the fibered product $\tilde{G} \tensor[_t]{\times}{_t} \G(A)$ we consider
    the foliation $\F$ given by
    \[ \F_{(g_1,g_2)} = \left\{ (\xi\cdot g_1, \xi\cdot g_2) \mid \xi \in A_{t(g_1)} \right\} .\]
    Integrability of this distribution follows from the fact that $\tilde{G}$ and $\G(A)$
    have the same Lie algebroid.
    
    Fix a point $x\in M$ and let $N_x$ be the leaf of $\F$ through $(x,x)$. We claim that the projection 
    $\pi_1 : N_x \to s^{-1}(x)$ is a covering of the source fiber in $\tilde{G}$.
    Note that $\pi_1 : N_x \to s^{-1}(x)$ is a local diffeomorphism by bi-regularity of the local groupoid.
    %TODO: clarify this a bit
    Now if $(g_1,g_2) \in N_x$, and $U_1$ is a neighborhood of $(g_1,g_2)$ in $N_x$ that gets mapped
    diffeomorphically to a neighborhood $U_2$ of $g_1$ in $s^{-1}(x)$,
    then $U_2$ is uniformly covered. Indeed, if $(g_1,g_3)$ is another point on $N_x$,
    then by invariance of $\F$ under right multiplication in the $\G(A)$-direction,
    the translated submanifold $U_1 \cdot g_2^{-1} \cdot g_3$ is a neighborhood of $(g_1,g_3)$ in $N_x$
    that gets mapped diffeomorphically to $U_1$.
    Hence, $\pi_1 : N_x \to s^{-1}(x)$ is a covering map, and since
    $s^{-1}(x)$ is simply connected, it is a diffeomorphism.
    The inverse is a diffeomorphism $\phi_x : s^{-1}(x) \to N_x$.

    If $N = \bigcup_{x\in M} N_x$, we have a map $\phi : \tilde{G} \to N$, $g\to \phi_{s(g)}(g)$. This map is smooth,
    because it is the extension by holonomy of the map $x\mapsto (x,x)$.
    We define $p_2 := \pi_2 \circ \phi: \tilde{G}\to\G(A)$ and let $U$ be the image of $p_2$, so we obtain the required diagram:
    \begin{equation*}
    %\label{eq:diagram}
    \begin{tikzpicture}[baseline=(current  bounding  box.center)]
        \matrix(m)[matrix of math nodes, row sep=2.2em, column sep=2.6em]{
            & \tilde{G} & \\
            G & & U\subseteq \G(A) \\
        };
        \path[->] (m-1-2) edge node[auto,swap]{$p_1$} (m-2-1.north east);
        \path[->] (m-1-2) edge node[auto]{$p_2$} (m-2-3.north west);
        %\tag{$\dag$}
    \end{tikzpicture}
    \end{equation*}
    
    Now, to establish the existence of the lower half of the diagram
    we apply the functor $\AsCo$ to $p_1$:
    \begin{center}
    \begin{tikzpicture}[baseline=(current  bounding  box.center)]
        \matrix(m)[matrix of math nodes, row sep=2.2em, column sep=2.6em]{
            & \tilde{G} & \\
            G & & \AsCo(\tilde{G}) \\
               & \AsCo(G) & \\
        };
        \path[->] (m-1-2) edge node[auto,swap]{$p_1$} (m-2-1.north east);
        \path[->] (m-1-2) edge (m-2-3.north west);
        \path[->] (m-2-3.south west) edge node[auto]{$\AsCo(p_1)$} (m-3-2);
        \path[->] (m-2-1.south east) edge (m-3-2);
    \end{tikzpicture}
    \end{center}
    We claim that $\AsCo(\tilde{G}) \cong \G(A)$, and that under this identification the natural
    map $\tilde{G} \to \AsCo(\tilde{G})$ corresponds to $p_2$.

    Note that the associators of $\tilde{G}$ are discrete
    since they lie in the fiber of $\tilde{G} \to U$ over the unit section.
    Since $G$ has product connected to the arcs, so does $\tilde{G}$ and hence $\AsCo(\tilde{G})$ is a Lie groupoid.
    We have morphisms of Lie groupoids 
   \[ \AsCo(\tilde{G}) \to \AsCo(U) \to \AsCo(\G(A)) \cong \G(A), \] 
   which induce isomorphisms at the level of Lie algebroids. Since $\G(A)$ is source-simply connected, we must in fact have
    \[ \AsCo(\tilde{G}) \cong \AsCo(U) \cong \AsCo(\G(A)) \cong \G(A) .\]
    Following along these maps, we see that an element $g\in\tilde{G}$ is mapped to $\G(A)$
    as follows:
    \[ g\in\tilde{G} \mapsto [g]\in\AsCo(\tilde{G}) \mapsto [p_2(g)] \in \AsCo(U) \mapsto [p_2(g)] \in \AsCo(\G(A)) \mapsto p_2(g) \in \G(A) .\]
    Hence, the diagram in the statement of the theorem follows.
%    \begin{center}
%    \begin{tikzpicture}[baseline=(current  bounding  box.center)]
%        \matrix(m)[matrix of math nodes, row sep=2.2em, column sep=2.6em]{
%            & \tilde{G} & \\
%            G & & \G(A) \\
%               & \AsCo(G) & \\
%        };
%        \path[->] (m-1-2) edge node[auto,swap]{$p_1$} (m-2-1.north east);
%        \path[->] (m-1-2) edge node[auto]{$p_2$} (m-2-3.north west);
%        \path[->] (m-2-3.south west) edge (m-3-2);
%        \path[->] (m-2-1.south east) edge (m-3-2);
%    \end{tikzpicture}
%    \end{center}
%    is what we were looking for.
\end{proof}

\begin{corollary}
\label{cor:1-connected:integrable}
    If $G$ is a $s$-simply connected, bi-regular, local Lie groupoid with product connected to the axes and integrable algebroid, then $\Assoc(G)$ is uniformly discrete (and, in particular, $\AsCo(G)$ is smooth).
\end{corollary}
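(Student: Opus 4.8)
The plan is to exploit Theorem~\ref{thm:classification-groupoids} with $G$ playing the role of its own cover. First I would observe that, since $G$ is $s$-simply connected (so in particular source-connected), the covering $p_1:\tilde{G}\to G$ of Corollary~\ref{cor:universal:cover} restricts to a diffeomorphism on each source fiber; hence we may identify $\tilde{G}$ with $G$ and take $p_1=\id$. Theorem~\ref{thm:classification-groupoids} then supplies directly the generalized covering $p_2:G\to\G(A)$ onto an open neighborhood $U$ of the units in the source-simply connected integration. By Theorem~\ref{thm:covers}, $p_2$ is an \'etale morphism of local Lie groupoids, in particular a local diffeomorphism which is the identity on $M$.

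The key step would be to show that $p_2$ sends every associator to a unit, that is, $\Assoc(G)\subseteq p_2^{-1}(u(M))$. Given $g\in\Assoc_x(G)$, there is a well-formed word $w$ with $(x)\le w$ and $(g)\le w$. Since $p_2$ is a morphism of local Lie groupoids it carries contractions in $G$ to contractions in $\G(A)$, so applying $p_2$ to the two contraction sequences produces two evaluations of the single word $p_2(w)$ in $\G(A)$, ending at $1_x$ and at $p_2(g)$ respectively. But $\G(A)$ is a global Lie groupoid, hence globally associative, so these two evaluations must coincide; therefore $p_2(g)=1_x$. This is where the integrability hypothesis really enters, through the existence of the honest, globally associative integration $\G(A)$.

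Finally I would upgrade this to uniform discreteness using \'etaleness near the units. Because $p_2$ is \'etale, every $x\in M$ has a neighborhood $V_x$ in $G$ on which $p_2$ is a diffeomorphism onto its image; as $p_2$ carries $M$ onto the unit section of $\G(A)$, on such a $V_x$ the only preimages of units are units, so $V_x\cap p_2^{-1}(u(M))=V_x\cap M$. Setting $U_0=\bigcup_{x\in M}V_x$ gives an open neighborhood of $M$ with $U_0\cap p_2^{-1}(u(M))=M$, whence $U_0\cap\Assoc(G)=M$ by the previous paragraph together with $M\subseteq\Assoc(G)$. Thus $\Assoc(G)$ is uniformly discrete, and since $G$ is bi-regular with products connected to the axes, Theorem~\ref{thm:smoothness-of-asco} immediately yields that $\AsCo(G)$ is smooth. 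The only genuinely new work beyond invoking the classification theorem is the observation that $p_2$ annihilates associators; the passage from this pointwise fiber statement to a single uniform neighborhood is the one place where I must use that $p_2$ is a local diffeomorphism along all of $M$, rather than merely that its unit fiber is discrete.
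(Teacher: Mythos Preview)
Your proof is correct and follows essentially the same approach as the paper's: the paper's two-sentence argument observes that $G$ (being its own source-simply connected cover) is a generalized cover of the globalizable $U\subseteq\G(A)$ via $p_2$, so the associators lie in the (discrete) fiber of $p_2$ over the units. You have simply unpacked these two sentences carefully---explaining why associators are killed by $p_2$ (global associativity of $\G(A)$) and why discreteness of the fiber yields a single uniform neighborhood (\'etaleness of $p_2$ along $M$)---which is exactly what the paper leaves implicit.
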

\begin{proof}
    The above result shows that $G$ is a generalized cover of a globalizable local
    Lie group. The associators of $G$ are therefore contained in the fiber of this
    generalized covering over the unit section, and this fiber is discrete.
\end{proof}

Note that this corollary is false without the assumption of $s$-simply connectedness.
For example, there are local Lie groups with non-discrete associators.
An example is given is the next section.

\section{Associativity and integrability}
\label{sec:integrability}

In this section, we discuss the relationship between the integrability of a Lie
algebroid and the associativity of a local groupoid integrating it. Starting
with a local Lie groupoid $G$ with Lie algebroid $A$, we would like to give a
precise relationship between the monodromy groups $\tilde{\N}_x(A)$,
controlling the integrability of $A$, and the associators $\Assoc_x(G)$.  Since
$\tilde{\N}_x(A)$ and $\Assoc_x(G)$ only depend on the restrictions of $G$ and
$A$ to the orbit through $x$, in this section we will make the following:
\begin{itemize}
\item {\bf Assumption.} $G\tto M$ is a \emph{transitive} local Lie groupoid,
    i.e., for every $x,y\in M$ there exist a well-formed word
    $w=(g_1,\dots,g_k)$ with $t(g_1)=x$ and $s(g_k)=y$.
\end{itemize}
This is equivalent to assuming that $A$ has surjective anchor (i.e., is
transitive). Of course, our results are also valid for a non-transitive local
Lie groupoid $G$: in the statements one must replace $M$ by an orbit of $G$.

Before we turn to the general theory, in Section~\ref{ss:associators:examples}
we discuss examples exhibiting properties that motivate our results. In
particular, one of these examples shows that it is possible to have
$\tilde{\N}_x(A)$ trivial (hence, $A$ integrable) and $\Assoc_x(G)\subset G_x$
non-discrete. Hence, in order to relate $\tilde{\N}_x(A)$ and $\Assoc_x(G)$ one
needs some extra assumptions on $G$. We will see that these can always be
achieved by shrinking $G$.

\subsection{Examples}
\label{ss:associators:examples}

We have seen in Lemma~\ref{lem:assoc:restriction} that restricting a local Lie groupoid does not change its associators. On the
other hand, shrinking a local Lie groupoid \emph{can} change the associators drastically. To illustrate this we will give an example of a local Lie group $G$ such that $\Assoc(G)$ is not discrete. Because every local Lie group has a neighborhood of the unit section that is globalizable
(and therefore has trivial associators), this shows that the associators can change from
non-discrete to discrete by shrinking the local groupoid!

\begin{example}
Let $B \subset \R^2$ be the ladder-shaped set
\[ B = (\{0\}\times \R) \cup (\{1\}\times \R) \cup ([0,1]\times \Z) \]
and let $G$ be its thickening
\[ G = \left\{ p \in \R^2 \mid \exists\, q \in B \text{ such that } d(p,q) < \frac1{10} \right\} .\]
(Here, $d$ is the Euclidean distance.)
We will equip $G$ with the structure of a local Lie group in such a way that $\Assoc(G)$ is not discrete.

For each $n \in \Z_{>0}$, let $f_n : \R\to \R_{>0}$ be a smooth function that
is 1 outside of $[\frac13, \frac23]$ and such that the time-$\frac{8}{10}$ flow of the vector field
$f_n\frac{\partial}{\partial x}$ on $\R$ maps $\frac1{10}$ to $\frac9{10} + \frac1{100n}$.
In other words, the flow of $f_n\frac{\partial}{\partial x}$ is just a little bit faster than
that of $\frac{\partial}{\partial x}$.
For each $n \in \Z_{\leq 0}$, let $f_n$ be the function on $[\frac1{10},\frac9{10}]$ that takes value 1 everywhere.
Now consider the following vector fields on $G$:
\begin{align*}
X(x,y) &= \begin{cases} f_n(x)\frac{\partial}{\partial x} &
        \text{if } x\in[\frac1{10},\frac9{10}] \text{ and } y\in(n-\frac1{10},n+\frac1{10}) \\
        \frac{\partial}{\partial x} & \text{otherwise}, \end{cases} \\
Y(x,y) &= \frac{\partial}{\partial y} .
\end{align*}

\begin{figure}[h]
    \begin{tikzpicture}[scale=2]
        \clip (-2,-0.6) rectangle (2,1.5);

        %the ladder
        \foreach \y in {-2,-1,0,1} {
            \draw (0.1,\y+0.1) -- (0.9,\y+0.1) -- (0.9,\y+0.9) -- (0.1,\y+0.9) -- cycle;
        }
        \draw (-0.1,-2) -- (-0.1,2);
        \draw (1.1,-2) -- (1.1,2);

%\begin{scope}[very thick,decoration={
    %markings,
    %mark=at position 0.5 with {\arrow{>}}}
    %] 
    %\draw[postaction={decorate}] (-4,0)--(4,0);
    %\draw[postaction={decorate}] (4,0)--(4,2);
    %\draw[postaction={decorate}] (4,2)--(-4,2);
    %\draw[postaction={decorate}] (-4,2)--(-4,0);
%\end{scope}
        %the flow
        \draw (0,0) -- (0,1) -- (1.05,1) -- (1.05,0) -- (0.05,0);
        \draw[thick, postaction={decorate}, decoration={markings,mark=at position 0.5 with {\arrow{Latex}}}] (0,0) -- (0,1);
        \draw[thick, postaction={decorate}, decoration={markings,mark=at position 0.5 with {\arrow{Latex}}}] (0,1) -- (1.05,1);
        \draw[thick, postaction={decorate}, decoration={markings,mark=at position 0.5 with {\arrow{Latex}}}] (1.05,1) -- (1.05,0);
        \draw[thick, postaction={decorate}, decoration={markings,mark=at position 0.5 with {\arrow{Latex}}}] (1.05,0) -- (0.05,0);

        %two marked points
        \draw[fill=black] (0,0) circle (0.015);
        \draw[fill=white] (0.05,0) circle (0.015);

        %set up circle for zoom effect
        \draw[dashed,line width=0.5pt] (0.025,0) circle (0.2);

        \coordinate (A) at (2.5cm/2/2,-0.2cm/2/2);
        %\node at (0.106873, 0.182474)[circle, fill=green, scale=0.3] {};
        %\node at (-0.929381, 0.647422)[circle, fill=green, scale=0.3] {};
        %\node at (0.0755409, -0.193509)[circle, fill=green, scale=0.3] {};
        %\node at (-1.02338, -0.480527)[circle, fill=green, scale=0.3] {};
        \draw[line width=0.5pt] (0.106873, 0.182474) -- (-0.929381, 0.647422);
        \draw[line width=0.5pt] (0.0755409, -0.193509) -- (-1.02338, -0.480527);

        %now transform the canvas, clip, and make the same picture again
        \pgflowlevel{\pgftransformshift{\pgfpoint{-2.5cm}{0.2cm}}};
        \pgflowlevel{\pgftransformscale{2}};
        \pgftransformscale{1.5};
        \draw[dashed] (0.025,0) circle (0.2);
        \clip (0.025,0) circle (0.2);

        %the ladder
        \foreach \y in {-2,-1,0,1} {
            \draw (0.1,\y+0.1) -- (0.9,\y+0.1) -- (0.9,\y+0.9) -- (0.1,\y+0.9) -- cycle;
        }
        \draw (-0.1,-2) -- (-0.1,2);
        \draw (1.1,-2) -- (1.1,2);

        %the flow
        \draw[thick] (0,0) -- (0,1) -- (1.05,1) -- (1.05,0) -- (0.05,0);

        %two marked points
        \draw[fill=black] (0,0) circle (0.015);
        \draw[fill=white] (0.05,0) circle (0.015);

        %add a double arrow indicating the gap
        \draw[line width=0.2pt,{Latex[length=0.3mm, width=0.4mm]}-{Latex[length=0.3mm, width=0.4mm]}] (0,-0.03) -- (0.05,-0.03);
    \end{tikzpicture}
    \caption{The vector field $X$ is obtained by slightly modifying
        $\frac{\partial}{\partial x}$.
        The origin is the black dot.
        If we flow the origin along $Y$
        ($=\frac{\partial}{\partial y}$) for time 1, then along $X$ for time 1,
        then along $-Y$ for time 1, and finally along $-X$ for time 1, we end
        up slightly away from the origin, at the white dot.
        This white dot is an associator.
        The gap is exaggerated in the picture for clarity.}
\end{figure}
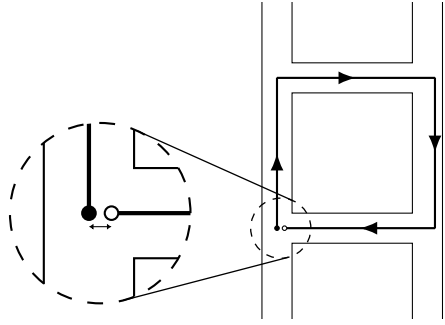
There is a unique local Lie group structure on $G$ over $M$ such that $X$ and $Y$
are bi-invariant (more precisely, unique up to restriction if we ask that the local groupoid has
with products connected to the axes).
%TODO: make this more explicit, and maybe get rid of the 'products connected to the axes'
We claim that for this local Lie group structure, $\Assoc(G)$ is not discrete.

Let $a = (\frac1{20},0)\in G$ and $b = (0,\frac1{20}) \in G$.
Then consider the product
\[ \underbrace{a^{-1}\cdots a^{-1}}_{20} \underbrace{b^{-1}\cdots b^{-1}}_{20n}
   \underbrace{b\cdots b}_{20n} \underbrace{a\cdots a}_{20} ,\]
where the numbers under the braces indicate the number of repetitions.
Clearly, this product results in the neutral element $(0,0)$ when evaluated from the inside out.
However, evaluating first the second half $c = b\cdots b a\cdots a$ from left to right,
and then evaluating the rest $a^{-1}\cdots a^{-1} b^{-1}\cdots b^{-1} c$ from right to left,
we get $\left(\frac1{100n},0\right)$.
This shows that $\left(\frac1{100n},0\right) \in \Assoc(G)$ for all positive $n$, so that the associators do not
form a discrete set.
\end{example}
\medskip

All the examples of local Lie groupoids we have discussed so far have integrable Lie algebroid. We consider now an example of a local Lie groupoid with non-integrable algebroid. It is a local Lie groupoid integrating a pre-quantum Lie algebroid $A_\omega$ where $\omega$ has a non-discrete group of spherical periods.

\begin{example}
Let $M = \S^2\times \S^2$, where each copy of $\S^2$
is equipped with the usual round metric.
Let
\[ G' = \{ ((y,y'),(x,x')) \in M\times M \mid x+y\neq 0\neq x'+y' \} \times \R .\]
We prescribe the source and target maps as
\[ s((y,y'),(x,x'),a) = (x,x') \in M \]
and
\[ t((y,y'),(x,x'),a) = (y,y') \in M .\]
In a similar fashion to what we did in the examples in Section~\ref{ss:example-non-globalizable}, we introduce a multiplication on $H$ by the formula
\begin{multline*}
((z,z'),(y,y'),a_1) \cdot ((y,y'),(x,x'),a_2)= \\ =((z,z'),(x,x'),a_1+a_2+A(\Delta xyz)+\lambda A(\Delta x'y'z')) ,
\end{multline*}
where $\lambda \in \R$ is a fixed parameter,
and which is defined whenever
\[ A(\Delta xyz) \in (-\pi,\pi) \quad\text{and}\quad A(\Delta x'y'z') \in \left(-\frac{\pi}{\lvert \lambda\rvert},\frac{\pi}{\lvert\lambda\rvert}\right),\]
with the convention that the condition on $A(\Delta x'y'z')$ is void if $\lambda = 0$. The algebroid of this local Lie groupoid is $A_\omega$ where $\omega=\d S\oplus \lambda \d S$, and so it is integrable precisely if $\lambda$ is rational.

To check that $H$ is 3-associative assume that $((z,z'),(y,y'),a_1)$, $((y,y'),(x,x'),a_2)$ and $((x,x'),(w,w'),a_3)$ are in $H$.
Then
\begin{multline*}
    ( ((z,z'), (y,y'),a_1) \cdot ((y,y'),(x,x'),a_2) )\cdot ((x,x'),(w,w'),a_3)=\\
    = ((z,z'),(x,x'),a_1+a_2+a_3+A(\Delta xyz)+A(\Delta wxz)+\lambda A(\Delta x'y'z') + \lambda A(\Delta w'x'z'))
\end{multline*}
and
\begin{multline*}
    ((z,z'), (y,y'),a_1) \cdot ( ((y,y'),(x,x'),a_2) \cdot ((x,x'),(w,w'),a_3) )=\\
    = ((z,z'),(x,x'),a_1+a_2+a_3+A(\Delta wxy)+A(\Delta wyz)+\lambda A(\Delta w'x'y') + \lambda A(\Delta w'y'z')) .
\end{multline*}
Now
\[ A(\Delta xyz) + A(\Delta wxz) = A(\Delta wxy) + A(\Delta wyz) \pmod{4\pi} \]
because both equal the area of the quadrangle $wxyz$ (which is defined up to $\pi$).
By the restriction on the areas of these triangles, equality must hold in $\R$ (not just in $\R/4\pi$).
Similarly, we will have
\[ \lambda A(\Delta x'y'z') + \lambda A(\Delta w'x'z') = \lambda A(\Delta w'x'y') + \lambda A(\Delta w'y'z') .\]
This proves 3-associativity.

This local Lie groupoid is not globally associative. The counterexample to 6-associativity
in Section~\ref{sss:non-glob} also works here,
by considering $G''$ as the local Lie subgroupoid $\{ (y,y'),(N,N) \} \times \R$ of $G'$,
where $N \in \S^2$ is the north pole.
Recall, however, that there was a neighborhood of $M \subset G''$ that was globally
associative. This is not the case for $G'$. Indeed, the following result will be a consequence of 
Theorem~\ref{thm:assoc-mono} below:

\begin{proposition}
    There is an open neighborhood of $M \subset G'$ that is globally associative
    if and only if $\lambda \in \Q$ (i.e.\ iff the Lie algebroid is integrable).
    \label{lem:nonint}
\end{proposition}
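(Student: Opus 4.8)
The plan is to deduce the proposition from the identity $\Assoc_x(G)=G_x\cap\N_x(A)$ of Theorem~\ref{thm:assoc-mono}, after computing the monodromy group of $A_\omega$ explicitly. Since $M=\S^2\times\S^2$ is simply connected and the isotropy Lie algebra $\g_x=\Ker(\rho_x)\cong\R$ is abelian (so that $\G(\g_x)\cong\R$), the monodromy homomorphism $\partial:\pi_2(M,x)\to\G(\g_x)$ is just the period map $[\eta]\mapsto\int_\eta\omega$. With $\omega=\d S\oplus\lambda\,\d S$ and $\pi_2(M)\cong\Z\oplus\Z$ generated by the two sphere factors, its image is $\N_x(A)=4\pi(\Z+\lambda\Z)\subset\R$, which is discrete exactly when $\lambda\in\Q$ and dense when $\lambda\notin\Q$. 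Under the embedding $G_x\hookrightarrow\G(\g_x)\cong\R$ the isotropy is identified with an interval around $0$ in the central coordinate $a$, so the content of Theorem~\ref{thm:assoc-mono} becomes: for any shrunk neighborhood $U$ of $M$, the associators $\Assoc_x(U)$ are exactly the points of $4\pi(\Z+\lambda\Z)$ lying in the $a$-interval of $U_x$.

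For the forward implication I would argue by contraposition. Suppose $\lambda\notin\Q$ and that some neighborhood $U$ of $M$ in $G'$ were globally associative. After shrinking $U$ further (a shrinking of a globally associative local groupoid is again globally associative, and one can arrange it to be bi-regular, shrunk, and with products connected to the axes), global associativity forces $U\to\AsCo(U)$ to be injective, so by Corollary~\ref{cor:kernel:completion} the associators $\Assoc(U)$ are trivial. But the computation above gives $\Assoc_x(U)=U_x\cap 4\pi(\Z+\lambda\Z)$, and since this set is dense in $\R$ it meets the interval $U_x$ in infinitely many points — a contradiction. Hence no globally associative neighborhood exists when $\lambda\notin\Q$.

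For the reverse implication, when $\lambda\in\Q$ the group $\N:=4\pi(\Z+\lambda\Z)$ is discrete, say generated by $c>0$, so that $A_\omega$ is integrable and the source $1$-connected groupoid $\G(A_\omega)$ is a genuine global Lie groupoid over $\S^2\times\S^2$, with central fiber coordinate taking values in $\R/\N$. This is built exactly as the groupoid $\G$ was built for a single $\S^2$ in Section~\ref{ss:example-non-globalizable}, the conditions $4\pi,4\pi\lambda\in\N$ ensuring that the multiplication formula descends modulo $\N$. Reducing the $\R$-coordinate of $G'$ modulo $\N$ defines (on the non-antipodal locus) a map to $\G(A_\omega)$ that is injective on the shrinking $U=\{\,((y,y'),(x,x'),a)\in G'\mid |a|<c/2\,\}$, since $\R\to\R/\N$ is injective on $(-c/2,c/2)$. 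This exhibits $U$ as an open part of a global Lie groupoid, hence as globalizable and in particular globally associative.

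The main obstacle is the globalization step in the reverse direction: one must verify that the multiplication of $G'$, together with its domain constraints $A(\Delta xyz)\in(-\pi,\pi)$ and the $\lambda$-rescaled condition on the second factor, really descends to the quotient $\G(A_\omega)=G'/\N$ and that the reduction map restricts to an isomorphism of $U$ onto an open part of $\G(A_\omega)$; this is routine given the parallel construction already carried out for $\S^2$, but it is the only place where genuine bookkeeping is required. The forward direction, by contrast, is immediate once the density of $\N_x(A)$ for irrational $\lambda$ is combined with Theorem~\ref{thm:assoc-mono}.
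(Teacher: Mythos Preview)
Your proposal is correct and follows exactly the route the paper intends: the paper does not give a separate proof of this proposition but simply declares it to be ``a consequence of Theorem~\ref{thm:assoc-mono}'', and you have supplied the details of that deduction (computing $\N_x(A_\omega)=4\pi(\Z+\lambda\Z)$, identifying $G_x$ with an interval in $\R$, and reading off both implications). For the reverse direction your explicit embedding into $\G(A_\omega)$ is fine; an equally valid alternative, closer in spirit to the paper's machinery, is to observe that a shrunk neighborhood has uniformly discrete associators when $\lambda\in\Q$, so by Theorem~\ref{thm:smoothness-of-asco} the completion map is a local diffeomorphism and a further shrinking embeds injectively into the Lie groupoid $\AsCo(G)$.
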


\end{example}

\subsection{Monodromy groups}
\label{ss:monodromy}

The integrability of a Lie algebroid $A\to M$ is controlled by its
\emph{monodromy groups} \cite{integ-of-lie-article,lectures-integrability-lie}.
The monodromy group $\tilde{\N}_x(A)$ at
$x\in M$ is a subgroup of $\G(\g_x)$ contained in the center $Z(\G(\g_x))$.
Its construction uses the language of $A$-paths, which we will assume the reader
is familiar with.
A detailed exposition can be found in \cite{integ-of-lie-article} and
\cite[sections 2.2, 3.3 and 3.4]{lectures-integrability-lie} and we refer to
these works for details.

Let $A$ be a Lie algebroid over $M$. We write $I$ for the interval $[0,1]$.
Recall that an $A$-path is a $C^2$-path $a:I\to A$ with base path
$\gamma:I\to M$, satisfying:
\[ \rho(a(t))=\frac{\d }{\d t} \gamma(t). \]
Alternatively, this condition states that $a\,\d t:TI\to A$ is a Lie algebroid morphism. 

Two $A$-paths $a_0,a_1:I\to A$ are said to be \emph{$A$-homotopic} if there exists a
Lie algebroid morphism $a\,\d t + b\,\d s : TI\times TI \to A$ satisfying the
boundary conditions:
\begin{itemize}
\item $a(0,t)=a_0(t)$ and $a(1,t)=a_1(t)$;
\item $b(s,0)=b(s,1)=0$.
\end{itemize}
The space of $A$-paths is a Banach manifold $P(A)$ and $A$-homotopy defines an
equivalence relation $\sim$ on $P(A)$. The quotient space:
\[ \G(A):=P(A)/\sim, \]
is a topological groupoid over $M$ with composition given by concatenation of
paths (after possible reparameterization). The main result of
\cite{integ-of-lie-article} states that:
\begin{enumerate}[(i)]
\item $A$ is an integrable Lie algebroid if and only if $\G(A)$ is a smooth
    quotient, in which case it is the source 1-connected integration of $A$;
\item $\G(A)$ is smooth if and only if the monodromy groups $\Mon_x(A)$ are
    uniformly discrete.
\end{enumerate}

Let us briefly recall the construction of $\Mon_x(A)$. 

\begin{definition}
    Suppose $a,a':I\to A$ are $A$-paths over the same base path.
    We say that $a$ and $a'$ are \emph{$A$-homotopic along a trivial sphere}
    if there is an $A$-homotopy between $a$ and $a'$ whose base homotopy
    determines a trivial element in $\pi_2(M)$.
\end{definition}

Then one can show that:

\begin{lemma}
Suppose $a_0:I\to A$ is an $A$-path with base path $\gamma_0 :I\to M$ and $S : I\times I \to M$ is such that
\begin{itemize}
    \item $S(0,t) = \gamma_0(t)$,
    \item $S(s,0) = \gamma_0(0)$ and $S(s,1) = \gamma_0(1)$ for all $s$,
\end{itemize}
Then there is an $A$-homotopy $a\,\d t + b\,\d s : TI\times TI \to A$ covering $S$.
Moreover, the class of the $A$-path $a_1(t):=a(1,t)$ modulo $A$-homotopies along
trivial spheres only depends on the homotopy class (rel boundary) of $S$.
\end{lemma}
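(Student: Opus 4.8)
The plan is to fix a linear connection $\nabla$ on $A$ and use it to rewrite the condition that $a\,\d t+b\,\d s:TI\times TI\to A$ be a Lie algebroid morphism as the Crainic--Fernandes equation $\partial_s a-\partial_t b=T_\nabla(a,b)$, where $\partial_s,\partial_t$ are the $\nabla$-covariant derivatives along $S$ and $T_\nabla$ is the torsion of $\nabla$ (see \cite{integ-of-lie-article,lectures-integrability-lie}). Working under the standing transitivity assumption, the anchor $\rho$ is surjective, so I would choose a linear splitting $\sigma:TM\to A$ of $\rho$ and set $b(s,t):=\sigma(\partial_s S(s,t))$. This is a lift of $\partial_s S$, and because $S$ fixes the endpoints (so $\partial_s S(s,0)=\partial_s S(s,1)=0$) it automatically satisfies the boundary conditions $b(s,0)=b(s,1)=0$ required of an $A$-homotopy. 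With $b$ fixed, the morphism equation becomes, for each fixed $t$, an inhomogeneous linear (hence globally solvable on $[0,1]$) ODE $\partial_s a=\partial_t b+T_\nabla(a,b)$ in the variable $s$, which I solve with initial condition $a(0,t)=a_0(t)$.

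First I would verify that the resulting $a$ actually covers $S$, i.e.\ $\rho(a)=\partial_t S$. Applying $\rho$ to the equation, and using that $\partial_s\partial_t S=\partial_t\partial_s S$ together with the compatibility of $\rho$ with $T_\nabla$, one sees that $\rho(a)-\partial_t S$ solves a linear ODE in $s$ with vanishing initial value, and is therefore identically zero (this is the standard compatibility lemma for $A$-homotopies). Thus $a\,\d t+b\,\d s$ is a genuine $A$-homotopy covering $S$, and $a_1(t):=a(1,t)$ is an $A$-path over $S(1,\cdot)=\gamma_1$. This establishes the existence statement.

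For the ``moreover'', I would prove invariance in two stages, both reduced to the same mechanism: \emph{lift one additional parameter direction by the ODE above, and observe that the base of the resulting comparison $A$-homotopy is constant in that direction, hence sweeps a trivial sphere.} In the first stage I fix $S$ but vary the auxiliary data $(\nabla,\sigma,b)$. Interpolating affinely over a parameter $r\in I$ (an affine combination of lifts of $\partial_s S$ is again such a lift, still vanishing at $t=0,1$) and lifting the $r$-direction by the same ODE produces a morphism on the cube $I^3_{(r,s,t)}$; restricting it to the face $s=1$ yields an $A$-homotopy between the two candidate $A$-paths $a_1$ whose base map is $S(1,\cdot)=\gamma_1$ for every $r$. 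Being independent of $r$, this base is a degenerate square, i.e.\ a trivial sphere, so the class of $a_1$ modulo $A$-homotopy along trivial spheres is independent of the choices.

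In the second stage, if $S$ and $S'$ are homotopic rel boundary through $G:I^3\to M$, I would lift $G$ to a morphism on the cube by solving the ODE in the new parameter $r$, taking as initial data at $r=0$ the $A$-homotopy already built for $S$ and arranging (by uniqueness and the fact that $G(r,0,t)=\gamma_0(t)$ is constant in $r$) that the edge $s=0$ stays equal to $a_0$. Restricting the cube morphism to the face $s=1$ produces an $A$-homotopy from $a_1$ to a path built from $S'$; since $G$ is rel boundary, $G(r,1,t)=\gamma_1(t)$ is again independent of $r$, so the swept sphere is trivial, and the first stage reconciles this path with the canonical $a_1'$ of $S'$. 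The main obstacle I anticipate is twofold: establishing the compatibility identity $\rho(a)=\partial_t S$ cleanly, which is the technical heart of lifting a base homotopy to an $A$-homotopy, and the careful boundary bookkeeping on the cube needed to ensure that the comparison morphisms are genuine $A$-homotopies with degenerate base squares. Once these are in place, the triviality of the swept sphere, and hence the asserted well-definedness, is immediate.
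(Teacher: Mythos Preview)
Your approach is correct and is precisely the standard argument from \cite{integ-of-lie-article,lectures-integrability-lie}: fix a $TM$-connection on $A$, choose a lift $b$ of $\partial_s S$ (using the splitting $\sigma$ afforded by transitivity so that the endpoint conditions $b(s,0)=b(s,1)=0$ are automatic), solve the linear ODE $\partial_s a=\partial_t b+T_\nabla(a,b)$ with initial data $a_0$, and then run a cube argument for the well-definedness modulo trivial spheres. The paper does not give its own proof of this lemma at all; it simply refers the reader to the proof of Proposition~3.21 in \cite{lectures-integrability-lie}, and what you have written is a faithful sketch of that argument.

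One small comment on your second stage: when you say you ``lift $G$ to a morphism on the cube by solving the ODE in the new parameter $r$'', you should be aware that on $I^3$ there are now three compatibility equations (one for each pair of directions), not one, and it is not immediate that solving the ODE in $r$ alone produces a morphism. The cleaner bookkeeping, which is what the cited reference does, is to run your existence construction for each fixed $r$ (getting $a(r,s,t)$ and $b(r,s,t)$ covering $G(r,\cdot,\cdot)$, smoothly in $r$), and then observe that on the face $s=1$ the pair $(a(r,1,t),\,c(r,1,t))$ with $c$ obtained by lifting $\partial_r G$ satisfies the single morphism equation in $(r,t)$; since $G(r,1,t)=\gamma_1(t)$ is independent of $r$, that face is the required $A$-homotopy along a trivial sphere. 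This is only a matter of organizing the argument, not a gap.
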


The proof of this lemma is contained in the proof of Proposition 3.21 in \cite{lectures-integrability-lie}. We will write $\tilde{\partial}(a_0, S)$ for the class of $a_1$ modulo $A$-homotopy along trivial spheres.

If in the lemma above the path $t\mapsto S(1,t)$ is constant, say at $x\in M$,
then $\tilde{\partial}(a_0, S)$ is (the class of) a $\g_x$-path,
hence it determines an element of $\G(\g_x)$.
If we write $0_x$ for the trivial $A$-path at $x$, then:

\begin{definition}
The \emph{monodromy group} $\Mon_x(A)$ is defined as the image of
\[ \partial : \pi_2(M,x) \to \G(\g_x) : [\alpha] \mapsto \tilde{\partial}(0_x, \alpha) ,\]
where an element $[\alpha] \in \pi_2(\Orbit_x,x)$ is represented by a smooth map $\alpha:I\times I\to M$ mapping the boundary to $x$.
\end{definition}

The monodromy map has the following interpretation
(\cite{integ-of-lie-article}). The short exact sequence of algebroids (recall
we are assuming that $A$ is transitive):
\begin{equation}
\label{eq:short:exact:algebroid}
 \xymatrix{0\ar[r] & \Ker\rho \ar[r]& A \ar[r]^\rho & TM \ar[r] & 0} 
 \end{equation}
can be seen as a ``fibration'' to which one can apply the integration functor $\G(-)$. The result is a long exact sequence of ``isotropy groups'' whose connecting homomorphism is the monodromy map:
\begin{equation}
\label{eq:short:exact:groupoid}
\xymatrix{\cdots \ar[r] & \pi_2(M,x) \ar[r]^\partial & \G(\g_x)\ar[r] & \G_x(A) \ar[r] & \pi_1(M,x)\ar[r] & \cdots}.
\end{equation}
In particular, this leads to the short exact sequence of groups:
\begin{equation}
\label{eq:short:exact:mon}
\xymatrix{0\ar[r] & \Mon_x(A) \ar[r]& \G(\g_x)\ar[r] & \G_x(A) \ar[r] & 0}.
\end{equation}

\subsection{Simplicial complexes and associative completions}
\label{ss:expansion-contraction}

The associative completion of a local Lie groupoid $G$ admits a simplicial
description that turns out to be quite useful to relate associators and
monodromy.

Let $W_k$ be the ordered simplicial complex
(see Figure~\ref{fig:wk})
\[ \{ \{0\}, \dots, \{k\} \} \cup \{ \{0,1\}, \{1,2\},\dots,\{k-1,k\} \} .\]
We will use the same letter to denote the associated simplicial set.
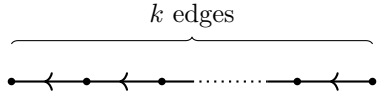
\begin{figure}[h]
\begin{center}
    \begin{tikzpicture}
        \draw[thick,dotted] (2.4,0) -- (3.4,0);
        \fill (0,0) circle (0.05);
        \fill (1,0) circle (0.05);
        \fill (2,0) circle (0.05);
        \fill (3.8,0) circle (0.05);
        \fill (4.8,0) circle (0.05);
        \begin{scope}[thick,decoration={
            markings,
            mark=at position 0.55 with {\arrow{<}}}
            ] 
            \draw[postaction={decorate}] (0,0) -- (1,0);
            \draw[postaction={decorate}] (1,0) -- (2,0);
            \draw[] (2,0) -- (2.4,0);
            \draw[] (3.4,0) -- (3.8,0);
            \draw[postaction={decorate}] (3.8,0) -- (4.8,0);
        \end{scope}
        \draw[decoration={brace,raise=0.5cm},decorate] (0,0) -- (4.8,0)
        node[pos=0.5,yshift=0.6cm,anchor=south] {$k$ edges};
    \end{tikzpicture}
\end{center}
\caption{The simplicial complex $W_k$ is just a sequence of $k$ edges, head to tail.}
\label{fig:wk}
\end{figure}
If $\NG=\{G^{(n)}\}$ is the nerve of a local Lie groupoid $G\tto M$
(cf.~Section~\ref{ss:nerve}), a simplicial map $W_k \to \NG$ is nothing but a
well-formed word on $G$ of length $k$. Moreover, we can also express
equivalence of words using simplicial notions, as we now explain.

For the following discussion we consider only ordered simplicial complexes $S$
which are 2-dimensional. Moreover, we assume that each edge has at most two
faces attached to it.
We will say that an edge is a \emph{boundary edge} of $S$ if it has at most one
face attached to it. 

If $\{u,w\}$ is a boundary edge of $S$ with $u < w$,
we can obtain a new ordered simplicial complex $S'$ by
\begin{enumerate}
    \item adding a new vertex $v$ to the simplicial complex
        with $u<v<w$,
    \item adding edges $\{u,v\}$ and $\{v,w\}$, and a face $\{u,v,w\}$.
\end{enumerate}
We will say that $S'$ can be obtained from $S$ by \emph{expansion}.

If $\{u,v\}$ and $\{v,w\}$ are boundary edges of $S$, with $u < v < w$,
and $\{u,w\}$ is not an edge of $S$,
we can obtain a new ordered simplicial complex $S'$ by
\begin{enumerate}
    \item adding the edge $\{u,w\}$,
    \item adding the face $\{u,v,w\}$.
\end{enumerate}
We will say that $S'$ can be obtained from $S$ by \emph{contraction}.

\begin{definition}
    We say that an ordered simplicial complex $S$ is a \emph{good complex}
    if there is an integer $k\geq 1$ such that $S$ can be obtained
    from $W_k$ by repeated expansion or contraction.
\end{definition}

Note that a good complex comes with a choice of two vertices:
there is one vertex that is minimal among boundary vertices (we will call it the \emph{source} of the good complex)
and one vertex that is maximal among boundary vertices (we will call it the \emph{target} of the good complex).

\begin{definition}
    If $S$ is a good complex, a \emph{boundary path} of $S$ is an ordered subcomplex $S'$ of $S$
    that is contained in its boundary,
    and for which there is an isomorphism of ordered complexes $W_k \to S'$
    that maps the source of $W_k$ to the source of $S$, and the target of $W_k$ to the target of $S'$.
\end{definition}

In other words, a boundary path of $S$ is just a path from source to target along the boundary of $S$
such that the vertices are increasing along the path.

\begin{figure}[h]
\begin{center}
    \begin{tikzpicture}
        \path (3,1) coordinate (a)
              (2,0) coordinate (b)
              (2,2) coordinate (c)
              (1.5,1.2) coordinate (d)
              (2.2,1) coordinate (h)
              (1,0.3) coordinate (e)
              (0.7,2) coordinate (f)
              (0,1) coordinate (g);
        \draw[line width=4.5, yellow!50, line cap=round] (a) -- (b) -- (e) -- (g);
        \draw[line width=4.5, green!50, line cap=round] (a) -- (c) -- (f) -- (g);
        \fill (a) circle (0.05);
        \fill (b) circle (0.05);
        \fill (c) circle (0.05);
        \fill (d) circle (0.05);
        \fill (h) circle (0.05);
        \fill (e) circle (0.05);
        \fill (f) circle (0.05);
        \fill (g) circle (0.05);
        \begin{scope}[thick,line cap=round,decoration={
            markings,
            mark=at position 0.55 with {\arrow{>}}}
            ] 
            \draw[postaction={decorate}] (a)--(b); % W_3
            \draw[postaction={decorate}] (b)--(e);
            \draw[postaction={decorate}] (e)--(g);
            \draw[postaction={decorate}] (a)--(h); % expansion
            \draw[postaction={decorate}] (h)--(b);
            \draw[postaction={decorate}] (h)--(e); % contraction
            \draw[postaction={decorate}] (e)--(d); % expansion
            \draw[postaction={decorate}] (d)--(g);
            \draw[postaction={decorate}] (h)--(d); % contraction
            \draw[postaction={decorate}] (a)--(c); % expansion
            \draw[postaction={decorate}] (c)--(h);
            \draw[postaction={decorate}] (c)--(d); % contraction
            \draw[postaction={decorate}] (c)--(g); % contraction
            \draw[postaction={decorate}] (c)--(f); % expansion
            \draw[postaction={decorate}] (f)--(g);
        \end{scope}
        %\foreach \xy in {a, b, c, d, e, f, g, h}{
        %    \node at (\xy) {\xy};
        %}
    \end{tikzpicture}
\end{center}
\caption{A good complex. This one can be obtained from the yellow-marked $W_3$ on the bottom
    by (for example) an expansion, a contraction, an expansion,
    a contraction, an expansion, two contractions and an expansion.
    The source is the rightmost vertex, the target is the leftmost vertex.
    This complex has two boundary paths (one on the top, one on the bottom).
    They are highlighted in green and yellow.}
\end{figure}
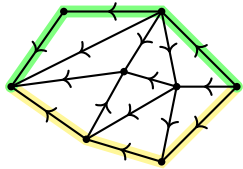

\begin{remark}
    A good complex can have many boundary paths. For example,
    the complex shown in Figure~\ref{fig:good-complex-not-disk} has eight of them.
    The good complexes that will be most relevant for us are those that are
    homeomorphic to a disk. These have precisely two boundary paths.
\end{remark}

\begin{figure}[h]
\begin{center}
    \begin{tikzpicture}
        \path (0,0) coordinate (a)
              (1,0) coordinate (b)
              (2,0) coordinate (c)
              (3,0) coordinate (d)
              (4,0) coordinate (e)
              (0.5,0.7) coordinate (f)
              (2.2,0.9) coordinate (g)
              (2.8,1) coordinate (h)
              (3.5,0.7) coordinate (i);
        \draw[line width=4.5, green!50, line cap=round] (e) -- (i) -- (d) -- (c) -- (b) -- (f) -- (a);
        \fill (a) circle (0.05);
        \fill (b) circle (0.05);
        \fill (c) circle (0.05);
        \fill (d) circle (0.05);
        \fill (e) circle (0.05);
        \fill (f) circle (0.05);
        \fill (g) circle (0.05);
        \fill (h) circle (0.05);
        \fill (i) circle (0.05);
        \begin{scope}[thick,line cap=round,decoration={
            markings,
            mark=at position 0.55 with {\arrow{>}}}
            ] 
            \draw[postaction={decorate}] (e)--(d);
            \draw[postaction={decorate}] (d)--(c);
            \draw[postaction={decorate}] (c)--(b);
            \draw[postaction={decorate}] (b)--(a);
            \draw[postaction={decorate}] (e)--(i);
            \draw[postaction={decorate}] (i)--(d);
            \draw[postaction={decorate}] (d)--(h);
            \draw[postaction={decorate}] (d)--(g);
            \draw[postaction={decorate}] (h)--(g);
            \draw[postaction={decorate}] (g)--(c);
            \draw[postaction={decorate}] (b)--(f);
            \draw[postaction={decorate}] (f)--(a);
        \end{scope}
        %\foreach \xy in {a, b, c, d, e, f, g, h, i}{
            %\node at (\xy) {\xy};
        %}
    \end{tikzpicture}
\end{center}
\caption{A good complex that is not homeomorphic to a disk.
This one has eight boundary paths. One of them is highlighted.}
\label{fig:good-complex-not-disk}
\end{figure}
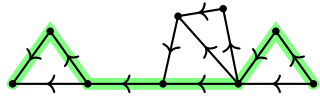

\begin{definition}
    Let $S$ be a good complex (considered as a simplicial set)
    and let $\phi : S \to \NG$ be a simplicial map.
    The \emph{boundary words} of $\phi$ are the words corresponding
    to the restriction of $\phi$ to the boundary paths of $S$.
\end{definition}

The following result now holds more or less by construction.

\begin{proposition}
    Two well-formed words $w_1,w_2\in W(G)$ are equivalent
    if and only if there is a good complex $S$ and a
    simplicial map $\phi : S \to \NG$
    such that both words are boundary words of $\phi$.
    \label{prop:equivalence-geometrically}
\end{proposition}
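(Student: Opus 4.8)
The plan is to set up a precise dictionary between the two kinds of expansion/contraction --- the one on words that generates $\sim$ and the one on ordered simplicial complexes that defines good complexes --- and then run an induction in each direction. The basic observation, which I would record carefully first using the face-map conventions of Section~\ref{ss:nerve}, is that a simplicial map $\phi:S\to\NG$ assigns to each $1$-simplex of $S$ an arrow of $G$ and to each $2$-simplex $\{u,v,w\}$ (with $u<v<w$) an element $(g_1,g_2)\in G^{(2)}$, in such a way that $\{u,v\}\mapsto g_1$, $\{v,w\}\mapsto g_2$, and the long edge $\{u,w\}\mapsto g_1g_2$; thus a $2$-simplex of $S$ is exactly a witness that the product $g_1g_2$ is defined. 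Restricting $\phi$ to a boundary path reads off a well-formed word, and a single simplicial expansion or contraction that adds one $2$-simplex along a boundary path changes the corresponding boundary word by exactly one expansion/contraction of words.

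For the ``only if'' direction I would argue by induction on the number $N$ of elementary equivalences realizing $w_1\sim w_2$, building the complex as a ``stack of triangles''. Since an elementary equivalence acts on a consecutive pair of letters and preserves the overall source and target of a word, every word $u_i$ in a chain $w_1=u_0,u_1,\dots,u_N=w_2$ is a path from the common target to the common source. Laying these out as successive boundary paths and gluing in, for each step $u_i\leftrightarrow u_{i+1}$, the single $2$-simplex provided by the defined product, produces an ordered simplicial complex $S$ which is homeomorphic to a disk and whose two boundary paths are precisely $w_1$ and $w_2$. By construction $S$ is obtained from $W_{\lvert w_1\rvert}$ by $N$ expansions/contractions, hence is a good complex, and the triangle labels assemble into the desired map $\phi:S\to\NG$.

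For the ``if'' direction I would induct on the number of expansions/contractions used to build $S$ from some $W_k$, proving the stronger statement that \emph{all} boundary words of $\phi$ are mutually $\sim$-equivalent (this also covers the non-disk good complexes, such as the one in Figure~\ref{fig:good-complex-not-disk}, which have more than two boundary paths). The base case $S=W_k$ has a single boundary word, so there is nothing to prove. For the inductive step, let $S$ be obtained from a good complex $S''$ by one operation; restricting $\phi$ to $S''$ and using the dictionary above, every boundary word of $S$ either equals, or differs by a single word expansion/contraction from, a boundary word of $S''$, and conversely. By the induction hypothesis applied to $\phi{\restriction_{S''}}$, all boundary words of $S''$ are $\sim$-equivalent, hence so are all boundary words of $S$. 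Applying this to the two given boundary words $w_1,w_2$ yields $w_1\sim w_2$.

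I expect the main obstacle to lie in the bookkeeping of the ``only if'' direction: one must promote the intuitive stack of triangles to a genuine \emph{ordered} simplicial complex. Concretely, one has to choose a single total order on all the vertices that is simultaneously compatible with every intermediate word $u_i$ and with all the triangle orientations, so that each gluing really is an admissible expansion or contraction --- for a contraction one must in particular verify that the newly created long edge $\{u,w\}$ is not already present. Because elementary equivalences only ever act on a consecutive pair of letters and preserve the linear ``position'' order along the word, one can realize all vertices as an order-preserving family of points (for instance in $\Q$), assigning fresh coordinates to each newly created vertex; this keeps the complex embedded as a disk and makes all the admissibility conditions automatic. Checking these combinatorial details, together with the compatibility of $\phi$ across shared faces, is the only real work, and everything else then follows from the dictionary essentially by construction.
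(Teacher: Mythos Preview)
Your approach is essentially the same as the paper's: both directions run by induction, translating between expansions/contractions of words and of ordered complexes. Your ``if'' direction is in fact slightly more careful than the paper's, since you prove the stronger statement that \emph{all} boundary words of $\phi$ are mutually $\sim$-equivalent, which cleanly covers non-disk good complexes.

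The one point where the paper does something you do not is in the ``only if'' direction, precisely at the obstacle you flag: when performing a contraction on the current boundary path, the long edge $\{u,w\}$ must not already be present in the complex. The paper's fix is to take a \emph{shortest} chain of elementary equivalences from $w_1$ to $w_2$; then a contraction that would re-add an existing edge would have to be undoing an immediately preceding expansion, contradicting minimality. Your proposed fix --- realizing vertices as order-preserving points in $\Q$ and keeping the complex embedded as a disk --- does not by itself rule this out: for instance, along the chain $(a,b)\to(ab)\to(a,b)\to(ab)$ the final contraction asks you to add the edge between the two outer vertices, which is already an internal edge of the disk built so far. So your sketch is correct in outline, but to close that gap you should replace the embedding argument by the shortest-sequence trick (or else explain how to insert auxiliary degenerate triangles to sidestep the collision).
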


\begin{proof}
    If the two words $w$ and $w'$ are equivalent, then there is a sequence of expansions and contractions
    of words turning $w$ into $w'$.
    Let $w = w_0, w_1, \dots, w_n = w'$ be the shortest sequence of expansions and contractions
    from $w$ to $w'$.
    We can use this sequence to build a good complex $S$ and a map $\phi : S \to \NG$,
    as we explain now.

    Start with a map $\phi_0 : W_k \to \NG$ representing $w_0$.
    If $w_1$ is an expansion of $w_0$, we can expand $W_k$ in the corresponding edge
    to get a map $\phi_1 : S_1 \to \NG$ with boundary words $w_0$ and $w_1$.
    If $w_1$ is a contraction of $w_0$, we can contract $W_k$ in the corresponding edge
    to get a map $\phi_1 : S_1 \to \NG$ with boundary words $w_0$ and $w_1$.
    (Note that the edge that we need to add to the complex for the contraction is not already present.
    This edge can only be present if we have an expansion followed by a contraction
    undoing that expansion, but we chose the shortest sequence of words linking $w$ to $w'$.)

    Conversely, if we have a good complex $S$ and a map $\phi : S \to \NG$
    with boundary words $w_1$
    and $w_2$, we obtain a sequence of expansions and contractions of words
    turning $w_1$ into $w_2$.
\end{proof}

\subsection{Associators are monodromy elements}
\label{ss:assoc-is-mono}

We would like to give
a precise connection between the associators of a local
Lie groupoid $G$ and the monodromy groups of the Lie algebroid $A=A(G)$. For
that, we need to be able to compare the isotropy $G_x$ (where the
associators live) with the 1-connected Lie group $\G(\g_x)$ integrating the
isotropy Lie algebra $\g_x=\Ker\rho_x$ (which contains the monodromy groups).
For this reason, henceforth we will assume that $G$ has been shrunk so that:
\begin{itemize}
\item[(H1)] $G_x$ is a 1-connected local Lie group.
\end{itemize}

Under this assumption, we have a local Lie group homomorphism $G_x \to \G(\g_x)$ defined as follows. If $g\in G_x$, choose a path in $G_x$ from $x$ to $g$. Differentiate this path to get a $\g_x$-path and hence an element of $\G(\g_x)$. Since $G_x$ is simply-connected, this map is well-defined: any two paths
in $G_x$ from $x$ to $g$ are homotopic, and such a homotopy induces a $\g_x$-homotopy.  

By shrinking $G$ further, we can also assume that:
\begin{itemize}
\item[(H2)] the map $G_x\to \G(\g_x)$ is injective.
\end{itemize}
Actually, by shrinking $G$ even further, we can also assume that:
\begin{itemize}
\item[(H3)] the source fibers of $G$ are 1-connected.
\end{itemize}
Again, this allows us to construct a local homomorphism of topological groupoids $G\to \G(A)$:  if $g\in G$ has source $x$, choose a path in $s^{-1}(x)$ from $x$ to $g$. Differentiate this path to get an $A$-path and hence an element of $\G(A)$. Since $s^{-1}(x)$ is simply-connected, this map is well-defined, because any two paths in $s^{-1}(x)$ from $x$ to $g$ are homotopic, and this homotopy induces an $A$-homotopy. 

\begin{proposition}
    \label{prop:assoc-is-mon}
Let $G$ be a local Lie groupoid satisfying (H1)-(H3). Then under the natural map $G_x\to \G(\g_x)$ we have $\Assoc_x(G) \subset \Mon_x(A)$.

\end{proposition}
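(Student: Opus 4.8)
The plan is to use the natural morphism $F:G\to\G(A)$ furnished by (H3) to transport the associator relation into the global groupoid $\G(A)$, where the monodromy group appears as a kernel. First I would recall that $F$ (sending $g$ with $s(g)=y$ to the class of the $A$-path obtained by differentiating a path in $s^{-1}(y)$ from $y$ to $g$) is a morphism of local topological groupoids, so $F(hk)=F(h)F(k)$ whenever $hk$ is defined. Extending $F$ to well-formed words by $\bar F(w_1,\dots,w_n):=F(w_1)\cdots F(w_n)$, a product of composable arrows in the global groupoid $\G(A)$, the homomorphism property makes $\bar F$ invariant under contraction and expansion, hence constant on $\sim$-classes. (Equivalently, one may invoke the universal property of $\AsCo(G)$ in Proposition~\ref{prop:AsCo} to factor $F$ through a morphism $\AsCo(G)\to\G(A)$; the hand computation is preferable here since it needs neither that $G$ be inversional nor Corollary~\ref{cor:kernel:completion}.)

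With this in place, take $g\in\Assoc_x(G)$. By definition there is a word $w$ with $(x)\le w$ and $(g)\le w$, so $(g)\sim w\sim(x)$, and therefore
\[ F(g)=\bar F((g))=\bar F((x))=1_x \quad\text{in }\G(A). \]

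The remaining, and most delicate, step is to compare this vanishing in $\G(A)$ with membership in $\Mon_x(A)\subset\G(\g_x)$. I would establish the commutative square
\[
\xymatrix{
G_x \ar[r] \ar@{^{(}->}[d] & \G(\g_x) \ar[d] \\
G \ar[r]_F & \G(A)
}
\]
in which the right vertical map is induced by the inclusion $\g_x\into A$ and lands in the isotropy $\G_x(A)$. For $g\in G_x$ one computes $F(g)$ using a path $\al$ lying in $G_x$ itself; since such $\al$ has constant base $t\circ\al\equiv x$, its $A$-derivative is in fact a $\g_x$-path, so $F(g)$ is precisely the image of $\bar g:=(G_x\to\G(\g_x))(g)$ under $\G(\g_x)\to\G(A)$. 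I expect the verification of this compatibility of the two differentiations, namely that the Maurer--Cartan form of $G$ restricts along $G_x$ to that of the isotropy local group, to be the main technical point.

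Finally, combining the two previous steps: $\bar g\in\G(\g_x)$ maps to $F(g)=1_x$ under $\G(\g_x)\to\G_x(A)$, so $\bar g$ lies in the kernel of this map. By the short exact sequence \eqref{eq:short:exact:mon}, that kernel is exactly $\Mon_x(A)$, whence $\bar g\in\Mon_x(A)$. Since $g\in\Assoc_x(G)$ was arbitrary, this proves $\Assoc_x(G)\subset\Mon_x(A)$ under the natural map $G_x\to\G(\g_x)$, as claimed.
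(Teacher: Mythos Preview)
Your proof is correct and follows essentially the same strategy as the paper's. The paper phrases your first step via the functor $\AsCo$---applying it to $F:G\to\G(A)$ gives the factorization $G\to\AsCo(G)\to\G(A)$, so associators (which by definition go to units in $\AsCo(G)$) go to units in $\G(A)$---but this is exactly what your hand extension $\bar F$ accomplishes, as you yourself note. The factorization $G_x\to\G(\g_x)\to\G_x(A)$ and the identification of the kernel via \eqref{eq:short:exact:mon} are used identically in both arguments. The compatibility of Maurer--Cartan forms you flag as the main technical point is in fact immediate: the algebroid of $G_x$ is $\g_x=\ker\rho_x\subset A_x$, and right translation in $G_x$ is the restriction of right translation in $G$, so $\wmc^{G_x}=\wmc^G|_{T^sG_x}$.
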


\begin{proof}
Applying the functor $\AsCo(-)$ to the morphism $G\to \G(A)$ we obtain a commutative triangle:
\[
\xymatrix{G \ar[rr]\ar[dr] &&\G(A) \\
& \AsCo(G)\ar[ur]}
\]
Passing to isotropies, it follows that the map $G_x\to \G_x(A)$ takes the associators $\Assoc_x(G)$ to the unit. But the last map factors as:
\[
\xymatrix{G_x \ar[rr]\ar[dr] &&\G_x(A) \\
& \G(\g_x)\ar[ur]}
\]
and the kernel of the morphism $\G(\g_x)\to\G_x(A)$ is the monodromy group $\Mon_x(A)$, so the result follows.
\end{proof}

\subsection{Monodromy elements are associators}
\label{ss:mono-is-assoc}

We would like to improve Proposition~\ref{prop:assoc-is-mon} and show that we actually have $\Assoc_x(G)= \Mon_x(A)\cap G_x$. For this, we will need to construct a lift of the map $G\to \G(A)$ to the space of $A$-paths:
\[
\xymatrix{
 & & P(A) \ar[d] \\
G\ar@{-->}[urr] \ar[rr] && \G(A) 
}
\]
which is multiplicative in the following sense: if $w = (g_1,\dots,g_k)\in
W(G)$ is a well-formed word on $G$, write $P(w) = P(g_k)\circ\cdots \circ
P(g_1)$ (concatenation of $A$-paths). Then if $w_1$ and $w_2$ are equivalent
words we would like $P(w_1)$ and $P(w_2)$ to be $A$-homotopic paths.  This will
require slightly stronger hypotheses than (H1)-(H3) which can still be
achieved by shrinking $G$ further.

The construction of a multiplicative lift requires the choice of some auxiliary
data, namely:
\begin{itemize}
\item a reparameterization function, i.e., a smooth, non-decreasing function $f:[0,1]\to[0,1]$ that is 0 near 0 and 1 near 1;
\item a Riemannian metric on $M$;
\item an $A$-connection  $\nabla$ on $A$.
\end{itemize}
The connection yields an exponential map $\exp_\nabla : A \to G$,
defined and injective in some neighborhood of the zero section $M$
(\cite[section 4.4]{lectures-integrability-lie}).
Hence, if $g\in G$ is sufficiently close to $M$, then it is in the image of
$\exp_{\nabla}$, say $g=\exp(\xi)$.
Writing $x=s(g)$, we then
have the $G$-path $\tau \mapsto \exp(f(\tau)\xi)$ and its 
associated $A$-path $\tilde{P}(g):I\to A$ (obtained by differentiating and applying the
Maurer-Cartan form). Moreover, if $g\in G$ is sufficiently close to $M$, the base path $\tilde{\gamma}(t)$ of $\tilde{P}(g)$
is contained completely in a uniformly normal subset of $M$ (\footnote{Recall that an open subset of a Riemannian manifold
is \emph{uniformly normal} if there exists some $\delta > 0$ such that the subset is contained in a geodesic ball of radius
$\delta$ around each of its points. See \cite[lemma 5.12]{lee-riemannian}.
Important for us will be the fact that between any two points in a uniformly normal
neighborhood, there is a unique minimizing geodesic, and it lies in this neighborhood.}).

Now we modify the $A$-path $\tilde{P}(g)$, which lies over $\tilde{\gamma}(t)$,
into an $A$-path $P(g)$ that lies
over the geodesic $\gamma_{s(g),t(g)}$ from $s(g)$ to $t(g)$.
We do this by prescribing
\[ P(g) = \partial(\tilde{P}(g), [\alpha]) ,\]
where $[\alpha] : [0,1] \times [0,1] \to M$ is the map such that
$s\mapsto \alpha(t,s)$ is the geodesic from $\tilde{\gamma}(t)$
to $\gamma_{s(g),t(g)}(t)$.
This way, we have associated to each $g\in G$ sufficiently close to $M$
an $A$-path $P(g)$ that lies over the geodesic from $s(g)$ to $t(g)$.
If $w = (g_1,\dots,g_k)\in W(G)$ is a well-formed word on $G$, 
we write $P(w) = P(g_k)\circ\cdots\circ P(g_1)$ (the concatenation of $P(g_k)$, \dots, $P(g_k)$).

Write $\Delta[k]$ for the standard $k$-simplex:
\[ \Delta[k] = \left\{ (\lambda_0,\dots,\lambda_k) \in \R^{k+1} \mid \lambda_i \geq 0, \sum_i \lambda_i = 1 \right\} .\]
The boundary of $\Delta[2]$ consists of three line segments. If $(g,h) \in \U$,
we will write $\beta_{(g,h)}:\partial\Delta[2] \to M$ for the map that sends
the first edge of $\Delta[2]$ to the base path of $P(g)$,
the second edge to the base path of $P(gh)$, and the third edge to the base path of $P(h)$.

Let $S''$ be a neighborhood of the diagonal in $M\times M$ such that for all $(x,y)\in S''$
there is a unique shortest geodesic from $x$ to $y$.
Let
\[ S' = \{ (x,y,z) \in M\times M\times M \mid (x,y), (y,z), (x,z) \in S'' \} .\]
If $(x,y,z) \in S'$, we will write $\beta_{(x,y,z)} : \partial \Delta[2] \to M$ for the
map that sends the edges of $\Delta[2]$ to the three geodesics between $x$, $y$ and $z$.

Consider now the space
\[ S = \left\{ (x,y,z,[\alpha]) \in S'\times \left( \frac{\text{maps $\Delta[2] \to M$}}{\text{homotopy rel $\partial$}} \right) :
    \alpha{\restriction_{\partial \Delta[2]}} = \beta_{(x,y,z)}
    \right\} .\]
For the natural topology on $S$, the map $\pi : S \to M^3$, $(x,y,z,[\alpha]) \mapsto (x,y,z)$ is continuous. For each $(x_0,y_0,z_0,[\alpha_0])$ there is an open neighborhood $U\subset M^3$ of $(x_0,y_0,z_0)$ and unique continuous section $s:U\to S$ of $\pi$ with $s(x_0,y_0,z_0)=(x_0,y_0,z_0,,[\alpha_0])$. Hence, $S$ has a unique smooth structure
for which $\pi$ is a local diffeomorphism.
Note that
\[ T = \{ (x,y,z) \in M\times M\times M \mid x=y \text{ or } y=z \text{ or } x=z \} \]
naturally sits inside $S$ by
\[ (x,y,z)\mapsto (x,y,z,[\text{maps whose image is contained in the image of $\beta_{(x,y,z)}$}]) .\]
We can choose a map
$\varphi_2 : U \to S$
from a neighborhood $U$ of $T$ in $M^3$
such that
\begin{itemize}
    \item $\pi \circ \varphi_2 = \id_{U}$,
    \item $\varphi_2{\restriction_T} = \id_T$.
\end{itemize}

\begin{definition}
    Suppose that $G$ is a local Lie groupoid over $M$, and that we have chosen
    auxiliary information $(f,\langle\cdot,\cdot\rangle,\nabla,\varphi_2)$ as above.
    We say that $G$ is \emph{shrunk} if
    \begin{enumerate}
        \item the isotropy groups $G_x$ are simply connected,
        \item for each $x\in M$, the map $G_x \to \G(\g_x)$
            is injective,
        \item the path $P(g)$ is defined for every $g\in G$,
        \item if $(g,h) \in \U$ then $\{s(h),t(h)=s(g),t(g)\} \in U$,
            so that we may write $\varphi_2(g,h) := \varphi_2(s(h),t(h),t(g))$,
        \item for every $(g,h) \in \U$, the class of $P(gh)$
            (as $A$-path modulo $A$-homotopy along trivial spheres)
            equals
            \[ \tilde{\partial}(P(g,h), \varphi_2(g,h)) .\]
            \label{condition-shrunk-essence}
    \end{enumerate}
\end{definition}

Every transitive local Lie groupoid has an open neighborhood of the identities
that is shrunk (after restricting).
The first three conditions can be satisfied by shrinking $G$.
The fourth can be satisfied by restricting the multiplication.
The last condition can be satisfied by shrinking and restricting:
it is automatically satisfied for a global Lie groupoid,
and over every contractible neighborhood $U$, the algebroid
is integrable (and over $U$ and near $M$, the local groupoid is
isomorphic to a global integration of $A{\restriction_U}$,
so that the condition holds for these small groupoid elements). 
Note that if a local Lie groupoid is shrunk it satisfies (H1)-(H3).
%TODO: discuss whether other local Lie groupoids can be shrunk to become, well\dots shrunk

\begin{proposition}
    Suppose that $G$ is a shrunk local Lie groupoid over $M$.
    If $w = (w_1,\dots,w_k)$ and $w' = (w'_1,\dots,w'_{k'})$ are equivalent
    words, then the $A$-paths $P(w_1,\dots,w_k)$ and $P(w'_1,\dots,w'_{k'})$ are $A$-homotopic.
\end{proposition}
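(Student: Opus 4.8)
The plan is to reduce the statement to the case of a single elementary equivalence and then to recognize that case as condition~\eqref{condition-shrunk-essence} in the definition of a shrunk local Lie groupoid. Since, by definition, the word equivalence relation is generated by elementary equivalences, any two equivalent words $w$ and $w'$ are joined by a finite chain $w = w^{(0)}, w^{(1)}, \dots, w^{(m)} = w'$ in which each consecutive pair differs by a single contraction or expansion. Because $A$-homotopy is an equivalence relation on $P(A)$, it suffices to prove that $P(w^{(i)})$ and $P(w^{(i+1)})$ are $A$-homotopic for each $i$, and then to conclude by transitivity. As expansions are the formal inverses of contractions and $A$-homotopy is symmetric, I only need to treat a single contraction.

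First I would record the standard fact that $A$-homotopy is compatible with concatenation of $A$-paths (this is exactly what makes concatenation descend to the multiplication on $\G(A) = P(A)/{\sim}$; see \cite{integ-of-lie-article,lectures-integrability-lie}). Consequently, if a contraction turns $w = (w_1,\dots,w_j,w_{j+1},\dots,w_n)$ into $w' = (w_1,\dots,w_jw_{j+1},\dots,w_n)$ with $(w_j,w_{j+1})\in\U$, then $P(w)$ and $P(w')$ differ only in the middle factor, so it is enough to show that the concatenation $P(w_{j+1})\circ P(w_j) = P(w_j,w_{j+1})$ is $A$-homotopic to $P(w_jw_{j+1})$. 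Writing $(g,h) = (w_j,w_{j+1})$, this is precisely a statement about the pair $(g,h)\in\U$.

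Now I would invoke the defining property of a shrunk groupoid. Condition~\eqref{condition-shrunk-essence} asserts that $[P(gh)] = \tilde{\partial}(P(g,h),\varphi_2(g,h))$ as $A$-paths modulo $A$-homotopy along trivial spheres; in particular $P(gh)$ and the $A$-path $\tilde{\partial}(P(g,h),\varphi_2(g,h))$ are joined by an $A$-homotopy (one whose base sphere is trivial, but an $A$-homotopy nonetheless). On the other hand, the lemma defining $\tilde{\partial}$ produces an $A$-homotopy covering the base homotopy $\varphi_2(g,h)$ between $P(g,h)$ and any chosen representative of $\tilde{\partial}(P(g,h),\varphi_2(g,h))$. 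Composing these two genuine $A$-homotopies shows that $P(gh)$ is $A$-homotopic to $P(g,h) = P(h)\circ P(g)$. Reinserting the unchanged factors via concatenation-compatibility then gives that $P(w)$ is $A$-homotopic to $P(w')$, which completes the single-contraction step and hence, by transitivity, the proof.

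The main obstacle is not the logical skeleton but the bookkeeping at two points. First, one must verify carefully that $A$-homotopy is preserved under concatenation with fixed prefixes and suffixes: this requires gluing the relevant homotopy square to the constant homotopies on the remaining factors and checking that the result is again a Lie algebroid morphism $TI\times TI\to A$ satisfying the boundary condition $b(s,0)=b(s,1)=0$ after reparameterization by $f$. Second, one must unwind condition~\eqref{condition-shrunk-essence} together with the definition of $\tilde{\partial}$ to confirm that the ``modulo trivial spheres'' clause is harmless here, i.e.\ that the relation it provides is an honest $A$-homotopy --- which it is, since an $A$-homotopy along a trivial sphere is in particular an $A$-homotopy. (A more global alternative is to feed Proposition~\ref{prop:equivalence-geometrically} a good complex $S$ realizing $w\sim w'$ and to assemble the full $A$-homotopy from the triangular fillings $\varphi_2$ attached to the $2$-simplices of $\phi:S\to\mathbf{G}$; this is essentially the inductive argument packaged simplicially.)
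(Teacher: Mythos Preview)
Your proposal is correct and follows essentially the same approach as the paper: reduce to a single elementary equivalence, then use that $\tilde{\partial}(P(g,h),\varphi_2(g,h))$ is $A$-homotopic to $P(g,h)$ by construction and $A$-homotopic to $P(gh)$ by condition~\eqref{condition-shrunk-essence}. Your write-up is in fact more careful than the paper's, explicitly noting the compatibility of $A$-homotopy with concatenation and the harmlessness of the ``along trivial spheres'' clause, both of which the paper leaves implicit.
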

\begin{proof}
    It suffices to prove this if $w$ and $w'$ are elementarily equivalent,
    so we will assume that $w'$ is obtained from $w$ by multiplying the letters
    $w_i$ and $w_{i+1}$.
    The $A$-path
    \[ \tilde{\partial}(P(w_i,w_{i+1}),\varphi_2(w_i,w_{i+1})) \]
    is homotopic to $P(w_i,w_{i+1})$ by construction.
    Because $G$ is shrunk, it is also $A$-homotopic to
    the $A$-path $P(w_iw_{i+1})$.
    We conclude that $P(w_i,w_{i+1})$ and
    $P(w_iw_{i+1})$ are $A$-homotopic, which suffices to prove the result.
\end{proof}

\begin{remark}
The monodromy group $\Mon_x$ consists precisely of the $\g_x$-homotopy classes of 
$\g_x$-paths that are $A$-homotopic to the trivial path
$0_x$. Hence, the previous proposition gives another proof of the inclusion
$\Assoc_x(G) \subset \Mon_x$ for a local Lie groupoid that is shrunk.
\end{remark}

Suppose that $G$ is shrunk. We denote by $[n]$ ($n\in \mathbb{N}_0$) the objects of the simplicial category $\Delta$, so $[n]$ is a nonempty linearly ordered set of the form
\[ [n]=\{0,1,\dots,n\}. \]
To each simplicial map $[1] \to \NG$,
we can associate a curve in $M$ as before:
we take the base curve $P(g)$ where $g$ is the image of the 1-cell.
If we have a simplicial map $[2] \to \NG$,
we get a homotopy class of maps $\Delta[2] \to M$ rel boundary,
namely $\varphi_2(g,h)$ as before, where $(g,h) \in \U$ is the image of the
2-cell.
More generally, if we have a good complex $S$ and a simplicial map $\phi : S \to \NG$,
we get a homotopy class of maps $[\phi]:\realization{S} \to M$ rel boundary,
where $\realization{S}$ is the geometric realization of $S$,
by mapping each 1-cell and 2-cell as above.

\begin{proposition}
    Suppose that $G$ is a shrunk local Lie groupoid over $M$ and fix $x\in M$.
    Consider $G_x$ as a subset of $\G(\g_x)$ using the natural map
    $G_x \to \G(\g_x)$.
    Suppose that $S$ is a good complex homeomorphic to a disk,
    and that
    \[ \phi : S \to \NG \]
    is a simplicial map sending the boundary to $x$, so $\phi$ induces a class
    $[\phi] \in \pi_2(M,x)$.
    If one of the boundary words of $\phi$ is $(x)$,
    and the other boundary word is $(g_1,\dots,g_k)$ with $g_1,\dots,g_k\in G_x$
    and $g_1\cdots g_k\in G_x$,
    then $\partial([\phi]) \in \Assoc_x(G)$.
    \label{prop:mon-is-assoc}
\end{proposition}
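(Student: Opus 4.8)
The plan is to split the statement into a combinatorial part and an analytic part. The combinatorial part shows that the product $g := g_1\cdots g_k \in G_x$ is itself an associator at $x$, using only that the two boundary words of $\phi$ are equivalent. The analytic part identifies the monodromy element $\partial([\phi])$ with the image of $g$ under the natural inclusion $G_x \hookrightarrow \G(\g_x)$, using the multiplicative lift $P$ and the triangle fillings $\varphi_2$ supplied by the hypothesis that $G$ is shrunk. Together these give $\partial([\phi]) = [g] \in \Assoc_x(G)$.

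For the combinatorial part, I would argue as follows. Since $S$ is a good complex and $\phi:S\to\NG$ has $(x)$ and $(g_1,\dots,g_k)$ as its two boundary words, Proposition~\ref{prop:equivalence-geometrically} yields $(x)\sim(g_1,\dots,g_k)$. Because $g=g_1\cdots g_k\in G_x$ is a defined product, some bracketing exhibits a sequence of contractions, so $(g)\le(g_1,\dots,g_k)$. Applying Proposition~\ref{prop:equivalence-relation} to $(x)\sim(g_1,\dots,g_k)$ (legitimate since we may assume, as for the shrinkings considered here, that $G$ is bi-regular with products connected to the axes) produces a word $w_3$ with $(x)\le w_3$ and $(g_1,\dots,g_k)\le w_3$. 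Then $(g)\le(g_1,\dots,g_k)\le w_3$ as well, so $w_3$ can be evaluated both to $x$ and to $g$, which is exactly the statement $g\in\Assoc_x(G)$.

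For the analytic part, I would exploit that every $g_i\in G_x$, so each $P(g_i)$ lies over the constant path at $x$ and is therefore a genuine $\g_x$-path, making $P(g_1,\dots,g_k)$ a $\g_x$-path whose base path is constant at $x$. The defining property of a shrunk groupoid attaches to each elementary move recorded by a $2$-cell of $S$ an $A$-homotopy with base map the filling $\varphi_2$ of that triangle. Stacking these across all $2$-cells of the disk $S$ assembles a single $A$-homotopy from $P(x)=0_x$ to $P(g_1,\dots,g_k)$ whose base homotopy is the realization $[\phi]:\realization{S}\to M$. Since the terminal base path is constant at $x$, the definition of the monodromy map gives $\partial([\phi]) = \tilde{\partial}(0_x,[\phi]) = [P(g_1,\dots,g_k)]$ in $\G(\g_x)$. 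Finally, $P$ restricts on $G_x$ to the natural map $G_x\to\G(\g_x)$ (both differentiate the canonical path, and simple connectivity of $G_x$ makes the choice immaterial), and this map is a local homomorphism, so $[P(g_1,\dots,g_k)]$ equals the product of the classes $[g_i]$, hence the image $[g_1\cdots g_k]=[g]$ of the associator $g$. Thus $\partial([\phi])=[g]\in\Assoc_x(G)$. (Any orientation convention in the good complex only affects this identity up to inversion, which is harmless since the associators form a group.)

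The hard part will be the assembly step in the third paragraph: checking that the per-$2$-cell $A$-homotopies furnished by shrunkenness glue along the shared edges of $S$ into one global $A$-homotopy, and that its base homotopy represents exactly $[\phi]$ and not $[\phi]$ perturbed by a trivial sphere. This relies on the fillings $\varphi_2$ of adjacent triangles agreeing on common edges, which holds because $P$ assigns a well-defined base path to each $1$-cell, and on the fact that $A$-homotopy along trivial spheres does not change the resulting class in $\G(\g_x)$; tracking these compatibilities over the whole disk, rather than for a single elementary move as in the preceding proposition, is the delicate bookkeeping. A secondary technical point needing care is the identification $[P(h)]=[h]$ for $h\in G_x$, i.e.\ that the lift $P$ genuinely recovers the inclusion $G_x\hookrightarrow\G(\g_x)$.
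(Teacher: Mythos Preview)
Your analytic part is essentially the paper's argument: the inductive assembly of the per-face $A$-homotopies is exactly Lemma~\ref{lem:monodromy-word}, and from it one reads off $\partial([\phi])=[P(g_1,\dots,g_k)]=\prod_i[g_i]$ in $\G(\g_x)$. That portion is fine.

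The gap is in your combinatorial part, at the step ``$(g)\le(g_1,\dots,g_k)$''. You are reading the hypothesis ``$g_1\cdots g_k\in G_x$'' as saying that some bracketing of the word is defined in the local groupoid $G$ and evaluates to $g$. It does not say that. In the statement, $G_x$ is being viewed as a subset of $\G(\g_x)$ via the natural map, and $g_1\cdots g_k$ denotes the product \emph{in $\G(\g_x)$}; the hypothesis only asserts that this global product happens to land back in the open set $G_x$. There is no reason any intermediate product $g_i g_{i+1}\cdots g_j$ should lie in $G_x$ (or even in $G$), so no sequence of contractions from $(g_1,\dots,g_k)$ to $(g)$ is available in general, and your chain $(g)\le(g_1,\dots,g_k)\le w_3$ collapses.

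What the paper does instead is prove the weaker relation $(g)\sim(g_1,\dots,g_k)$ in $W(G_x)$ via a separate lemma (Lemma~\ref{lem:smalllemma}): for an open neighborhood $U$ of the identities in a source-simply connected Lie groupoid $\G$, any word in $U$ whose $\G$-product lies in $U$ is equivalent in $W(U)$ to that product. This is applied with $\G=\G(\g_x)$ and $U=G_x$; the simple connectedness of $\G(\g_x)$ is what makes $\AsCo(G_x)\to\G(\g_x)$ injective and hence forces the equivalence. The remark immediately following that lemma shows the simple-connectedness is essential (the statement fails for $U=(-\tfrac12,\tfrac12)$ inside $\R/\Z$), so this step is not a formality. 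Once you have $(g)\sim(g_1,\dots,g_k)$ in $W(G_x)\subset W(G)$ and $(g_1,\dots,g_k)\sim(x)$ in $W(G)$ from the good complex, Corollary~\ref{cor:kernel:completion} gives $g\in\Assoc_x(G)$ directly; there is no need to produce a common expansion $w_3$.
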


For the proof we need the following two lemmas:
\begin{lemma}
    Let $G$ be a shrunk local Lie groupoid.
    Suppose $S$ is a good complex homeomorphic to a disk,
    and $\phi : S\to \NG$ is a simplicial map with boundary words
    $w$ and $w'$. Then
    \[ \tilde{\partial}(P(w),[\phi]) \text{ and } P(w') \]
    are $A$-homotopic along a trivial sphere.
    \label{lem:monodromy-word}
\end{lemma}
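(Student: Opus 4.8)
The plan is to induct on the number of $2$-cells (triangles) of the good complex $S$, peeling off one boundary triangle at a time via shellability of triangulated disks, and to reduce the single-triangle case to the defining property of a shrunk groupoid. Since $S$ is homeomorphic to a disk it has precisely two boundary paths, so $w$ and $w'$ are exactly its two boundary words and the homotopy $[\phi]:\realization{S}\to M$ may be read, rel the common endpoints, as a sweep from the base path of $P(w)$ to that of $P(w')$.

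First I would record the three properties of $\tilde{\partial}$ on which everything rests. Because $\tilde{\partial}(a,S)$ depends only on the rel-boundary homotopy class of the sweeping square and on the class of $a$ modulo $A$-homotopy along trivial spheres, it is (i) well defined in its first slot modulo trivial spheres; (ii) compatible with composition of homotopies, $\tilde{\partial}(a,T\cdot S)=\tilde{\partial}(\tilde{\partial}(a,S),T)$ modulo a trivial sphere (and in particular compatible with reversal of the homotopy); and (iii) additive under concatenation of $A$-paths when the sweeping homotopy is a concatenation that is constant away from one factor. All three follow from the construction of $\tilde{\partial}$ in \cite{integ-of-lie-article,lectures-integrability-lie} by gluing the covering $A$-homotopies, and I would state them as a preliminary observation rather than reprove them.

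For the base case, $S$ is a single triangle: one boundary word is $(g,h)$ with $(g,h)\in\U$, the other is $(gh)$, and the $2$-cell is sent by $[\phi]$ to $\varphi_2(g,h)$. The claim that $\tilde{\partial}(P(g,h),\varphi_2(g,h))$ is $A$-homotopic along a trivial sphere to $P(gh)$ is exactly condition (5) of the definition of a shrunk local Lie groupoid; the opposite assignment of the two boundary words is handled by reversing the homotopy using (ii). For the inductive step with $n\ge 2$ triangles I would invoke the classical fact that every triangulated $2$-disk is shellable, so $S$ is obtained from a disk $S_0$ with $n-1$ triangles by attaching one boundary triangle $\sigma$. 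Attaching $\sigma$ along a single boundary edge is precisely an expansion, and along two adjacent boundary edges precisely a contraction, of the boundary word on the side where $\sigma$ sits; hence $S_0$ has boundary words $w_0$ and $w'$, where $w$ and $w_0$ differ by a single elementary move and the other word is unchanged (for $n\ge2$ a removable triangle meets only one boundary path). Writing $[\phi]=[\phi_0]\cdot[\sigma]$ with $[\phi_0]:=[\phi\restriction_{S_0}]$, property (ii) gives $\tilde{\partial}(P(w),[\phi])=\tilde{\partial}(\tilde{\partial}(P(w),[\sigma]),[\phi_0])$ modulo a trivial sphere; the base case together with the locality property (iii) shows $\tilde{\partial}(P(w),[\sigma])$ is $A$-homotopic along a trivial sphere to $P(w_0)$; and then (i) together with the induction hypothesis applied to $S_0$, giving $\tilde{\partial}(P(w_0),[\phi_0])\sim P(w')$, completes the step.

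The hard part will be the bookkeeping of the inductive step: matching the two shelling moves with an expansion or contraction of the boundary word, verifying that a removable boundary triangle disturbs only one of the two boundary paths (which fails only in the base case $n=1$), and making precise the decomposition $[\phi]=[\phi_0]\cdot[\sigma]$ of the disk-filling together with the locality property (iii), so that the shrunk condition can be applied to the isolated triangle $\sigma$ while the flanking factors of $P(w)$ are carried along untouched.
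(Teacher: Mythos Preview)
Your proposal is correct and follows essentially the same approach as the paper's own proof: induction on the number of faces of $S$, with the base case supplied directly by condition~(\ref{condition-shrunk-essence}) in the definition of a shrunk local Lie groupoid. The paper's proof is only two sentences long, so what you have written is a fleshed-out version of the same argument; your use of shellability to peel off a boundary triangle is a clean way to organize the inductive step while remaining in the category of disks, and your recorded properties (i)--(iii) of $\tilde{\partial}$ are exactly the bookkeeping the paper leaves implicit.
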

\begin{proof}
    If $S$ has only one face,
    this follows immediately from condition (\ref{condition-shrunk-essence})
    in the definition of a shrunk local groupoid.
    By induction on the number of faces of $S$, the statement follows.
\end{proof}

\begin{lemma}
    Let $G$ be a source-simply connected Lie groupoid,
    and let $U \subset G$ be an open neighborhood of the identities,
    considered as a local Lie groupoid.
    Suppose that $g_1, \ldots, g_n \in U$ are such that their product
    $g = g_1\cdots g_n$ lies in $U$.
    Then in $W(U)$ we have
    \label{lem:smalllemma}
    \[ (g_1,\ldots,g_n) \sim (g) .\]%
\end{lemma}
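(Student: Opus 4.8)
The plan is to turn this combinatorial statement into a statement about homotopy in a single source fiber, where source-simple-connectedness of $G$ does all the work. First I would record the only genuine obstacle: although the global product $g = g_1\cdots g_n$ is always defined in $G$, I cannot simply contract the word letter by letter, because a partial product such as $g_k\cdots g_n$ — or the pair $(g_k, g_{k+1}\cdots g_n)$ — need not lie in the local multiplication domain $\U$ of $U$, and may even leave $U$ entirely. So a naive induction on $n$ fails, and one must move through $W(U)$ more cleverly.

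To set this up, I would fix $x = s(g_n) = s(g)$ and consider the source fiber $F = s^{-1}(x)$, which is simply connected by hypothesis. Given any well-formed word $(w_1,\dots,w_m)$ with total source $x$, its partial products $p_j := w_j w_{j+1}\cdots w_m$ (together with $p_{m+1} := 1_x$) all lie in $F$, and consecutive ones satisfy $w_j = p_j p_{j+1}^{-1} \in U$. Thus a word with source $x$ is exactly a ``discrete path'' $1_x = p_{m+1}, p_m, \dots, p_1$ in $F$ whose consecutive differences lie in $U$, and an elementary expansion or contraction is exactly the insertion or deletion of an intermediate point $p$ for which the relevant differences form a composable pair in $\U$. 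Under this dictionary our two words become the two discrete paths $1_x, g_n, g_{n-1}g_n, \dots, g$ and $1_x, g$ from $1_x$ to $g$ in $F$.

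The core of the argument is then the following: since $F$ is simply connected there is a homotopy rel endpoints between these two paths, and since $U$ is an open neighborhood of the identities, any three points of $F$ that are pairwise close enough have pairwise differences in $U$ forming a valid triangle of $\mathbf{U}$ (the nerve of $U$). I would choose a homotopy $H:[0,1]^2\to F$ with the two paths as its horizontal boundaries and constant vertical sides, then triangulate $[0,1]^2$ finely enough — using uniform continuity of $H$ — that every triangle maps to a composable triple in $U$. This produces a good complex $S$ homeomorphic to a disk and a simplicial map $\phi:S\to\mathbf{U}$ whose two boundary words are refinements of $(g_1,\dots,g_n)$ and of $(g)$. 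Each such refinement is obtained by expansions, hence is $\sim$-equivalent to the original word, so Proposition~\ref{prop:equivalence-geometrically} would give $(g_1,\dots,g_n)\sim(g)$.

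I expect the main obstacle to be this last, technical step: arranging the triangulation to be simultaneously fine enough that every $2$-simplex lands in $\U$ and coherent enough that its two boundary words are genuine refinements (sequences of expansions) of the given words, which is where openness of $U$ and uniform continuity of $H$ must be used quantitatively. Equivalently, and perhaps more cleanly, one can package the same idea as the statement that the product map $W(U)\to G$ descends to an \'etale groupoid morphism $\AsCo(U)\to G$ (via Proposition~\ref{prop:AsCo} and Theorem~\ref{thm:smoothness-of-asco}), which on each source fiber is a covering of the simply connected fiber $s^{-1}(x)\subset G$ and is therefore injective; since $(g_1,\dots,g_n)$ and $(g)$ have the same image $g$, they represent the same class in $\AsCo(U)$. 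In that formulation I expect the only real subtlety to be the bookkeeping needed to legitimately pass to $\AsCo(U)$ (bi-regularity, products connected to the axes, and source-connectedness — all arrangeable by \emph{restricting} $U$, which leaves $\sim$ unchanged).
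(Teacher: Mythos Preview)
Your second, ``cleaner'' formulation at the end is exactly the paper's proof: apply $\AsCo$ to the inclusion $U\hookrightarrow G$, observe that $\AsCo(U)\to\AsCo(G)\cong G$ is a Lie groupoid morphism inducing an isomorphism on Lie algebroids, and conclude from source-simple-connectedness of $G$ that it is an isomorphism (hence injective), so $(g_1,\dots,g_n)$ and $(g)$ must represent the same class in $\AsCo(U)$. Your caveat about bi-regularity and products connected to the axes is well taken, and the paper glosses over it just as you do.

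Your first approach---reinterpreting words as discrete paths in the source fiber $F=s^{-1}(x)$, filling in a homotopy, and triangulating finely enough to produce a good complex mapping to $\mathbf{U}$---is a genuinely different and more elementary route. It trades the appeal to the structure theory of Lie groupoids (the fact that a morphism which is an isomorphism on algebroids, with source-simply connected target, is an isomorphism) for a direct compactness/uniform-continuity argument in a single fiber. The cost is exactly the technical step you flag: arranging the triangulation so that its boundary words are honest refinements of the given words. The benefit is that it makes transparent \emph{where} simple connectedness is used (contractibility of a loop in $F$), and it avoids invoking smoothness of $\AsCo(U)$ altogether. The paper's route is shorter because it reuses machinery already built; yours would be preferable in a setting where that machinery is unavailable.
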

\begin{proof}
    Apply the functor $\AsCo$ to the inclusion $U \into G$.
    We get $\AsCo(U) \to \AsCo(G) \cong G$, which is a morphism of Lie groupoids.
    It is an isomorphism
    at the level of Lie algebroids,
    and $G$ is source-simply connected, so this map is an isomorphism.
    In particular, it is an injection.
    Because the words $(g_1, \dots, g_n), (g) \in W(U)$ are mapped to the same element of $G$,
    they must represent the same element of $\AsCo(U)$. This means that they are equivalent.
\end{proof}

Note that the last lemma is false if $G$ is not $s$-simply connected.
For example, if we take $U = (-0.5,0.5)/\Z \subset \R/\Z = G$,
then $0.25+0.25+0.25+0.25=0$ in $G$, but $(0.25,0.25,0.25,0.25) \not\sim (0)$ as words
in $W(U)$.

\begin{proof}[Proof of Proposition~\ref{prop:mon-is-assoc}]
    Let $w = (w_1,\dots,w_k)$ be the other boundary word of $\phi$.
    By Lemma~\ref{lem:monodromy-word}, $\partial([\phi])$ is the $\g_x$-homotopy class
    of the $\g_x$-path $P(w)$.
    Now working in $\G(\g_x)$, we see that
    \[ \partial([\phi]) = \prod_{i=1}^k w_i .\]
    By Lemma~\ref{lem:smalllemma} applied to $G_x \subset \G(\g_x)$, we have
    \[ (\partial([\phi])) \sim (w_1,\dots,w_k) \quad\text{in $W(G_x)$.}\]
    In particular, this equivalence holds in $W(G)$.
    However, the map $\phi$ shows that $(w_1,\dots,w_k) \sim (x)$
    in $W(G)$
    (by Proposition~\ref{prop:equivalence-geometrically}).
    This implies that $(\partial([\phi])) \sim (x)$ in $W(G)$,
    and so $\partial([\phi]) \in \Assoc_x(G)$.
\end{proof}

We now state and prove the main theorem relating associators to monodromy.

\begin{theorem}
    Suppose that $G$ is a shrunk local Lie groupoid over $M$ with Lie algebroid $A$.
    For $x\in M$,
    consider $G_x$ as a subset of $\G(\g_x)$ using the natural map
    $G_x \to \G(\g_x)$.
    \label{thm:assoc-mono} %put label here to avoid extra spacing below formula
    Then
    \[ \Assoc_x(G) = \Mon_x(A) \cap G_x .\]
\end{theorem}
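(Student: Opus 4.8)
The plan is to prove the two inclusions separately, with essentially all of the content concentrated in the reverse inclusion.

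The inclusion $\Assoc_x(G) \subseteq \Mon_x(A) \cap G_x$ requires no new work. By definition every associator lies in $G_x$, and Proposition~\ref{prop:assoc-is-mon} already gives $\Assoc_x(G) \subseteq \Mon_x(A)$ under the natural embedding $G_x \hookrightarrow \G(\g_x)$; since a shrunk local Lie groupoid satisfies (H1)--(H3), that proposition applies. Intersecting yields the first inclusion at once.

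For the reverse inclusion $\Mon_x(A) \cap G_x \subseteq \Assoc_x(G)$, fix $g \in \Mon_x(A) \cap G_x$. By the definition of the monodromy group there is a class $[\alpha] \in \pi_2(M,x)$ with $\partial([\alpha]) = g$ in $\G(\g_x)$. My goal is to produce a good complex $S$ homeomorphic to a disk and a simplicial map $\phi : S \to \NG$ to which Proposition~\ref{prop:mon-is-assoc} applies and for which $[\phi] = [\alpha]$; the proposition will then deliver $g = \partial([\alpha]) = \partial([\phi]) \in \Assoc_x(G)$. The heart of the argument is therefore the realization of $[\alpha]$ by such a $\phi$. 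I would represent $[\alpha]$ by a map from a disk (say $I\times I$ with boundary collapsed to $x$) into $M$ and triangulate the disk, refining until each $2$-simplex maps into a uniformly normal neighborhood of $M$ and, crucially, until the pair of composable arrows attached to each triangle lies in the domain $\U$ of multiplication, so that conditions (4) and (\ref{condition-shrunk-essence}) in the definition of shrunk apply face by face. Assigning to each ordered edge the geodesic arrow $P(\cdot)$ and to each triangle the corresponding element of $\U$ produces the simplicial map $\phi : S \to \NG$, where $S$ is the ordered triangulated disk. Introducing the faces one at a time, starting from $W_1$, exhibits $S$ as a good complex, and one arranges the combinatorics so that the bottom boundary path carries the trivial word $(x)$ while the top boundary path carries a word $(g_1,\dots,g_k)$; since the whole boundary of the disk maps to $x$, each $g_i\in G_x$, and by Lemma~\ref{lem:monodromy-word} their product equals $\partial([\phi])$.

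The main obstacle is precisely this triangulation step. One must guarantee that the mesh can be made fine enough that every triangle simultaneously lands in a uniformly normal neighborhood \emph{and} produces a pair in $\U$, and that the vertices can be ordered so that $S$ is a bona fide good complex with the two prescribed boundary paths (bottom $(x)$, top a word in $G_x$). Compactness of the disk makes the required fineness achievable, and the verification that $[\phi]=[\alpha]$ in $\pi_2(M,x)$ follows because the geodesic $2$-simplices are homotopic rel boundary to the restrictions of $\alpha$. Once $\phi$ is constructed, Proposition~\ref{prop:mon-is-assoc} closes the argument and establishes the equality $\Assoc_x(G) = \Mon_x(A) \cap G_x$.
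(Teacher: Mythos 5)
Your overall strategy coincides with the paper's: the inclusion $\Assoc_x(G)\subseteq\Mon_x(A)\cap G_x$ is Proposition~\ref{prop:assoc-is-mon}, and the reverse inclusion is reduced, via Proposition~\ref{prop:mon-is-assoc}, to realizing each class $[\alpha]\in\pi_2(M,x)$ by a simplicial map $\phi:S\to\NG$ from a good complex homeomorphic to a disk, with one boundary word equal to $(x)$ and $[\phi]=[\alpha]$. The gap is in your construction of $(S,\phi)$, and it is not a technicality: it is the heart of the theorem. You take $S$ to be the ordered triangulated disk itself, sending each edge to a geodesic arrow and each triangle to ``the corresponding element of $\U$''. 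But a $2$-simplex of $\NG$ is a composable pair $(g,h)\in G^{(2)}$ whose three edges are $g$, $h$ and, \emph{exactly}, the product $gh$. If every edge $e$ of the triangulation carries its geodesic arrow $g_e$, there is no reason that $g_{e_3}=g_{e_2}g_{e_1}$ holds in $G$ for the three edges of a face; and if you instead decree that the long edge of each face carries the product of the other two, the two faces sharing an interior edge impose conflicting assignments. So your recipe does not define a simplicial map from the triangulated disk to $\NG$ at all. (If it did, the nerve would behave like a Kan complex and the monodromy could never obstruct anything; the failure of exact multiplicativity of the arrows $g_e$ is precisely where $\Mon_x(A)$ hides.)

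For the same reason, ``introducing the faces one at a time, starting from $W_1$'' cannot exhibit the triangulated disk as the required good complex: good complexes are generated from $W_k$ by expansions and contractions subject to ordering constraints, and you need one boundary path to be a \emph{single} edge carrying $(x)$, which the boundary circle of a triangulation does not provide. What the paper actually does is build a genuinely \emph{different} good complex abstractly from $W_1$, guided by a sequence of oriented edges of the triangulation chosen so that it is simultaneously (a) obtainable from the boundary edges by inserting blocks $(e_1^{-1},\dots,e_m^{-1},e_m,\dots,e_1)$, which supplies the expansions, and (b) a concatenation of laces, one around each face, whose conjugated products $(g^{-1}_{f_1}\cdots(g^{-1}_{f_m}(g_{e_3}g_{e_2}g_{e_1})g_{f_m})\cdots g_{f_1})$ are defined, which supplies the contractions. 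Making (b) work forces a fineness condition strictly stronger than your per-triangle ones: all products along conjugating edge-paths of bounded Euclidean length must be defined (condition (c) in the paper's proof), not merely the pair attached to each face. The faces of the resulting $S$ then record honest identities in $G$ (cancellations and lace products), its boundary words are $(x)$ and $(p_N,\dots,p_1)$ with each lace product $p_i\in G_x$, and $[\phi]=[\alpha]$ because each face of the triangulation is enclosed by exactly one lace. This block-and-lace combinatorics is the missing idea; the step you flagged as ``the main obstacle'' and proposed to settle by compactness and a choice of vertex ordering is exactly where the argument breaks down.
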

\begin{proof}
    The inclusion $\subset$ was Proposition~\ref{prop:assoc-is-mon}.
    We now prove the inclusion $\supset$.
    By Proposition~\ref{prop:mon-is-assoc}, it suffices to show that for every
    $[\alpha] \in \pi_2(M,x)$,
    there is a good complex $S$, homeomorphic to a disk,
    and a simplicial map $\phi : S \to \NG$, such that
    \begin{enumerate}
        \item the boundary of $S$ is mapped to $x$,
        \item one of the boundary words of $\phi$ is $(x)$,
        \item $[\phi] = [\alpha]$ as elements of $\pi_2(M,x)$.
    \end{enumerate}

    Write $\Delta = \{ (x,y) \in \R^2 \mid x\geq 0, y\geq 0, x+y\leq 1\}$.
    Represent $[\alpha]$ as a map $\alpha : \Delta \to M$, mapping the boundary of $\Delta$ to $x$.
    Fix a splitting $\sigma : TM \to A$ of the anchor.
    If $e$ is a sufficiently short oriented line segment in $\Delta$,
    from a point $v_1$ to a point $v_2$,
    we will associate to it an element $g_e$ of $G$, as follows.
    There is a tangent vector $V_e$ in $T_{\alpha(v_1)}M$ such that
    $t(\exp(\sigma(V_e))) = \alpha(v_2)$ (if $e$ is sufficiently short).
    Take $g_e = \exp(\sigma(V_e))$.
    The base path of $P(g_e)$ is then a geodesic from $\alpha(v_1)$ to $\alpha(v_2)$.

    If $k$ is a positive integer, we can subdivide $\Delta$ into $k^2$ triangles,
    as shown in Figure~\ref{fig:triangulation}.
    By choosing $k$ large enough, we can ensure that
    \begin{enumerate}[(a)]
        \item every edge $e$ of the triangulation is short enough to ensure that
            $g_e$ is defined,
        \item if $e_3,e_2,e_1$ are the edges of a face, the products
            $(g_{e_3}g_{e_2})g_{e_1}$ and $g_{e_3}(g_{e_2}g_{e_1})$ are both defined,
        \item if $e_3,e_2,e_1$ are the edges of a face, and $f_m,\dots,f_1$ are edges
            of the triangulation forming a path
            of total Euclidean length at most 10 (as measured in $\R^2$)
            with $t(e_3) = s(e_1) = t(f_m)$,
            then the product
            \begin{equation}
            \label{eq:laceproduct} (g^{-1}_{f_1} \cdots (g^{-1}_{f_m} (g_{e_3} g_{e_2} g_{e_1}) g_{f_m}) \cdots g_{f_1})
            \end{equation}
            is defined (the brackets that are not specified may be put in in any way).
    \end{enumerate}

    \begin{figure}[h]
        \centering
        \begin{tikzpicture}[scale=0.6]
            \foreach \x in {0,...,7}
            {\pgfmathtruncatemacro{\maxy}{7-\x}
              \foreach \y in {0,...,\maxy}
                \draw (\x,\y) -- (\x+1,\y) -- (\x,\y+1) -- (\x,\y);}
        \end{tikzpicture}
        \caption{The triangulation of $\Delta$, shown here for $k=8$.}
        \label{fig:triangulation}
    \end{figure}
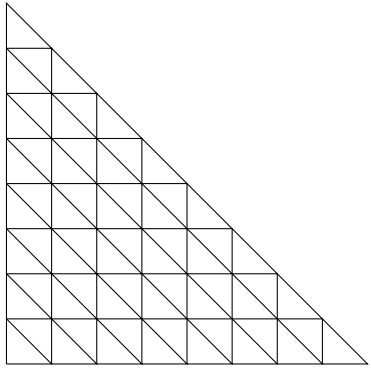

    If $k$ is large enough to guarantee these properties, we get an induced
    element of $\pi_2(M,x)$, as follows:
    we can map the 1-skeleton of the triangulated $\Delta$ to $M$ by mapping
    each vertex of the triangulation to its image under
    $\alpha$, and mapping each edge $e$ to the base path of $P(g_e)$.
    For each face of the triangulation, there is a preferred homotopy class of map
    into $M$ (rel boundary), namely the one determined by $\varphi_2$.
    All in all, we get a well-defined homotopy class of map from $\Delta$ to $M$,
    rel 1-skeleton. In particular, this defines a homotopy class of map from $\Delta$ to $M$,
    rel boundary, and hence an element of $\pi_2(M,x)$.
    After choosing $k$ larger if necessary, we may assume that this class is precisely $[\alpha]$.

    Let us write $\eps_{3k}, \dots, \eps_{1}$ for the $3k$ edges at the circumference of $\Delta$,
    in counterclockwise order starting at the origin $(0,0)$, and with the corresponding orientation
    (so $\eps_k,\dots,\eps_1$ are oriented to the right, $\eps_{2k},\dots,\eps_{k+1}$ to the upper left,
    and $\eps_{3k},\dots,\eps_{2k+1}$ down).
    We will work with ordered sequences of oriented edges of the triangulation.
    Such a sequence of oriented edges $(e_n, \dots, e_1)$ is \emph{well-formed}
    if $e_{i}$ ends where $e_{i+1}$ starts (for all $i$).
    A \emph{block} will be a well-formed sequence (possibly empty) of the form
    \[ (e_1^{-1}, \dots, e_n^{-1}, e_n, \dots, e_1) ,\]
    where $e_i^{-1}$ is the edge $e_i$ with its opposite orientation.
    This name is chosen to be the same as in the proof of Theorem~\ref{thm:smoothness-of-asco}.
    If $F$ is one of the faces of the triangulation, a \emph{lace around $F$} will be a
    sequence of the form
    \[ (e_1^{-1}, \dots, e_n^{-1}, c, b, a, e_n, \dots, e_1) ,\]
    where $a, b, c$ are the three sides of $F$, oriented counterclockwise,
    and $e_1$ starts at $(0,0)$. (So it loops around $F$ once, and encloses nothing else.)
    The words $(e_n,\dots,e_1)$ and $(e_1^{-1},\dots,e_n^{-1})$ are the \emph{ends} of the lace.

    Now let $(e_n, \dots, e_1)$ be a sequence of oriented edges of the triangulation
    that simultaneously satisfies the conditions
    \begin{enumerate}[(a)]
        \item the sequence can be obtained from $(\eps_{3k},\dots,\eps_2,\eps_{1})$ by repeated insertion
            of blocks,
        \item the sequence is a concatenation of laces $\ell_N, \dots, \ell_1$,
            one around each face of the triangulation,
            and each end of a lace has at most $2k$ edges (which certainly ensures that
            the total euclidean length of the edges of that end is less than 10).
    \end{enumerate}
    Such a sequence always exists.
    Figure~\ref{fig:sequence-in-triangulation} shows an example for $k=2$.

    \begin{figure}[h]
    \begin{center}
        \begin{tikzpicture}[scale=3]
            \pgfmathsetlengthmacro{\tick}{0.3mm}
            %\draw[help lines] (0,0) grid (2,2);
            \draw[rounded corners=1] (0,-2 * \tick) -- (1cm,-2*\tick) -- (2cm+2*\tick,-2*\tick) --
            (1cm+\tick,1cm-\tick) -- (1cm+\tick,-\tick) -- (0,-\tick) -- (0,0) -- (1cm,0) -- (1cm,1cm) --
            (0,2cm) -- (0,1cm) -- (1cm-\tick, 1cm) -- (1cm-\tick,\tick) -- (0,\tick) -- (0,2*\tick) --
            (1cm-2*\tick, 2*\tick) -- (1cm-2*\tick, 1cm-\tick) -- (\tick, 1cm-\tick) --
            (1cm-3*\tick,3*\tick) -- (0,3*\tick) -- (0,4*\tick) -- (1cm-5.4142*\tick, 4*\tick) --
            (0,1cm-1.4142*\tick) -- (0,5*\tick);
            \draw[fill=black] (0,-2*\tick) circle[radius=0.5*\tick]; 
            \draw[fill=white] (0,5*\tick) circle[radius=0.5*\tick]; 
            \node at (1.33,0.27) {$F_1$};
            \node at (0.33,1.3) {$F_2$};
            \node at (0.67,0.67) {$F_3$};
            \node at (0.28,0.33) {$F_4$};
        \end{tikzpicture}
    \end{center}
    \caption{A sequence of edges that satisfies the conditions in the proof of
        Theorem~\ref{thm:assoc-mono}, for $k=2$,
        from the black dot to the white dot.
        The sequence is both of the form $(\text{lace around $F_1$}, \text{lace around $F_2$},
        \text{lace around $F_3$},\text{lace around $F_4$})$, and of the form
        $(\eps_1, \eps_2, \eps_3, \text{block}, \eps_4, \eps_5, \text{block}, \text{block}, \eps_6)$.}
    \label{fig:sequence-in-triangulation}
    \end{figure}
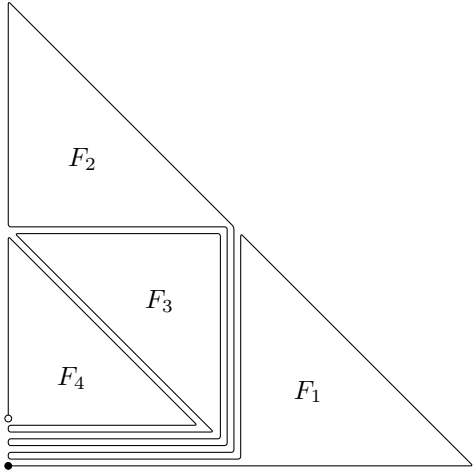

    The video at
    \url{http://math.uiuc.edu/~michiel2/edge-sequence/}
    illustrates an appropriate sequence of edges for $k=5$,
    which can immediately be generalized to
    work for any value of $k$.
    % The code that was used to make this video can be found in the appendix.
    %TODO: find a better place to put the video
    Note that for each of the laces, the product of the elements associated to the edges
    is defined if we use the order as in (\ref{eq:laceproduct}).
    We will call this the \emph{product} of the lace.

    Now this sequence allows us to build up the required good complex $S$ and
    the simplicial map $\phi : S \to \NG$.
    Start with the complex $W_1$, mapped to the element $x$.
    Expand $3k-1$ times, to get a good complex with two boundary paths: one is the original $W_1$,
    the other is a copy of $W_{3k}$. On the complex we have so far,
    $\phi$ will still map everything to $x$.

    Recall that $(e_n,\dots,e_1)$ can be obtained from $(\eps_{3k},\dots,\eps_1)$ by insertion of blocks.
    This means that the word $(g_{e_n},\dots,g_{e_1})$ can be obtained from $(x,\dots,x)$ by expansion.
    Expanding our good complex further, we therefore get a complex and a map $\phi$
    with boundary words $(x)$ and $(g_{e_n},\dots,g_{e_1})$.

    Now we group the $(e_n,\dots,e_1)$ by laces.
    Because the corresponding products are defined, we may now continue building our good complex using
    contractions.
    This results in a complex and map $\phi$ with boundary words
    $(x)$ and $(p_N,\dots,p_1)$,
    where $p_i$ is the product of the $i$'th lace.
    By construction, this simplicial map $\phi$ induces $[\alpha]$, proving the result.
\end{proof}

%%%%%%%%%%%%%%%%%%%%%%%%%%%%%%
\section{Simplicial monodromy groups}
\label{s:simplicial:monodromy}
%%%%%%%%%%%%%%%%%%%%%%%%%%%%%%

The results of the previous section have an even more conceptual simplicial interpretation as we now explain. We will continue to assume that $G\tto M$ denotes a transitive local Lie groupoid.

\subsection{The simplicial viewpoint}
\label{ss:simplicial:introd}
The nerve $\NG=\{G^{(n)}\}$ of a groupoid is a simplicial set, so we have its geometric realization $|\NG|$ (we are ignoring the manifold structure of $\NG$, treating it simply as a set). Since $M$  sits naturally inside $|\NG|$, for each $x\in M$ we have the $n$-homotopy group of $|\NG|$ based at $x$ which we denote by $\pi_n(\NG,x)$. As for any topological space, we also have the fundamental groupoid $\Pi_1(|\NG|)\tto |\NG|$ and we consider its restriction to $M$ which we denote by $\Pi_1(\NG)\tto M$. Note that the isotropy group at $x$ of $\Pi_1(|\NG|)$ is precisely $\pi_1(\NG,x)$.

Because, in general, $\NG$ is not Kan the usual combinatorial description of the groups $\pi_n(\NG,x)$ does not apply. One could consider the Kan completion and then proceed with the usual description. For our purposes, we only need to deal with $\pi_1$ and $\pi_2$ for which one has a relatively simple combinatorial description due to Moore and Smith (see \cite{Smith,EstLee}), which we will recall below. Using these descriptions we give simplicial interpretations of our previous  constructions. For example, the associative completion has the following interpretation:

\begin{theorem}
\label{thm:AC:homotopy}
If $G$ is a strongly connected local Lie groupoid then there is an isomorphism of groupoids:
\[ \AsCo(G)\simeq \Pi_1(\NG). \]
\end{theorem}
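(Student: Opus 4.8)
The plan is to avoid building the isomorphism by hand and instead identify both $\AsCo(G)$ and $\Pi_1(\NG)$ as the value of the \emph{same} left adjoint, so that the isomorphism drops out of the uniqueness of adjoints. Concretely, I will show that both corepresent the functor $\HH\mapsto \Hom_{\mathrm{loc}}(G,\HH)$ on the category of (set-level) groupoids, where $\Hom_{\mathrm{loc}}$ denotes morphisms of local groupoids and a groupoid $\HH$ is viewed as a local groupoid by forgetting that its multiplication is everywhere defined. Since the theorem ignores the smooth/topological structure (we treat $\NG$ as a bare simplicial set), I work throughout with plain groupoids.

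First I would record that strong connectedness enters only to guarantee that $G$ is \emph{inversional}: by bi-regularity and $s$-connectedness, the generation proposition shows that the neighborhood $\V$ of invertible elements already generates $G$, so every element is a product of invertibles. Consequently Proposition~\ref{prop:AsCo} (whose proof is purely formal and hence applies verbatim to plain groupoids) gives that $\AsCo(G)$ is a groupoid and that $g\mapsto (g)$ is the universal local-groupoid morphism from $G$ into a groupoid, i.e.\ $\Hom_{\mathbf{Grpd}}(\AsCo(G),\HH)\cong \Hom_{\mathrm{loc}}(G,\HH)$ naturally in $\HH$.

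The heart of the matter is the analogous statement for $\Pi_1(\NG)$, which I would obtain from two standard facts about the nerve $N:\mathbf{Grpd}\to\mathbf{sSet}$. (1) The nerve of a groupoid is $2$-coskeletal, so a simplicial map $\NG\to N\HH$ is determined by its components $G^{(0)}=M\to\mathrm{Ob}(\HH)$ and $G^{(1)}=G\to\mathrm{Mor}(\HH)$ subject only to compatibility with the face maps of $G^{(2)}=\U$; tracing the definitions (using $d_0(g)=s(g)$, $d_1(g)=t(g)$ and $d_1(g_1,g_2)=g_1g_2$) this compatibility says exactly that the component on $G$ preserves units and the defined products, i.e.\ is a morphism of local groupoids. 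This yields a natural bijection $\Hom_{\mathbf{sSet}}(\NG,N\HH)\cong\Hom_{\mathrm{loc}}(G,\HH)$. (2) The fundamental-groupoid functor $\Pi_1$ is left adjoint to $N$; because $N$ is fully faithful the counit $\Pi_1(N\HH)\to\HH$ is an isomorphism, and for any simplicial set $X$ the combinatorial $\Pi_1(X)$ agrees with the fundamental groupoid of $|X|$ on the vertex set $X_0$ (Gabriel--Zisman; this is the Moore--Smith edge-path description used in \cite{Smith,EstLee}). Since $(\NG)_0=G^{(0)}=M$, this combinatorial $\Pi_1(\NG)$ is precisely the object in the statement. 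Combining (1) and (2) gives $\Hom_{\mathbf{Grpd}}(\Pi_1(\NG),\HH)\cong\Hom_{\mathbf{sSet}}(\NG,N\HH)\cong\Hom_{\mathrm{loc}}(G,\HH)$, naturally in $\HH$.

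Having exhibited $\AsCo(G)$ and $\Pi_1(\NG)$ as corepresenting the same functor, the Yoneda lemma produces a canonical isomorphism of groupoids, the identity on the common object set $M$. Unwinding the two units shows that it sends the class of a word $(w_1,\dots,w_n)$ to $[w_1]^{-1}\cdots[w_n]^{-1}$, i.e.\ to the homotopy class of the edge path spelled out by the $w_i$ (the inverses merely reflecting the orientation convention $d_0=s$, $d_1=t$ of the nerve). The step I expect to be the real obstacle is fact (1): pinning down the $2$-coskeletality of $N\HH$ together with the exact face-map bookkeeping so that ``simplicial map into the nerve'' becomes, on the nose, ``morphism of local groupoids,'' including the correct treatment of degeneracies (units) and of orientation. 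A more hands-on alternative would construct $\AsCo(G)\to\Pi_1(\NG)$ directly on edge paths and prove injectivity via Proposition~\ref{prop:equivalence-relation} (two words are equivalent iff they admit a common expansion) together with Proposition~\ref{prop:equivalence-geometrically}; in that route injectivity is the hard part, and it is precisely there that one must use that $G$ is bi-regular with products connected to the axes, which is another reason the connectedness hypotheses are needed.
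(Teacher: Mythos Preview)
Your argument is correct and takes a genuinely different route from the paper. The paper proceeds directly: it shows that for $w_1, w_2 \in W(G)$ one has $w_1 \sim w_2$ (the $\AsCo$ relation) if and only if $\bar{w}_1 \approx \bar{w}_2$ (the Moore--Smith relation defining $\Pi_1(\NG)$), by explicitly exhibiting, for each elementary contraction, an element $\eta\in\Gamma_2$ whose boundary realizes it; the reverse implication is left to the reader. Together with the surjectivity of $W(G)\to\Pi_1(\NG)$ (which uses only that $G$ is inversional, so that formal inverses in $F(G^{(1)})$ can be replaced by honest words), this yields the isomorphism. Your approach instead recognizes both groupoids as corepresenting $\HH\mapsto\Hom_{\mathrm{loc}}(G,\HH)$: for $\AsCo(G)$ this is Proposition~\ref{prop:AsCo}, and for $\Pi_1(\NG)$ it follows from the standard adjunction $\Pi_1\dashv N$ together with $2$-coskeletality of $N\HH$, which reduces simplicial maps $\NG\to N\HH$ to maps on the $2$-truncation, i.e.\ to local-groupoid morphisms $G\to\HH$. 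Your route bypasses the hand verification of the equivalence relations and makes transparent that only the inversional hypothesis is really used; the paper's route is self-contained and produces the isomorphism explicitly on representatives, without importing the coskeletality and adjunction facts. Your worry about step~(1) is the right place to be careful but is not a genuine obstacle: the orientation discrepancy coming from $d_0=s$, $d_1=t$ only forces the explicit formula for the isomorphism to carry inverses (or to be read as an anti-isomorphism, depending on the composition convention chosen for $\Pi_1$), and does not affect the corepresentability argument.
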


Now, given a local Lie groupoid $G\tto M$, we can  consider the short exact sequence associated with the map $(t,s):G\to M\times M$: if $U\subset (M\times M)$ is the image of $(t,s)$ (a local subgroupoid of the pair groupoid) then this exact sequence is:
\begin{equation}
\label{eq:short:exact:local:grpd}
\xymatrix{
1\ar[r] & I(G)\ar[r] & G\ar[r] & U\ar[r] & 1
}
\end{equation}
where $I(G)=\bigcup_{x\in M} G_x$ denotes the bundle of isotropies. \emph{Assuming} that this induces a fibration of the associated simplicial sets, we obtain a long exact sequence:
\[
\xymatrix@C20pt{\cdots \ar[r] & \pi_2(\mathbf{U},x) \ar[r]& \pi_1(\mathbf{I(G)},x)\ar[r] & \pi_1(\mathbf{G},x) \ar[r] & \pi_1(\mathbf{U},x)\ar[r] & \cdots}
\]
By Theorem~\ref{thm:AC:homotopy}, this can be rewritten as:
\begin{equation}
\label{eq:long:exact:local:grpd}
\xymatrix{\cdots \ar[r] & \pi_2(\mathbf{U},x) \ar[r]& \AsCo(G_x) \ar[r] & \AsCo_x(G) \ar[r] & \AsCo_x(U)\ar[r] & \cdots}
\end{equation}
Comparing these exact sequences with the sequences \eqref{eq:short:exact:algebroid}, \eqref{eq:short:exact:groupoid} and \eqref{eq:short:exact:mon},
it is tempting to call the connecting homomorphism 
\[ \partial^s:\pi_2(\mathbf{U},x) \to \AsCo(G_x)\] 
the \emph{simplicial monodromy map}. The main issue with this approach is whether \eqref{eq:short:exact:local:grpd} actually defines a simplicial fibration. 

Below, for a general transitive local groupoid, we will construct a monodromy map $\partial^s:\pi_2(\mathbf{U},x) \to K_1/\delta(K_2)$, for which we have a long exact sequence as above. We will see also that that there is a natural group homomorphism $\AsCo(G_x)\to K_1/\delta(K_2)$. For a strongly connected groupoid we show that this map is surjective and we conjecture that it is also injective. We are able to prove this for a shrunk groupoid, in which case we also show:

\begin{theorem}
\label{thm:main:monodromy}
Let $G$ be a local Lie groupoid which is shrunk. Then there is a map $\partial^s:\pi_2(\mathbf{U},x) \to \AsCo(G_x)$ giving a long exact sequence \eqref{eq:long:exact:local:grpd}, and fitting into a commutative diagram:
\[
\xymatrix{
\pi_2(M,x)\ar[r]^\partial \ar[d] & \G(\g_x)\\
\pi_2(\mathbf{U},x) \ar[r]_{\partial^s} & \AsCo(G_x)\ar[u]}
\]
where the vertical arrows are isomorphisms. 
\end{theorem}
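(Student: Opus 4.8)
The plan is to split the statement into three independent tasks: (i) exhibit the two vertical maps as isomorphisms, (ii) construct $\partial^s$ together with the exact sequence \eqref{eq:long:exact:local:grpd}, and (iii) verify that the square commutes. Throughout I will exploit the fact that, by assumption, $G$ is shrunk, so that (H1)--(H3) hold and the multiplicative $A$-path lift $P$ of Section~\ref{ss:mono-is-assoc} is available.

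I would first deal with the two vertical isomorphisms. For the right-hand arrow, note that by (H1) the isotropy $G_x$ is a $1$-connected local Lie group with Lie algebra $\g_x$, and $\g_x$ is automatically integrable; applying Theorem~\ref{thm:AC:homotopy} to $G_x$ (a strongly connected local Lie group, so that $\AsCo(G_x)\cong\Pi_1(\mathbf{G}_x)=\pi_1(\mathbf{G}_x,x)$) together with the classification Theorem~\ref{thm:classification-groupoids} (whose input $\widetilde{G_x}$ is $G_x$ itself) yields a canonical isomorphism $\AsCo(G_x)\cong\G(\g_x)$, under which the completion map is the natural inclusion $G_x\hookrightarrow\G(\g_x)$ of (H2). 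For the left-hand arrow I would show $\pi_2(M,x)\cong\pi_2(\mathbf{U},x)$. Since $U$ is an open neighborhood of the diagonal in the pair groupoid $M\times M$, its nerve $\mathbf{U}$ is a \v{C}ech/Rips-type complex at the scale of $U$: high simplices only join nearby points, so a standard nerve argument in the spirit of Smith~\cite{Smith} and \cite{EstLee} gives $|\mathbf{U}|\simeq M$ once $U$ is small enough, which we may arrange by shrinking. Concretely, the isomorphism on $\pi_2$ is realized by the triangulation map already used in the proof of Theorem~\ref{thm:assoc-mono}, sending $[\alpha]\in\pi_2(M,x)$ to the simplicial $2$-sphere obtained by triangulating finely, mapping vertices through $\alpha$, edges to the corresponding pairs in $U$, and faces to $2$-simplices of $\mathbf{U}$; the inverse realizes edges as geodesics and fills faces with geodesic triangles.

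Next I would construct $\partial^s$ and the sequence. Morally, $\partial^s$ is the connecting homomorphism of the simplicial fibration sequence $\mathbf{I(G)}\to\mathbf{G}\to\mathbf{U}$ attached to \eqref{eq:short:exact:local:grpd}, whose fibre over $x$ is $\mathbf{G}_x$; rewriting $\pi_1(\mathbf{G}_x)=\AsCo(G_x)$, $\pi_1(\mathbf{G},x)=\AsCo_x(G)$ and $\pi_1(\mathbf{U},x)=\AsCo_x(U)$ by Theorem~\ref{thm:AC:homotopy} turns the homotopy long exact sequence into \eqref{eq:long:exact:local:grpd}. The obstacle --- and the real reason shrunkness is needed --- is that $\mathbf{G}$ and $\mathbf{U}$ are not Kan complexes, so no abstract fibration is available. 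I would therefore build $\partial^s$ by hand from the Moore--Smith combinatorial description of $\pi_1$ and $\pi_2$ (landing a priori in the combinatorial model $K_1/\delta(K_2)$ of $\pi_1(\mathbf{G}_x)$): a class in $\pi_2(\mathbf{U},x)$ is represented by a simplicial map $\phi:S\to\mathbf{U}$ on a good complex $S$ homeomorphic to a disk with trivial boundary words, one lifts $\phi$ through $\mathbf{G}\to\mathbf{U}$ to a map into $\mathbf{G}$ whose two boundary words now differ by an element of $G_x$, and $\partial^s$ of the class is the class of that element in $\AsCo(G_x)$. Existence of such lifts, and their independence of the choices, is exactly what the lift $P$ and the auxiliary data of Section~\ref{ss:mono-is-assoc} provide for a shrunk groupoid; exactness of \eqref{eq:long:exact:local:grpd} is then checked directly on these combinatorial representatives.

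Finally, commutativity of the square is essentially a reformulation of Theorem~\ref{thm:assoc-mono}. Tracing $[\alpha]\in\pi_2(M,x)$ down the left isomorphism and along $\partial^s$ reproduces verbatim the construction in the proof of Theorem~\ref{thm:assoc-mono}: the fine triangulation, the edge elements $g_e=\exp(\sigma(V_e))$, and the lace products assemble into a good complex $\phi:S\to\mathbf{G}$ with boundary words $(x)$ and $(g_1,\dots,g_k)$, so that $\partial^s([\alpha])=[g_1\cdots g_k]\in\AsCo(G_x)$; on the other hand, following $[\alpha]$ along the top map and up the right isomorphism produces $\partial([\alpha])=g_1\cdots g_k$ read in $\G(\g_x)$, by Lemma~\ref{lem:monodromy-word} and the definition of the monodromy map. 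The two agree under $\AsCo(G_x)\hookrightarrow\G(\g_x)$, giving the square. I expect the main difficulty of the whole argument to be precisely the non-Kan nature of the nerves: every homotopy-theoretic operation must be replaced by an explicit combinatorial construction governed by shrunkness, and the bulk of the work is checking that these constructions descend to the homotopy and associator equivalence classes independently of the triangulation, the good complex, and the chosen lift.
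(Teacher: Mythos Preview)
Your proposal is correct and follows essentially the same route as the paper, but there is one organizational difference worth flagging. The paper does not construct $\partial^s$ by lifting good complexes directly into $\AsCo(G_x)$. Instead, in Section~\ref{ss:simplicial:monodromy} it defines $\partial^s$ purely algebraically, for \emph{any} transitive local groupoid, as the connecting homomorphism of the diagram of Moore--Smith groups $\Gamma_i^G\to\Gamma_i^U$, landing in $K_1/\delta(K_2)$; the long exact sequence is then automatic from elementary homological algebra, with no need to verify exactness on combinatorial representatives. Shrunkness enters only afterward, in the proposition immediately preceding the proof, which identifies $K_1/\delta(K_2)\simeq\AsCo(G_x)\simeq\G(\g_x)$ by building a map $K_1/\delta(K_2)\to\G(\g_x)$ via the $A$-path lift $P$. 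Your direct lifting approach is equivalent in content but trades the clean algebraic exactness argument for a hands-on verification. For the left vertical arrow and commutativity, your plan matches the paper exactly: the map $\pi_2(M,x)\to\pi_2(\mathbf{U},x)$ is built by the triangulation of Theorem~\ref{thm:assoc-mono}, and commutativity amounts to recognizing that construction as the composite $\partial^s\circ\bigl(\pi_2(M,x)\to\pi_2(\mathbf{U},x)\bigr)$.
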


The second vertical arrow in the square above is just the isomorphism induced from the canonical map $G_x\to \G(\g_x)$ that exists for a shrunk local Lie groupoid. Then this result explains more conceptually the equality $\Assoc_x(G)=\Mon_x(A)\cap G_x$ stated in Theorem~\ref{thm:assoc-mono}. In fact, the proof of Theorem \ref{thm:main:monodromy} will use arguments from the proof of Theorem~\ref{thm:assoc-mono}. 

The following is an interesting consequence of Theorem \ref{thm:main:monodromy}. We will give the proof later.

\begin{corollary}
\label{cor:shrunk}
If $G$ is a local Lie groupoid which is shrunk then there is an isomorphism of topological groupoids $\AsCo(G)\simeq \G(A)$.
\end{corollary}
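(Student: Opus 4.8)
The plan is to promote the monodromy comparison of Theorem~\ref{thm:main:monodromy} to an isomorphism of topological groupoids. Since $G$ is shrunk it satisfies (H1)--(H3), and (after the further shrinking that makes it bi-regular with products connected to the axes, so that Proposition~\ref{prop:inverses:decomp} applies and $G$ is inversional) the canonical morphism of local topological groupoids $\Phi\colon G\to\G(A)$, $g\mapsto[P(g)]$, is defined. By the universal property of Proposition~\ref{prop:AsCo} it factors through a morphism of topological groupoids $\tilde\Phi\colon\AsCo(G)\to\G(A)$, $[w]\mapsto[P(w)]$, which is well defined precisely because $w\sim w'$ forces $P(w)$ and $P(w')$ to be $A$-homotopic. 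First I would note that $\tilde\Phi$ covers $\mathrm{id}_M$ and is surjective: the image $\Phi(G)$ contains a neighbourhood of the units, since the classes $[P(\exp_\nabla(\xi))]$ sweep out the exponential chart of $\G(A)$, and $\G(A)$ is source-connected, hence generated by any neighbourhood of $M$. Because $\tilde\Phi$ covers the identity, this already makes each restriction $\tilde\Phi_x\colon\AsCo_x(G)\to\G_x(A)$ surjective.

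It then suffices to prove injectivity of each $\tilde\Phi_x$, and for this I would compare the long exact sequence \eqref{eq:long:exact:local:grpd} with the groupoid monodromy sequence \eqref{eq:short:exact:groupoid}. Arranging them as a ladder with $\tilde\Phi_x$ in the middle, the isomorphisms $\AsCo(G_x)\cong\G(\g_x)$ and $\pi_2(\mathbf{U},x)\cong\pi_2(M,x)$ from Theorem~\ref{thm:main:monodromy} on one side, and the comparison map $\pi_1(\mathbf{U},x)\to\pi_1(M,x)$ on the other, I use that the commuting square of Theorem~\ref{thm:main:monodromy} identifies $\partial^s$ with $\partial$, hence identifies $\Im\partial^s=\ker\!\big(\AsCo(G_x)\to\AsCo_x(G)\big)$ with $\Mon_x(A)=\ker\!\big(\G(\g_x)\to\G_x(A)\big)$. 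Invoking $\pi_1(\mathbf{U},x)\cong\pi_1(M,x)$ --- which comes from the same identification of the homotopy groups of $\mathbf{U}$ with those of $M$ underlying Theorem~\ref{thm:main:monodromy} --- a short chase kills $\ker\tilde\Phi_x$: an element of the kernel maps to $0$ in $\pi_1(\mathbf{U},x)$, hence lifts to $\AsCo(G_x)$, and the kernel identification forces that lift into $\Im\partial^s$, so the element is trivial in $\AsCo_x(G)$. Together with the surjectivity above, $\tilde\Phi$ is a bijective morphism of groupoids over $M$.

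Finally I would show $\tilde\Phi$ is a homeomorphism. The key input is Theorem~\ref{thm:assoc-mono}: near the units both $\AsCo(G)$ and $\G(A)$ are the quotient of one fixed neighbourhood of $M$ in $G$ --- identified with a neighbourhood of $M$ in $A$ via $\exp_\nabla$ --- by the associators $\Assoc_x(G)$, respectively the monodromy $\Mon_x(A)\cap G_x$; as these coincide, $\tilde\Phi$ is a homeomorphism from a neighbourhood $V$ of the units onto a neighbourhood of the units in $\G(A)$. Because the structure maps of $\AsCo(G)$ and of $\G(A)$ are open (Proposition~\ref{prop:AsCo} and the construction of $\G(A)$) and $\tilde\Phi$ intertwines multiplication, translating $V$ by arrows shows $\tilde\Phi$ is a local homeomorphism at every point: near $a\in\AsCo(G)$ one writes a neighbourhood as $a\cdot V$ and uses that left translation by $a$, resp.\ by $\tilde\Phi(a)$, is a homeomorphism. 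A continuous open bijection is a homeomorphism, so $\tilde\Phi$ is an isomorphism of topological groupoids.

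I expect the genuinely delicate point to be this last, topological, step. The algebraic isomorphism is essentially packaged by Theorems~\ref{thm:main:monodromy} and~\ref{thm:assoc-mono}, but in the non-integrable case neither $\AsCo(G)$ nor $\G(A)$ is a manifold, so both carry honest quotient topologies; matching them near the identity through the equality $\Assoc_x(G)=\Mon_x(A)\cap G_x$, and then verifying that left translations are homeomorphisms so that the local-homeomorphism property genuinely propagates away from the units, is where the real care is needed.
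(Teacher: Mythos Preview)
Your approach is essentially the same as the paper's: reduce to injectivity on isotropy groups and deduce this from the comparison of the two long exact sequences \eqref{eq:short:exact:groupoid} and \eqref{eq:long:exact:local:grpd} via the commuting square of Theorem~\ref{thm:main:monodromy}. The paper's own proof is a terse two-sentence version of exactly this diagram chase; you have simply spelled out the surjectivity and the five-lemma-style argument more explicitly, and added a discussion of the topological aspect that the paper does not elaborate.

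One small slip to flag: your parenthetical ``after the further shrinking that makes it bi-regular with products connected to the axes'' is dangerous as written. Shrinking (replacing $G$ by a smaller neighbourhood of $M$) can change $\Assoc(G)$ drastically, as Section~\ref{ss:associators:examples} shows, and would undo the hypothesis that $G$ is shrunk. What you actually need here is \emph{restricting}, which by Lemma~\ref{lem:assoc:restriction} leaves the associators (and hence $\AsCo(G)$) unchanged; the paper notes that products connected to the axes can always be achieved by restricting. With that correction the argument goes through.
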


\subsection{Simplicial homotopy groups}
\label{ss:simplicial:homotopy}

We recall the combinatorial description of the simplicial homotopy groups in degree 1 and 2 for a (possibly non-Kan) simplicial set, due to Moore and Smith (see \cite{Smith,EstLee}).

First, for a set $S$ we will denote by $F(S)$ the free group on $S$. More generally, for a quiver $X^1\tto X^0$, we have the associated free groupoid $F(X^1)\tto X^0$ (sometimes also called the path groupoid of the quiver): it consists of reduced well-formed words of elements of $X^1$ and their formal inverses, with multiplication obtained by concatenation of reduced words (note that a set $S$ can be thought of as quiver with $X^1=S$ and $X^0=\{*\}$). A typical element of $F(X^1)$ is written as:
\[ w=w_1^{\eps_1}\bullet \dots \bullet w_k^{\eps_k}, \quad (w_i\in X^1, \eps_i=\pm 1), \]
where one defines: $t(w_i^{-1}):=s(w_i)$ and $s(w_i^{-1}):=t(w_i)$. For each $x\in X^0$ one denotes by $1_x$ the ``empty word'' at $x$, which by definition has source and target $x$ and is such that $w_i\bullet w_i^{-1}=1_{t(w_i)}$ and $w_i^{-1}\bullet w_i=1_{s(w_i)}$. We denote by $\cdot$
the groupoid multiplication on $F(X^1)$. Note also that if $X^1\tto X^0$ is a topological quiver then $F(X^1)\tto X^0$ is a topological groupoid in a natural way.

Now, given any simplicial set $\mathbf{X}=\{X^n\}$, fix $x\in X$. One defines groups $\Gamma_i(x)$ as follows:
\begin{itemize}
\item $\Gamma_1(x):=F(X^1)_{x}$ (the isotropy group of $F(X^1)\tto X^0$ at $x$);
\item $\Gamma_2(x):=F(Y_{x})/\sim$, where $Y_{x}$ is the set:
\[ Y_{x}:=\left\{ (w,\sigma)\in F(X^1)\times X^2: t(w)=x, s(w)=\d_0 \d_1\sigma\right\}, \]
and $\sim$ is the equivalence relation generated by:
\[ (w_1,\sigma_1)\bullet(w_1\cdot w_2,\sigma_2)\sim (w_1\cdot \d\sigma_1\cdot w_2,\sigma_2)\bullet (w_1,\sigma_1).  \]
Here, given $\sigma\in X^2$, we denote by $\d \sigma\in F(X^1)_{\d_0\d_1\sigma}$ its boundary:
\[ \d \sigma:=\d_0\sigma\bullet \d_2\sigma\bullet  (\d_1\sigma)^{-1}. \]
\item  $\Gamma_3(x)=F(Z_{x})/\sim$, where $Z_{x}$ is the set:
\[  Z_{x}:=\left\{ (\xi,(w,\tau))\in \Gamma_2(x)\times(F(X^1)\times X^3): t(w)=x, s(w)=\d_0 \d_1\d_2\tau\right\}, \]
and $\sim$ is the equivalence relation generated by:
\[ (\xi\cdot(w_1,\sigma),(w_1\cdot w_2,\tau))\sim (\xi,(w_1\cdot \d\sigma_1\cdot w_2,\tau)).  \]
\end{itemize}
%%% ***** PICTURES ???  *****

We also define a group homomorphism $\delta: \Gamma_2(x)\to \Gamma_1(x)$ by setting on generators:
\[ (w,\sigma)\mapsto w \cdot \d\sigma \cdot w^{-1}, \]
and another  group homomorphism $\delta: \Gamma_3(x)\to \Gamma_2(x)$, by setting on generators:
\[ (\xi,(w,\tau))\mapsto \xi\cdot (w,\d_3 \tau)\cdot (w,\d_1\tau)^{-1}\cdot (w,\d_2\tau)\cdot (w\cdot (\d_2\d_3\tau)^{-1},\d_0\tau)
\cdot \xi^{-1}. \]

These homomorphisms define a complex of groups:
\[ \xymatrix{\Gamma_3(x)\ar[r]^{\delta} & \Gamma_2(x)\ar[r]^{\delta} & \Gamma_1(x)} \]
such that $\im\delta\subset \Gamma_i(x)$ are normal subgroups, $\Ker\delta\subset \Gamma_2(x)$ is a central subgroup and one has:
\[ \pi_1(|\mathbf{X}|,x)=\Gamma_1(x)/\im \delta,\quad \pi_2(|\mathbf{X}|,x)=\Ker\delta/\im \delta. \]
Moreover, one can define a groupoid over $X^0$ by setting:
\[ \Pi_1(\mathbf{X})=F(X^1)/\approx, \]
where $\approx$ is the equivalence relation 
\[ w_1\approx w_2 \quad\text{ if and only if }\quad 
\left\{
\begin{array}{l}
s(w_1)=s(w_2),\\ 
t(w_1)=t(w_2),\\ 
w_2=w_1\cdot \delta(\Gamma_2(s(w_1)))
\end{array}
\right.
\]
and one finds that this coincides with the restriction of the fundamental groupoid of $|\mathbf{X}|$ to $X^0$.

Theorem~\ref{thm:AC:homotopy} now follows from the following:

\begin{proposition}
Let $G$ be a strongly connected local Lie groupoid and let $w_1,w_2\in W(G)$. Then $w_1\sim w_2$ if and only if $\bar{w}_1\approx \bar{w}_2$, where for a word $w$, we denote by $\bar{w}$ the corresponding element in $F(G^{(1)})$. 
\end{proposition}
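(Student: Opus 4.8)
The plan is to identify both quotients $\AsCo(G)=W(G)/{\sim}$ and $\Pi_1(\mathbf{G})=F(G^{(1)})/{\approx}$ with ``the free groupoid on the quiver $G^{(1)}=(G\tto M)$ modulo the one relation family $g\bullet h=gh$, one relation for each composable pair $(g,h)\in\U$'', and then to exhibit these two descriptions as mutually inverse groupoid morphisms. The proposition is the statement that the resulting map is injective, so it suffices to produce the isomorphism. First I would record the standing structural fact: a strongly connected local Lie groupoid is inversional, so by Proposition~\ref{prop:AsCo} the set $\AsCo(G)$ is a genuine groupoid over $M$ with multiplication given by concatenation of words. In particular, for every $(g,h)\in\U$ the contraction of $(g,h)$ to $(gh)$ gives the relation $[g]\,[h]=[(g,h)]=[(gh)]=[gh]$ in $\AsCo(G)$.

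For the easy direction $w_1\sim w_2\Rightarrow\bar w_1\approx\bar w_2$, I would observe that the projection $\pi:F(G^{(1)})\to\Pi_1(\mathbf{G})$ already satisfies $\pi(g)\pi(h)=\pi(gh)$ for every composable pair: indeed, taking the empty word $w=1_{d_0d_1\sigma}$ shows that the boundary $\partial\sigma$ of the $2$-simplex $\sigma=(g,h)$ lies in $\delta(\Gamma_2)$ and is therefore killed by $\pi$, which is exactly the identity $\pi(g)\pi(h)=\pi(gh)$. Since an elementary contraction of a positive word replaces a subword $g\bullet h$ by $gh$, it does not change the image under $\pi$, so $\sim$-equivalent positive words have $\bar w_1\approx\bar w_2$. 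This simultaneously shows that the assignment $\Psi:\AsCo(G)\to\Pi_1(\mathbf{G})$, $[w]_{\sim}\mapsto[\bar w]_{\approx}$, is a well-defined groupoid morphism.

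For the substantive direction I would build the inverse. The completion map $g\mapsto[g]$, together with $\id_M$ on objects, is a morphism of quivers $G^{(1)}\to\AsCo(G)$, so by the universal property of the free groupoid it extends uniquely to a groupoid morphism $\Phi:F(G^{(1)})\to\AsCo(G)$ sending each formal inverse $g^{-1}$ to the inverse $[g]^{-1}$ (which exists precisely because $\AsCo(G)$ is a groupoid). Since $\Phi$ is a morphism and $\partial\sigma$ encodes the face relation of $\sigma=(g,h)$, one computes that $\Phi(\partial\sigma)$ equals, up to orientation, $[gh]^{-1}[g][h]$, which is a unit because $[g][h]=[gh]$; hence $\Phi$ annihilates every conjugate $w\cdot\partial\sigma\cdot w^{-1}$ and therefore all of $\delta(\Gamma_2(x))$, so $\Phi$ descends to $\bar\Phi:\Pi_1(\mathbf{G})\to\AsCo(G)$. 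Checking that $\Psi$ and $\bar\Phi$ are mutually inverse is then formal: on a positive word, $\bar\Phi\circ\Psi([w]_{\sim})=\Phi(\bar w)=[w]_{\sim}$; and $\Psi\circ\bar\Phi\circ\pi$ and $\pi$ are two groupoid morphisms out of $F(G^{(1)})$ that agree on the generators $g\mapsto[g]_{\approx}$, hence are equal, and since $\pi$ is surjective $\Psi\circ\bar\Phi=\id$. Thus $\Psi$ is an isomorphism, so $\bar w_1\approx\bar w_2\iff\Psi[w_1]_{\sim}=\Psi[w_2]_{\sim}\iff w_1\sim w_2$, which is the claim.

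The main obstacle I anticipate is bookkeeping rather than conceptual: I must pin down the orientation conventions in the face maps of $\mathbf{G}$ and the composition order in $F(G^{(1)})$ precisely enough to verify that $\Phi(\partial\sigma)$ really is a unit (the naive reading of $\partial\sigma=d_0\sigma\bullet d_2\sigma\bullet(d_1\sigma)^{-1}$ must be reconciled with the right-to-left composition so that it matches $[g][h]=[gh]$ and not the non-trivial $[h][g][gh]^{-1}$). The conceptual subtlety that the universal-property argument lets me sidestep is that $\sim$ manipulates only positive words whereas $\approx$ allows formal inverses: a direct proof of $\Leftarrow$ would have to rewrite inverse letters back into positive form, which is exactly the kind of decomposition carried out in Proposition~\ref{prop:inverses:decomp} and used in Proposition~\ref{prop:equivalence-relation}; routing everything through the free groupoid and $\Phi$ makes that rewriting automatic.
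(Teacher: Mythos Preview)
Your proof is correct and takes a genuinely different, more structural route than the paper. The paper verifies the forward implication by an explicit computation—given an elementary contraction it writes down the element $\eta\in\Gamma_2$ with $\bar w_2=\bar w_1\cdot\delta(\eta)$—and leaves the reverse implication to the reader. You instead exhibit $\AsCo(G)$ and $\Pi_1(\mathbf{G})$ as two presentations of the same groupoid by constructing mutually inverse morphisms via the universal property of the free groupoid. This has two advantages: it treats both implications symmetrically, and it makes transparent the conceptual point that the passage from $W(G)/{\sim}$ to $F(G^{(1)})/{\approx}$ only adds formal inverses, which are already present once $\AsCo(G)$ is known to be a groupoid (this is exactly where the inversional hypothesis enters). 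The paper's hands-on approach, by contrast, would presumably require the reader to unwind an arbitrary element of $\delta(\Gamma_2)$ back into a sequence of contractions and expansions of positive words—essentially the rewriting you allude to via Proposition~\ref{prop:inverses:decomp}—which is more laborious. Your caveat about orientation conventions is well placed: in the paper $\partial\sigma=d_0\sigma\bullet d_2\sigma\bullet(d_1\sigma)^{-1}$ is to be read with $\bullet$ as path-concatenation (left to right), so that for $\sigma=(g,h)$ one has $\Phi(\partial\sigma)=[gh]^{-1}[g][h]$ in $\AsCo(G)$, which is indeed a unit; once that is pinned down the rest of your argument goes through verbatim.
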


\begin{proof}
The proof is similar to the group case, which is given in \cite{EstLee}. We
redo here the direct implication and leave the reverse implication for the
reader.

In order to prove the direct implication, we can assume that $w_1$ and $w_2$ differ by an elementary contraction:
\[ w_1=(g_1,\dots,g_i,\dots,g_k),\quad w_2=(g_1,\dots,u,v,\dots,g_k), \]
where $g_i=uv$. Now define an element in $\eta\in\Gamma_2(s(w_1))$ by setting:
\[ \eta:=((g_i\bullet\cdots\bullet g_k)^{-1},(u,v)). \]
Then:
\[ \delta(\eta)= (g_i\bullet\cdots\bullet g_k)^{-1}\bullet u \bullet v\bullet (uv)^{-1} \bullet g_i\bullet\cdots\bullet g_k \]
Hence, we find:
\[ \bar{w}_2= \bar{w}_1\cdot\delta(\eta)\quad \Leftrightarrow \quad \bar{w}_1\approx \bar{w}_2.\]
\end{proof}

\subsection{Simplicial monodromy groups}
\label{ss:simplicial:monodromy}

Consider a (transitive) local groupoid $G\tto M$. The map
\[ \phi:=(t,s):G\to M\times M, \]
has as image an open $U\subset M\times M$ containing the diagonal. We can view $U$ as a local groupoid over $M$ (a subgroupoid of the pair groupoid). Fixing $x\in M$, the surjective local groupoid morphism $\phi$ induces surjective group homomorphisms:
\[ \phi_*:\Gamma_i^G(x)\to \Gamma_i^U(x)\quad (i=1,2,3). \]
Denoting the kernel of this map by $K_i$ and omitting the reference to the base point $x$, we obtain a commutative diagram with exact rows:
\[
\xymatrix{
          &                         & \Gamma_3^G \ar[d]^{\delta} \ar[r]_{\phi_*} & \Gamma_3^U \ar[d]^{\delta} \ar[r] & 1 \\
1\ar[r] & K_2\ar[r]\ar[d]^{\delta} & \Gamma_2^G \ar[d]^{\delta} \ar[r]_{\phi_*} & \Gamma_2^U \ar[d]^{\delta} \ar[r] & 1 \\
1\ar[r] & K_1\ar[r]         & \Gamma_1^G \ar[r]_{\phi_*}               & \Gamma_1^U \ar[r] & 1.
}
\]
It follows that we have a long exact sequence:
\[
\xymatrix@C20pt{\cdots \ar[r] & \pi_2(\mathbf{U},x) \ar[r]^{\partial^s} & K_1/\delta(K_2)\ar[r] & \pi_1(\mathbf{G},x) \ar[r] & \pi_1(\mathbf{U},x)\ar[r] & \cdots}
\]

\begin{definition}
The morphism $\partial^s: \pi_2(\mathbf{U},x) \to K_1/\delta(K_2)$ is called the \emph{simplicial monodromy map} of $G$ at $x$. Its image is called the \emph{simplicial monodromy group} of $G$ at $x$.
\end{definition}

Note that, in general, $K_i\not=\Gamma_i^{G_x}$. However, we have:

\begin{lemma}
\label{lem:simpl:mon:morph}
Assume that $G$ is strongly connected. Then the map:
\[ G_x \to K_1/\delta(K_2),\quad g\mapsto ({g}\bullet {x}^{-1})\delta(K_2). \]
defines a morphism of local groupoids that extends to a surjective group homomorphism $\AsCo(G_x)\to K_1/\delta(K_2)$.
\end{lemma}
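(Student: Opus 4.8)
The plan is to check four things in order: that $j\colon g\mapsto (g\bullet x^{-1})\,\delta(K_2)$ is well defined into $K_1/\delta(K_2)$, that it is a morphism of local groups, that it extends to $\AsCo(G_x)$, and that this extension is onto. Here $x^{-1}$ denotes the formal inverse of the unit $u(x)$ in the free groupoid $F(G^{(1)})$; I will write $e_x=(x,x)\in U^{(1)}$ for the corresponding unit edge of $\mathbf U$ and $\tau_x=(e_x,e_x)\in U^{(2)}$ for its degenerate $2$-simplex. Well-definedness is immediate: every element of $G_x$ has the same image $e_x$ under $\phi=(t,s)$, so $\phi_*(g\bullet x^{-1})=e_x\bullet e_x^{-1}=1$ and therefore $g\bullet x^{-1}\in K_1$. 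Units go to the identity since $x\bullet x^{-1}=1$, so the morphism property reduces to showing, for $g,h\in G_x$ with $(g,h)\in\U$, that
\[ T:=\big((gh)\bullet x^{-1}\big)^{-1}\bullet\big(g\bullet x^{-1}\bullet h\bullet x^{-1}\big) \in \delta(K_2). \]

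First I would note that $T$ lies in $\delta(\Gamma_2^G(x))$: in $\AsCo(G)\cong\Pi_1(\mathbf G)$ the unit $x$ maps to the identity, so both words represent the single element $[gh]$, whence $T\in\ker(\Gamma_1^G(x)\to\AsCo_x(G))=\delta(\Gamma_2^G(x))$. The point is to produce a witness $\eta$ with $\delta(\eta)=T$ that actually lies in $K_2$. Following the elementary moves used in the proof of the preceding proposition, $\eta$ can be taken to be a product of generators $(w,\sigma)$ in which every $\sigma$ has both entries in $G_x$ and every $w$ is a word in $G_x$ — these moves only ever multiply or split the letters $g,h,gh,x$. For such an $\eta$ each $\sigma$ maps under $\phi$ to $\tau_x$ and each $w$ to a power of $e_x$, so $\phi_*(\eta)$ lies in the subgroup of $\Gamma_2^U(x)$ generated by the elements $(e_x^{\,n},\tau_x)$. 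A short computation with the defining relation of $\Gamma_2$ (using $\d\tau_x=e_x$, and taking the relator with empty $w_2$) shows that all the $(e_x^{\,n},\tau_x)$ coincide with a single element $c$ with $\delta(c)=e_x$; since $e_x$ has infinite order in $\Gamma_1^U(x)=F(U^{(1)})_x$, the map $\delta$ is injective on $\langle c\rangle$. As $\delta(\phi_*\eta)=\phi_*(\delta\eta)=\phi_*(T)=1$ (because $T\in K_1$), injectivity forces $\phi_*(\eta)=1$, i.e.\ $\eta\in K_2$. Thus $T\in\delta(K_2)$ and $j$ is a morphism of local groups. By the (group-valued, purely algebraic) universal property of the associative completion, $j$ then extends uniquely to a group homomorphism $\AsCo(G_x)\to K_1/\delta(K_2)$, sending $[h_1,\dots,h_m]$ to $\big(\prod_i(h_i\bullet x^{-1})\big)\delta(K_2)$.

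The remaining, and hardest, part is surjectivity, which is exactly where strong connectedness is needed. Given $\xi\in K_1$, the equation $\phi_*\xi=1$ in the free groupoid $F(U^{(1)})$ forces the edge-word $\phi(g_1)^{\eps_1}\cdots\phi(g_k)^{\eps_k}$ to be unreduced, so some adjacent pair $g_i^{\eps_i},g_{i+1}^{\eps_{i+1}}$ lies over mutually reverse edges of $U$. The idea is to rewrite $\xi$, modulo $\delta(K_2)$, as a product of terms $h\bullet x^{-1}$ with $h\in G_x$, and thereby exhibit $[\xi]$ as a value of the extended map. I would organize this as a Reidemeister–Schreier analysis of $K_1=\ker\phi_*$: choosing a lifted spanning tree and a section of $(t,s)\colon G^{(1)}\to U^{(1)}$ (lifting the unit edge of $U$ at $x$ to $u(x)$), one writes $K_1$ in terms of generators that compare each $g$ to its chosen lift over the same $U$-edge, conjugated by lifted tree-paths back to $x$. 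Strong connectedness — concretely, connectedness of the $(t,s)$-fibres, which follows from $s$- and $t$-connectedness together with bi-regularity — lets me join $g$ to that lift inside a single fibre; feeding this path into Proposition~\ref{prop:inverses:decomp} rewrites each generator, modulo $\delta(K_2)$, as the $j$-image of an element of $G_x$.

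The main obstacle, as in the morphism step, is controlling which $2$-simplices lie in $K_2$: the contractions used in the reduction are performed along $2$-simplices of $\mathbf G$ lying over backtracking (``thin'') triangles of $\mathbf U$, and one must check that the corresponding $\Gamma_2^G$-generators are killed by $\phi_*$. I expect this to follow from the same mechanism as above — that $\delta$ is injective on the relevant (aspherical, tree-supported) subgroup of $\Gamma_2^U(x)$, so that membership in $K_2$ is detected by the already-guaranteed vanishing of $\phi_*\circ\delta$ — but the bookkeeping of the intermediate unit edges $e_y$, which must telescope away once everything is conjugated back to the basepoint $x$, is the delicate point that forces a careful use of strong connectedness.
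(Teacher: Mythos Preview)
Your argument for the morphism property is more involved than necessary and has a small gap: you assert that a witness $\eta$ with $\delta(\eta)=T$ can be chosen so that every generator $(w,\sigma)$ has $\sigma\in G_x^{(2)}$ and $w$ a word in $G_x$, but ``following the elementary moves of the preceding proposition'' does not directly yield this, since $T$ involves the formal letter $x^{-1}$ and is not of the form $\bar w_1\cdot\bar w_2^{-1}$ for words in $W(G_x)$. The paper simply writes down an explicit element of $K_2$: one computes
\[
(g_1\bullet x^{-1})\bullet(g_2\bullet x^{-1})\bullet\bigl((g_1g_2)\bullet x^{-1}\bigr)^{-1}
= \delta\bigl((g_1,x)^{-1}\bullet(g_1,g_2)\bigr),
\]
and then observes that $\phi_*\bigl((g_1,x)^{-1}\bullet(g_1,g_2)\bigr)=(x,x)^{-1}\bullet(x,x)$ is the empty word, so this element lies in $K_2$. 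No injectivity-of-$\delta$ argument is needed.

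The more significant divergence is in the surjectivity argument, where your combinatorial Reidemeister--Schreier approach is explicitly incomplete (``I expect this to follow'', ``the delicate point''). The paper avoids all of this bookkeeping by a topological argument: it equips $K_1/\delta(K_2)$ with the quotient topology from $K_1\subset F(G)_x$, and then shows that (i) $K_1/\delta(K_2)$ is connected, and (ii) the image of $G_x$ is an open neighbourhood of the identity. Point (ii) holds because the map factors $G_x\to K_1/\delta(K_2)\to\AsCo(G_x)$ and $G_x\to\AsCo(G_x)$ has open image. For (i), given any $w=g_1^{\eps_1}\bullet\cdots\bullet g_k^{\eps_k}\in K_1$, the condition $\phi_*(w)=1$ means there is an involution $\sigma\in S_k$ matching each letter with one of opposite sign having the same source and target; strong connectedness then lets you deform each $g_i$ inside its $(s,t)$-fibre to the unit, producing a path in $K_1$ from $w$ to a word in units, and the latter visibly lies in $\delta(K_2)$. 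Since a connected topological group is generated by any open neighbourhood of the identity, surjectivity follows immediately. This completely sidesteps the tree-lifts, Schreier transversals, and the ``thin triangle'' bookkeeping you were worried about.
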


\begin{proof}
First we check that the map is a local group homomorphism. If $g_1,g_2\in G_x$ and $g_1 g_2$ is defined, we need to show that:
\[ ({g}_1\bullet {x}^{-1})\delta(K_2)\cdot ({g}_2\bullet {x}^{-1})\delta(K_2)=({g_1 g_2}\bullet {x}^{-1})\delta(K_2) \]
This follows because $\delta(K_2)\subset \Gamma_1^G$ is a normal subgroup and we have:
\begin{align*}
({g}_1\bullet {x}^{-1})\bullet ({g}_2\bullet {x}^{-1})\bullet({g_1 g_2}\bullet {x}^{-1})^{-1} &=
{g}_1\bullet {x}^{-1}\bullet {g}_2\bullet {x}^{-1}\bullet {x} \bullet({g_1 g_2})^{-1}\\
&={g}_1\bullet {x}^{-1}\bullet {g}_2 \bullet({g_1 g_2})^{-1}\\
&={g}_1\bullet {x}^{-1}\bullet {{g}_1}^{-1}\bullet{g}_1\bullet{g}_2 \bullet({g_1 g_2})^{-1}\\
&=\delta((g_1,x)^{-1}\bullet(g_1,g_2))\in \delta(K_2),
\end{align*}
since $\phi_*((g_1,x)^{-1}\bullet(g_1,g_2))=(x,x)^{-1} \bullet(x,x)= \text{empty word}$. 

To show that the map is surjective, we will check that:
\begin{enumerate}[(i)]
\item $K_1/\delta(K_2)$ is a topological group which is connected;
\item The image of $G_x\to K_1/\delta(K_2)$ is an open neighborhood of the unit.
\end{enumerate}
It follows that the image of $G_x$ generates $K_1/\delta(K_2)$, so $\AsCo(G_x)\to K_1/\delta(K_2)$ is a surjective group homomorphism.

The topology on $K_1/\delta(K_2)$ is the quotient topology induced from the subspace topology on $K_1\subset \Gamma^G_1(x)$ ($F(G)$ is a topological groupoid, so $\Gamma^G_1(x):=F(G^1)_{x}$ is a topological group). The image of the map $G_x\to \AsCo(G_x)$ is an open neighborhood of the unit and this map factors through the map $G_x\to K_1/\delta(K_2)$. Hence, the image of $G_x\to K_1/\delta(K_2)$ is an open neighborhood of the unit. To check that $K_1/\delta(K_2)$ is connected, choose any word $w\in K_1$
\[ w=g_1^{\eps_1}\bullet \dots \bullet g_{k}^{\eps_k}, \quad (g_i\in G, \eps_i=\pm 1). \]
Here $k$ must be even and there exists an involution $\sigma\in S_k$ such that for each $g_i$ with $\eps_i=1$ there exists a $g_{\sigma(i)}$ with $\eps_{\sigma(i)}=-1$, $s(g_i)=s(g_{\sigma(i)})$ and $t(g_i)=t(g_{\sigma(i)})$. Since $G$ is strongly connected, we can find a path of words $w(t)\in K_1$ with $w(1)=w$ and
\[ w(0)=x^{\eps_1}\bullet \dots \bullet x^{\eps_k}. \]
Now, $w(0)\in\delta(K_2)$ since we have that $x=\delta(x,x)$ and $(x,x)^{\eps_1}\bullet \dots \bullet (x,x)^{\eps_k}\in K_2$. So any element in $K_1/\delta(K_2)$ can be connected to the unit.
\end{proof}

We conjecture that the map $\AsCo(G_x)\to K_1/\delta(K_2)$ is also injective, so that $\AsCo(G_x)\simeq K_1/\delta(K_2)$. This would mean that we could view the simplicial monodromy map as a homomorphism:
\[ \partial^s: \pi_2(\mathbf{U},x) \to \AsCo(G_x). \]
We do not know how to prove injectivity for a general strongly connected local groupoid.

\subsection{Simplicial monodromy for shrunk $G$}
\label{ss:simplicial:shrunk}

When $G$ is shrunk we have:

\begin{proposition}
If $G$ is a shrunk local Lie groupoid then there are group isomorphisms:
\[ K_1/\delta(K_2)\simeq \AsCo(G_x)\simeq \G(\g_x). \]
\end{proposition}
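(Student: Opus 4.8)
The plan is to prove the two isomorphisms separately: first $\AsCo(G_x)\simeq\G(\g_x)$, which is the clean part, and then $K_1/\delta(K_2)\simeq\AsCo(G_x)$, whose only non-trivial ingredient is the injectivity of a map already known to be surjective. Composing the two gives the stated chain $K_1/\delta(K_2)\simeq\AsCo(G_x)\simeq\G(\g_x)$.

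\smallskip

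\emph{The isomorphism $\AsCo(G_x)\simeq\G(\g_x)$.} Since $G$ is shrunk, $G_x$ is a $1$-connected local Lie group with Lie algebra $\g_x$, and the natural map $G_x\into\G(\g_x)$ is an injective étale morphism of local Lie groups (hypotheses (H1)--(H2)), identifying $G_x$ with an open neighbourhood $U$ of the identity. By the universal property of the associative completion (Proposition~\ref{prop:AsCo}) this morphism extends uniquely to a group homomorphism $\Psi:\AsCo(G_x)\to\G(\g_x)$ sending $[g_1,\dots,g_k]$ to the product $g_1\cdots g_k$ computed in $\G(\g_x)$. First I would check surjectivity: the image of $G_x$ contains an open neighbourhood of the identity of the connected group $\G(\g_x)$, and a connected group is generated by any such neighbourhood. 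For injectivity I would apply Lemma~\ref{lem:smalllemma} to the pair $U\subset\G(\g_x)$ (here $\G(\g_x)$ is source-simply connected): if $\Psi[g_1,\dots,g_k]=e$ then $g_1\cdots g_k=e\in U$, so the lemma gives $(g_1,\dots,g_k)\sim(x)$ in $W(G_x)$, i.e.\ $[g_1,\dots,g_k]$ is the unit of $\AsCo(G_x)$. Hence $\Psi$ is an isomorphism.

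\smallskip

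\emph{The isomorphism $K_1/\delta(K_2)\simeq\AsCo(G_x)$.} Lemma~\ref{lem:simpl:mon:morph} already provides a surjective homomorphism $\Phi:\AsCo(G_x)\to K_1/\delta(K_2)$ (we may assume $G$ strongly connected, so that the lemma applies), so it suffices to construct a left inverse. The plan is to use the multiplicative $A$-path lift $P:W(G)\to P(A)$ built from the shrunk structure in Section~\ref{ss:mono-is-assoc}. Given $w\in K_1\subset\Gamma_1^G(x)$, i.e.\ a loop $w=g_1^{\eps_1}\bullet\cdots\bullet g_m^{\eps_m}$ at $x$ with $\phi_*(w)=1$, I would set $\Theta(w):=[P(w)]$, where $P$ is extended to formal inverses by path reversal. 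The cancellation $\phi_*(w)=1$ forces the base path of $P(w)$ to be $A$-homotopic into the fibre, so $\Theta(w)$ lands in $\G(\g_x)\cong\AsCo(G_x)$. One then checks $\Theta\circ\Phi=\id$ directly on generators: $\Theta(\Phi[g])=[P(g)\circ\overline{P(x)}]=[P(g)]$ is exactly the image of $g$ in $\G(\g_x)$. Together with surjectivity of $\Phi$ this forces $\Phi$ to be an isomorphism with $\Theta=\Phi^{-1}$.

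\smallskip

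The main obstacle is the well-definedness of $\Theta$, and this is precisely where the shrunk hypothesis is essential. Two things must be verified. First, that $\phi_*(w)=1$ really implies $\Theta(w)\in\G(\g_x)$: the formal cancellation in the free groupoid on the arrows of $U$ must be promoted to an actual $A$-homotopy contracting the base loop in $M$ rel endpoints. Second, that $\Theta$ descends modulo $\delta(K_2)$: the generators of $\delta(K_2)$ have the form $v\cdot\d\sigma\cdot v^{-1}$ with $(v,\sigma)\in K_2$ and $\sigma=(g,h)\in G^{(2)}$, and condition~(\ref{condition-shrunk-essence}) in the definition of a shrunk groupoid says exactly that $P(gh)$ is $A$-homotopic, along a trivial sphere, to $\tilde{\partial}(P(g,h),\varphi_2(g,h))$; this makes $\Theta(\d\sigma)$ trivial, while the conjugation by $v$ is harmless since $\Mon_x(A)$ is central in $\G(\g_x)$. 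These checks reuse the base-path/triangulation computations from the proof of Theorem~\ref{thm:assoc-mono} --- indeed the equality $\Assoc_x(G)=\Mon_x(A)\cap G_x$ established there is the $\pi_0$-shadow of this left inverse --- and I expect the bookkeeping of base paths and their contractions to be the genuinely delicate step.
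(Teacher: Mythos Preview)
Your overall architecture matches the paper's: prove $\AsCo(G_x)\simeq\G(\g_x)$ first, then show that the surjection $\Phi:\AsCo(G_x)\to K_1/\delta(K_2)$ of Lemma~\ref{lem:simpl:mon:morph} is injective by producing a map $K_1/\delta(K_2)\to\G(\g_x)$ through which the isomorphism factors. For the first isomorphism you argue directly via Lemma~\ref{lem:smalllemma}, whereas the paper invokes Corollary~\ref{cor:1-connected:integrable} to conclude $\AsCo(G_x)$ is a Lie group and then uses that a Lie group homomorphism inducing a Lie algebra isomorphism onto a $1$-connected target is an isomorphism. Both routes are fine.

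Two points in your second part deserve tightening. First, writing $\Theta(w):=[P(w)]$ only gives an element of $\G_x(A)$, not of $\G(\g_x)$; the map $\G(\g_x)\to\G_x(A)$ has kernel $\Mon_x(A)$, so a lift must be \emph{specified}. What actually produces the lift is the ``thin'' contraction of the base loop dictated by the free-group reduction of $\phi_*(w)$: since $G$ is shrunk, each cancellation $g^{\eps}\bullet h^{-\eps}$ with $\phi(g)=\phi(h)$ corresponds to two geodesics with the same endpoints, and one transports along the resulting trivially-swept disc. The paper makes this explicit by an inductive two-step process (remove innermost matching pairs, collect the residual $\g_y$-paths, repeat), and this is exactly the bookkeeping you flagged as delicate.

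Second, your description of $\delta(K_2)$ is not quite right: a single generator $(v,\sigma)$ of $\Gamma_2^G$ is almost never in $K_2$, so ``$(v,\sigma)\in K_2$'' is not the relevant condition. Elements of $K_2$ are \emph{words} in such generators whose image in $\Gamma_2^U$ is trivial; the paper's key observation is that in any such word the letters pair off as $(w_1,(g_1,h_1))$ and $(w_2,(g_2,h_2))^{-1}$ with $\varphi_2(g_1,h_1)=\varphi_2(g_2,h_2)$, so that the $\varphi_2$-discs contributed by condition~(\ref{condition-shrunk-essence}) cancel in $\G(\g_x)$. Your appeal to condition~(\ref{condition-shrunk-essence}) is the right ingredient, but it must be applied to these matched pairs rather than to a single $\d\sigma$, which by itself has no reason to map to the identity. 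Once you incorporate these two corrections, your argument coincides with the paper's.
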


\begin{proof}
Since $G$ is shrunk, we have a local group morphism $G_x\to \G(\g_x)$. Applying the functor $\AsCo(-)$ we obtain a morphism of topological groupoids $\AsCo(G_x)\to \G(\g_x)$. But $G_x$ is 1-connected local Lie group, so by Corollary~\ref{cor:1-connected:integrable} we have that $\AsCo(G_x)$ is a Lie group and $\AsCo(G_x)\to \G(\g_x)$ is a Lie group homomorphism. Since $\AsCo(G_x)$ and $\G(\g_x)$ have the same Lie algebra and the latter is 1-connected, it follows that this map is actually an isomorphism.

We show that the isomorphism $\AsCo(G_x)\simeq \G(\g_x)$ factors into group homomorphisms:
\[
\xymatrix{
 & K_1/\delta(K_2) \ar[dr]\\
\AsCo(G_x)\ar[ru] \ar[rr] & & \G(\g_x)
}
\]
The morphism $\AsCo(G_x)\to K_1/\delta(K_2)$ was given in Lemma~\ref{lem:simpl:mon:morph} and it is surjective. So to complete the proof it remains to construct a morphism $K_1/\delta(K_2)\to \G(\g_x)$ making the diagram commute.

Write an element $w\in K_1$ in the form:
\[ w=g_1^{\eps_1}\bullet \dots \bullet g_{k}^{\eps_k}, \quad (g_i\in G, \eps_i=\pm 1). \]
Using the fact that $\phi_*(w)$ is the empty word (the unit in $\Gamma_1^U(x)$), we associate to $w$ a $\g_x$-path (which represents an element in $\G(\g_x)$) by proceeding inductively as follows:
\begin{enumerate}[(i)]
\item[Step 1.] We first choose all the successive pairs of arrows in $w$ of the form:
\[ \xymatrix{\ar@{.}[r]& y\ar[r]^{g} &z & y\ar[l]_{h}\ar@{.}[r] &} \]
Notice that, since $\phi_*(w)$ is the empty word, there must exist at least one such pair. The $A$-paths $P(g)$ and $P(h)$ have opposite base paths (the geodesics connecting $y$ and $z$). Hence, we can find an $A$-homotopy along trivial spheres from $P(g)\circ P(h)^{-1}$ to a $\g_{y}$-path $p(t)$. In this way, we have ``removed'' all such  vertices $z$ from the word $w$ and replaced them by $\g_y$-paths.
\item[Step 2.] After step 1, we will have new successive pairs of arrows (at least one, since  $\phi_*(w)$ is the empty word), but now the vertices will have $\g_y$-paths and/or $\g_{z}$-paths attach to them:
\[ \xymatrix{\ar@{.}[r] & y\ar@(ul,ur)[]^{p_1(t)}\ar[r]^{g} &z \ar@(ul,ur)[]^{p_2(t)}& y\ar@(ul,ur)[]^{p_3(t)}\ar[l]_{h}\ar@{.}[r]&} \]
Again, the $A$-paths $P(g)$ and $P(h)$ have opposite base paths, while $p_i(t)$ are $A$-paths with constant base paths, so we can find an $A$-homotopy along trivial spheres from $p_1\circ P(g)\circ p_2 \circ P(h)^{-1}\circ p_3$ to a $\g_{y}$-path $p(t)$.
\end{enumerate}
After applying step 2 as many times as possible, we are left in the end with a $\g_x$-path whose $\g_x$-homotopy class defines an element $[p(t)]\in\G(\g_x)$. 

In order to have a well-defined morphism $K_1/\delta(K_2)\to \G(\g_x)$ we need to check that this construction associates to an element in $\delta(K_2)$ a $\g_x$-path which is $\g_x$-homotopic to the trivial path. But this follows from property (5) in the definition of a shrunk Lie groupoid (see Definition~\ref{condition-shrunk-essence}) and the fact that every element in $K_2$ contains pairs $(w_1,(g_1,h_1))$ and $(w_2,(g_2,h_2)^{-1})$, where $(g_1,h_1),(g_2,h_2)\in G^{(2)}$ satisfy $\varphi_2(g_1,h_1)=\varphi_2(g_2,h_2)$.
\end{proof}

Using this result, we can now complete the proof of Theorem~\ref{thm:main:monodromy}.

\begin{proof}[Proof of Theorem~\ref{thm:main:monodromy}]
From the previous proposition, we already know that there is a well-defined map $\partial^s:\pi_2(\mathbf{U},x) \to \AsCo(G_x)$ giving a long exact sequence \eqref{eq:long:exact:local:grpd}. We turn to the proof of the existence of a commutative diagram:
\[
\xymatrix{
\pi_2(M,x)\ar[r]^\partial \ar[d] & \G(\g_x)\\
\pi_2(\mathbf{U},x) \ar[r]_{\partial^s} & \AsCo(G_x)\ar[u]}
\]
with vertical arrows which are isomorphisms. Note that the only map that is
missing is the right vertical arrow. 

Since $G$ is shrunk, for every pair $(x,y)\in U$ there is a unique geodesic connecting $x$ to $y$, so the source fibers of $U\tto M$ are simply connected. It follows that:
\[ \AsCo(U)\simeq \Pi_1(M), \]
and, in particular, $\pi_1(\mathbf{U},x)\simeq\pi_1(M,x)$. We are looking for the degree 2 version of this isomorphism.

Given a element $[\alpha]\in\pi_2(M,x)$, represented by a map $\alpha:\Delta\to M$, we subdivide $\Delta$ into $k^2$ triangles as in the proof of Theorem~\ref{thm:assoc-mono}. The subdivision is such that $e_3$, $e_2$ and $e_1$ are the edges of a face then $(e_2,e_1)\in U^{(2)}$. For the resulting triangulation, we can associate to each lace around a face $F$:
\[ (e_1^{-1}, \dots, e_n^{-1}, c, b, a, e_n, \dots, e_1) ,\]
the element $(w_F,\sigma_F)$ where $w_F=e_1^{-1}\bullet \dots\bullet e_n^{-1}$ and $\sigma_F=(b,a)$. If $F_1,\dots,F_{k^2}$ is the ordered list of faces of $\Delta$, then we obtain an element:
\[  (w_{F_1},\sigma_{F_1})\bullet\cdots \bullet (w_{F_{k^2}},\sigma_{F_{k^2}})\in \Gamma^U_2(x). \]
We leave it as an exercise to check that this defines a morphism $\pi_2(M,x)\to \pi_2(\mathbf{U},x)$. 

Now consider the construction in the proof of Theorem~\ref{thm:assoc-mono} which to $[\alpha]$ associates a simplicial map $\phi : S \to \NG$ with $[\phi]=[\alpha]$. One of the boundary words of $\phi$ is $(x)$ and the other gives an element $(g_1,\dots,g_k)\in\AsCo(G_x)$. In the present language, this yields the composition of 
the map $\pi_2(M,x)\to \pi_2(\mathbf{U},x)$ that we have just constructed with the simplicial monodromy map $\partial^s:\pi_2(\mathbf{U},x) \to \AsCo(G_x)$. Hence, the proof of Theorem~\ref{thm:assoc-mono} shows that the diagram above is commutative and that the first vertical arrow must be an isomorphism.
\end{proof}

\begin{proof}[Proof of Corollary~\ref{cor:shrunk}]
We already know that the morphism $\AsCo(G)\to \G(A)$ is surjective. We only need to show that it is injective, i.e., that it restricts to an isomorphism on isotropy groups. But the two exact sequences \eqref{eq:short:exact:groupoid} and \eqref{eq:long:exact:local:grpd}, together with the diagram in Theorem~\ref{thm:main:monodromy}, show that this is indeed the case.
\end{proof}

%%%%%%%%%%%%%%%%%%%%%%%%%%%%%%%%%%%%%%%%%%%%%%%%%%%%%%%%%%%%%%%%%%%%%%%%%%%%%%%

\bibliographystyle{plain}
%\bibliography{llg}

\end{document}